\numberwithin{equation}{section}
\newtheorem{thm}{Theorem}[section]
\newtheorem{cor}[thm]{Corollary}
\newtheorem{lem}[thm]{Lemma}
\newtheorem{prop}[thm]{Proposition}
\theoremstyle{definition}
\newtheorem{defn}[thm]{Definition}
\theoremstyle{definition}
\newtheorem{rem}[thm]{Remark}
\theoremstyle{definition}
\newtheorem{ex}[thm]{Example}
\journal{Journal of Differential Equations}
\begin{document}

\begin{frontmatter}

\title{Multicycle dynamics and high-codimension bifurcations in SIRS epidemic models with cubic psychological saturated incidence}

\author[label1]{Henan Wang}
\ead{wanghn24@mails.jlu.edu.cn}
\author[label1,label2]{Xu Chen}
\ead{chenxu21@mails.jlu.edu.cn}
\author[label1,label3]{Wenxuan Li}
\ead{Liwx18@mails.jlu.edu.cn}
\author[label1]{Suli Liu\corref{cor1}}
\cortext[cor1]{Corresponding author}
\ead{liusuli@jlu.edu.cn}
\author[label1]{Huilai Li}
\ead{lihuilai@jlu.edu.cn}
\affiliation[label1]{organization={School of Mathematics, Jilin University},
            addressline={Qianjin Street 2699},
            city={Changchun},
            postcode={130012},
            state={Jilin Province},
            country={China}}
\affiliation[label2]{organization={School of Mathematics, Changchun University of Technology},
            addressline={Yan'an Street 2055},
            city={Changchun},
            postcode={130012},
            state={Jilin Province},
            country={China}}
\affiliation[label3]{organization={Department of Mathematics and Physics, Suzhou Polytechnic University},
            addressline={Zhineng Avenue 106},
            city={Suzhou},
            postcode={215004},
            state={Jiangsu Province},
            country={China}}
\begin{abstract}
This study investigates bifurcation dynamics in an SIRS epidemic model with cubic saturated incidence, extending the quadratic saturation framework established by Lu, Huang, Ruan, and Yu (Journal of Differential Equations, 267, 2019). We rigorously prove the existence of codimension-three Bogdanov-Takens bifurcations and degenerate Hopf bifurcations, demonstrating the coexistence of three limit cycles within a single epidemiological model, a phenomenon that is rarely documented and exhibits significant dynamical complexity. Our analysis reveals that both the infection rate $\kappa$ (through specific inequality conditions) and psychological effect thresholds critically govern disease dynamics: from complete eradication to various persistence patterns including multiple periodic oscillations and coexistent steady states. By innovatively applying singularity theory, we characterize the topology of the bifurcation set through the local unfolding of singularities and the identification of nondegenerate singularities for fronts. For the Bogdanov-Takens bifurcation, we establish a fundamental connection between the multiplicity of positive equilibria, $A_k$-type singularities, and the differential topology of the bifurcation set via versal unfolding theory, confirming that the codimension-2 $BT$ bifurcation set is locally diffeomorphic to a regular curve. For the Hopf bifurcation set, we employ front identification techniques and show that the bifurcation surface generically admits two types of singularities: cuspidal edges and swallowtails. Numerical simulations verify the emergence of three limit cycles in monotonic parameter regimes and two limit cycles in nonmonotonic regimes. This work advances existing bifurcation research by incorporating higher-order interactions and comprehensive singularity analysis, thereby providing a mathematical foundation for decoding complex transmission mechanisms critical to the design of public health strategies.
\end{abstract}
\begin{keyword}
 Multicycle dynamics \sep SIRS epidemic models \sep Bogdanov-Takens bifurcation \sep  Degenerate Hopf bifurcation\sep Unfolding
 \MSC 34C05 \sep 34C23 \sep 92D30
\end{keyword}
\end{frontmatter}
\section{Introduction}
\label{sec:introduction}
Mathematical modeling of infectious diseases has evolved substantially since Kermack and McKendrick's seminal SIR model \cite{1927}, with compartmental frameworks progressing from bilinear incidence ($\kappa SI$) to incorporate biologically realistic mechanisms through nonlinear incidence rates. Key developments include the standard incidence $\frac{\kappa SI}{N}$ \cite{Li1999, Liu2021a, Li2025}, saturated incidence
$\frac{\kappa SI}{1 + \alpha I}$ \cite{V1977, Capasso1978}, and generalized forms $\frac{\kappa SI^n}{1 + \alpha I^m}$ \cite{Liu1986, Liu1987, Hethcote1991, Ruan2003, Tang2008, Saha2022, Cui2024}. Such nonlinear structures generate complex dynamics, including periodic outbreaks and bistability, making bifurcation theory indispensable for identifying critical thresholds where parameter variations induce qualitative behavioral transitions \cite{Kuznetsov2023, Perko1996}.

While \citet{Capasso1978} first conceptualized the integration of psychological effects into incidence rates to capture risk-behavior feedback, they did not specify its mathematical form. Subsequent studies concretized this relationship through nonmonotone incidence rates: \citet{Xiao2007a, Liu2015} formalized $\frac{\kappa  SI}{1+\alpha I^2}$, \citet{Zhou2007} and \citet{Lu2020} advanced $\frac{\kappa  SI}{1+\beta I+\alpha I^2}$ ($\beta > -2\sqrt{\alpha}$). These formulations mathematically encode psychological mechanisms-infection rates rise initially with pathogen exposure, then decay under behavioral mitigation (e.g., risk avoidance, protection measures, intervention policies), asymptotically approaching zero as $I \to +\infty$. This decay assumption, however, contradicts endemic saturation in diseases like influenza, where crowding effects sustain transmission \cite{Lu2019}, revealing limitations in modeling psychology-dominated dynamics.

Building on these foundations, \citet{Lu2019} introduced a psychologically motivated SIRS model with nonmonotone and saturated incidence:
\begin{equation}\label{eq:lu_model}
\begin{aligned}
\frac{{\rm d}S}{{\rm d}t} &= \Lambda - dS - \frac{\kappa SI^2}{1 + \beta I + \gamma I^2} + \delta R, \\
\frac{{\rm d}I}{{\rm d}t} &= \frac{\kappa SI^2}{1 + \beta I + \gamma I^2} - (d + \mu)I, \\
\frac{{\rm d}R}{{\rm d}t} &= \mu I - (d + \delta)R,
\end{aligned}
\end{equation}
where $\Lambda$ denotes the recruitment rate, $d$ is the natural death rate, $\mu$ is the recovery rate, $\delta$ is the immunity loss rate, $\kappa$ is the infection rate, and $\gamma > 0$ quantifies psychological inhibition. The constraint $\beta > -2\sqrt{\gamma}$ ensures $1 + \beta I + \gamma I^2 > 0$ for all $I \geq 0$. Let the incidence rate in model \eqref{eq:lu_model} be denoted by $g(I)S$, where
\begin{equation}\label{g(I):lu_model}
g(I) = \frac{\kappa I^2}{1 + \beta I + \gamma I^2}.
\end{equation}
The function $g(I)$ exhibits two distinct behavioral regimes:
(i) When $\beta > 0$, $g(I)$ is monotonically increasing and asymptotically approaches the saturation level $\kappa/\gamma$ as $I \to +\infty$;
(ii) When $-2\sqrt{\gamma} < \beta < 0$, $g(I)$ is nonmonotonic: increasing for small $I$, decreasing for large $I$, while still converging to $\kappa/\gamma$ as $I \to +\infty$.

Through center manifold theory and normal form reduction, they established that quadratic saturation induces codimension-two Bogdanov-Takens and Hopf bifurcations. This result revealed oscillatory dynamics and bistability, significantly advancing the understanding of complex epidemic behavior.

However, Lu et al.'s \cite{Lu2019} analysis of psychological effects was confined to quadratic saturation, which inherently limits the spectrum of achievable dynamics. Although \citet{Lu2020} and \citet{Li2015} demonstrated codimension-three Bogdanov-Takens bifurcations in the epidemic models, such high-codimension phenomena in SIRS systems are rarely documented, particularly those incorporating saturated psychological feedback. These bifurcations entail co-occurring criticalities (e.g., simultaneous Hopf and homoclinic events) that generate intricate attractor networks, posing significant analytical challenges due to high-dimensional parameter space, yet offering critical insights into epidemic regime transitions.
This gap is particularly notable for SIRS models with cubic incidence, where high-codimension degenerative Hopf bifurcations remain largely unexplored, impeding our understanding of multi-wave outbreak mechanisms.

\begin{center}
\begin{table}[htbp]
\caption{Monotonicity regimes of  $g(I)=\frac{I^3}{1+\beta I +\alpha I^2+\gamma I^3}$.}
\label{tab:monotonicity}
 \centering
  \begin{tabular}{@{}c c c l c@{}}
 \toprule
 \multicolumn{1}{c}{Function} & \multicolumn{1}{c}{Derivative} & Parameters & Behavior & Extrema \\
\multicolumn{1}{c}{$g(I)$} & \multicolumn{1}{c}{$g'(I)$} & ($\alpha, \beta, \gamma$) & ($I > 0$) & ($I>0$)
 \\
\midrule
  $\frac{I^3}{1+\gamma I^3}$ & $\frac{3I^3}{(1+\gamma I^3)^2}$ & $\begin{array}{ll}\alpha=\beta=0, \gamma>0\end{array}$ &$\begin{array}{ll} {\rm Inc.}\end{array}$& $\begin{array}{ll}0\end{array}$   \\
  \\
  $\frac{I^3}{1+\alpha I^2+\gamma I^3}$ & $\frac{I^2(3+\alpha I^2)}{(1+\alpha I^2+\gamma I^3)^2}$ & $\begin{array}{ll}\beta=0,\begin{cases}\alpha>0, \gamma>0 \\ \alpha<0, \gamma>0\end{cases}\end{array}$& $\begin{array}{ll}{\rm Inc.}\\ {\rm Inc.-Dec.}\end{array}$ & $\begin{array}{ll}0\\ 1\end{array}$  \\
  \\
  $\frac{I^3}{1+\beta I +\gamma I^3}$ &  $\frac{I^2(3+2\beta I)}{(1+\beta I +\gamma I^3)^2}$ & $\begin{array}{ll}\alpha=0,\begin{cases}\beta>0, \gamma>0 \\ \beta<0, \gamma>0\end{cases}\end{array}$ & $\begin{array}{ll}{\rm Inc.}\\ {\rm Inc.-Dec.}\end{array}$& $\begin{array}{ll}0\\ 1\end{array}$ \\
  \\
  $\frac{I^3}{1+\beta I +\alpha I^2+\gamma I^3}$ &  $\frac{I^2(3+2\beta I+\alpha I^2)}{(1+\beta I +\alpha I^2+\gamma I^3)^2}$  & $\begin{array}{ll}\alpha>0,\begin{cases}\beta>0, \gamma>0 \\ \beta<0, \gamma>0\end{cases}\\
 \alpha<0, \begin{cases}\beta>0, \gamma>0 \\ \beta<0, \gamma>0\end{cases}\end{array}$  & $\begin{array}{ll}{\rm Inc.}\\ {\rm Inc.-Dec.-Inc.}\vspace{3mm}\\ {\rm Inc.-Dec.}\\{\rm Inc.-Dec.}\end{array}$& $\begin{array}{ll}0\\ 2\vspace{3mm}\\1\\1\end{array}$  \\
\bottomrule
\end{tabular}
\parbox{\linewidth}{\footnotesize Note: Inc. denotes increase, and Dec. denotes decrease.}
\end{table}
\end{center}

To address these limitations, we extend Lu et al.'s \cite{Lu2019} framework through a cubic-saturated incidence function $g(I)S$, where $g(I)$ is defined by:
\begin{equation}\label{eq:modified_incidence}
g(I) = \dfrac{\kappa I^3}{1+\beta I+\gamma I^3},
\end{equation}
subject to $-3\sqrt[3]{\frac{1}{4}\gamma} < \beta $, guaranteeing denominator positivity for $I \geq 0$ which is a necessary condition for biological meaning. This formulation achieves structural minimalism while preserving dynamical regimes, as evidenced in Table \ref{tab:monotonicity}. By omitting quadratic terms ($\alpha I^2$) in the denominator, we simplify the incidence structure while retaining essential monotonic/nonmonotonic regimes: monotonic saturation ($\beta > 0$) and single-peak nonmonotonicity ($\beta < 0$) (Table \ref{tab:monotonicity}, row 3), avoiding artificial multi-extrema patterns from higher-order terms.
While asymptotically similar to quadratic saturation \eqref{g(I):lu_model}, the cubic term captures the mechanistic progression of an epidemic through its distinct growth-decay phases.
Figure \ref{fig:2}(a) demonstrates how $\beta < 0$ encodes psychological biphasic responses (risk $\rightarrow$ panic $\rightarrow$ behavioral adjustment), while Figure \ref{fig:2}(b) shows endemic saturation ($\kappa/\gamma$) consistent with crowding persistence in avian influenza.
The cubic term critically enables accelerated initial growth, capturing exponential transmission, and earlier/sharper-inflection dynamics that model abrupt interventions (e.g., COVID-19 lockdowns \cite{Harris2023, Jang2024}).

\begin{figure}[htbp]
  \centering
  \begin{minipage}[c]{0.38\textwidth}
    \centering
    \includegraphics[width=\textwidth]{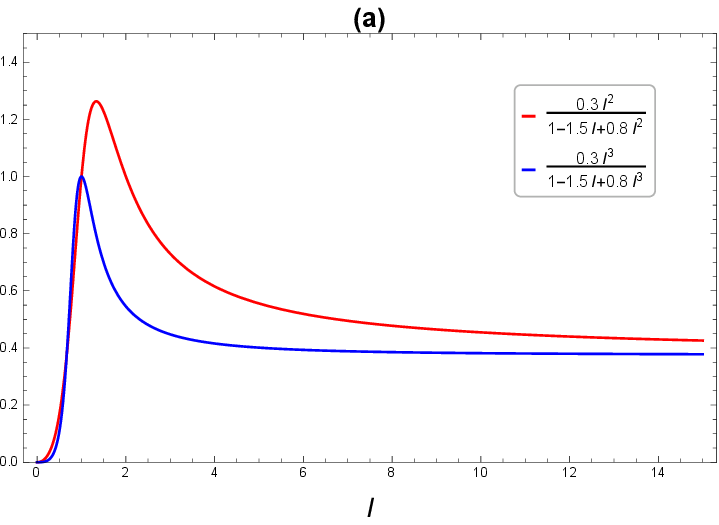}
  \end{minipage}
  \hspace{.25in}
  \begin{minipage}[c]{0.38\textwidth}
    \centering
    \includegraphics[width=\textwidth]{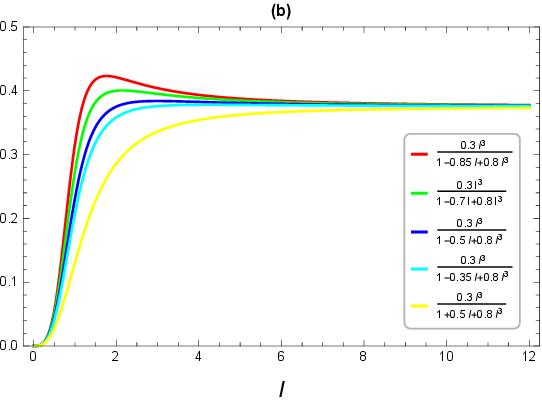}
  \end{minipage}
  \caption{(a) Comparison of modified cubic function $g(I)$ (blue) with Lu et al.'s quadratic form (red); (b) Variation of cubic function $g(I)$ with different $\beta$ values.}
  \label{fig:2}
\end{figure}

We employ singularity theory, founded by Thom \cite{Rosen1977} and advanced by Arnold \cite{Arnold2012}, Wall \cite{Wall1981}, Golubitsky \cite{Golubitsky1981}, Bruce \cite{Bruce1992}, and Izumiya \cite{Izumiya2014}, to characterize bifurcation structures induced by cubic nonlinearity. Central to our approach is versal unfolding \cite{Golubitsky1981, Bruce1992, Izumiya2014}, which provides minimal parametrized families that capture all local behaviors near singularities. Building on seminal contributions by  \citet{Tang2016, Tang2019}, and \citet{Liu2021} in planar systems, particularly their work on nilpotent singularities and homogeneous cubic centers, we establish three key advances over prior epidemiological bifurcation studies \cite{Ruan2003, Lu2019, Hu2011, Alexander2005, She2024, Xiao2007}.
First, we develop a unified framework preserving biological realism while extending Lu et al.'s model to both monotonic ($\beta>0$) and nonmonotonic ($\beta<0$) regimes.
Second, we achieve topological classification of bifurcation sets through singularity-theoretic characterization of saddle-node surfaces, revealing how codimension-three singularities organize global dynamics. Third, we provide the first rigorous demonstration of cubic-induced degenerate Hopf bifurcations that produce multiple limit cycles and codimension-three Bogdanov-Takens points in epidemic models. Numerical verification confirms these cubic nonlinearities induce previously unobtainable dynamics, including triple-cycle coexistence.

The remainder of this paper is organized as follows. Section 2 introduces the modified SIRS model with a cubic incidence rate and its non-dimensional form, establishing the foundational singularity theory framework for bifurcation analysis. Section 3 provides a comprehensive stability analysis of all equilibria. Key results include the classification of the saddle-node singularity (Theorem \ref{th:3.4}) and the stability criteria for the focus equilibrium in the bistable regime (Theorem \ref{th:3.6}), which provide the theoretical basis for analyzing Bogdanov-Takens and degenerate Hopf bifurcations in subsequent sections.  Section 4 is devoted to a detailed study of both codimension-2 and codimension-3 Bogdanov-Takens bifurcations. Section 5 analyzes degenerate Hopf bifurcations and the emergence of multiple limit cycles. Finally, Section 6 summarizes the key results, discusses their epidemiological implications, and suggests potential directions for future research.

\section{Singularity Theory Framework for Bifurcation Analysis}
\label{2}
This section constructs the mathematical foundation for our study. We first reduce the dimension of the epidemic model and introduce dimensionless parameters. Subsequently, we reformulate the problem of finding equilibria and their degeneracies within the framework of singularity theory. This approach allows us to derive a universal unfolding that captures the local bifurcation structure of the system in a minimal parametrization.

\subsection{Model Reduction and Non-Dimensionalization}
Based on our preceding monotonicity analysis, we focus on the SIRS model with the cubic-saturated incidence rate:
\begin{equation}\label{eq:2.1}
\begin{aligned}
\frac{{\rm d}S}{{\rm d} t} &= \Lambda - dS - \frac{\kappa SI^3}{1 + \beta I + \gamma I^3} + \delta R, \\
\frac{{\rm d}I}{{\rm d}t} &= \frac{\kappa SI^3}{1 + \beta I + \gamma I^3} - (d + \mu)I, \\
\frac{{\rm d}R}{{\rm d}t} &= \mu I - (d + \delta)R,
\end{aligned}
\end{equation}
where $S(t)$, $I(t)$, and $R(t)$ denote susceptible, infected, and recovered individuals at time $t$ respectively, with non-negative initial conditions $S(0), I(0), R(0) \geq 0$. The parameters $\Lambda, d, \mu, \delta, \gamma, \kappa$ are strictly positive, while $\beta$ satisfies $-3\sqrt[3]{\gamma/4} < \beta $.

The critical parameter constraint on $\beta$ emerges from the requirement that the incidence denominator $f(I) = 1 + \beta I + \gamma I^3$ remains positive for all $I \geq 0$. When $\beta\geq0$, it is obvious that $f(I)>0$ for all $I\geq0$. When $\beta < 0$, the derivative $f'(I) = \beta + 3\gamma I^2$ vanishes at $I_* = \sqrt{-\beta/(3\gamma)}$, which corresponds to an extremum. The second derivative test $f''(I_*) = 6\gamma I_* > 0$ confirms this critical point as a local minimum. Substituting $I_*$ into $f(I)$ yields the minimal value:
\[
f_{\text{min}} = 1 + \beta I_* + \gamma I_*^3 = 1 + \frac{2\beta}{3}\sqrt{-\frac{\beta}{3\gamma}}.
\]
Enforcing positivity $f_{\text{min}} > 0$ leads to the inequality:
\[
1 + \frac{2\beta}{3}\sqrt{-\frac{\beta}{3\gamma}} > 0 \implies \beta > -3\sqrt[3]{\frac{\gamma}{4}}.
\]

Let the total population represented by $N(t)= S(t) + I(t) + R(t)$, then \(\dfrac{{\rm d}N}{ {\rm d} t} = \Lambda - dN\). We introduce the invariant manifold below without proof; for justification, please refer to \cite{Liu2021a, Lu2019}.

\begin{lem}[Invariant Manifold]\label{lem:2.1}
The system \eqref{eq:2.1} possesses an invariant manifold:
\[
\Sigma = \left\{ (S,I,R) \in \mathbb{R}^3_+ \ \big|\ S + I + R = \frac{\Lambda}{d} \right\},
\]
where \(\mathbb{R}^3_+ = \{(S, I, R) : S \geq 0, I \geq 0, R \geq 0\}\).
\end{lem}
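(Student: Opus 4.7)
The plan is to exploit the conservation-like structure of the total population already hinted at in the excerpt. First I would add the three equations of system \eqref{eq:2.1} componentwise. The nonlinear incidence terms $\kappa S I^3/(1+\beta I+\gamma I^3)$ cancel between the $S$ and $I$ equations; the flux $\mu I$ leaving the $I$ compartment is balanced by the gain $\mu I$ in the $R$ equation; and the loss-of-immunity term $\delta R$ cancels between $R$ and $S$. What survives is the scalar linear ODE $\mathrm{d}N/\mathrm{d}t = \Lambda - dN$, exactly as the paper states. This is the crucial simplification that underlies the entire argument.

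Second, I would solve this scalar ODE explicitly, obtaining
\[
N(t) \;=\; \frac{\Lambda}{d} \;+\; \Bigl(N(0) - \frac{\Lambda}{d}\Bigr) e^{-dt}.
\]
In particular, if $N(0) = \Lambda/d$ then $N(t) \equiv \Lambda/d$ for all $t \geq 0$. Consequently, the affine hyperplane $\{(S,I,R) : S+I+R = \Lambda/d\}$ is forward-invariant under the flow induced by \eqref{eq:2.1}: every trajectory that starts on it remains on it. This already gives the level-set invariance; what remains is to keep the analysis inside the biologically meaningful octant $\mathbb{R}^3_+$.

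Third, I would verify positivity by a standard boundary argument on each coordinate face, showing that the vector field points into $\mathbb{R}^3_+$ or is tangent to its boundary. On $\{S=0\}$, $\mathrm{d}S/\mathrm{d}t = \Lambda + \delta R \geq 0$; on $\{I=0\}$, $\mathrm{d}I/\mathrm{d}t = 0$, so the face $\{I = 0\}$ is itself invariant; on $\{R=0\}$, $\mathrm{d}R/\mathrm{d}t = \mu I \geq 0$. Here one uses the parameter constraint $\beta > -3\sqrt[3]{\gamma/4}$ established in Section 2 to ensure that the denominator $1+\beta I + \gamma I^3$ stays strictly positive on $I \geq 0$, so the vector field is smooth throughout $\mathbb{R}^3_+$. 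Moreover, the linear equation $\mathrm{d}N/\mathrm{d}t = \Lambda - dN$ furnishes an a priori bound that prevents finite-time blow-up, yielding global existence. Intersecting the forward-invariant hyperplane with the forward-invariant octant gives $\Sigma$ as claimed.

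The main obstacle is essentially cosmetic rather than technical: the entire argument reduces to one algebraic cancellation plus a boundary check. The only point that deserves attention is confirming well-posedness of the vector field on the closed octant, which is precisely what the denominator-positivity constraint $\beta > -3\sqrt[3]{\gamma/4}$ secures; without that bound a face crossing argument could encounter a singularity before the inward-pointing property is confirmed.
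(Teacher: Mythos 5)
Your proof is correct, and it is essentially the intended argument: the paper itself records $\dfrac{{\rm d}N}{{\rm d}t}=\Lambda-dN$ immediately before the lemma and then defers the justification to the references, which use exactly this summation-plus-boundary-check route. Your explicit solution $N(t)=\frac{\Lambda}{d}+(N(0)-\frac{\Lambda}{d})e^{-dt}$, the inward-pointing check on the faces $\{S=0\}$, $\{I=0\}$, $\{R=0\}$, and the observation that the constraint $\beta>-3\sqrt[3]{\gamma/4}$ keeps the denominator positive (hence the vector field smooth) on $\mathbb{R}^3_+$ together give a complete proof of the invariance claim as stated.
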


This limit set enables a dimensional reduction by expressing susceptible individuals as \(S(t) = \frac{\Lambda}{d} - I(t) - R(t)\), thus we consider the simplified planar system governing infection-recovery dynamics:
\begin{equation}\label{eq:2.2}
\begin{aligned}
\frac{{\rm d}I}{{\rm d}t} &= \frac{\kappa I^3}{1 + \beta I + \gamma I^3}\left(\frac{\Lambda}{d} - I - R\right) - (d + \mu)I, \\
\frac{{\rm d}R}{{\rm d}t} &= \mu I - (d + \delta)R.
\end{aligned}
\end{equation}
Then, the positively invariant set of system \eqref{eq:2.2} is
\[\Sigma_{red} = \{(I,R) \in \mathbb{R}^2_+ \ |\ I + R \leq \Lambda/d\}.\]

We further simplify the system \eqref{eq:2.2} through nondimensionalization to minimize parameter redundancy and reveal fundamental dynamic relationships. Introducing characteristic scales for population and time, we define the dimensionless variables:
\begin{equation}\label{eq:2.3}
\begin{aligned}
I=\sqrt{\dfrac{d\mu}{\kappa\Lambda}}x, \quad  R=\sqrt{\dfrac{d\mu}{\kappa\Lambda}}y, \quad \tau = \mu t.
\end{aligned}
\end{equation}
Substituting \eqref{eq:2.3} into \eqref{eq:2.2} yields the nondimensional system (retaining symbol $t$):
\begin{equation}\label{eq:2.4}
\begin{aligned}
\frac{{\rm d}x}{{\rm d}t} &= \frac{x^3}{1 + a x + b x^3}(1 - c x - c y) - m x := f(x,y;\lambda), \\
\frac{{\rm d}y}{{\rm d}t} &= x - n y := g(x,y;\lambda),
\end{aligned}
\end{equation}
where the composite parameters combine original quantities through:
\begin{equation}\label{eq:2.5}
\begin{aligned}
a=\beta\sqrt{\dfrac{d\mu}{\kappa\Lambda}},\quad b=\dfrac{d\mu\gamma}{\kappa\Lambda}\sqrt{\dfrac{d\mu}{\kappa\Lambda}}, \quad c=\dfrac{d}{\Lambda}\sqrt{\dfrac{d\mu}{\kappa\Lambda}},\quad m=\dfrac{d+\mu}{\mu},\quad n=\dfrac{d+\delta}{\mu}.
\end{aligned}
\end{equation}
The original constraint $-3\sqrt[3]{\gamma/4} < \beta$ translates directly to $-3\sqrt[3]{b/4} < a$ through the established parameter mappings, preserving the cubic-root relationship in dimensionless form.
According to Lemma \ref{lem:2.1} and transformation \eqref{eq:2.5}, we conclude  all orbits of system \eqref{eq:2.4} eventually approach, enter, and stay in the following compact set
\[
\widetilde{\Sigma} = \left\{ (x,y) \in \mathbb{R}^2_+ \ \Big|\ x + y \leq \frac{\Lambda}{d}\big/\sqrt{\dfrac{d\mu}{\kappa\Lambda}} =\frac{1}{c}\right\}.
\]

\subsection{$\mathcal{R}^+$-Versal Unfolding of the Reduced Function and Equilibria}

Singularity theory provides a systematic methodology for addressing bifurcation problems in dynamical systems. Crucially, the equilibria of dynamical systems correspond precisely to singularities of smooth mappings, which are the primary objects studied in singularity theory. We therefore employ this framework to determine how parameters $(a,b,c,m,n) \in \mathbb{R}^5$ govern the number of equilibria in system \eqref{eq:2.4}, then characterize the local differential topology at these equilibria.

The 5-parameter bifurcation problem corresponding to the system \eqref{eq:2.4} is the smooth map
\(\Phi:\mathbb{R}^2\times \mathbb{R}^5\rightarrow \mathbb{R}^2\)
 given by $\Phi(x,y;\lambda)=(f(x,y;\lambda),g(x,y;\lambda))$. For each $\lambda=(a,b,c,m,n)\in \mathbb{R}^5$, we denote the map $\phi_\lambda:\mathbb{R}^2\rightarrow \mathbb{R}^2$ by
\(\phi_\lambda:=\Phi(x,y;\lambda)\).
The zero set of $\Phi$ is
\begin{align*}
N_\Phi:=\{(x,y,\lambda)\in\mathbb{R}^2\times \mathbb{R}^5\big|\, \phi_\lambda=\Phi(x,y;\lambda)=0\}.
\end{align*}

\begin{defn}[\cite{Bruce1992, Izumiya2014}]  \label{defn:2.2}
The {\it singular set} of the bifurcation problem $\Phi$ is
\begin{align*}
\Sigma_\Phi:=\{(x,y,\lambda)\in N_\Phi\big|\det({\rm d}\phi_\lambda(x,y))=0\}.
\end{align*}
The {\it bifurcation set} or {\it discriminant} is the subset of the parameter space $\mathbb{R}^5$,
\begin{align*}
\Delta_\Phi:=\pi_\Phi(\Sigma_\Phi).
\end{align*}
determined by the projection $\pi_\Phi:N_\Phi\rightarrow \mathbb{R}^5,\pi_\Phi(x,y,\lambda)=\lambda$. That is,
\begin{align*}
\Delta_\Phi:=\{\lambda\in \mathbb{R}^5\big|\Phi_\lambda(x,y)=\det({\rm d}\phi_\lambda(x,y))=0\}.
\end{align*}
\end{defn}

The zero set $N_\Phi$ satisfies \(f(x,y;\lambda)=g(x,y;\lambda)=0\), which is equivalent to
\begin{equation}\label{eq:2.6}
\begin{aligned}
&x^3(1-cx-cy)-mx(1+a x+b x^3)=0,\\
&x- ny=0.
\end{aligned}\end{equation}
Substituting $y=\frac{1}{n}x$ into the first equation of \eqref{eq:2.6}, one obtains the equilibrium condition
\begin{equation}\label{eq:2.7}
x\Big((c(n+1)+bmn)x^3-nx^2+amnx+nm\Big)=0.
\end{equation}
In combination with equations \eqref{eq:2.6}, it is easy to calculate
\begin{equation}\label{eq:2.8}
\begin{aligned}
\det({\rm d}\phi_\lambda(x,y))&=\det\left(
  \begin{array}{cc}
                  \dfrac{-2 a m x-4 (b m+c) x^3-3 c x^2 y-m+3 x^2}{1+a x+b x^3} &\dfrac{-cx^3}{1+a x+b x^3}\\
                  1& -n
     \end{array}
\right)\\
 &=\dfrac{c x^3-n(-2 a m x-4 x^3 (b m+c)-3 c x^2 y-m+3 x^2)}{1+a x+b x^3}.
     \end{aligned}\end{equation}
Substituting $y=\dfrac{1}{n}x$ into the expression \eqref{eq:2.8}, then $\det({\rm d}\phi_\lambda(x,y))=0$ is equivalent to
\begin{align}\label{eq:2.9}
4(c(n+1)+bmn)x^3-3nx^2+2amnx+mn=0.
\end{align}
Since $c(n+1)+bmn>0$, equations \eqref{eq:2.7} and \eqref{eq:2.9} are equivalent to
\begin{equation}\label{eq:2.10}
x\Big(x^3-\frac{n}{c(n+1)+bmn}x^2+\frac{amn}{c(n+1)+bmn}x+\frac{mn}{c(n+1)+bmn}\Big)=0
\end{equation}
and
\begin{equation}\label{eq:2.11}
4x^3-\frac{3n}{(c(n+1)+bmn)}x^2+\frac{2amn}{(c(n+1)+bmn)}x+\frac{mn}{(c(n+1)+bmn)}=0.
\end{equation}
Letting
 \begin{align}\label{eq:2.12}
\theta:=-\frac{n}{c(n+1)+bmn},\quad p_1:=\frac{amn}{c(n+1)+bmn},\quad q_1:=\frac{mn}{c(n+1)+bmn}.
\end{align}
For equations \eqref{eq:2.7} and \eqref{eq:2.9}, making the translation
\begin{align}\label{eq:2.13}
x=z -\dfrac{\theta}{3},
\end{align}
then they respectively convert into
\begin{align}\label{eq:2.14}
\left(z-\frac{\theta}{3}\right) \left(z^3+z \left(p_1-\frac{\theta^2}{3}\right)+\frac{2\theta^3}{27}-\frac{\theta  p_1}{3}+q_1\right)=0
\end{align}
and
\begin{align}\label{eq:2.15}
\left(z^3+z \left(p_1-\frac{\theta^2}{3}\right)+\frac{2\theta^3}{27}-\frac{\theta p_1}{3}+q\right)+\left(z-\frac{\theta}{3}\right) \left(3 z^2+p_1-\frac{\theta ^2}{3}\right)=0.
\end{align}
It is clear that equations \eqref{eq:2.7} and \eqref{eq:2.9} both admit the trivial solution
$x=0$, which corresponds to the disease-free equilibrium. Under the translation $x=z-\frac{\theta}{3}$, this solution becomes $z=\frac{\theta}{3}$. For
$z\neq\frac{\theta}{3}$, the two distinct positive endemic equilibria of system \eqref{eq:2.4} satisfy the following equations
\begin{align}\label{eq:2.16}
 z^3+z \left(p_1-\frac{\theta^2}{3}\right)+\frac{2\theta^3}{27}-\frac{\theta p_1}{3}+q_1=0
\end{align}
and
\begin{align}\label{eq:2.17}
 3 z^2+p_1-\frac{\theta^2}{3}=0.
\end{align}
Letting
\begin{align}\label{eq:2.18}
 p:=p_1-\frac{\theta^2}{3},\quad q:=\frac{2\theta^3}{27}-\frac{\theta p_1}{3}+q_1.
\end{align}
Equations \eqref{eq:2.16} and \eqref{eq:2.17} are transformed into, respectively,
\begin{align}\label{eq:2.19}
 z^3+pz+q=0
\end{align}
and
\begin{align}\label{eq:2.20}
 3 z^2+p=0.
\end{align}

Through a continuous  mapping $\varphi:U\subset \mathbb{R}^5\rightarrow \mathbb{R}^2$ given by
\begin{align}
\quad \varphi(a,b,c,m,n)=(p,q),
\end{align}
the five control variables of system \eqref{eq:2.4} are reduced to two control variables.
Showing the scenes of equilibria and their bifurcation of system \eqref{eq:2.4} may be transformed into the analyses for the 2-parameter {\it unfolding} of a potential  function defined by $H: I\times \mathbb{R}^2\rightarrow\mathbb{R}$ given by
\begin{equation}\label{eq:2.22}
\begin{aligned}
H(z,\bm v)=\frac{1}{4}z^4+\frac{1}{2}pz^2+q,
\end{aligned}
\end{equation}
where
\begin{align}\label{eq:2.23}
\bm v=(p,q) \in\mathbb{R}^2.
\end{align}
 Next, we will review some basic notions about the unfolding of functions in singularity theory (refer to the details in \cite{Bruce1992, Izumiya2014}).

\begin{defn}[\cite{Bruce1992, Izumiya2014}] \label{def:2.3}
Let $G :( \mathbb{R}\times\mathbb{R}^r,(s_0,x_0))\rightarrow\mathbb{R}$ be a function germ. We call $G$ an {\it r-parameter unfolding of $\widetilde{g}$}, when $\widetilde{g}(s)=G_{x_0}(s,\bm{x}_0)$. We say that $\widetilde{g}(s)$ has $A_k $-singularity $(k\geq1)$ at $s_0$, if $\widetilde{g}^{(p)}(s_0)=0 $ for all $1\leq p\leq k$ and $f^{(k+1)}(s_0)\neq 0$. We also say that $\widetilde{g}(s)$ has $A_{\geq k} $-singularity at $s_0$ for all $1\leq p\leq k$.
\end{defn}

Let $G$ be an unfolding of $\widetilde{g}$ and $\widetilde{g}(s)$ has an $A_k $-singularity $(k\geq1)$ at $s_0$. We write the $(k-1) $-jet of the partial derivative $\frac{\partial G}{\partial x_i}$ at $t_0$ by $j^{(k-1)}\left(\frac{\partial G}{\partial x_i}(s,\bm x)\right)(s_0)=\sum_{j=1}^{k-1}\alpha_{ji}(s-s_0)^j$ for $i=1, \ldots ,r$. Then  $G$ is called an {\it $\mathcal{R}^+$-versal unfolding} if the $(k-1)\times r$ matrix of coefficients $(\alpha_{ji})$ has rank $k-1$ $(k-1\leq r)$ (refer to the details in \cite{Bruce1992}).

We now introduce the following set concerning the unfolding:
\begin{align*}
\Delta_G=\bigg\{ \bm{x}\in \mathbb{R}^r\mid \frac{\partial  G}{\partial s}(s, \bm{x})=\frac{\partial^2  G}{\partial s^2}(s, \bm{x})=0\bigg\},
\end{align*}
which is called the {\it bifurcation set of  $G$}.
We will review and further explain the following lemma about $\Delta_G$.

\begin{lem}[\cite{Bruce1992,Izumiya2014}]\label{lem:2.4} Let $G:(\mathbb{R}\times
\mathbb{R}^r,(s_0,\bm{x}_0))\rightarrow \mathbb{R} $ be an
$r$-parameter unfolding of $f(s)$, which has $A_ k$ singularity $(k\ge 1)$ at
$s_0$. Suppose that $G$ is an $\mathcal{R}^+$-versal unfolding.
\begin{enumerate}[label=\textnormal{(\roman*)}]
\item
If $k=2$, then $\Delta_G$ is locally diffeomorphic to $\{0\}\times \mathbb{R}^{r-1}$;
\item
If  $k=3$, then $\Delta_G$ is locally diffeomorphic to $C(2,3)\times \mathbb{R}^{r-2}$;
\item
If $k=4$, then $\Delta_G$ is locally diffeomorphic to $SW\times \mathbb{R}^{r-3}$.
\end{enumerate}
\end{lem}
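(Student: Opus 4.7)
The plan is to invoke the versal unfolding theorem of singularity theory to reduce $G$, up to $\mathcal{R}^+$-equivalence, to a standard miniversal unfolding of the $A_k$-germ $s\mapsto s^{k+1}$, and then to compute the discriminant of that standard unfolding explicitly for $k=2,3,4$. Since $\Delta_G$ is invariant under $\mathcal{R}^+$-equivalence and behaves trivially under the addition of dummy parameters, it will then suffice to identify the canonical discriminants as the fold point, the cusp $C(2,3)$, and the swallowtail $SW$, respectively.

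First I would normalise $\widetilde g$: since $\widetilde g$ has an $A_k$-singularity at $s_0$, an $\mathcal{R}^+$-change of source coordinate brings $\widetilde g(s)$ to $s^{k+1}$. A miniversal $\mathcal{R}^+$-unfolding of $s^{k+1}$ is then the standard normal form
\begin{equation*}
F_k(s,u_1,\ldots,u_{k-1})=s^{k+1}+u_{k-1}s^{k-1}+\cdots+u_2 s^2+u_1 s,
\end{equation*}
because the monomials $s,s^2,\ldots,s^{k-1}$ span the local algebra $\mathcal{E}_s/\langle s^k\rangle$ transversally to the tangent space of the $\mathcal{R}^+$-orbit of $s^{k+1}$. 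The rank condition on the coefficient matrix $(\alpha_{ji})$ in Definition 2.3's companion paragraph is exactly the infinitesimal criterion for $\mathcal{R}^+$-versality, so the versal unfolding theorem (\cite{Bruce1992, Izumiya2014}) applies and yields smooth germs $S(s,\bm x)$ with $\partial_s S(s_0,\bm x_0)\neq 0$, a local diffeomorphism $\bm X(\bm x)=(\bm X'(\bm x),\bm X''(\bm x))\in\mathbb{R}^{k-1}\times\mathbb{R}^{r-k+1}$, and a smooth function $c(\bm x)$ such that
\begin{equation*}
G(s,\bm x)=F_k\bigl(S(s,\bm x),\bm X'(\bm x)\bigr)+c(\bm x).
\end{equation*}
Because $\partial_s G=F_k'(S)\,\partial_s S$ and $\partial_s^2 G=F_k''(S)(\partial_s S)^2+F_k'(S)\,\partial_s^2 S$, and $\partial_s S\neq 0$, the conditions $\partial_s G=\partial_s^2 G=0$ are equivalent to $F_k'(S)=F_k''(S)=0$, so $\bm X$ sends $\Delta_G$ locally onto $\Delta_{F_k}\times\mathbb{R}^{r-k+1}$.

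It then remains to compute $\Delta_{F_k}$ in each case. For $k=2$, the system $3s^2+u_1=6s=0$ forces $u_1=0$, so $\Delta_{F_2}=\{0\}\subset\mathbb{R}$, proving (i). For $k=3$, eliminating $s$ from $4s^3+2u_2 s+u_1=0$ and $12s^2+2u_2=0$ gives the parametrisation $(u_1,u_2)=(8s^3,-6s^2)$ and the implicit equation $27u_1^2+8u_2^3=0$, which is the $(2,3)$-cusp $C(2,3)$, proving (ii). For $k=4$, the analogous elimination from $5s^4+3u_3 s^2+2u_2 s+u_1=0$ and $20s^3+6u_3 s+2u_2=0$ gives $(u_1,u_2,u_3)=(15s^4+3u_3 s^2,-10s^3-3u_3 s,u_3)$, the classical parametrisation of the swallowtail $SW\subset\mathbb{R}^3$, proving (iii). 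The main obstacle is really the first step: the versal unfolding theorem is a substantial result of Thom--Mather theory that I would cite from \cite{Bruce1992, Izumiya2014} rather than reprove, and the only delicate verification is that the rank condition in Definition 2.3 supplies precisely the infinitesimal data needed to apply it; once the factorisation through $F_k$ is in hand, the non-vanishing of $\partial_s S$ makes the product decomposition $\Delta_G\simeq\Delta_{F_k}\times\mathbb{R}^{r-k+1}$ rigorous and the three discriminant computations above finish the argument.
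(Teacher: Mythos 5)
This lemma is quoted in the paper directly from \cite{Bruce1992,Izumiya2014} and is given no proof there, so there is no in-paper argument to compare against; your reconstruction is the standard one from those references (reduce to the miniversal unfolding $F_k$ of $s^{k+1}$ via the versal unfolding theorem, observe that $\partial_s S\neq 0$ makes $\{\partial_s G=\partial_s^2 G=0\}$ correspond to $\{F_k'=F_k''=0\}$, hence $\Delta_G\cong\Delta_{F_k}\times\mathbb{R}^{r-k+1}$, and compute $\Delta_{F_k}$ explicitly). Your three discriminant computations check out: $(u_1,u_2)=(8s^3,-6s^2)$ satisfies $27u_1^2+8u_2^3=0$ and is diffeomorphic to $C(2,3)$, and your $k=4$ parametrisation agrees with the paper's $SW=\{(3u^4+u^2v,\,4u^3+2uv,\,v)\}$ after a linear rescaling of $s$, $u_3$ and the target coordinates, which is harmless since the claim is only up to local diffeomorphism.
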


We remark that all of the diffeomorphisms in the above assertions are diffeomorphism germs. We call $C(2, 3) = \{(x_1, x_2) | x_1 = u^2, x_2 = u^3\}$ a {\it $(2, 3)$-cusp} (Figure \ref{fig:unfolding}(a)), $C(2,3)\times\mathbb{R}$ a {\it cuspidal edge} (Figure~\ref{fig:unfolding}(b)), and $SW = \{(x_1, x_2, x_3)|x_1 = 3u^4 +u^2v, x_2 = 4u^3 +2uv, x_3 = v \}$ a {\it swallowtail} (Figure~\ref{fig:unfolding}(c)), respectively.

\begin{figure}[htbp]
  \centering
  \begin{minipage}[c]{0.15\textwidth}
    \centering
    \includegraphics[width=\textwidth]{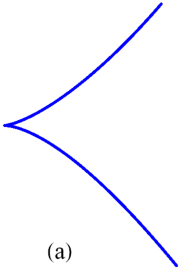}\\
  \end{minipage}\qquad\qquad
  \begin{minipage}[c]{0.25\textwidth}
    \centering
    \includegraphics[width=\textwidth]{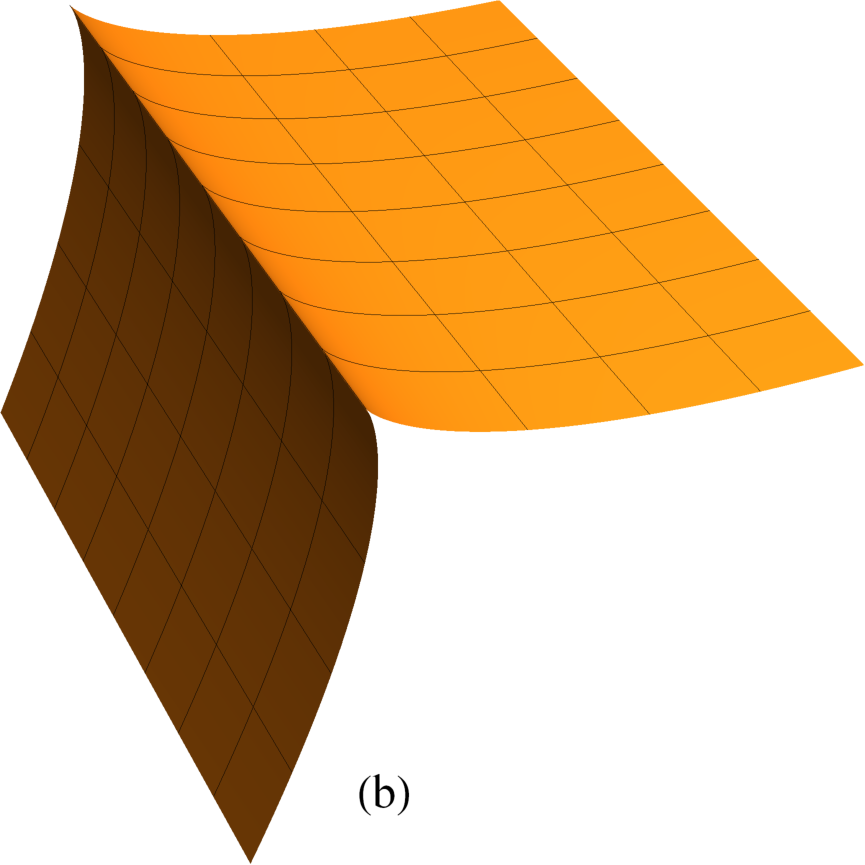} \\
  \end{minipage}
\begin{minipage}[c]{0.35\textwidth}
    \centering
    \includegraphics[width=\textwidth]{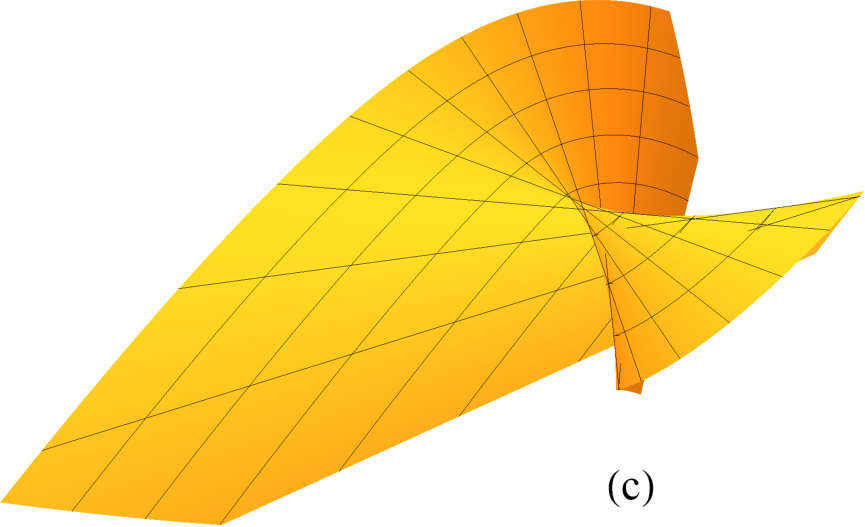} \\
  \end{minipage}
 \caption{ (a) $(2, 3)$-cusp; (b) Cuspidal edge $C(2, 3)\times \mathbb{R}$; (c) Swallowtail $SW$.}
\label{fig:unfolding}
\end{figure}

For any fixed vector $\bm v \in \mathbb{R}^3$, let $h_{\bm{v}}(s) = H(z, \bm{v})$. We state the following lemma.

\begin{lem} \label{lem:2.5} For th potential function $h_{\bm{v}}(z)$, the following facts can be stated:
\begin{enumerate}[label=\textnormal{(\arabic*)}]
\item
$h_{\bm{v}}'(z)=0$ if and only if there exists a real number $z$ of system \eqref{eq:2.4} such that
\begin{align*}
z^3+pz+q=0.
\end{align*}
\item
$h_{\bm{v}}'(z)=h''_{\bm{v}}(z)=0$ if and only if there exists a real number $z$ of system \eqref{eq:2.4} such that
\begin{align*}
\bm{v}=\Big(-3z^2, 2z^3\Big),
\end{align*}
that is, $\bm v$ is a parameterized curve with the parameters $z$ in $\mathbb{R}^2$.
\item
$h_{\bm{v}}'(z)=h''_{\bm{v}}(z)=h'''_{\bm{v}}(z)=0$ if and only if
$\bm{v}=\Big(0, 0\Big)$,
that is,  $z=0$.
\item
$h^{(4)}_{\bm{v}}(z)\neq0$ for any $z\in I$.
\end{enumerate}
\end{lem}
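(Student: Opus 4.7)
The plan is to verify all four claims by direct differentiation of the potential $h_{\bm{v}}(z)=H(z,\bm{v})$ followed by elementary algebraic elimination; there is essentially no analytic content beyond computing successive $z$-derivatives and matching the first of them against the cubic equation \eqref{eq:2.19} derived earlier in this subsection. Interpreting $H$ as the standard $\mathcal{R}^+$-versal unfolding of the $A_3$ (swallowtail) germ, so that $\partial_z H$ reproduces the reduced equilibrium equation, I would first record
\begin{align*}
h'_{\bm{v}}(z) &= z^3 + p z + q, \\
h''_{\bm{v}}(z) &= 3z^2 + p, \\
h'''_{\bm{v}}(z) &= 6z, \\
h^{(4)}_{\bm{v}}(z) &= 6,
\end{align*}
and then read off each statement.

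For item (1), the identity $h'_{\bm{v}}(z)=z^3+pz+q$ is literally the left-hand side of \eqref{eq:2.19}, so a real $z$ with $h'_{\bm{v}}(z)=0$ corresponds, via the translation $x=z-\theta/3$ of \eqref{eq:2.13}, to an equilibrium of system \eqref{eq:2.4}; the converse is immediate. For item (2), the equation $h''_{\bm{v}}(z)=0$ forces $p=-3z^2$, and inserting this into $h'_{\bm{v}}(z)=0$ yields
\[
z^3 - 3z^3 + q = 0 \quad \Longrightarrow \quad q = 2z^3,
\]
so $\bm{v}=(-3z^2,2z^3)$, which is precisely the parameterized $(2,3)$-cusp depicted in Figure \ref{fig:unfolding}(a). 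The converse direction follows by direct substitution.

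For item (3), adjoining $h'''_{\bm{v}}(z)=6z=0$ to the conditions of (2) forces $z=0$, and the parameterization of (2) then collapses to $\bm{v}=(0,0)$; conversely, at $\bm{v}=(0,0)$ and $z=0$ all three derivatives $h'_{\bm{v}},\, h''_{\bm{v}},\, h'''_{\bm{v}}$ vanish. Item (4) is the trivial but crucial observation that $h^{(4)}_{\bm{v}}\equiv 6\neq 0$, independent of both $z$ and $\bm{v}$; this is exactly the nondegeneracy condition guaranteeing that the singularity at the origin has type $A_3$ (and no worse), which is what is needed to invoke Lemma \ref{lem:2.4} in the sequel. There is no genuine analytic obstacle here: the only point requiring care is to ensure the potential $H$ is taken so that its first $z$-derivative matches \eqref{eq:2.19}, after which the entire lemma reduces to the four differentiations and single linear elimination above. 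The lemma is therefore preparatory, and its value lies in feeding a verified $\mathcal{R}^+$-versal unfolding into the general classification recorded in Lemma \ref{lem:2.4}.
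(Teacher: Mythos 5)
Your proof is correct and follows essentially the same route as the paper: compute the four successive $z$-derivatives of $h_{\bm v}$ and eliminate $p$ and $q$ in turn. Your remark about taking the potential so that $\partial_z H = z^3+pz+q$ is well placed, since the displayed definition \eqref{eq:2.22} has constant term $q$ rather than the intended linear term $qz$, and your version is the one the paper itself uses from Lemma \ref{lem:2.8} onward.
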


\begin{proof}
{\rm (1)} The assertion is evident. \par
{\rm (2)} Under the condition \( h_{\bm{v}}'(z) = 0 \), we calculate
\[
h''_{\bm{v}}(z) = 3z^2 + p.
\]
Setting \( h''_{\bm{v}}(z) = 0 \) gives us \( p = -3z^2 \). Substituting this back into \( z^3 + pz + q = 0 \) leads to \( q = 2z^3 \). Thus, the assertion in {\rm (2)} follows. \par
{\rm (3)} Assume that both \( h_{\bm{v}}'(z) = 0 \) and \( h''_{\bm{v}}(z) = 0 \) hold. We then find
\[
h'''_{\bm{v}}(z) = 6z.
\]
The equation \( h'''_{\bm{v}}(z) = 0 \) is satisfied if and only if \( z = 0 \). Therefore, the conditions \( h_{\bm{v}}'(z) = h''_{\bm{v}}(z) = h'''_{\bm{v}}(z) = 0 \) hold if and only if \( \bm{v} = (0, 0) \). \par
{\rm (4)} This conclusion is clear from the fact that \( h^{(4)}_{\bm{v}}(z) = 6 \).
\end{proof}

From the definition of $A_k$-singularity and the proof of Lemma \ref{lem:2.5}, we can immediately draw the following conclusions.

\begin{cor}\label{cor:2.6}
\begin{enumerate}[label=\textnormal{(\arabic*)}]
\item
The function $h_{\bm{v}}(z)$ at $z_0$ has $A_2$-singularity if and only if there exists an equilibrium $\bigg(z_0-\dfrac{\theta}{3},\dfrac{1}{n}\Big(z_0-\dfrac{\theta}{3}\Big)\bigg)$ of system \eqref{eq:2.4} such that
\(  \bm{v}=\Big(-3z_0^2, 2z_0^3\Big), \)
and it is required that $z_0\neq 0$.
\item
The function $h_{\bm{v}}(z)$ at $z_0$ has $A_3$-singularity
 if and only if there exists an equilibrium $\bigg(z_0-\dfrac{\theta}{3}, \dfrac{1}{n}\Big(z_0-\dfrac{\theta}{3}\Big)\bigg)$ of system \eqref{eq:2.4} such that
$\bm{v}=\Big(0,0\Big)$, which implies that $z_0=0$.
\end{enumerate}
\end{cor}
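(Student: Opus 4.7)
The plan is to derive Corollary \ref{cor:2.6} as an essentially immediate consequence of the definition of $A_k$-singularity (Definition \ref{def:2.3}) together with Lemma \ref{lem:2.5}, so the argument reduces to bookkeeping rather than to new computation.

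First I would record the correspondence between critical points of $h_{\bm v}$ and equilibria of \eqref{eq:2.4}. By Lemma \ref{lem:2.5}(1), the condition $h'_{\bm v}(z_0)=0$ is precisely the shifted endemic equation $z_0^3+pz_0+q=0$, so via the translation \eqref{eq:2.13} and the invariant relation $y=x/n$ extracted from \eqref{eq:2.4}, each such $z_0$ produces exactly the equilibrium $\bigl(z_0-\tfrac{\theta}{3},\tfrac{1}{n}(z_0-\tfrac{\theta}{3})\bigr)$. One should note that the factor $(z-\theta/3)$ isolated in \eqref{eq:2.14} corresponds to the disease-free equilibrium $x=0$, so the endemic branch detected by $h_{\bm v}$ is exactly the cubic factor $z^3+pz+q$, and no separate case analysis is required.

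For part (1), Definition \ref{def:2.3} characterises an $A_2$-singularity at $z_0$ by $h'_{\bm v}(z_0)=h''_{\bm v}(z_0)=0$ together with $h'''_{\bm v}(z_0)\neq 0$. Lemma \ref{lem:2.5}(2) converts the two vanishing conditions into $\bm v=(-3z_0^2,2z_0^3)$, and since $h'''_{\bm v}(z)=6z$ the nondegeneracy $h'''_{\bm v}(z_0)\neq 0$ is equivalent to $z_0\neq 0$. For part (2), an $A_3$-singularity requires in addition $h'''_{\bm v}(z_0)=0$; Lemma \ref{lem:2.5}(3) then forces $z_0=0$ and $\bm v=(0,0)$, while Lemma \ref{lem:2.5}(4) automatically supplies the required nondegeneracy $h^{(4)}_{\bm v}(0)=6\neq 0$. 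The combination of these two chains of equivalences yields both assertions.

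I do not anticipate a genuine obstacle in the proof; the only care required is to check that the nondegeneracy clause of each $A_k$-singularity is indeed delivered by the appropriate part of Lemma \ref{lem:2.5}, so that the equivalences become true biconditionals rather than one-way implications. This is automatic here because $h^{(4)}_{\bm v}\equiv 6$ is a nonzero constant, which prevents any higher-order degeneration from arising and guarantees that the analysis terminates at $A_3$.
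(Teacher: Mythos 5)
Your proposal is correct and follows the same route the paper intends: the paper gives no separate proof, asserting the corollary "immediately" from Definition \ref{def:2.3} and the proof of Lemma \ref{lem:2.5}, and your argument is precisely that bookkeeping — the two (resp.\ three) vanishing conditions come from Lemma \ref{lem:2.5}(2)--(3), the nondegeneracy clauses reduce to $h'''_{\bm v}(z_0)=6z_0\neq0$ (i.e.\ $z_0\neq0$) and $h^{(4)}_{\bm v}\equiv6\neq0$, and the identification with equilibria of \eqref{eq:2.4} is via the translation \eqref{eq:2.13} and $y=x/n$. No gaps.
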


\begin{rem}\label{rem:2.7} From the perspective of linear algebra, the $A_k$-singularity of a polynomial function accurately reflects the multiplicity of its real roots. Specifically, the polynomial function \( h_{\bm v}(z) \) at the point \( z_0 \) exhibits \( A_k \)-singularity (for \( k = 1, 2, 3, \ldots \)) if and only if \( h_{\bm v}'(z) \) has real roots of multiplicity \( k \).
\end{rem}

\begin{lem}\label{lem:2.8}
If \(h_{\bm v}\) has an \(A_k\)-singularity (where \(k =2,3\)) at \(z_0\), then \(H\) serves as a $\mathcal{R}^+$-versal unfolding of \(h_{\bm v}\).
\end{lem}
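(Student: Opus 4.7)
The plan is to verify directly the $\mathcal{R}^+$-versality criterion recalled in the paragraph preceding Lemma~\ref{lem:2.4}. Since the unfolding has $r=2$ parameters $(p,q)$, for $k\in\{2,3\}$ it suffices to assemble the $(k-1)\times 2$ matrix $(\alpha_{ji})$ whose entries are the coefficients of $(z-z_0)^j$ in the $(k-1)$-jets of $\partial H/\partial p$ and $\partial H/\partial q$ at the singular point $z_0$, and to show that this matrix has rank $k-1$.

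First I would read off the two parameter-partials, which are independent of $\bm v$ and, consistently with $h_{\bm v}'(z)=z^3+pz+q$ in Lemma~\ref{lem:2.5}(1), take the form
\[
\frac{\partial H}{\partial p}(z,\bm v) = \tfrac{1}{2}z^2, \qquad \frac{\partial H}{\partial q}(z,\bm v) = z.
\]
For $k=2$, Corollary~\ref{cor:2.6}(1) forces $z_0\neq 0$ at the $A_2$-singularity. Expanding at $z_0$, the linear coefficients yield $\alpha_{1,1}=z_0$ and $\alpha_{1,2}=1$, so the $1\times 2$ row $(z_0,\,1)$ has rank $1=k-1$ unconditionally, by the nonvanishing of the second entry.

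For $k=3$, Corollary~\ref{cor:2.6}(2) pins $z_0=0$ (and $\bm v=(0,0)$). The $2$-jets of $\tfrac{1}{2}z^2$ and $z$ at $0$ deliver
\[
(\alpha_{ji})_{j=1,2;\,i=1,2} \;=\; \begin{pmatrix} 0 & 1 \\ \tfrac{1}{2} & 0 \end{pmatrix},
\]
with determinant $-\tfrac{1}{2}\neq 0$, hence rank $2=k-1$, as required.

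No substantial obstacle is anticipated: once the partials are written, verification reduces to a one-step rank check in each case. The one place where I would tread carefully is the indexing convention for $(\alpha_{ji})$ — the sum in the definition preceding Lemma~\ref{lem:2.4} starts at $j=1$, i.e., excludes the constant term of each jet, which is consistent with $\mathcal{R}^+$-equivalence absorbing additive constants (so the constant $1$ does not need to enter the matrix). The deeper content of the lemma lies upstream, in the Section~\ref{2} reductions that normalized the potential so that its parameter-partials $\{\tfrac{1}{2}z^2,\,z\}$ form a transversal complement to $\langle h_{\bm v}'\rangle$ in the local algebra up to order $k-1$; once that reduction is in place, $\mathcal{R}^+$-versality follows mechanically from the rank calculation above.
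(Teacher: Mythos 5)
Your proof is correct and follows the same basic strategy as the paper's: a direct verification of the jet--coefficient rank criterion recalled before Lemma~\ref{lem:2.4}. The computational route differs in one respect, and the difference is to your credit. The paper's own proof works from the cubic representative $H(z,\bm v)=z^3+pz+q$ of \eqref{eq:2.24} (which is inconsistent with the quartic in \eqref{eq:2.22}) and checks nonsingularity of $\begin{pmatrix} z_0 & 1\\ 1 & 0\end{pmatrix}$, a matrix whose first row consists of the \emph{constant} terms $\frac{\partial H}{\partial p}(z_0)=z_0$, $\frac{\partial H}{\partial q}(z_0)=1$; read literally against the stated $(k-1)\times r$ criterion, that is really the $\mathcal{R}$-versality test for the cubic family rather than the $\mathcal{R}^{+}$-test for the potential. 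You instead take the potential to be $\tfrac14 z^4+\tfrac12 pz^2+qz$ (the reading forced by Lemma~\ref{lem:2.5}(1), silently repairing the typo in \eqref{eq:2.22}), compute $\frac{\partial H}{\partial p}=\tfrac12 z^2$ and $\frac{\partial H}{\partial q}=z$, and correctly discard the constant terms as $\mathcal{R}^{+}$-equivalence permits, obtaining the row $(z_0,\,1)$ for $k=2$ and the matrix $\begin{pmatrix}0 & 1\\ \tfrac12 & 0\end{pmatrix}$ for $k=3$ at $z_0=0$. Both computations yield full rank and the same conclusion, so there is no gap; yours is simply the more faithful application of the criterion as the paper states it.
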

\begin{proof} Noticing the expression
\begin{equation}\label{eq:2.24}
H(z,\bm v)=z^3+pz+q,
\end{equation}
one calculates
\begin{align*}
&\frac{\partial H}{\partial p}(z_0,\bm v)=z_0,\quad \frac{\partial H}{\partial q}(z_0,\bm v)=1.
\end{align*}
Therefore, the $1$-jets of $\frac{\partial H}{\partial p}$ and $\frac{\partial H}{\partial q}$ are
\begin{align*}
\begin{split}
j^1\frac{\partial H}{\partial p}(z_0,\bm v)=z-z_0\ \text{and}\ j^1\frac{\partial H}{\partial q}(z_0,\bm v)=0.
\end{split}
\end{align*}
When $h_{\bm{v}}$ has $A_k$-singularity ($k=2,3$) at $z_0$, one considers the following matrix:
\begin{align*}A=\begin{pmatrix}
      z_0  & 1 \\
       1 & 0
\end{pmatrix}.
\end{align*}
It follows from $\det(A)=-1$ that $H$ is a versal unfolding of $h_{\bm v}$.
 This completes the proof.
 \end{proof}

Neglecting the biological constraints for the parameters \( p \) and \( q \), which depend on \(\lambda = (a, b, c, m, n)\), we will mathematically state the differential structures of the bifurcation set.

\begin{thm}\label{th:2.9}
\begin{enumerate}[label=\textnormal{(\arabic*)}]
\item
The image of the bifurcation set of function $H(z,\bm v)$ is a planer curve, the discriminant of function $H(z,\bm v)$ is
\begin{align*}
\Delta_H:=\Big\{(p,q)\in \mathbb{R}^2\Big|\Big(\dfrac{p}{3}\Big)^3+\Big(\dfrac{q}{2}\Big)^2=0\Big\}.
\end{align*}
\item
Suppose that $z_0$ is a real root of $h_{\bm{v}}(z)$ with multiplicity 2, then the image $\big(-3z^2,2z^3\Big)$ of the bifurcation set is locally diffeomorphic to a line at $\big(-3z_0^2,2z_0^3\big)$ if $z_0\neq0$.
\item
Suppose that $z_0$ is a real root of $h_{\bm{v}}(z)$ with multiplicity 3, then the image $\big(-3z^2, 2z^3\big)$  of  the bifurcation set is locally diffeomorphic to a $(2,3)$-cusp   at $\big(-3z_0^2, 2z_0^3\big)$ if $z_0=0$.
\end{enumerate}
\end{thm}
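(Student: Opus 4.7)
The plan is to treat the three parts separately: part (1) by direct elimination from the defining equations of $\Delta_H$, and parts (2) and (3) by invoking the $\mathcal{R}^+$-versal unfolding machinery already developed in Lemmas~\ref{lem:2.4}, \ref{lem:2.5}, and~\ref{lem:2.8}.

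For part (1), I would start from the definition $\Delta_H=\{\bm{v}\mid \partial H/\partial z=\partial^2 H/\partial z^2=0\}$, which by Lemma~\ref{lem:2.5} amounts to demanding that the pair $z^3+pz+q=0$ and $3z^2+p=0$ have a common root $z$. Using the second equation to eliminate $z^2$ from the first yields a linear equation of the form $(2p/3)z+q=0$; squaring and substituting back produces the standard cubic-discriminant relation $4p^3+27q^2=0$, which is precisely $(p/3)^3+(q/2)^2=0$. The degenerate case $p=0$ forces $z=0$ and hence $q=0$, which already lies on the same curve, so no points are missed.

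For parts (2) and (3), the strategy is to exploit the parametrization $\bm{v}(z)=(-3z^2,2z^3)$ of $\Delta_H$ supplied by Lemma~\ref{lem:2.5}(2). By Corollary~\ref{cor:2.6}, the potential $h_{\bm{v}_0}$ has an $A_2$-singularity at $z_0$ when $z_0\neq 0$ and an $A_3$-singularity at $z_0=0$; Lemma~\ref{lem:2.8} then guarantees that $H$ is an $\mathcal{R}^+$-versal unfolding in both cases. Applying Lemma~\ref{lem:2.4}(i) with $r=2$ and $k=2$ yields that, near $\bm{v}_0=(-3z_0^2,2z_0^3)$ with $z_0\neq 0$, $\Delta_H$ is locally diffeomorphic to $\{0\}\times\mathbb{R}$, i.e.\ a smooth line; Lemma~\ref{lem:2.4}(ii) with $r=2$ and $k=3$ yields that, at $\bm{v}_0=(0,0)$, $\Delta_H$ is locally diffeomorphic to $C(2,3)\times\mathbb{R}^0=C(2,3)$, the $(2,3)$-cusp.

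As an independent sanity check, I would verify these conclusions directly from the parametric form: $\bm{v}'(z)=(-6z,6z^2)$ is nonzero for $z\neq 0$, so the image is an immersed regular curve near any $\bm{v}_0$ with $z_0\neq 0$; at $z_0=0$ the leading terms $(-3z^2,2z^3)$ coincide, up to a linear change of coordinates in the target, with the defining parametrization $(u^2,u^3)$ of a $(2,3)$-cusp. I expect no real analytical obstacle, since every input has already been prepared; the one place requiring care is the bookkeeping in Lemma~\ref{lem:2.4}, namely verifying that both hypotheses --- the $A_k$-singularity of $h_{\bm{v}_0}$ and the $\mathcal{R}^+$-versality of $H$ --- are correctly matched before reading off the normal form, which is exactly what Corollary~\ref{cor:2.6} and Lemma~\ref{lem:2.8} supply.
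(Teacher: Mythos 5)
Your proposal is correct and follows essentially the same route as the paper: part (1) by eliminating $z$ from the pair $z^3+pz+q=0$, $3z^2+p=0$ to obtain $\bigl(\tfrac{p}{3}\bigr)^3+\bigl(\tfrac{q}{2}\bigr)^2=0$, and parts (2)--(3) by combining Corollary~\ref{cor:2.6}, Lemma~\ref{lem:2.8}, and Lemma~\ref{lem:2.4} to read off the line and $(2,3)$-cusp normal forms. The direct parametric sanity check via $\bm v'(z)=(-6z,6z^2)$ is a harmless addition not present in the paper but consistent with it.
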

\begin{proof}
{\rm(1)} According to the definition of bifurcation set and the proof of Lemma \ref{lem:2.5}, the bifurcation set is given by
\begin{align*}
\Delta_H:=\Big\{(p,q)\in \mathbb{R}^2\Big|p=-3z^2, q=2z^3, z\in I\Big\}.
\end{align*}
It is a planar curve with parameter $z\in I$. Eliminating the parameter $z$, we have
 \begin{align*}
\Delta_H:=\Big\{(p,q)\in \mathbb{R}^2\Big|\Big(\dfrac{p}{3}\Big)^3+\Big(\dfrac{q}{2}\Big)^2=0\Big\}.
\end{align*}
The assertion (1) is concluded   immediately from the definition of the bifurcation set  and Corollary \ref{cor:2.6}.\\
\par
{\rm(2)}
Corollary \ref{cor:2.6} and Remark \ref{rem:2.7} indicate  that  $h_{\bm{v}}(s)$ has $A_{2}$-singularity at $z_0$ if and only if
\begin{align*}
\bm{v}=\big(-3z_0^2,2z_0^3\big),
\end{align*}
where $z_0\neq0$, and $z_0$ is a real root of $h_{\bm{v}}'(z)$ with multiplicity 2, then using Lemma \ref{lem:2.8} shows that if $h_{\bm{v}}(z)$ has $A_{2}$-singularity at $z_0$, then  $H(z,\bm{v})$ is an $\mathcal{R}^+$-versal unfolding of $h_{\bm{v}}(z)$, then using Lemma \ref{lem:2.4}, the assertion (2) follows.
\vspace{1em}
\par
{\rm (3)} Again reviewing the conclusion of Corollary \ref{cor:2.6} and Remark \ref{rem:2.7} that $h_{\bm{v}}(s)$ has $A_{3}$-singularity at $z_0$ if and only if $\bm{v}=\big(0,0\big)$ and $z_0=0$ is a real root of $h_{\bm{v}}'(z)$ with multiplicity 3, hence employing Lemma \ref{lem:2.8} and Lemma \ref{lem:2.4}, the assertion holds.

\end{proof}
\begin{figure}[h]
  \centering
  \begin{minipage}[c]{0.28\textwidth}
    \centering
    \includegraphics[width=\textwidth]{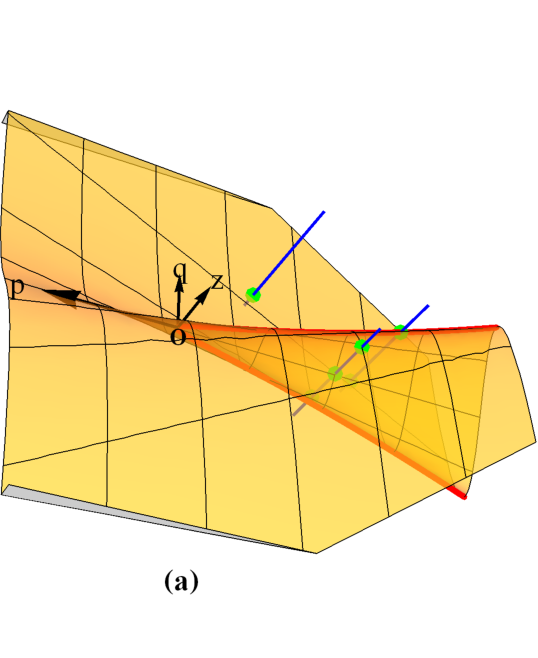}\\
  \end{minipage}
  \quad\quad\quad\quad
  \begin{minipage}[c]{0.275\textwidth}
    \centering
    \includegraphics[width=\textwidth]{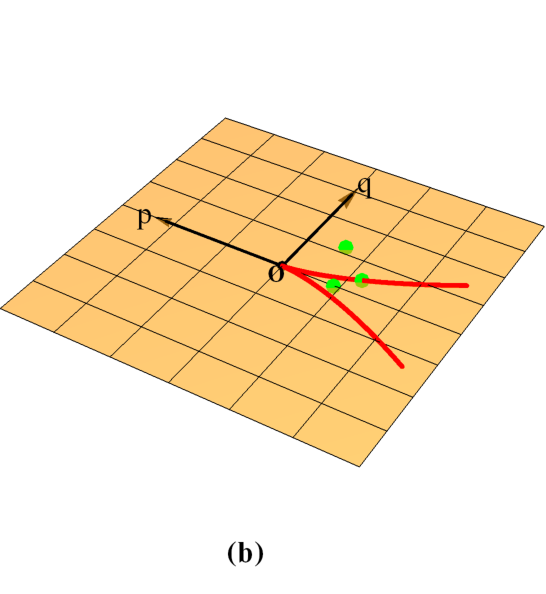}\\
  \end{minipage}
  \caption{(a) The  bifurcation diagram  $q(p,z)=-z^3-pz$; (b) The bifurcation set in $pq$-plane.}
  \label{fig:bif}
\end{figure}
\begin{rem} \label{rem:2.10}
Figure \ref{fig:bif} depicts the bifurcation set of the potential function $H$: (a) the surface $q = -z^3 - pz$ in $(p,q,z)$-space, and (b) its projection onto the $pq$-plane, which is the cuspidal curve $\Delta_H$.
For a fixed parameter pair $(p, q)$ (a point in the $pq$-plane), the number of real roots $z$ of $h_{\boldsymbol{v}}'(z) = z^3 + pz + q = 0$ corresponds to the number of intersections of a vertical line (parallel to the $z$-axis) through $(p, q)$ with the surface in (a). This number is one, two, or three depending on whether $(p, q)$ lies outside, on, or inside the cusp $\Delta_H$, respectively.

The number of real roots translates into the number of equilibria for the mathematical system defined by the zeros of $\Phi$. However, a full analysis of the signs of these roots (which determines the number of \emph{positive} (biologically meaningful) equilibria in system \eqref{eq:2.4}) requires further examination of the specific structure of our model, as will be rigorously established in the next theorem.

In particular, the cusp point at the origin $(p, q) = (0, 0)$, which corresponds to a triple root at $z=0$, represents a degenerate case that is precluded by the biological constraints of our model.
\end{rem}
Denote $\mathfrak{D}_{(p,q)} := \left(\frac{p}{3}\right)^3 + \left(\frac{q}{2}\right)^2$.
Substituting the explicit expressions
\begin{align*}
p =& \frac{n \left(3 a m (b m n + c n + c) - n\right)}{3 (b m n + c n + c)^2}, \\
q =& \frac{n \left(9 a m n (b m n + c n + c) + 27 m (b m n + c n + c)^2 - 2 n^2\right)}{27 (b m n + c n + c)^3},
\end{align*}
we obtain the expanded form:
\begin{align*}
\mathfrak{D}_{(p,q)} =&\frac{m n^2}{108 (b m n+c n+c)^4}\Big(2 c m n (n+1) (2 a^3 m+9 a+27 b m)+n^2(4 a^3 b m^3-a^2 m\\
&+18 a b m^2+27 b^2 m^3-4)+27 c^2 m (n+1)^2\Big).
\end{align*}
This root distribution analysis leads naturally to the following classification theorem for positive equilibria, which is determined by the discriminant $\mathfrak{D}_{(p,q)}$:
\begin{thm}\label{th:2.11}
System \eqref{eq:2.4} always has a disease-free equilibrium $(0,0)$. Moreover, the number of positive equilibria is determined by the discriminant $\mathfrak{D}_{(p,q)}$ as follows:
\begin{enumerate}
    \item[\rm (1)] If $\mathfrak{D}_{(p,q)} > 0$, then there exists exactly one real root of \eqref{eq:2.19}, which is negative. Hence, system \eqref{eq:2.4} has no positive equilibrium.
    \item[\rm (2)] If $\mathfrak{D}_{(p,q)} = 0$, then \eqref{eq:2.19} has a double real root and a distinct simple real root. Under the biological constraints, the double root is positive, and the simple root is negative. Thus, system \eqref{eq:2.4} has exactly one positive equilibrium (a double equilibrium).
    \item[\rm (3)] If $\mathfrak{D}_{(p,q)} < 0$, then \eqref{eq:2.19} has three distinct real roots. Among them, exactly one is negative and the other two are positive. Therefore, system \eqref{eq:2.4} has two distinct positive equilibria.
\end{enumerate}
\end{thm}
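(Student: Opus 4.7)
The plan is to reduce the counting of positive equilibria to a sign-and-multiplicity analysis of the real roots of a cubic, combining Vieta's formulas with the classical cubic discriminant.

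I would first observe that $x=0$ is always a root of (2.7), producing the disease-free equilibrium $(0,0)$ via $y = x/n$, so the positive equilibria of (2.4) correspond one-to-one with positive real roots of the monic cubic
\[
Q(x) := x^3 + \theta x^2 + p_1 x + q_1
\]
obtained by dividing the non-trivial factor of (2.7) through by $c(n+1)+bmn$. Under the biological positivity of $b, c, m, n$, this denominator is strictly positive, so $\theta < 0$ and $q_1 > 0$. Applying Vieta's formulas to the three roots $x_1, x_2, x_3$ of $Q$ (with multiplicity, possibly complex) yields
\[
x_1 + x_2 + x_3 = -\theta > 0, \qquad x_1 x_2 x_3 = -q_1 < 0,
\]
and these two sign conditions will be the main lever throughout.

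Next, I would count real roots via the classical cubic discriminant. The depressed form (2.19) has discriminant $-4p^3 - 27q^2 = -108\, \mathfrak{D}_{(p,q)}$, so the cases $\mathfrak{D}_{(p,q)} > 0$, $=0$, $<0$ correspond respectively to one real root, a multiple real root, and three distinct real roots. I would then dispatch each case by combining this count with the Vieta inequalities. When $\mathfrak{D}_{(p,q)} > 0$, the two non-real roots are complex conjugates $\alpha \pm i\beta$ of positive squared modulus, so the sole real root $r$ satisfies $r\,(\alpha^2+\beta^2) = -q_1 < 0$, forcing $r < 0$. When $\mathfrak{D}_{(p,q)} < 0$, the product of three real roots is negative so an odd number of them are negative; three negatives would contradict $\sum x_i > 0$, leaving exactly one negative and two positive. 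When $\mathfrak{D}_{(p,q)} = 0$, writing the double root as $x_0$ and the simple root as $x_1$, we have $x_0^2 x_1 < 0$ so $x_1 < 0$, and then $2x_0 + x_1 > 0$ forces $x_0 > 0$; the triple-root alternative $x_0 = x_1$ is ruled out because $x_0^3 < 0$ and $3x_0 > 0$ are incompatible.

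The argument is elementary once framed this way; no step is genuinely hard. The bookkeeping that deserves most care, and the likeliest source of error if one is not careful, is the sign relation $\mathfrak{D}_{(p,q)} = -\tfrac{1}{108}(-4p^3 - 27q^2)$, which inverts the usual correspondence between the sign of the discriminant and the number of real roots, so the three cases in the statement are ordered opposite to what one might first expect. Verifying this identity explicitly, together with the positivity of $c(n+1)+bmn$ that makes the Vieta sign inequalities sharp, suffices to complete the proof.
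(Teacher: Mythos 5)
Your proposal is correct and follows essentially the same route as the paper: both reduce the question to the sign pattern of the roots of the monic cubic $x^3+\theta x^2+p_1x+q_1$ with $\theta<0$, $q_1>0$, and both combine the classical discriminant trichotomy with sum/product-of-roots arguments (the paper invokes Descartes' rule of signs where you use Vieta, but the content is the same). Your handling of the case $\mathfrak{D}_{(p,q)}=0$ is in fact slightly more explicit than the paper's, since you pin down the signs of the double and simple roots and rule out the triple root directly, whereas the paper leaves that step implicit.
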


\begin{proof}
The existence of the disease-free equilibrium $(0,0)$ is trivial. The number of real roots of the cubic equation \eqref{eq:2.19} is classical and depends on the sign of the discriminant $\mathfrak{D}_{(p,q)}$ (see, e.g. \cite{Saunders1980} or any standard algebra reference):
 If $\mathfrak{D}_{(p,q)} > 0$, one real root (and two complex conjugates).
 If $\mathfrak{D}_{(p,q)} = 0$, a multiple root and all roots real.
 If $\mathfrak{D}_{(p,q)} < 0$, three distinct real roots.

We now analyze the sign of the roots under the biological constraints. From \eqref{eq:2.10}, the positive equilibria correspond to positive roots of
\[
x^3 + \theta x^2 + p_1 x + q_1 = 0,
\]
where $q_1 = \frac{mn}{c(n+1) + bmn} > 0$ and $\theta = -\frac{n}{c(n+1) + bmn} < 0$. By Descartes' rule of signs, the number of positive roots is either zero or two. Since $q_1 > 0$, the product of the roots is negative, implying an odd number of negative roots. The sum of the roots is $-\theta > 0$, which precludes three negative roots. Hence, there is exactly one negative root and either two positive roots or a pair of complex conjugates. The discriminant condition $\mathfrak{D}_{(p,q)}$ then distinguishes these cases, yielding the stated conclusion.
\end{proof}

\begin{rem} \label{rem:2.12}
While Theorem \ref{th:2.9}(3) describes the cusp singularity arising from a triple root, this case is biologically infeasible. Theorem \ref{th:2.11} shows that under the model constraints, equation \eqref{eq:2.10} cannot have a triple positive root. The only attainable degeneracy is a double positive equilibrium, for which the bifurcation set is locally diffeomorphic to a line (Theorem \ref{th:2.9}(2)).
\end{rem}

\section{Stability and Local Bifurcation Analysis}
This section provides a comprehensive stability analysis of all equilibria for system \eqref{eq:2.4}. We first confirm the global stability of the disease-free equilibrium. The core of our analysis then focuses on the positive equilibria, where we establish criteria for their existence and local stability properties. This leads to the precise identification of parameter regimes that yield structurally unstable dynamics, including saddle-node and Bogdanov-Takens points. The theoretical results derived here, particularly concerning the degeneracy conditions at these bifurcation points, form the essential foundation for the analysis of higher-codimension phenomena in the following sections. \par
 The Jacobian matrix of system \eqref{eq:2.4} at $(0,0)$ is
 \begin{align*}
\left(
  \begin{array}{cc}
                  -m &0\\
                  1& -n
     \end{array}
\right),
 \end{align*}
 which has two negative eigenvalues $-m$ and $-n$. We obtain the following result.

\begin{thm} \label{th:2.15} The disease-free equilibrium $(0,0)$ of system \eqref{eq:2.4} is a stable hyperbolic node.
\end{thm}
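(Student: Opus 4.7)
The plan is to verify the statement by direct linearization, exploiting the fact that the cubic factor $x^{3}$ in the incidence term wipes out every contribution to the Jacobian at the origin except for the linear damping terms. First I would confirm the Jacobian already displayed in the text by computing partial derivatives of $f(x,y;\lambda)=\frac{x^{3}}{1+ax+bx^{3}}(1-cx-cy)-mx$ and $g(x,y;\lambda)=x-ny$. The rational expression $\frac{x^{3}}{1+ax+bx^{3}}(1-cx-cy)$ vanishes to order three in $x$ at the origin (and its $y$-dependence is multiplied by $x^{3}$), so both $\partial_{x}$ and $\partial_{y}$ of this piece are $0$ at $(0,0)$. Hence $\partial_{x}f|_{(0,0)}=-m$ and $\partial_{y}f|_{(0,0)}=0$, while $\partial_{x}g=1$ and $\partial_{y}g=-n$ are immediate.

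The Jacobian is therefore the lower-triangular matrix in the excerpt, with eigenvalues $\lambda_{1}=-m$ and $\lambda_{2}=-n$. I would then invoke the parameter definitions in \eqref{eq:2.5}: because $d,\mu,\delta>0$ are strictly positive, we have $m=(d+\mu)/\mu>0$ and $n=(d+\delta)/\mu>0$, so both eigenvalues are strictly negative real numbers. In particular the linearization is hyperbolic and both eigenvalues have the same sign, so the linear system at the origin is a stable node.

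To promote this to the nonlinear system I would cite the Hartman-Grobman theorem: since the right-hand side of \eqref{eq:2.4} is $C^{\infty}$ on a neighborhood of $(0,0)$ (the denominator $1+ax+bx^{3}$ is positive there by the discussion following \eqref{eq:2.1}) and the linearization is hyperbolic, the local phase portrait is topologically conjugate to that of the linearization. Therefore $(0,0)$ is a stable hyperbolic node, which yields the claim.

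There is no substantive obstacle. The only minor point worth flagging is that the nodal character is generic when $m\neq n$ and improper when $m=n$, but either possibility is still a stable hyperbolic node, so the distinction does not affect the statement. If a stronger global conclusion were desired, one could note that the cubic numerator also kills the nonlinear incidence at the axis $\{x=0\}$, which together with $\dot y=-ny$ there makes $\{x=0\}$ an invariant stable manifold, but this is beyond what Theorem \ref{th:2.15} asserts.
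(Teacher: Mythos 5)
Your proposal is correct and follows essentially the same route as the paper: the paper simply records the lower-triangular Jacobian at the origin with eigenvalues $-m$ and $-n$ and concludes stability from their negativity. Your additional remarks on the cubic numerator annihilating the incidence term at the origin and on the $m=n$ improper-node case are sound but not needed for the statement.
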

\begin{figure}[h]
\centering
\includegraphics[width=0.4\textwidth]{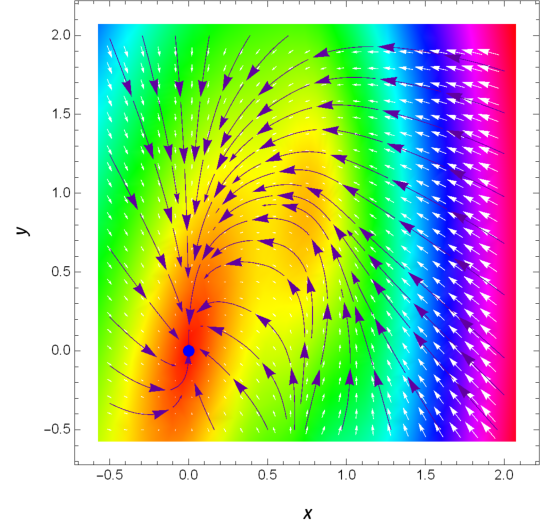}
\caption{ The phase portraits of system (2.4) (the streamlines with density distribution) for $a=-1.5,b=1,c=0.3,m=1.4,n=0.6$ at $(x^*,y^*)=(0,0)$.}
\label{fig:9}
\end{figure}

A phase portrait is shown in Figure \ref{fig:9}.
To discuss whether the disease can invade the population, we study the global stability of the equilibrium $(0,0)$. Since $x=0$ is an invariant line and $\Sigma_{\text{red}}$ is positively invariant, index theory implies that system \eqref{eq:2.4} admits no nontrivial periodic orbits in $\mathbb{R}^2_+$ when it lacks positive equilibria.

\begin{thm}\label{th:3.2}
The disease-free equilibrium $\left(\frac{\Lambda}{d},0,0\right)$ of \eqref{eq:2.1} is globally asymptotically stable in the interior $\mathbb{R}_+^3$, and the disease cannot invade the population whenever the condition
\(
\mathfrak{D}_{(p,q)} > 0
\)
is satisfied.
\end{thm}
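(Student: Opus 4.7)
The plan is to reduce the question of global stability in $\mathbb{R}_+^3$ to a planar argument on the invariant manifold $\Sigma$, and then use Poincar\'e--Bendixson together with results already established in the excerpt.

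First, I would invoke Lemma \ref{lem:2.1}: since $\frac{{\rm d}N}{{\rm d}t}=\Lambda-dN$, every trajectory of \eqref{eq:2.1} starting in $\mathbb{R}_+^3$ satisfies $N(t)\to \Lambda/d$ exponentially, so $\Sigma$ is globally attracting in $\mathbb{R}_+^3$. Consequently, the long-time behavior of \eqref{eq:2.1} is governed by the reduced planar system \eqref{eq:2.2}, or equivalently its non-dimensionalization \eqref{eq:2.4}, on the compact positively invariant set $\widetilde{\Sigma}$. This step reduces global asymptotic stability of $(\Lambda/d,0,0)$ in $\mathbb{R}_+^3$ to global asymptotic stability of $(0,0)$ in $\widetilde{\Sigma}$ for \eqref{eq:2.4}.

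Second, I would assemble the hypotheses needed for Poincar\'e--Bendixson on $\widetilde{\Sigma}$. By Theorem \ref{th:2.11}(1), the assumption $\mathfrak{D}_{(p,q)}>0$ ensures that \eqref{eq:2.4} has no positive equilibrium, so $(0,0)$ is the unique equilibrium in $\mathbb{R}_+^2$. Since the line $\{x=0\}$ is invariant (indeed $\dot x=0$ there and $\dot y=-ny$), and $\widetilde{\Sigma}$ is compact and positively invariant, the index-theoretic remark preceding Theorem \ref{th:3.2} rules out nontrivial periodic orbits in $\mathbb{R}_+^2$. By the Poincar\'e--Bendixson theorem, the $\omega$-limit set of any trajectory starting in $\widetilde{\Sigma}$ must therefore reduce to $\{(0,0)\}$; trajectories starting on the invariant axis $\{x=0\}$ satisfy $y(t)=y(0)e^{-nt}\to 0$ trivially.

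Third, combining with Theorem \ref{th:2.15}, which identifies $(0,0)$ as a stable hyperbolic node, local asymptotic stability together with the $\omega$-limit analysis yields global asymptotic stability of $(0,0)$ for \eqref{eq:2.4} in $\mathbb{R}_+^2$. Translating back through the rescaling \eqref{eq:2.3} and the invariant manifold projection gives $I(t)\to 0$ and $R(t)\to 0$, and therefore $S(t)=N(t)-I(t)-R(t)\to \Lambda/d$, for every initial data in $\mathbb{R}_+^3$. This establishes the global asymptotic stability of $(\Lambda/d,0,0)$ in the full three-dimensional system and shows the disease cannot invade whenever $\mathfrak{D}_{(p,q)}>0$.

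The main obstacle is not computational but expository: one must carefully justify the two-step reduction (3D $\to$ planar via $\Sigma$, and then planar global dynamics via Poincar\'e--Bendixson) while explicitly handling the boundary behavior on the invariant $x$-axis, and verify that the absence of positive equilibria combined with no periodic orbits truly forces every $\omega$-limit set to collapse to $(0,0)$. Once these reduction steps are stated cleanly, the conclusion follows directly from Theorems \ref{th:2.15} and \ref{th:2.11}(1) together with the index argument preceding the statement.
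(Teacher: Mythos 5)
Your proposal is correct and follows essentially the same route the paper intends: the paper states Theorem \ref{th:3.2} without a formal proof, relying on the immediately preceding observations (invariance of $\{x=0\}$, positive invariance of $\Sigma_{red}$, index theory excluding periodic orbits when no positive equilibria exist by Theorem \ref{th:2.11}(1), and the stable node of Theorem \ref{th:2.15}), which is exactly the Poincar\'e--Bendixson argument you spell out. Your write-up merely makes explicit the reduction to the invariant manifold and the boundary behavior, which the paper leaves implicit.
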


\begin{rem}\label{rem:3.3}
This remark analyzes the critical threshold for disease eradication through  $\mathfrak{D}_{(p,q)}=\dfrac{m n^2}{108 (b m n+c n+c)^4}\widetilde{\rho}(b)$, where
\begin{align*}
\widetilde{\rho}(b):=&\ 27 m^3 n^2 b^2 + (4 a^3 m^3 n^2 + 18 a m^2 n^2 + 54 c m^2 n (n+1))b \\
&+ 27 c^2 m (n+1)^2 + 2 a c m (2 a^2 m + 9)n (n+1) - (4 + a^2 m)n^2.
\end{align*}
Treating $\widetilde{\rho}$ as a quadratic function of $b$, the equation $\widetilde{\rho}(b) = 0$ yields two distinct real roots $b_1 < b_2$:
\begin{align*}
b_1 &= -\frac{2 a^3 m^3 n + 2 n \left(m(a^2 m + 3)\right)^{3/2} + 9 a m^2 n + 27 c m^2 (n+1)}{27 m^3 n}, \\
b_2 &= -\frac{2 a^3 m^3 n - 2 n \left(m(a^2 m + 3)\right)^{3/2} + 9 a m^2 n + 27 c m^2 (n+1)}{27 m^3 n}.
\end{align*}
Using the parameter mapping from \eqref{eq:2.5}, these roots correspond to critical psychological effect thresholds:
\begin{align*}
\gamma_i := \frac{b_i \kappa \Lambda}{d\mu \sqrt{d\mu/(\kappa\Lambda)}}, \quad i = 1, 2,
\end{align*}
that is,
\begin{align*}
\gamma_1 &= -\mu  \left(\frac{2 \beta ^3}{27 \mu }+\frac{\beta  \kappa  \Lambda }{3 d \mu  (d+\mu )}+\frac{\kappa  (d+\delta +\mu )}{\mu  (d+\delta ) (d+\mu )}\right)-\frac{2}{27} \left(\frac{\beta ^2 d^2+\beta ^2 d \mu +3 \kappa  \Lambda }{d (d+\mu )}\right)^{3/2}, \\
\gamma_2 &= -\mu  \left(\frac{2 \beta ^3}{27 \mu }+\frac{\beta  \kappa  \Lambda }{3 d \mu  (d+\mu )}+\frac{\kappa  (d+\delta +\mu )}{\mu  (d+\delta ) (d+\mu )}\right)+\frac{2}{27} \left(\frac{\beta ^2 d^2+\beta ^2 d \mu +3 \kappa  \Lambda }{d (d+\mu )}\right)^{3/2}.
\end{align*}
The disease will die out for all positive initial populations when the psychological effect parameter $\gamma$ satisfies one of the following conditions, depending on the relative positions of $\gamma_1$ and $\gamma_2$ with respect to zero:
\begin{enumerate}
    \item[(i)] If $\gamma_1 < \gamma_2 < 0$, then eradication occurs when $\gamma > 0$;
    \item[(ii)] If $\gamma_1 < 0 < \gamma_2$, then eradication occurs when $\gamma > \gamma_2$;
    \item[(iii)] If $0 < \gamma_1 < \gamma_2$, then eradication occurs when $0 < \gamma < \gamma_1$ or $\gamma > \gamma_2$.
\end{enumerate}
\end{rem}

The following notations are presented, which will be used in subsequent results and their proofs.
 \begin{align}
 \begin{split}
\zeta&:=-\frac{x^*(- 2a^2 m n-6 n+(9 b m n+12 c n+12c)(x^*) +(4ac+4 a c n)(x^*)^2+3 b n(x^*)^3)}{2n(1+a x^*+b (x^*)^3)^2},\\
\eta&:=-\frac{x^*(-2 a^2 m n-6 n+(9 b m n+12 c n+9 c)x^* +(2ac+4 a c n)(x^*)^2+3 b n(x^*)^3)}{n(1+a x^*+b (x^*)^3)^2},\\
c^*&:=\frac{-bmn^*(x^*)^3+n^*(x^*)^2-amn^*x^*-mn^*}{(n^*+1)(x^*)^3},\\
n^*&:=\frac{1}{2} \left(-1+\frac{\sqrt{\left(1+a x^*+b \left(x^*\right)^3\right) \left( (1-4 m)(1+a x^*+b (x^*)^3)+4(x^*)^2\right)}}{1+a x^*+b \left(x^*\right)^3}\right),\\
x^*&:=am+\sqrt{a^2 m^2+3 m},\\
y^*&:=\dfrac{1}{n}(am+\sqrt{a^2 m^2+3 m}).
 \end{split}
 \end{align}

\begin{thm}\label{th:3.4}
Assume that $\mathfrak{D}_{(p,q)} = 0$,
and the following inequality holds
\begin{equation}\label{eq:3.2}
-bmn(x^*)^3 + n(x^*)^2 - amnx^* - mn > 0,
\end{equation}
then system \eqref{eq:2.4} satisfies:
\begin{enumerate}[label=\textnormal{(\arabic*)}]
\item
For $n < n^*$ (resp. $n > n^*$), the unique positive equilibrium $(x^*,y^*)$ is a repelling (resp. attracting) saddle-node.
\item
At $n = n^*$, system \eqref{eq:2.4} topologically equivalent to the second-order normal form:
\begin{equation}\label{eq:3.3}
       \begin{aligned}
       \dfrac{{\rm d} x}{{\rm d}t}&=y,\\
         \dfrac{{\rm d} y}{{\rm d}t}&= \zeta\xi_1 x^2+\eta xy+\mathcal{O}(|(x,y)|^3),
\end{aligned}\end{equation}
where $\zeta=\xi_3-\dfrac{\xi_1\xi_4}{\xi_2}, \eta=2\xi_3-\dfrac{\xi_1\xi_4}{\xi_2}$, and $\xi_i, i=1,2,3,4$ are defined in \eqref{paraxi}.
\end{enumerate}
\end{thm}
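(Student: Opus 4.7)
The plan is to translate the double-root condition $\mathfrak{D}_{(p,q)}=0$ into a linear-algebra statement about the Jacobian at $(x^*,y^*)$ and then read off claims (1) and (2) from a center-manifold argument and a Bogdanov--Takens normal-form computation, respectively. First I would pin down $x^*$ by combining \eqref{eq:2.7} and \eqref{eq:2.9}: forming $4\times\eqref{eq:2.7}-\eqref{eq:2.9}$ eliminates the cubic term and reduces to the quadratic $x^2-2amx-3m=0$, whose unique positive root is $x^*=am+\sqrt{a^2m^2+3m}$. Substituting back into \eqref{eq:2.7} solves for $c$ in terms of $n$, reproducing the stated formula for $c^*$; inequality \eqref{eq:3.2} is exactly the positivity condition $c^*>0$, and $y^*=x^*/n$ follows from $x-ny=0$.

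For part (1), I would write $J(x^*,y^*)$ using \eqref{eq:2.8}. By construction $\mathfrak{D}_{(p,q)}=0$ is equivalent to $\det J=0$, so one eigenvalue vanishes identically along the saddle-node surface and the other equals $\operatorname{tr} J$. Direct computation gives $\operatorname{tr} J$ as an explicit rational function of $n$ after substituting $c=c^*$, and the threshold $n^*$ in the theorem is precisely the positive root of $\operatorname{tr} J(n)=0$; comparison of signs then gives $\operatorname{tr} J>0$ for $n<n^*$ and $\operatorname{tr} J<0$ for $n>n^*$, yielding the repelling/attracting dichotomy. To confirm that $(x^*,y^*)$ is a genuine saddle-node rather than a cusp, I would perform a one-dimensional center-manifold reduction and check that the leading quadratic coefficient of the reduced vector field is nonzero; this nondegeneracy is guaranteed by Theorem \ref{th:2.11} together with Remark \ref{rem:2.12}, which rule out the triple-root ($A_3$) case under the biological constraints.

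For part (2), at $n=n^*$ both $\det J=0$ and $\operatorname{tr} J=0$, so $J$ has a double zero eigenvalue. Since the $(2,1)$-entry of $J$ is $1\neq 0$, the matrix is nonzero and thus a nilpotent Jordan block of size two. I would apply a linear change of coordinates to bring $J$ into canonical Jordan form $\left(\begin{smallmatrix}0&1\\0&0\end{smallmatrix}\right)$, translate the equilibrium to the origin, and Taylor-expand $f$ and $g$ through second order. A near-identity nonlinear transformation that eliminates the $xy$ coefficient in the first equation and the $y^2$ coefficient in the second then produces the standard Bogdanov--Takens form \eqref{eq:3.3}. The coefficients $\zeta\xi_1$ and $\eta$ emerge by tracking the Taylor coefficients of $f,g$ through this sequence of changes; grouping them into the auxiliary quantities $\xi_1,\xi_2,\xi_3,\xi_4$ defined in \eqref{paraxi} makes the final expressions $\zeta=\xi_3-\xi_1\xi_4/\xi_2$ and $\eta=2\xi_3-\xi_1\xi_4/\xi_2$ compact.

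The main obstacle will be the nondegeneracy verification in part (2): showing that $\zeta\xi_1\neq 0$ and $\eta\neq 0$ so that we genuinely sit at a codimension-2 Bogdanov--Takens point rather than a higher-codimension cusp-type singularity. This reduces to a polynomial inequality in $(a,b,m)$ that must be checked under the model's positivity constraints and the hypothesis \eqref{eq:3.2}. A secondary difficulty is algebraic bookkeeping: the expressions for $\operatorname{tr} J$ and for the quadratic Taylor coefficients all carry the common denominator $(1+ax^*+b(x^*)^3)^2$, so I would factor this out at the outset and work with numerator polynomials when isolating $n^*$ and verifying the sign conditions that split the saddle-node into its attracting and repelling subcases.
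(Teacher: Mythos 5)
Your proposal is correct and follows essentially the same route as the paper: eliminating the cubic term between the equilibrium and degeneracy conditions (equivalently, setting $\det J^*=0$) to get $x^*=am+\sqrt{a^2m^2+3m}$, reducing the trace condition under the constraint on $c$ to the quadratic $Dn^2+Dn+mD-(x^*)^2=0$ whose unique positive root is $n^*$, reading off the repelling/attracting dichotomy from the sign of $\operatorname{tr}J$, and obtaining \eqref{eq:3.3} by translation plus the standard near-identity reductions. The only cosmetic differences are that the paper phrases the sign analysis as a comparison of the two curves $\Gamma_1,\Gamma_2$ in the $(n,c)$-plane rather than direct substitution, and that you are somewhat more explicit than the paper about the center-manifold nondegeneracy underlying the saddle-node classification.
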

To enhance readability, we relegate the complete proof of this theorem to the \ref{proof_th:3.4}.

\begin{figure}[htbp]
  \centering
  \begin{minipage}[c]{0.36\textwidth}
    \centering
    \includegraphics[width=\textwidth]{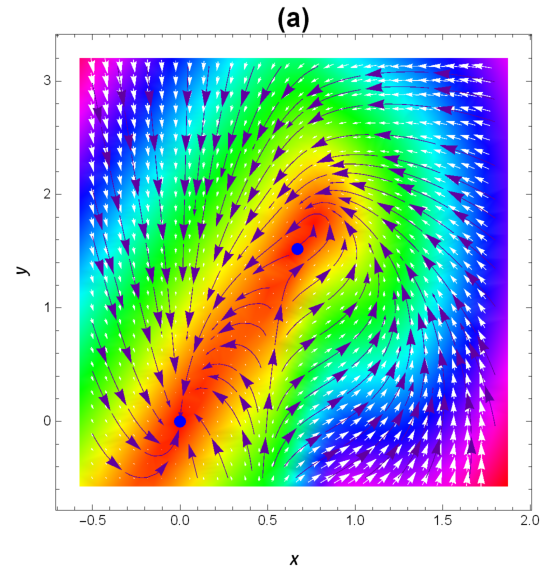}\\
  \end{minipage}\quad\quad\quad
  \begin{minipage}[c]{0.36\textwidth}
    \centering
    \includegraphics[width=\textwidth]{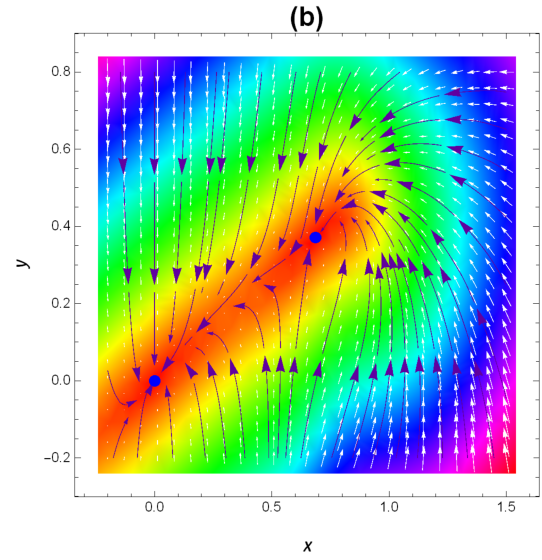} \\
  \end{minipage}\\
 \caption{The phase portraits of system (2.4) (the streamlines with density distribution). (a) A repelling saddle-node  for $a=-1.5,b=1,c=0.3,m=0.5,n=0.426960$ at $(x^*,y^*)=(0.686141, 1.60704)$; (b) A attractive saddle-node  for $a=-1.5,b=1,c=0.650837,m=0.5,n=1.85$ at $(x^*,y^*)=(0.686141, 0.370887)$.}
  \label{fig:5}
\end{figure}

Numerical simulations in Fig.~\ref{fig:5} verify the two dynamical regimes described in Theorem~\ref{th:3.4}: Repelling saddle-node ($n < n^*$) shown in Fig.~\ref{fig:5} (a), attracting saddle-node ($n > n^*$) shown in Fig.~\ref{fig:5} (b).\par
We note that assertion (2) of Theorem \ref{th:3.4} implies that $(x^*, y^*)$ is a candidate for a Bogdanov-Takens cusp (codimension 2 or higher). Numerical simulations in Fig.~\ref{fig:7} give  the phase portraits of a codimension-2  and  a codimension-3 cusp, respectively. The detailed analysis of the Bogdanov-Takens bifurcation of codimension 2 or 3 will be given in the next section.
\begin{rem}\label{rem:3.5}
When the psychological effect parameter $\gamma$ attains a critical value $\gamma_i$, system (2.1) possesses two equilibria: a disease-free equilibrium and an endemic equilibrium. Under this condition, the eventual outcome of the disease outbreak, specifically whether it is eradicated or persists, depends on both the infection rate $\kappa$ and the initial population size. Specifically, if the following inequality holds:
\[
\frac{\sqrt{d^2+\delta ^2+2 d (\delta +\mu )+\delta  \mu +\mu ^2}}{\mu }\leq \frac{x^*}{1+x^* \sqrt{\frac{d \mu }{\kappa  \Lambda }} \left(\beta +\frac{\gamma_i  d \mu }{\kappa  \Lambda }(x^*)^2\right)}\ (\text{i.e.} \ n\leq n^*),
\]
then the disease will be eradicated for almost all initial populations (as shown in Fig. \ref{fig:5}~(a) and Fig. \ref{fig:7}). Conversely, if this inequality is reversed, the infection will persist for some initial conditions and approach the positive endemic equilibrium (Fig.\ref{fig:5}~(b)). Here, $x^*$ is given by:
\[
x^* = \left(\left( \frac{d}{\mu} + 1 \right) \left( \frac{\beta^2 d(d + \mu)}{\kappa\Lambda} + 3 \right)\right)^{1/2} + \beta \left( \frac{d}{\mu} + 1 \right) \sqrt{ \frac{d\mu}{\kappa\Lambda} }.
\]
Additionally, the specific critical value $\gamma_i$ is determined according to the cases established in Remark \ref{rem:3.3}: $\gamma = \gamma_2$ when $\gamma_1 < 0 < \gamma_2$, or $\gamma = \gamma_i$ (where $i = 1$ or $2$) when $0 < \gamma_1 < \gamma_2$.
\end{rem}
\begin{figure}[htbp]
  \centering
\begin{minipage}[c]{0.38\textwidth}
    \centering
    \includegraphics[width=\textwidth]{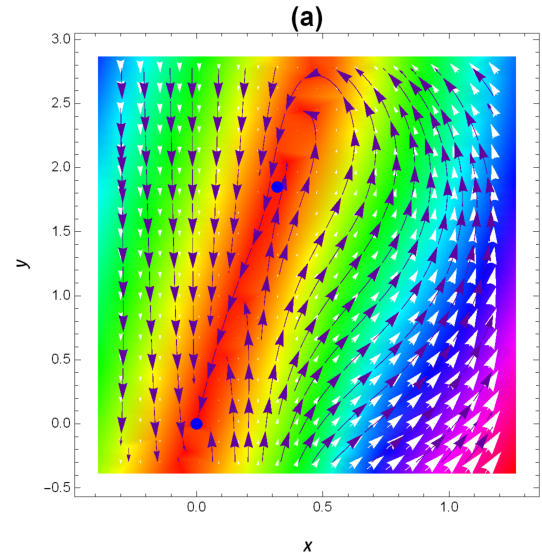}\\
  \end{minipage}\quad\quad\quad
  \begin{minipage}[c]{0.36\textwidth}
    \centering
    \includegraphics[width=\textwidth]{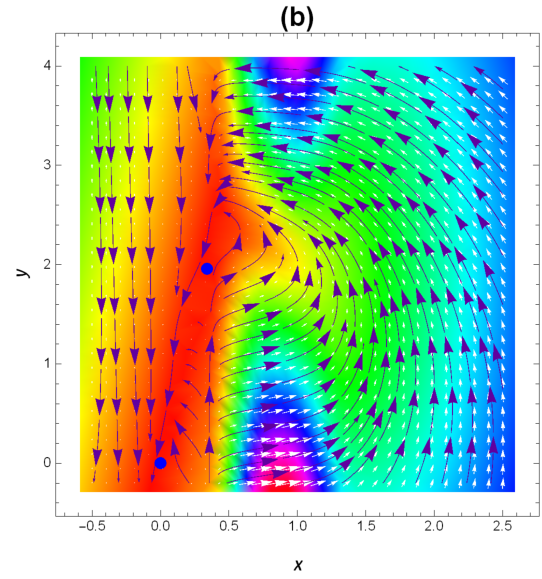} \\
  \end{minipage}
 \caption{(a) A codimension-2 cusp with parameters $(a, b, c, m, n) = (-1.5, 1.8045924, 0.330275, 0.05, 0.172824)$ and the double positive equilibrium $(x^{*}, y^{*}) = (0.319493, 1.848663)$. (b) A codimension-3 cusp  with parameters $(a, b, c, m, n) = (-1.8, 1, 0.330275, 0.064380, 0.172824)$  emerges at the double positive equilibrium $(x^{*}, y^{*}) = (0.338614, 1.959299)$.
    }
  \label{fig:7}
\end{figure}
\begin{thm}\label{th:3.6}
When the discriminant satisfies $\mathfrak{D}_{(p,q)} < 0$, system \eqref{eq:2.4} possesses two distinct positive equilibria $(x_1, y_1)$ and $(x_2, y_2)$ with $x_1 < x_2$. Let $J(x,y) = d\phi_\lambda(x,y)$ denote the Jacobian matrix at equilibrium $(x,y)$, and let $N(x,y)$ represent the numerator of $\det(J(x,y))$ derived in equation \eqref{eq:2.8}. We conclude that $(x_1, y_1)$ is always a hyperbolic saddle,
the equilibrium $(x_2,y_2)$ exhibits three stability regimes:
\begin{enumerate}[label=\textnormal{(\arabic*)}]
\item Stable hyperbolic focus/node when $\text{\rm Tr}(J(x_2,y_2)) < 0$;
\item Weak focus or center when $\text{\rm Tr}(J(x_2,y_2)) = 0$;
\item Unstable hyperbolic focus/node when $\text{\rm Tr}(J(x_2,y_2)) > 0$.
\end{enumerate}
\end{thm}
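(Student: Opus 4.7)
My plan is to reduce the stability question to a sign analysis of $\det J$ at each positive equilibrium, exploiting the cubic structure already set up in Section~2. After substituting $y = x/n$, the positive equilibria of \eqref{eq:2.4} are exactly the positive roots of
\[
P(x) := (c(n+1)+bmn)x^{3} - nx^{2} + amn\,x + mn,
\]
while equation \eqref{eq:2.9} shows that $\det(\mathrm{d}\phi_\lambda)$ vanishes precisely when $4(c(n+1)+bmn)x^3 - 3nx^2 + 2amnx + mn = 0$. A direct check shows this latter polynomial is exactly $\frac{d}{dx}\bigl(x\,P(x)\bigr) = P(x) + xP'(x)$. Hence, at any positive equilibrium $(x^{*},y^{*})$ where $P(x^{*})=0$, the numerator of $\det J(x^{*},y^{*})$ in \eqref{eq:2.8} reduces to $x^{*}P'(x^{*})$, and since $x^{*}>0$ and the denominator $1+ax^{*}+b(x^{*})^{3}>0$, I get the sign identity
\[
\operatorname{sign}\bigl(\det J(x^{*},y^{*})\bigr) = \operatorname{sign}\bigl(P'(x^{*})\bigr).
\]

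Next I invoke Theorem~\ref{th:2.11}: when $\mathfrak{D}_{(p,q)}<0$, the cubic $P$ has three distinct real roots, and the proof there (Descartes' rule applied to $P$, together with the signs of its coefficients) yields precisely one negative root $x_{0}$ and two positive roots $x_{1}<x_{2}$. Since $P$ has positive leading coefficient $c(n+1)+bmn>0$, the factorization $P(x) = (c(n+1)+bmn)(x-x_{0})(x-x_{1})(x-x_{2})$ gives
\[
P'(x_{1}) = (c(n+1)+bmn)(x_{1}-x_{0})(x_{1}-x_{2}) < 0, \qquad P'(x_{2}) = (c(n+1)+bmn)(x_{2}-x_{0})(x_{2}-x_{1}) > 0,
\]
because $x_{1}-x_{0}>0$, $x_{2}-x_{0}>0$, and $x_{1}-x_{2}<0<x_{2}-x_{1}$. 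Combining with the sign identity above, $\det J(x_{1},y_{1})<0$ while $\det J(x_{2},y_{2})>0$.

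The conclusion is then immediate from standard planar linear theory. A negative determinant forces two real eigenvalues of opposite sign, so $(x_{1},y_{1})$ is a hyperbolic saddle. A positive determinant forces eigenvalues that are either both real of the same sign or a complex conjugate pair; in either case the sign of $\operatorname{Tr}(J(x_{2},y_{2}))$ decides: negative trace yields a stable hyperbolic focus or node, positive trace an unstable hyperbolic focus or node, and zero trace gives a pair of purely imaginary eigenvalues, identifying $(x_{2},y_{2})$ as either a center or a weak focus (the distinction being settled later by the focal values computed in Section~5).

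I do not expect any serious obstacle here: the whole argument is algebraic once one spots that the determinant numerator is literally $\frac{d}{dx}(xP(x))$ modulo the factor $P(x)$ that vanishes at equilibria. The only point that needs care is confirming the root ordering $x_{0}<0<x_{1}<x_{2}$ used above, but this was already established inside the proof of Theorem~\ref{th:2.11} via Descartes' rule and the signs $q_{1}>0$, $\theta<0$. The trace--determinant classification is standard and does not require further normal-form work; the center/weak-focus dichotomy in case (2) is deliberately left for the Hopf analysis in Section~5.
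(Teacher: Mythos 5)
Your proposal is correct and follows essentially the same route as the paper: both identify the numerator of $\det J$ at an equilibrium with $x^{*}$ times the derivative of the equilibrium cubic (the paper writes this as $N=(c(n+1)+bmn)\bigl(\omega(z)+x\,\omega'(z)\bigr)$ in the shifted $z$-coordinate, which is exactly your $P(x)+xP'(x)$), and then read off the signs at the two positive roots before applying the standard trace--determinant classification. The only cosmetic difference is that the paper determines $\operatorname{sign}\omega'(z_i)$ from the monotonicity intervals of the depressed cubic, whereas you use the explicit factorization of $P$ and the root ordering $x_0<0<x_1<x_2$; both are equivalent and rest on Theorem~\ref{th:2.11}.
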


\begin{proof}
By Theorem \ref{th:2.11}, the condition $\mathfrak{D}_{(p,q)} < 0$ ensures two distinct positive equilibria.
The characteristic cubic equation $\omega(z) = z^3 + pz + q = 0$ admits three real roots $\bar{z} , z_1 , z_2$ under this discriminant regime, where$\bar{z} < z_\ell < z_1 < z_r < z_2$, $z_\ell = -\sqrt{-p/3}$ and $z_r = \sqrt{-p/3}$ are critical points. The derivative $\omega'(z) = 3z^2 + p$ indicates the monotonicity: increase on $(-\infty, z_\ell) \cup (z_r, +\infty)$ and decrease on $(z_\ell, z_r)$. Consequently, the equilibrium-associated roots satisfy $\omega'(z_1) < 0$ for $z_1 \in (z_\ell, z_r)$ and $\omega'(z_2) > 0$ for $z_2 \in (z_r, +\infty)$.\par
From \eqref{eq:2.8}, the Jacobian determinant decomposes as:
\begin{align*}
\det(J(x,y)) =&\frac{N(x,y)}{1+a x+b x^3},\\
N(x,y) =&c x^3 - n(-2 a m x -4 x^3 (b m+c) -3 c x^2 y -m +3 x^2,
\end{align*}
where the denominator $1+a x+b x^3 > 0$ for all positive $x$. Through coordinate transformation \eqref{eq:2.13} and polynomial reductions \eqref{eq:2.14}-\eqref{eq:2.20}, the numerator simplifies to:
\[
N(x,y) = (c(n+1)+bmn)\left(\omega(z) + x\omega'(z)\right).
\]
\par
For the lower equilibrium $(x_1,y_1)$ corresponding to $z_1$, we have:
\[
N(x_1,y_1) = (c(n+1)+bmn)x_1\omega'(z_1) < 0,
\]
since $x_1 > 0$ and $\omega'(z_1) < 0$ in the decreasing interval $(z_\ell, z_r)$. This negative determinant confirms the saddle point property.\par
For the upper equilibrium $(x_2,y_2)$ corresponding to $z_2$, we find:
\[
N(x_2,y_2) = (c(n+1)+bmn)x_2\omega'(z_2) > 0
\]
given $x_2 > 0$ and $\omega'(z_2) > 0$ in the increasing interval $(z_r, +\infty)$. The stability classification follows from analyzing the trace $\text{Tr}(J(x_2,y_2))$ using standard linearization techniques.
\end{proof}
\begin{rem}
Theorem \ref{th:3.6} and Remark \ref{rem:3.3} reveal that epidemic severity increases when the psychological effect parameter $\gamma$ falls within specific ranges determined by the relative positions of $\gamma_1$ and $\gamma_2$. Specifically, when $\gamma_1 < 0 < \gamma_2$, the range $0 < \gamma < \gamma_2$ leads to multiple positive coexistent steady states, while when $0 < \gamma_1 < \gamma_2$, the range $\gamma_1 < \gamma < \gamma_2$ produces this multi-stability phenomenon. In both cases, the existence of multiple endemic equilibria enhances disease persistence and potential severity.
\end{rem}
The nature of the Hopf bifurcation at $(x_2, y_2)$ (i.e., its criticality and the possible emergence of multiple limit cycles in degenerate cases) will be thoroughly analyzed in section 5.

\section{Bogdanov-Takens bifurcations}
This section provides a detailed analysis of the Bogdanov-Takens (BT) points identified in the previous sections. The central objective is to fully characterize these degenerate equilibria by deriving their normal forms for both codimension-2 and codimension-3 cases. We then construct universal unfoldings to describe the complete local bifurcation diagram in parameter space and support our theoretical findings with numerical simulations that confirm the emergence of predicted dynamical features, such as limit cycles and homoclinic orbits.
\subsection{Bogdanov-Takens bifurcation with codimension-2}
\begin{thm}\label{th:4.1} Withe the same conditions with Theorem \ref{th:3.4}, when $n = n^*$ and $\zeta\eta \neq 0$, $(x^*,y^*)$ becomes a Bogdanov-Takens cusp of codimension two.
\end{thm}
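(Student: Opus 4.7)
The plan is to leverage Theorem~\ref{th:3.4}(2), which at $n = n^*$ already supplies the preliminary normal form
\[
\dot{x} = y, \qquad \dot{y} = \zeta\xi_1 x^2 + \eta xy + \mathcal{O}(|(x,y)|^3),
\]
whose linear part is a single Jordan block at zero. The task is therefore reduced to verifying the two nondegeneracy conditions that single out the codimension-two Bogdanov--Takens cusp from higher-codimension degeneracies: the coefficient $\zeta\xi_1$ of $x^2$ and the coefficient $\eta$ of $xy$ must both be non-zero.

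I would then invoke the classical Bogdanov--Takens classification (e.g., \cite{Kuznetsov2023, Perko1996}): any planar vector field whose 2-jet at a degenerate equilibrium has the form $\dot u = v$, $\dot v = a_1 u^2 + a_2 uv + O(|(u,v)|^3)$ with $a_1 a_2 \neq 0$ is smoothly equivalent, via a near-identity change of coordinates followed by a time rescaling, to the canonical cusp $\dot u = v$, $\dot v = u^2 + s\, uv + O(3)$ with $s = \mathrm{sgn}(a_1 a_2) \in \{-1, +1\}$, whose generic two-parameter unfolding is precisely the BT cusp of codimension two. Identifying $a_1 = \zeta\xi_1$ and $a_2 = \eta$ in our situation, it remains only to check that these two quantities are non-vanishing.

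The hypothesis $\zeta\eta \neq 0$ directly delivers $\eta \neq 0$ and $\zeta \neq 0$; what remains is $\xi_1 \neq 0$, which combined with $\zeta \neq 0$ gives $\zeta\xi_1 \neq 0$. I would read off the explicit expression for $\xi_1$ from the proof of Theorem~\ref{th:3.4} relegated to Appendix~\ref{proof_th:3.4} and confirm its non-vanishing throughout the admissible parameter region $\{a > -3\sqrt[3]{b/4},\ b,c,m,n^* > 0,\ x^* > 0\}$. I expect this to be automatic from the construction itself, since $\xi_1$ enters as a nonzero rescaling factor when straightening the generalized eigenvector during the reduction to normal form in Theorem~\ref{th:3.4}, so its vanishing would already have obstructed that reduction. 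The main obstacle I anticipate is exactly this bookkeeping: $\xi_1$ is a rational function of $(a,b,c,m,n^*,x^*)$ whose non-vanishing must be certified either by exhibiting a definite sign on the admissible region or, more structurally, by identifying $\xi_1$ with a specific Jacobian entry that is forced to be nonzero by the saddle-node condition $\mathfrak{D}_{(p,q)} = 0$ together with inequality \eqref{eq:3.2}. Once $\xi_1 \neq 0$ is established, the conclusion is immediate from the classical BT classification.
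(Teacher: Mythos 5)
Your proposal is correct and follows essentially the same route as the paper: the paper also starts from the quadratic normal form of Theorem~\ref{th:3.4}(2) and, under $\zeta\eta\neq 0$, applies the explicit scaling $x\to \frac{\xi_1\zeta}{\eta^2}x$, $y\to \frac{\xi_1^2\zeta^2}{\eta^3}y$, $t\to \frac{\eta}{\xi_1\zeta}t$ to reach $\dot x=y$, $\dot y=x^2+xy+\mathcal{O}(3)$, then cites the classical BT/cusp results to conclude; this is exactly the classification argument you invoke. The one item you leave open, $\xi_1\neq 0$, is immediate since the appendix gives $\xi_1=n>0$.
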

\begin{proof}Continuing from the proof of Theorem \ref{th:3.4}, under the non-degeneracy condition $\zeta\eta \neq 0$, the scaling transformation
\begin{align}\label{eq:4.1}
x\rightarrow \frac{\xi_1\zeta}{\eta^2}x,\quad y\rightarrow \frac{\xi_1^2\zeta^2}{\eta^3}y,\quad t\rightarrow \frac{\eta}{\xi_1\zeta}t,
\end{align}
reduces the system to the normal form:
\begin{equation}\label{eq:4.2}
       \begin{aligned}
       \dfrac{{\rm d}x}{{\rm d}t}&=y,\\
         \dfrac{{\rm d}y}{{\rm d}t}&= x^2+xy+\mathcal{O}(|(x,y)|^3).
\end{aligned}\end{equation}
 Although the system contains higher-order terms $\mathcal{O}(|(x,y)|^3)$, the theories established in \cite{Perko1996} (Theorems 2--3) ensure that these terms do not alter the topological structure of the nonhyperbolic critical point or the qualitative types of bifurcations in its unfolding. Therefore, the local bifurcation phenomena are completely characterized by the quadratic part of the normal form ($x^2 + xy$ in \eqref{eq:4.2}), and our subsequent analysis will focus on this essential structure.
\end{proof}
\begin{rem}
The proof of Theorem~\ref{th:3.4} shows that the conditions $\mathfrak{D}_{(p,q)} = 0$ in Theorem~\ref{th:4.1} are equivalent to the system \eqref{eq:2.34}. Hence, the same conditions may be rewritten as $c = c^*$, $n = n^*$, and $\zeta\eta \neq 0$.
\end{rem}
In the following, we will take $c$ and $n$ as bifurcation parameters and develop a universal
unfolding for the cusp singularity of order 2 near the point $(x^*,y^*)$. Let
\begin{align*}
c=c^*+\epsilon_1,\quad n=n^*+\epsilon_2.
\end{align*}
Then the perturbed system is
\begin{equation}\label{eq:4.3}
       \begin{aligned}
       \dfrac{{\rm d} x}{ {\rm d} t}&=\dfrac{x^3}{1+a x+b x^3}\Big(1-(c^*+\epsilon_1)(x+y)\Big)-mx,\\
         \dfrac{{\rm d} y}{{\rm d}t}&=  x- (n^*+\epsilon_2)y.
\end{aligned}\end{equation}

\begin{thm}\label{th:4.3}
Let $\zeta \eta \neq 0$. For sufficiently small perturbations $\epsilon = (\epsilon_1, \epsilon_2)$, the system \eqref{eq:4.3} is $C^\infty$-equivalent to the following system:
\begin{equation}\label{eq:4.4}
       \begin{aligned}
       \dfrac{{\rm d} x}{{\rm d}t}&=y,\\
         \dfrac{{\rm d}y}{{\rm d}t}&= \mu_1(\epsilon)+\mu_2(\epsilon)y+ x^2+xy,
\end{aligned}\end{equation}
 here $R(x,y,\epsilon)$ is $C^\infty$ in $(x, y, \epsilon)$, and the Jacobian matrix of the transformation from $(\epsilon_1, \epsilon_2)$ to $(\mu_1, \mu_2)$ satisfies:
\begin{equation}\label{eq:4.5}
\det\left(\frac{\partial(\mu_1, \mu_2)}{\partial(\epsilon_1, \epsilon_2)}\right)\Big|_{\epsilon =0} \neq 0.
 \end{equation}
 Consequently, system \eqref{eq:4.3} is a universal unfolding of the cusp singularity of order two, and it undergoes a Bogdanov-Takens bifurcation of codimension two in a small neighborhood of the unique positive equilibrium $(x^*, y^*)$. Therefore, there exist parameter values such that system \eqref{eq:2.4} exhibits an unstable limit cycle, and other parameter values where it displays an unstable homoclinic loop.
 \end{thm}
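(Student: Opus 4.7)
The plan is to follow the standard reduction procedure for Bogdanov-Takens bifurcations of codimension two, as developed in \cite{Perko1996, Kuznetsov2023}. At $\epsilon = 0$ the system already reduces, by Theorem \ref{th:4.1}, to the nilpotent-quadratic normal form \eqref{eq:4.2}; the task is to show that adding the two-parameter perturbation $(\epsilon_1, \epsilon_2)$ yields a \emph{universal} unfolding, which amounts to constructing smooth coordinate, time, and parameter transformations that bring \eqref{eq:4.3} into the canonical form \eqref{eq:4.4} and verifying the non-degeneracy \eqref{eq:4.5}.

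First, I would translate the double positive equilibrium $(x^*, y^*)$, which persists smoothly for small $\epsilon$ by the implicit function theorem, to the origin via $u = x - x^*(\epsilon)$, $v = y - y^*(\epsilon)$, and Taylor-expand the right-hand side to second order while keeping each coefficient as a smooth function of $\epsilon$. This yields a planar system whose linear part at $\epsilon = 0$ is nilpotent of index two. I would then apply the scaling and linear change of variables \eqref{eq:4.1} used in the proof of Theorem \ref{th:4.1}, extended to include the perturbation-dependent terms, so that the unperturbed quadratic part becomes $x^2 + xy$ and the linear part equals the standard nilpotent Jordan block plus an $\mathcal{O}(\epsilon)$ correction.

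Next, I would run the Bogdanov-Takens normalization: a sequence of near-identity $C^\infty$ coordinate shifts together with a smooth time reparametrization, designed to successively remove the residual nonresonant terms and to absorb the $\epsilon$-dependent linear perturbations into two irreducible parameters, which I denote by $\mu_1(\epsilon)$ and $\mu_2(\epsilon)$, both smooth and vanishing at $\epsilon = 0$. The resulting system has the prescribed shape \eqref{eq:4.4}. To verify \eqref{eq:4.5}, I would differentiate the explicit expressions for $\mu_1$ and $\mu_2$ with respect to $(\epsilon_1, \epsilon_2)$ at the origin, using the closed-form values of $x^*, y^*, c^*, n^*$ together with the hypothesis $\zeta\eta \neq 0$. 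The leading-order contribution to the Jacobian comes from the direct dependence of the constant and trace-type coefficients on $\epsilon_1$, which enters the incidence term as $c^* + \epsilon_1$, and on $\epsilon_2$, which enters the recovery term as $n^* + \epsilon_2$; these two effects should produce two linearly independent columns, yielding a nonzero determinant.

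Once the universal unfolding \eqref{eq:4.4}--\eqref{eq:4.5} is established, the conclusion regarding unstable limit cycles and homoclinic loops follows from the classical BT unfolding diagram in \cite{Perko1996, Kuznetsov2023}: in the $(\mu_1, \mu_2)$-plane one finds a saddle-node curve, a Hopf curve along which an unstable limit cycle is born, and a homoclinic bifurcation curve carrying an unstable homoclinic loop, all meeting at $(0,0)$; the non-degenerate parameter map $\epsilon \mapsto \mu(\epsilon)$ pulls these curves back to analogous curves in the $(\epsilon_1, \epsilon_2)$-plane. The principal technical obstacle will be the bookkeeping for the multi-step normalization: because the nonlinearity $x^3/(1 + a x + b x^3)$ is rational, the Taylor coefficients at $(x^*, y^*)$ are complicated rational functions of the parameters, and tracking how each near-identity transformation and each $\epsilon$-dependent correction contributes to $\partial \mu_i/\partial \epsilon_j$ at $\epsilon = 0$ is the decisive step. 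Rather than expanding every intermediate quantity, I would isolate the leading-order dependence by computing $\mu_1$ and $\mu_2$ modulo $O(|\epsilon|^2)$ and modulo terms that factor through the normalizing change of variables, which is the standard efficient device for this kind of calculation.
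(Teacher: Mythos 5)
Your overall strategy (translate, Taylor-expand with $\epsilon$-dependent coefficients, run the Bogdanov--Takens normalization with a time reparametrization and a final scaling, then check the Jacobian of $\epsilon\mapsto\mu(\epsilon)$) is exactly the route the paper takes in \ref{proof_th_4.3}. However, there is one genuine error at the very first step. You claim that the double positive equilibrium $(x^*,y^*)$ ``persists smoothly for small $\epsilon$ by the implicit function theorem'' and propose translating the $\epsilon$-dependent equilibrium $(x^*(\epsilon),y^*(\epsilon))$ to the origin. This is false: at the BT point the Jacobian is nilpotent, so $\det(J^*)=0$ and the implicit function theorem does not apply; indeed, on one side of the saddle-node curve the perturbed system has \emph{no} equilibria near $(x^*,y^*)$, so no such smooth family can exist. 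Worse, if such a persistent equilibrium did exist and you translated it to the origin, the constant term in the normal form would vanish identically, $\mu_1(\epsilon)\equiv 0$, and you could never obtain a universal unfolding. The correct procedure, which the paper follows, is to translate the \emph{unperturbed} equilibrium $(x^*,y^*)$; the resulting system then carries $\mathcal{O}(\epsilon)$ constant terms ($d_1$ and $d_6$ in \eqref{eq:3.5}) that survive the normalization and become the unfolding parameter $\mu_1(\epsilon)$.

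Apart from this, your plan is sound and matches the paper's: the same chain of near-identity changes, the time rescaling to kill the $y^2$ term, the shift in $x$ to remove the linear term, the $\zeta\eta\neq 0$ scaling, and the verification that $\det\bigl(\partial(\mu_1,\mu_2)/\partial(\epsilon_1,\epsilon_2)\bigr)\big|_{\epsilon=0}\neq 0$ (the paper obtains the explicit value $\tfrac{(2n^*+1)x^*\eta^5}{c^*(n^*)^2\zeta^4}$, consistent with your heuristic that $\epsilon_1$ and $\epsilon_2$ enter through independent channels). Your final appeal to the classical BT unfolding diagram to produce the unstable limit cycle and homoclinic loop is also the paper's argument, via Bogdanov and Takens. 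Fix the first step and the proof goes through.
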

The complete proof is deferred to the \ref {proof_th_4.3} for clarity.
\par
According to Chow's results \cite{Chow1994}, there exists a   neighborhood $U$ of $(\epsilon_1,\epsilon_2)=(0,0)$ in $\mathbb{R}^2$ such
that the bifurcation diagram of (1.5) inside $U$ consists of the origin
$(\mu_1,\mu_2)=(0,0)$. Denoting $\text{jet}\mu(\epsilon_1,\epsilon_2)$,  that is, the 2-jet of a bivariate function, which refers to its second-order Taylor polynomial, retaining terms up to quadratic terms in the expansion, one can obtain the following  local representations of the bifurcation
curves up to second-order approximations by using the 2-jet of $\mu_1(\epsilon_1,\epsilon_2)$ and $\mu_2(\epsilon_1,\epsilon_2)$, that is,
\begin{align*}
\text{j}^2(\mu_1(\epsilon_1,\epsilon_2))&=r_1\epsilon_1+r_2\epsilon_2+r_3\epsilon_1^2+r_4\epsilon_1\epsilon_2+r_5\epsilon_2^2,\\
\text{j}^2(\mu_2(\epsilon_1,\epsilon_2))&=s_1\epsilon_1+s_2\epsilon_2+s_3\epsilon_1^2+s_4\epsilon_1\epsilon_2+s_5\epsilon_2^2.
\end{align*}
${\rm (\romannumeral1) }$ The saddle-node bifurcation curve is given by
{\footnotesize
\begin{equation}
\begin{split}
&SN=\Big\{(\epsilon_1,\epsilon_2)\in U\big|j^2(\mu_1(\epsilon_1,\epsilon_2))=0,j^2(\mu_2(\epsilon_1,\epsilon_2))\neq0\Big\}=SN^+\bigcup SN^-\\
=&\Big\{(\epsilon_1,\epsilon_2)\in U\big|j^2(\mu_1(\epsilon_1,\epsilon_2))=0,\epsilon_1<0\Big\}\bigcup\Big\{(\epsilon_1,\epsilon_2)\in U\big|j^2(\mu_1(\epsilon_1,\epsilon_2))=0,\epsilon_1>0\Big\}.\\
\end{split}
\end{equation}}
${\rm (\romannumeral2) }$ The Hopf bifurcation curve is given by
\begin{equation}
H=\bigg\{(\epsilon_1,\epsilon_2)\in U\big|j^2(\mu_1(\epsilon_1,\epsilon_2))=-j^2(\mu_2^2(\epsilon_1,\epsilon_2)),j^2(\mu_2(\epsilon_1,\epsilon_2))>0\Big\}.\\
\end{equation}
${\rm (\romannumeral3) }$ The Homoclinic bifurcation curve is given by
\begin{equation}
H=\bigg\{(\epsilon_1,\epsilon_2)\in U\big|j^2(\mu_1(\epsilon_1,\epsilon_2))=-j^2\Big(\dfrac{49}{25}\mu_2^2(\epsilon_1,\epsilon_2)\Big),j^2(\mu_2(\epsilon_1,\epsilon_2))>0\Big\}.
\end{equation}

\begin{center}
\begin{tabular}{cc}
\begin{minipage}[c]{0.5\textwidth}
\centering
\includegraphics[scale=0.45]{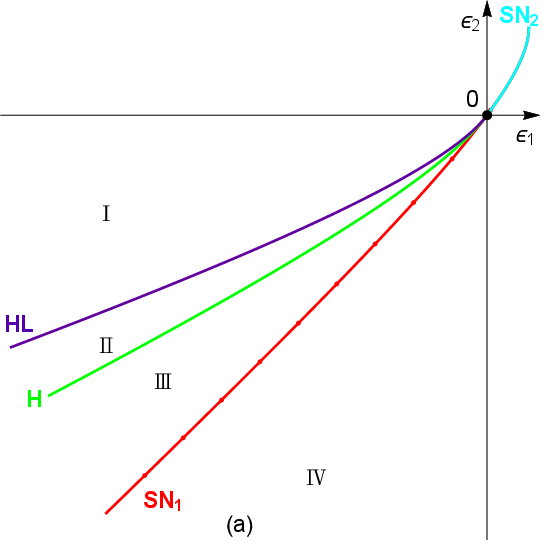}\\
\end{minipage}
\begin{minipage}[c]{0.5\textwidth}
\centering
\includegraphics[scale=0.45]{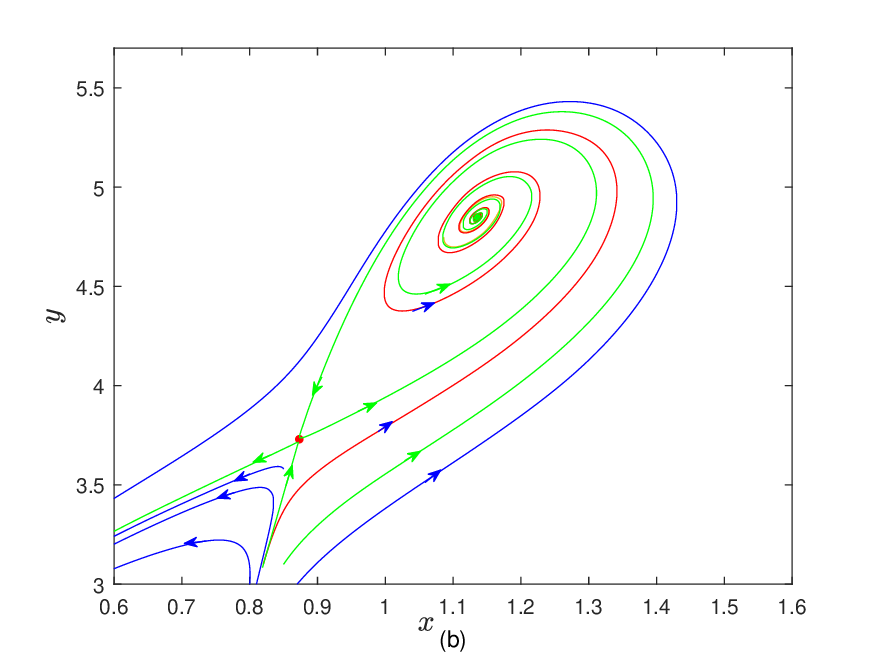}\\
\end{minipage}
\end{tabular}
\end{center}
\begin{center}
\begin{tabular}{cc}
\begin{minipage}[c]{0.5\textwidth}
\centering
\includegraphics[scale=0.45]{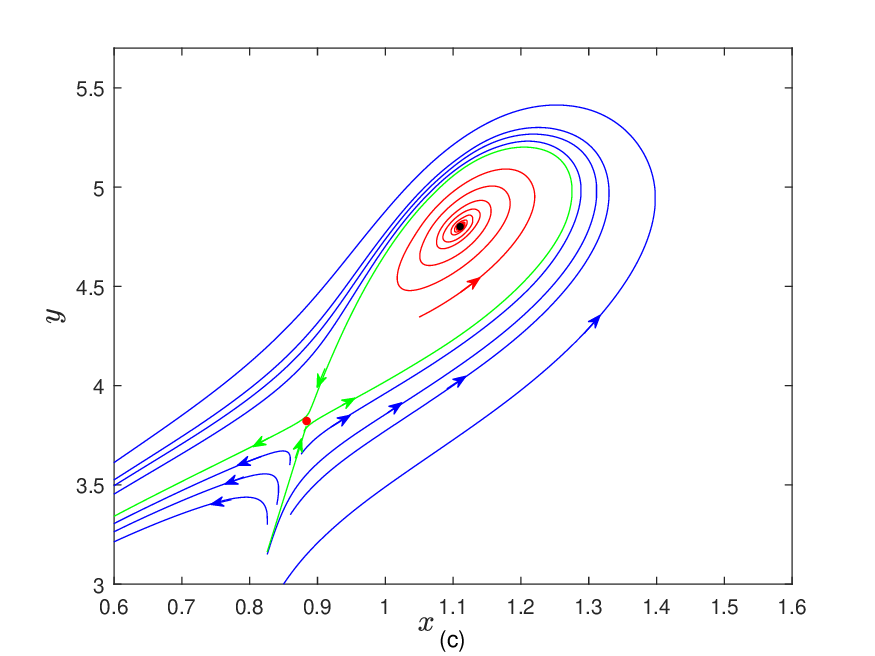}\\
\end{minipage}
\begin{minipage}[c]{0.5\textwidth}
\centering
\includegraphics[scale=0.45]{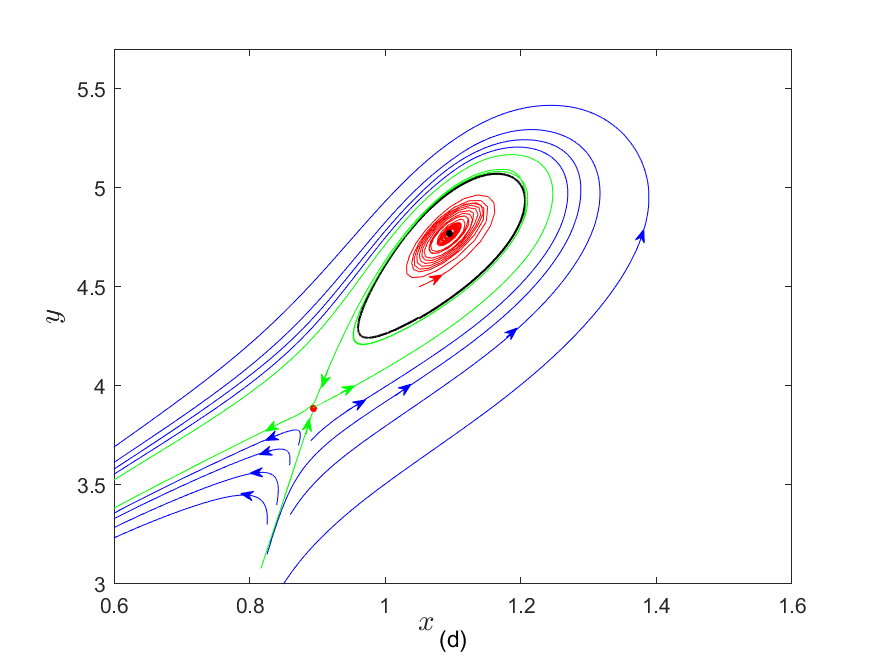}\\
\end{minipage}
\end{tabular}
\end{center}
\begin{center}
\begin{tabular}{cc}
\begin{minipage}[c]{0.5\textwidth}
\centering
\includegraphics[scale=0.45]{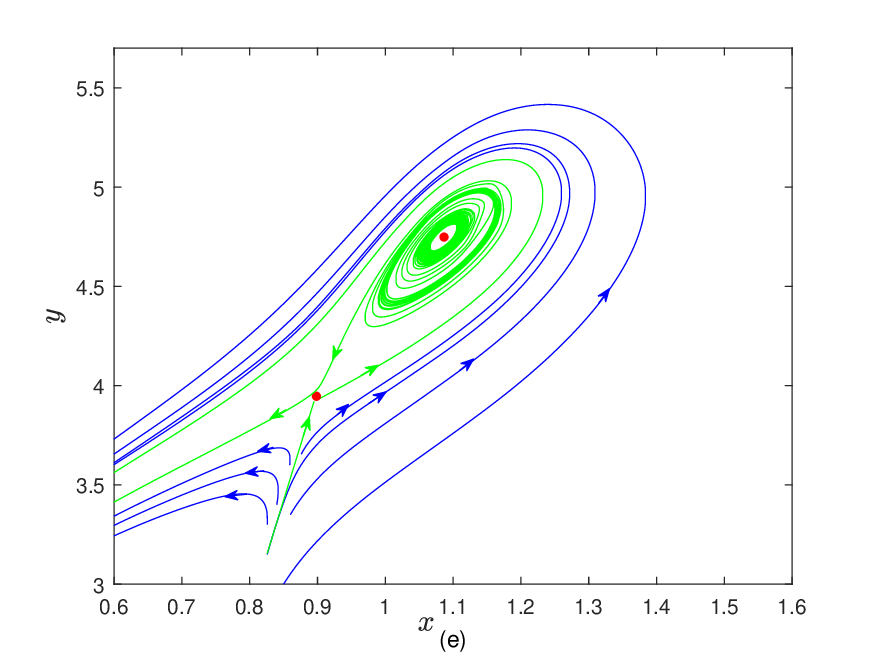}\\
\end{minipage}
\begin{minipage}[c]{0.5\textwidth}
\centering
\includegraphics[scale=0.45]{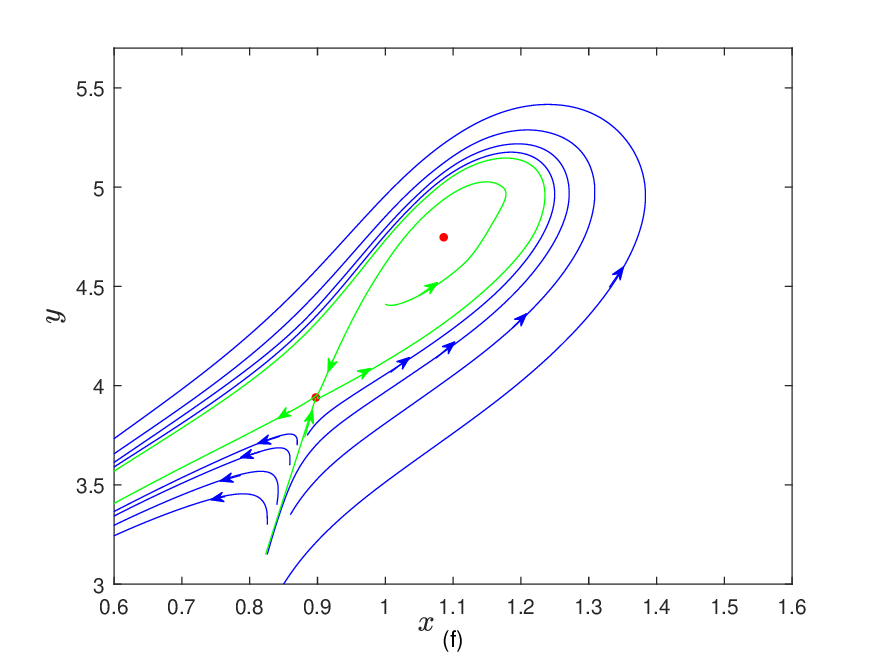}\\
\end{minipage}
\end{tabular}
\end{center}
\begin{center}
\begin{tabular}{cc}
\begin{minipage}[c]{0.5\textwidth}
\centering
\includegraphics[scale=0.45]{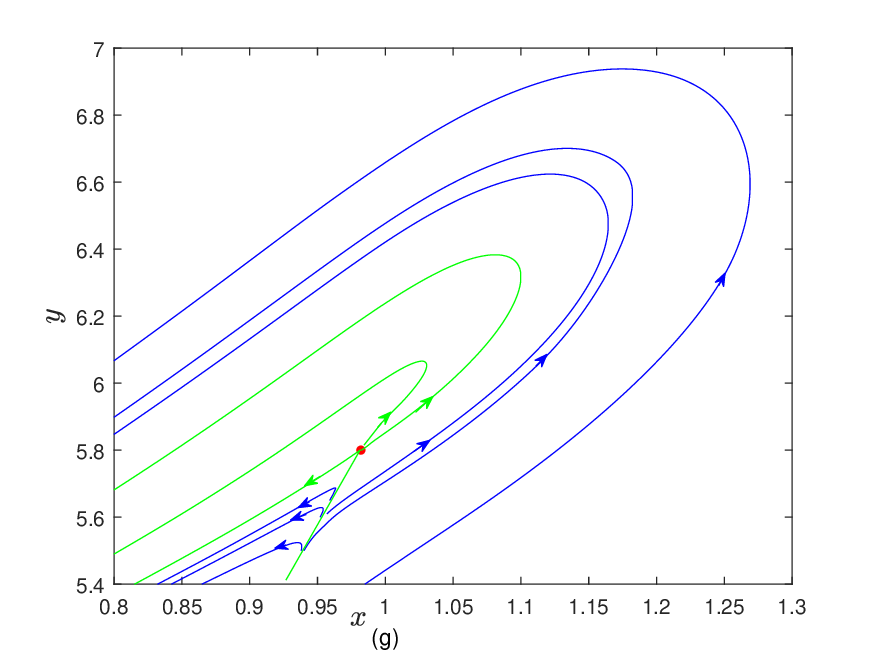}\\
\end{minipage}
\begin{minipage}[c]{0.5\textwidth}
\centering
\includegraphics[scale=0.45]{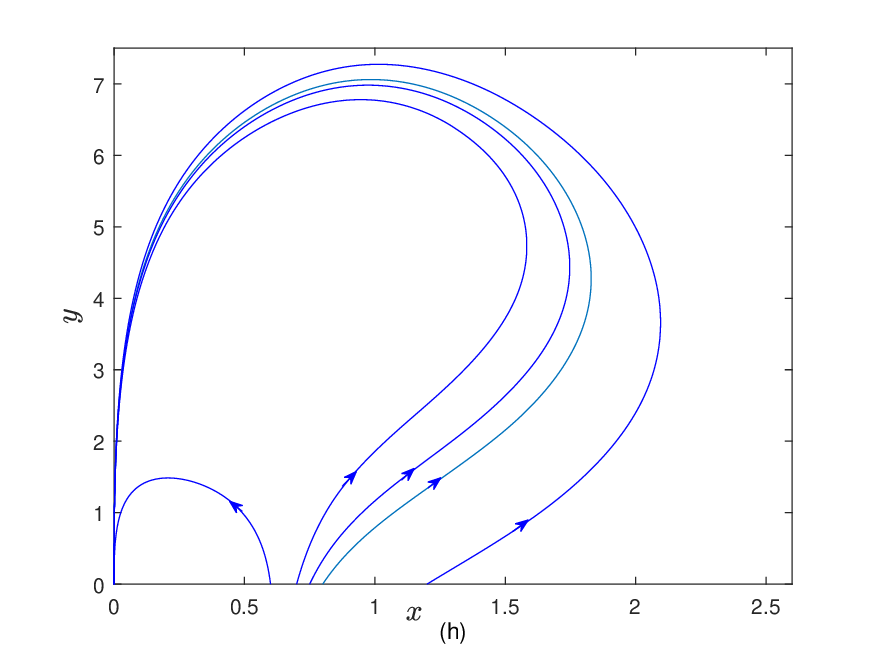}\\
\end{minipage}
\end{tabular}
\end{center}
\begin{figure}[H]
\ContinuedFloat
  \centering
  \begin{minipage}[c]{0.5\textwidth}
    \centering
    \includegraphics[scale=0.45]{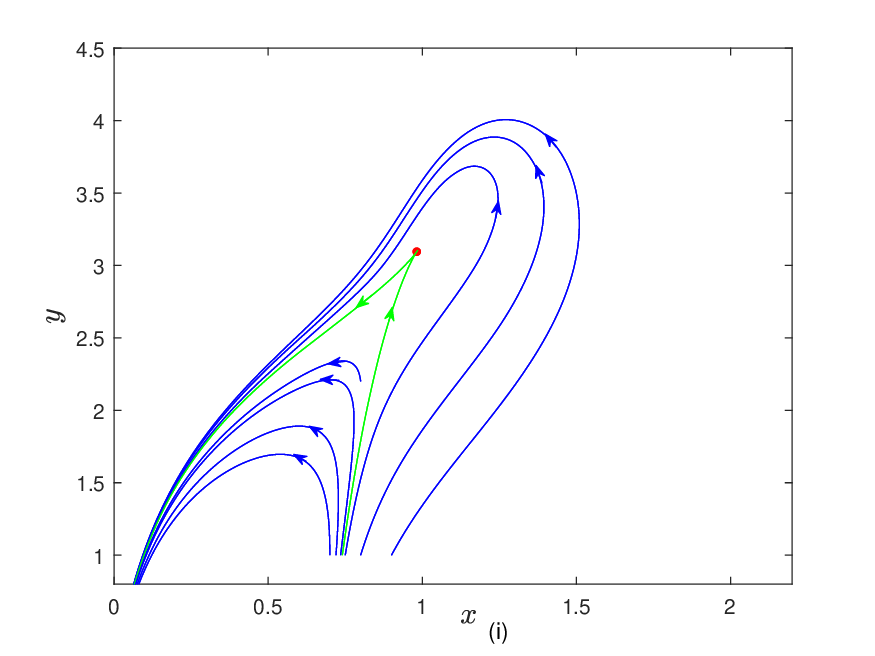}\\
  \end{minipage}
  \hfill
  \begin{minipage}[c]{0.45\textwidth}
    \centering
    \includegraphics[scale=0.45]{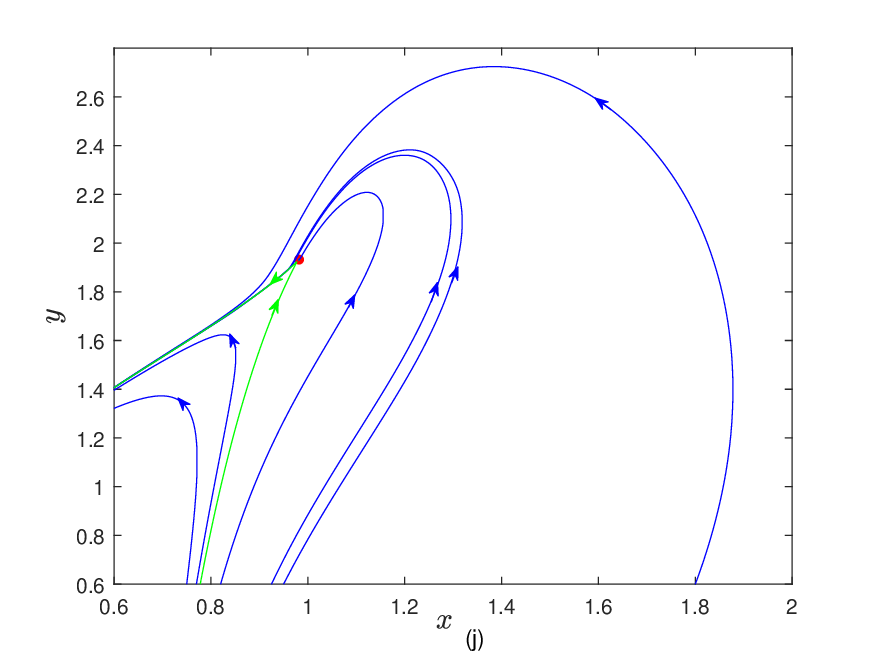}\\
  \end{minipage}
  \caption{\footnotesize The bifurcation diagram and phase portraits of system \eqref{eq:4.3} for $a=-0.3$, $b=0.5$, $c=0.1253449$, $n=0.3173105$, $m=0.4$. (a) The bifurcation diagram; (b) A stable focus when $(\epsilon_1,\epsilon_2) = (-0.0303449,- 0.0830482)$ lies in the region  I; (c) An unstable homoclinic loop when $(\epsilon_1,\epsilon_2) = (-0.0303449,- 0.08601394)$ lies on the curve $HL$; (d) When the point $(\epsilon_1, \epsilon_2) = (-0.0303449, -0.0876136)$ lies in region II, near the curve $H$, system exhibits a unique limit cycle in a small neighborhood of the focus.  (e) An unstable focus when $(\epsilon_1,\epsilon_2)= (-0.0303449, -0.0884821)$ lies in the curve $H$; (f) An unstable focus exists at $(\epsilon_1, \epsilon_2) = (-0.0303449, -0.08852161)$ in region III, away from the  curve $H$;  (g) An repelling saddle-node when $(\epsilon_1,\epsilon_2) = (-0.05,-0.148005)$ lies in the curve $SN^+$; (h) No positive equilibria when $(\epsilon_1,\epsilon_2) = (-0.0303449,0.177887)$ lies in the region  IV; (i) A cusp when $(\epsilon_1,\epsilon_2) = (0,0)$ lies in the origin $O$; (j) An attracting  saddle-node when $(\epsilon_1,\epsilon_2) = (0.05,0.190902)$ lies in the curve $SN^-$.}
  \label{fig:8}
\end{figure}

\begin{rem}\label{rem:4.4}
The saddle-node bifurcation curve near \(\epsilon = 0\) is locally diffeomorphic to a regular curve, as established by the unfolding theory in singularity theory (Theorems~\ref{th:2.9} and~\ref{th:2.11}), and is  observable in the bifurcation diagram (Figure~\ref{fig:8}(a)). The diagram in the original perturbation coordinates \((\epsilon_1, \epsilon_2)\) is topologically equivalent to the universal unfolding in the \((\mu_1, \mu_2)\)-plane under the local diffeomorphism. Specifically, the saddle-node bifurcation curve in the universal unfolding corresponds to a straight line $\mu_1=0$ in the \((\mu_1, \mu_2)\)-plane, reflecting the geometric simplification afforded by the normal form. This structural consistency between the singularity-theoretic analysis and dynamical systems approaches confirms the robustness of the codimension-2 cusp unfolding. The equivalence preserves the bifurcation hierarchy while accounting for smooth parameter-dependent distortions governed by the transversality condition \(\det\left(\frac{\partial \mu}{\partial \epsilon}\right)\big|_{\epsilon=0} \neq 0\).
\end{rem}

\begin{rem}
The bifurcation diagram and phase portraits of system \eqref{eq:4.3} for the given parameter values \( a = -0.3 \), \( b = 0.5, m=0.4 \), \( c = 0.1253449 \), and \( n = 0.3173105 \) are shown in the following figures. These figures (see Fig.~\ref{fig:8}) illustrate the dynamical behavior of the system as the perturbation parameters \(\epsilon_1\) and \(\epsilon_2\) vary, with different phenomena observed in four distinct regions marked by the curves \(H\) (Hopf bifurcation), \(HL\) (homoclinic loop), and \(SN\) (saddle-node bifurcation).
\begin{itemize}
    \item[(a)] \textbf{Bifurcation Diagram:} The bifurcation diagram shows how the qualitative nature of the system changes as \(\epsilon_1\) and \(\epsilon_2\) are varied. The diagram is divided into four regions by the three bifurcation curves \(H\), \(HL\), and \(SN\). Each region corresponds to different dynamical behaviors such as stable or unstable equilibria, limit cycles, and homoclinic orbits.

    \item[(b)] \textbf{Stable Focus:} For \((\epsilon_1, \epsilon_2) = (-0.0303449, -0.0830482)\), the system exhibits a stable focus located in region I. Here, the equilibrium point is stable with complex conjugate eigenvalues, indicating a damped oscillatory motion around the equilibrium.

    \item[(c)] \textbf{ Unstable Homoclinic Loop:} At \((\epsilon_1, \epsilon_2) = (-0.0303449, -0.08601394)\), the system shows an unstable homoclinic loop. The trajectory forms a homoclinic orbit, connecting the saddle point to itself, but the orbit is unstable, causing the trajectory to diverge over time.

    \item[(d)] \textbf{Unstable Limit Cycle:} When \((\epsilon_1, \epsilon_2) = (-0.0303449, -0.0876136)\), the system enters region II, where an unstable limit cycle is observed. This indicates a periodic solution that is unstable, where nearby trajectories either spiral outwards or inwards, depending on their initial conditions.

    \item[(e)] \textbf{ Unstable Focus:} For \((\epsilon_1, \epsilon_2) = (-0.0303449, -0.0884821)\), the system shows an unstable focus, located on the Hopf bifurcation curve \(H\). The equilibrium point is unstable with complex eigenvalues, but the system still exhibits oscillatory behavior.

    \item[(f)] \textbf{Unstable Behavior in Region III:} At \((\epsilon_1, \epsilon_2) = (-0.0303449, -0.08852161)\), the system is located in region III. Here, the system's dynamics are unstable, and this point lies near a critical threshold of bifurcations, suggesting the presence of chaotic or highly sensitive behavior to perturbations.

    \item[(g)] \textbf{Repelling Saddle-Node:} At \((\epsilon_1, \epsilon_2) = (-0.05, -0.148005)\), the system reaches a repelling saddle-node bifurcation along the \(SN^+\) curve. This indicates the creation of a pair of equilibria, one stable and one unstable, where the stable equilibrium repels trajectories.

    \item[(h)] \textbf{ No Positive Equilibria:} For \((\epsilon_1, \epsilon_2) = (-0.0303449, 0.177887)\), the system has no positive equilibria, as indicated by the point lying in region IV. In this case, the system's state does not admit a biologically meaningful equilibrium, implying extinction of the population.
 \item[(i)] \textbf{ Cusp Point:} Finally, at \((\epsilon_1, \epsilon_2) = (0, 0)\), the system reaches a cusp bifurcation at the origin. This point marks a critical transition in the system's dynamics, where small changes in parameters lead to significant qualitative changes in the system's behavior.

    \item[(j)] \textbf{Attracting Saddle-Node:} At \((\epsilon_1, \epsilon_2) = (0.05, 0.190902)\), an attracting saddle-node bifurcation is observed along the \(SN^-\) curve. Here, a stable and unstable equilibrium pair is formed, with the stable equilibrium attracting nearby trajectories.

\end{itemize}

These observations highlight the rich dynamical behavior of the system, including bifurcations leading to various types of equilibria, periodic orbits, and chaotic or unstable solutions. The interplay between the Hopf, homoclinic loop, and saddle-node bifurcations governs the transitions between different types of behavior in the system.
\end{rem}
\subsection{Bogdanov-Takens bifurcation: cusp of order 3}

To analyze the codimension-3 Bogdanov-Takens bifurcation associated with a third-order cusp, we derive the governing system by imposing the singularity conditions $\det(J^*)=0$ and $\mathrm{Tr}(J^*)=0$, along with the degeneracy condition $\eta=0$. This leads to the following coupled system for parameters $c$, $n$, and $b$:
\begin{equation*}
\begin{cases}
\mathcal{N}_T = c(x^*)^3 - n^2D = 0, \\
4(c(n+1)+bmn)(x^*)^3 - 3n(x^*)^2 + 2amnx^* + mn = 0,\\
\eta=0,
\end{cases}
\end{equation*}
or equivalently,
\begin{equation*}
\begin{cases}
\mathcal{N}_T = c(x^*)^3 - n^2D = 0, \\
(c(n+1) + bmn)(x^*)^{3} - n(x^*)^{2} + amnx^* + mn = 0,\\
\eta=0,
\end{cases}
\end{equation*}
where the critical value $x^*$ is explicitly given by
\[
x^* = am + \sqrt{a^2m^2 + 3m}.
\]
To resolve this system, we first solve the degeneracy condition $\eta=0$ for parameter $b$, obtaining
\begin{align*}
b=\dfrac{2 a^2 m n-4 a c n \left(x^*\right)^2-2 a c \left(x^*\right)^2-12 c n x^*-9 c x^*+6 n}{9 m n x^*+3 n \left(x^*\right)^3}.
\end{align*}
Substituting this expression for $b$ into the remaining equations and simplifying via the relation $(x^*)^2=2amx^*+3m$ yields the critical parameter values:
\begin{align*}
c&=\dfrac{4 m \left(a x^*+3\right)^2}{x^* \left(m \left(4 a^2 \left(x^*\right)^2+60 a x^*+90\right)+8 a^2 \left(x^*\right)^2+30 a x^*-18 \left(x^*\right)^2+27\right)}:=c^*,\\
n&= \frac{2 m \left(a x^*+3\right)}{2 a x^*+3}:=n^*.
\end{align*}
Consequently, the critical value $b^*$ is determined as
\begin{align*}
b=\dfrac{2 a^2 m n^*-4 a c^* n^*\left(x^*\right)^2-2 a c^* \left(x^*\right)^2-12 c^* n^* x^*-9 c^* x^*+6 n^*}{9 m n^* x^*+3 n^* \left(x^*\right)^3}:=b^*.
\end{align*}

To establish the normal form of the bifurcation, we employ a sequence of coordinate transformations. The following lemma provides the foundation for simplifying the system to its essential dynamics:

\begin{lem}[\cite{Dumortier1987}] \label{lem:4.6}
Consider the system
\begin{equation}\label{eq:3.27}
       \begin{aligned}
       \dfrac{{\rm d} x}{{\rm d}t}&=y,\\
         \dfrac{{\rm d}y}{{\rm d}t}&=x^2+\displaystyle\sum_{i=3}^4a_{i0}x^i +\displaystyle\sum_{i=1}^2y^i(a_{i+1,i}x^{i+1}+a_{i+2,i}x^{i+2})+\mathcal{O}(|(x,y)|^4).
       \end{aligned}
\end{equation}
Through smooth coordinate changes and time rescaling, the system \eqref{eq:3.27} can be transformed into the simplified form
\begin{equation}\label{eq:3.28}
       \begin{aligned}
       \dfrac{{\rm d}X}{{\rm d}t}&=Y,\\
         \dfrac{{\rm d}Y}{{\rm d}t}&=X^2+EX^3Y+\mathcal{O}(|(X,Y)|^4),
       \end{aligned}
\end{equation}
where the key coefficient is given by
\begin{align*}
E=a_{31}-a_{30}a_{21}.
\end{align*}
\end{lem}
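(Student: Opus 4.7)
The plan is to derive the normal form \eqref{eq:3.28} from \eqref{eq:3.27} through a sequence of weighted near-identity transformations combined with time rescaling, tracking coefficients carefully so as to expose the invariant $E = a_{31} - a_{30}a_{21}$. I would first install the weighted grading $\deg(x)=1$, $\deg(y)=2$ (so that the unperturbed leading field $y\,\partial_x + x^2\partial_y$ is weighted-homogeneous of weight $1$) and grade each perturbation monomial accordingly: $x^3$ has weight $3$, the monomials $x^4$ and $x^2 y$ have weight $4$, $x^3y$ has weight $5$. The normal-form reduction then amounts to choosing representatives in the complement of the image of the homological operator $\mathrm{ad}\,L_0$ on polynomial vector fields at each weight, where $L_0 = y\,\partial_x + x^2\partial_y$.

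Next, I would proceed weight by weight, using transformations of the form $(x,y)\mapsto (X+\phi(X,Y),\,Y+\psi(X,Y))$ together with a time rescaling $dt = (1+r(X,Y))\,d\tau$, with $\phi,\psi,r$ chosen at each order so as to preserve the structural equation $\dot X = Y$ and simultaneously annihilate chosen monomials in $\dot Y$. At weighted degree $3$, I would kill the $a_{30}X^3$ term; at weight $4$, I would kill both $a_{40}X^4$ and $a_{21}X^2Y$. Dumortier's classification of the cokernel of $\mathrm{ad}\,L_0$ guarantees that at weight $5$ the only irreducible monomial in $\dot Y$ is $X^3Y$, so the remaining task is to identify the coefficient with which it survives.

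The crux, and the main obstacle, is that the weight-$3$ transformation removing $a_{30}X^3$ feeds back nontrivially into weight $5$: its action on the term $a_{21}X^2Y$ (combined with the adjustments of $\psi$ and $r$ needed to maintain $\dot X = Y$) produces a contribution of the form $-a_{30}a_{21}X^3Y$ in $\dot Y$, which adds to the original coefficient $a_{31}$. Verifying this cross-term by direct expansion is tedious but routine; the cleanest route is to treat $\phi,\psi,r$ as formal power series graded by weight and solve the associated homological equations order by order, collecting only the surviving monomial at each stage. Finally, all remaining monomials of weighted degree $\geq 6$ are absorbed into the remainder $\mathcal{O}(|(X,Y)|^4)$, yielding the claimed form with $E = a_{31} - a_{30}a_{21}$.
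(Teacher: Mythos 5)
The paper offers no proof of this statement at all: Lemma~\ref{lem:4.6} is imported verbatim from \cite{Dumortier1987} and simply cited, so there is no in-paper argument to compare against and your attempt must stand on its own. Your overall strategy --- quasi-homogeneous normal-form reduction of the nilpotent field $L_0=y\,\partial_x+x^2\,\partial_y$ order by order, with a time rescaling folded into the homological operator (which is essential, since conjugacies preserving the structure $\dot x=y$ alone cannot remove the $x^3$ term) --- is indeed the standard route by which this result is established, and the mechanism you single out for the correction term (the generator that kills $a_{30}x^3$ acting back on $a_{21}x^2y$) is the right one.

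Two concrete defects remain. First, the grading $\deg x=1$, $\deg y=2$ does not make $L_0$ weighted-homogeneous: $y\,\partial_x$ then has weight $1$ while $x^2\,\partial_y$ has weight $0$, so "proceeding weight by weight" is ill-founded as written. The correct weights are $\deg x=2$, $\deg y=3$, under which $L_0$ has quasi-degree $1$ and the perturbations $a_{30}x^3\partial_y$, $a_{21}x^2y\,\partial_y$, $a_{40}x^4\partial_y$, $a_{31}x^3y\,\partial_y$ sit at quasi-degrees $3,4,5,6$ respectively (so $x^4$ and $x^2y$ are \emph{not} removed at the same stage), while the $y^2$-terms live at quasi-degree $\geq 9$ and are automatically irrelevant to $E$ --- a point your write-up never addresses. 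Second, and more seriously, the entire content of the lemma is the exact value $E=a_{31}-a_{30}a_{21}$, and you assert it rather than compute it. The verification is short once set up with the right weights: the degree-$3$ term is removed by $U_2=-\tfrac{a_{30}}{4}x^2\partial_x$ together with the time factor $1-\tfrac{a_{30}}{2}x$; the bracket $\bigl[U_2,\,a_{21}x^2y\,\partial_y\bigr]$ supplies $-\tfrac{1}{2}a_{30}a_{21}\,x^3y\,\partial_y$ and the time factor acting on $a_{21}x^2y\,\partial_y$ supplies the other $-\tfrac{1}{2}a_{30}a_{21}\,x^3y\,\partial_y$, while the subsequent removals of $a_{21}x^2y$ and $a_{40}x^4$ feed only into quasi-degree $\geq 7$. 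Without exhibiting this balance (and checking that no other degree-$6$ monomial projects onto $x^3y\,\partial_y$ in the chosen complement of $\operatorname{im}\,\mathrm{ad}\,L_0$), the proposal remains a plan for a proof rather than a proof.
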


Building upon this normal form reduction, we can now characterize the local dynamics near the degenerate singularity.

\begin{thm}\label{thm:cusp-equivalence}
Under the conditions $\xi_4 \neq -n\xi_3$ and $\xi_4= -2n\xi_3$ (i.e., when $\eta=0$), system \eqref{eq:2.4} is \(C^\infty\)-equivalent near the origin to
\begin{equation}\label{eq:cusp-normal-form}
\begin{aligned}
\dfrac{{\rm d} x}{{\rm d}t} &= y, \\
\dfrac{{\rm d} y}{{\rm d}t} &= x^2 + \chi x^3 y + \mathcal{O}(|(x,y)|^4),
\end{aligned}
\end{equation}
where $\chi$ is a constant determined by the original system parameters. The remainder terms vanish smoothly as \(|(x, y)| \to 0\).
\end{thm}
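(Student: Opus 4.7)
The plan is to continue the normal-form reduction begun in Theorem~\ref{th:3.4} to one order higher and then invoke Lemma~\ref{lem:4.6}. By Theorem~\ref{th:3.4} the system \eqref{eq:2.4} is already reduced to the quadratic normal form
\begin{equation*}
\dot x = y, \qquad \dot y = \zeta\xi_{1}x^{2} + \eta xy + \mathcal O(|(x,y)|^{3}).
\end{equation*}
The hypothesis $\xi_{4}=-2n\xi_{3}$ forces $\eta=0$, while $\xi_{4}\neq -n\xi_{3}$ together with the identity $\eta-\zeta=\xi_{3}$ (so that $\eta=0$ gives $\zeta=-\xi_{3}\neq0$) ensures that the coefficient $\zeta\xi_{1}$ of $x^{2}$ remains nonzero. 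A linear scaling of the form $x\mapsto\alpha x$, $y\mapsto\beta y$, $t\mapsto\gamma t$, with $(\alpha,\beta,\gamma)$ chosen so that the $x^{2}$ coefficient becomes $+1$, then puts the system in the precise shape
\begin{equation*}
\dot x = y, \qquad \dot y = x^{2} + \sum_{i=3}^{4}a_{i0}x^{i} + \sum_{i=1}^{2}y^{i}\bigl(a_{i+1,i}x^{i+1} + a_{i+2,i}x^{i+2}\bigr) + \mathcal O(|(x,y)|^{4})
\end{equation*}
required by Lemma~\ref{lem:4.6}.

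The first step, therefore, is to Taylor-expand the two right-hand sides of \eqref{eq:2.4} to order four about $(x^{*},y^{*})$. Unlike in the proof of Theorem~\ref{th:3.4} I cannot discard cubic and quartic contributions from $\tfrac{x^{3}(1-cx-cy)}{1+ax+bx^{3}}$; each monomial has to be transported through (i) the translation to the equilibrium, (ii) the affine rectification that mapped the Jacobian at $(x^{*},y^{*})$ to the standard nilpotent block used to produce \eqref{eq:3.3}, and (iii) any near-identity quadratic change needed to kill non-resonant quadratic terms. The output of this bookkeeping is explicit formulas for the coefficients $a_{30},a_{40},a_{21},a_{31},a_{32},a_{42}$ as rational functions of $(a,b,c,m,n)$ evaluated on the cusp locus $b=b^{*}$, $c=c^{*}$, $n=n^{*}$.

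The second step applies Lemma~\ref{lem:4.6} verbatim: a further sequence of smooth coordinate changes and a time rescaling remove every cubic and quartic monomial except the single resonant term $x^{3}y$, yielding
\begin{equation*}
\dot X = Y, \qquad \dot Y = X^{2} + E\,X^{3}Y + \mathcal O(|(X,Y)|^{4}),
\end{equation*}
with $E = a_{31} - a_{30}a_{21}$. Setting $\chi := E$ and substituting the formulas produced in Step~1 gives $\chi$ as a single rational expression in the original model parameters, which is exactly the conclusion \eqref{eq:cusp-normal-form}. Because all intermediate transformations are $C^{\infty}$ diffeomorphisms with positive time-reparametrisation, the chain of equivalences delivers $C^{\infty}$ equivalence between \eqref{eq:2.4} and \eqref{eq:cusp-normal-form} on a neighbourhood of the origin.

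The main obstacle is Step~1. The difficulty is not conceptual but combinatorial: one must track fourth-order Taylor coefficients of the nonlinear quotient $\tfrac{x^{3}(1-cx-cy)}{1+ax+bx^{3}}$ through the stack of affine and near-identity transformations used in Theorem~\ref{th:3.4} without corrupting any of the resonant monomials that feed into $a_{21}$, $a_{30}$, and $a_{31}$. Once these three coefficients are available on the codimension-3 locus, the statement follows from Lemma~\ref{lem:4.6} with no additional analytic input, and the explicit value of $\chi$ obtained in this way is precisely what is required in the subsequent sections to discuss the universal unfolding of the third-order cusp.
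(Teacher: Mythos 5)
Your proposal is correct and follows essentially the same route as the paper: a fourth-order Taylor expansion about $(x^*,y^*)$, a chain of near-identity coordinate changes and a scaling to reach the pre-normal form required by Lemma~\ref{lem:4.6}, and then that lemma's formula $E=a_{31}-a_{30}a_{21}$ (the paper's $\chi=c_5-c_2c_3$) to produce the single resonant coefficient. Your observations that $\eta=0$ kills the $xy$ term while $\xi_4\neq -n\xi_3$ keeps the $x^2$ coefficient nonzero (so the scaling is legitimate) are exactly the non-degeneracy checks the paper performs via $a_1\neq 0$ and $c_1=0$.
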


The proof of the upper Theorem can be found in
\ref{proof-thm:cusp-equivalence}.
Having established the normal form equivalence, we now characterize the singularity at the equilibrium.
\begin{thm}
The nilpotent equilibrium $(x^*, y^*) = ( am + \sqrt{a^2 m^2 + 3m},\linebreak \dfrac{1}{n}(am + \sqrt{a^2 m^2 + 3m}))$ of system (2.4) is a cusp singularity of order 3 when \((c, n, b) = (c^*, n^*, b^*)\).
\end{thm}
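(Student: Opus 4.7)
The plan is to combine the normal form reduction already provided by Theorem~\ref{thm:cusp-equivalence} with the classical characterization of nilpotent cusp singularities due to Dumortier, Roussarie, and Sotomayor. First I would verify that at $(c,n,b)=(c^*,n^*,b^*)$ the linearization $J^*$ of system~\eqref{eq:2.4} at $(x^*,y^*)$ is nonzero yet nilpotent. The triple $(c^*,n^*,b^*)$ is, by construction, the unique solution of the simultaneous constraints $\det J^*=0$ and $\mathrm{Tr}\,J^*=0$, so both eigenvalues of $J^*$ vanish; since the $(2,1)$-entry of $J^*$ equals $1$, the matrix itself is nonzero, hence $J^*$ is rank-one nilpotent. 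This is the standing hypothesis under which the subsequent normal-form machinery operates.

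Next I would invoke Theorem~\ref{thm:cusp-equivalence} to reduce the germ of~\eqref{eq:2.4} at $(x^*,y^*)$ to the canonical form
\begin{equation*}
\dot x = y, \qquad \dot y = x^2 + \chi\,x^3 y + \mathcal{O}(|(x,y)|^4).
\end{equation*}
By the Dumortier--Roussarie--Sotomayor classification~\cite{Dumortier1987} (recorded here as Lemma~\ref{lem:4.6}), a germ in this form is a cusp singularity of order exactly three as soon as $\chi\neq 0$. Thus the entire theorem reduces to the single algebraic verification $\chi\neq 0$ at $(c^*,n^*,b^*)$.

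The main obstacle is the explicit evaluation of $\chi$. Following the algorithm in Lemma~\ref{lem:4.6}, one has $\chi = a_{31}-a_{30}a_{21}$, where the coefficients $a_{ij}$ are read off from the Taylor expansion of the reduced vector field after (i) translating $(x^*,y^*)$ to the origin, (ii) executing the two near-identity coordinate changes used in the proof of Theorem~\ref{thm:cusp-equivalence}, and (iii) eliminating the quadratic $xy$-term, whose vanishing is precisely the degeneracy $\eta=0$ built into the definition of $(c^*,n^*,b^*)$. In practice I would expand $f,g$ from~\eqref{eq:2.4} up to order four in $(x-x^*,\,y-y^*)$, substitute $c=c^*$, $n=n^*$, $b=b^*$, and repeatedly simplify using the algebraic identity $(x^*)^2 = 2am\,x^* + 3m$ implied by the closed form of $x^*$.

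Finally, to show $\chi\neq 0$ I would either display $\chi$ as a rational function of $(a,m)$ whose numerator is a nonzero polynomial under the biological positivity constraints ($m>0$, $n^*>0$, $c^*>0$, and $x^*>0$), or, if the closed expression is unmanageable, verify nonvanishing at the explicit codimension-three parameter point used in Figure~\ref{fig:7}(b) and conclude by analyticity of $\chi$ in the parameters. Once $\chi\neq 0$ is established, the cusp-of-order-three property, equivalently a codimension-three Bogdanov--Takens singularity of cusp type, follows immediately from the normal-form criterion of~\cite{Dumortier1987,Kuznetsov2023}, completing the proof.
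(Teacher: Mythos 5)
Your overall strategy coincides with the paper's: invoke Theorem~\ref{thm:cusp-equivalence} to put the germ in the form $\dot x=y$, $\dot y=x^2+\chi x^3y+\mathcal{O}(|(x,y)|^4)$, and then reduce everything to the single condition $\chi\neq0$. That framing is correct, and your identification $\chi=a_{31}-a_{30}a_{21}$ (equivalently $\chi=c_5-c_2c_3$ in the paper's notation) picks out the right quantity.

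The gap is that the nonvanishing of $\chi$ is the entire mathematical content of the theorem, and you do not establish it. Your primary route (exhibit $\chi$ as a rational function whose numerator does not vanish under the positivity constraints) is exactly what the paper does, but it is the hard part: the paper substitutes $(c^*,n^*,b^*)$, simplifies with $(x^*)^2=2amx^*+3m$, and obtains
\[
\chi=\frac{(2ax^*+3)^{10}}{8(x^*)^{10}(ax^*+3)^7\Psi^2(x^*)}\,\hbar(x^*),
\]
then proves $\hbar>0$ by viewing it as a quadratic in $(x^*)^2$ with coefficients $\widetilde A,\widetilde B,\widetilde C$ depending on $\vartheta=ax^*$ and checking their signs on the admissible range $-\tfrac{3}{2}<\vartheta<0$. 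Without this (or an equivalent) computation the proof is incomplete. Moreover, your fallback is not a valid repair: verifying $\chi\neq0$ at the single parameter point of Figure~\ref{fig:7}(b) and ``concluding by analyticity'' establishes nonvanishing only in a neighborhood of that point, since a nonconstant analytic function that is nonzero somewhere can still vanish on a codimension-one subset of parameter space; the theorem asserts $\chi\neq0$ for all admissible $(a,m)$, so a pointwise check cannot substitute for the global positivity argument. Finally, note that ``the numerator is a nonzero polynomial'' is weaker than what is required --- a nonzero polynomial can still have zeros inside the constraint set --- so the statement you must actually prove is that the numerator is nowhere vanishing there, which is precisely the role of the $\hbar(x^*)>0$ analysis in the paper.
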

\begin{proof}
When $(c, n, b) = (c^*, n^*, b^*)$, we have demonstrated that system \eqref{eq:3.31} is $C^\infty$-equivalent to system \eqref{3.32}. The coefficient $\chi$ is explicitly calculated by substituting the expressions for $\xi_5, \xi_6, \xi_7, \xi_8$ into the formula $\chi = c_5 - c_2c_3$, resulting in:
\begin{align*}
\chi = -\frac{1}{n^5 \xi_3^4} \Big( 3n^2 \xi_5^2 + \xi_3^2(8n \xi_5 + 6\xi_6) + n \xi_3(4n \xi_7 + 3\xi_8) + 5n \xi_5 \xi_6 + 4 \xi_3^4 + 2 \xi_6^2 \Big).
\end{align*}
To analyze the sign of $\chi$, we observe that all parameters $\xi_3, \xi_5, \xi_6, \xi_7, \xi_8$ are implicitly functions of $x^*$ through their definitions. By substituting these parameter expressions into $\chi$, we explicitly express it as a rational function of $x^*$:
\[
\chi = \dfrac{(2 a x^* + 3)^{10}}{8 (x^*)^{10} (a x^* + 3)^7 \Psi^2(x^*)} \cdot \hbar(x^*),
\]
where
\[
\Psi(x^*) = 16 a^3 (x^*)^3 + 4 a^2(x^{*2} +21)(x^*)^2 + 24 a (x^{*2} +6)x^* + 9(4x^{*2} +9)
\]
and
\begin{align*}
\hbar(x^*)=&32 (\vartheta +3)^6(x^*)^4+16 (\vartheta +3)^3 (2 \vartheta+3)^2(2 \vartheta^2+10 \vartheta+15)(x^*)^2+(2 \vartheta+3)^4\\
       &\quad\times (2 \vartheta^3+21 \vartheta^2+108 \vartheta+162) \quad (\vartheta = ax^*)
\end{align*}
 The positivity of $\hbar(x^*)$ follows from:
\begin{align*}
&\widetilde{A} = 32 (\vartheta+3)^6 > 0,\\
& \widetilde{B} = 16 (\vartheta +3)^3 (2\vartheta+3)^2(2\vartheta^2 +10\vartheta+15) > 0,\\
&\widetilde{C} = (2\vartheta+3)^4(2\vartheta^3 +21\vartheta^2 +108\vartheta+162) > 0,\\
&  (\widetilde{B})^2 -4\widetilde{A}\widetilde{C} > 0\ \text{for} \ -\dfrac{3}{2} < \vartheta < 0.
\end{align*}
Since $\hbar(x^*) > 0$ and all denominators are positive, we conclude $\chi \neq 0$. By the characterization of cusp singularities, the equilibrium is a cusp of order 3.
\end{proof}
\begin{rem}
        In Fig.~\ref{fig:7}~(a), we have shown a codimension-2 cusp with parameters $(a, b, c, m, n) = (-1.5, 1.8045924, 0.330275, 0.05, 0.172824)$ and equilibrium $(x^{*}, y^{*}) = (0.319493, 1.848663)$ with $\zeta\eta\neq0$.  By adjusting parameters to $(a, b, c, m, n) = (-1.8, 1, \linebreak 0.330275, 0.064380, 0.172824)$ to enforce the degeneracy condition $\eta=0$, a codimension-3 cusp (Fig.\ref{fig:7}~(b)), emerges at $(x^{*}, y^{*}) = (0.338614, 1.959299)$, demonstrating how higher-codimension singularities organize from the structure of lower-codimension ones. The existence of a codimension-3 cusp,confirms the theoretical prediction that the cubic incidence term can introduce a third independent degeneracy into the Bogdanov-Takens bifurcation. This is a non-trivial result, as many systems cannot support bifurcations beyond codimension-2. The visual similarity yet topological distinction between the codimension-2 and codimension-3 phase portraits highlights the subtlety of high-codimension dynamics.
\end{rem}
We consider \((c, n, b)\) as bifurcation parameters and construct a versal unfolding for the cusp singularity of codimension 3 near the reference point \((c^*, n^*, b^*)\). Introducing small perturbations
\[
c = c^* + \epsilon_1, \quad n = n^* + \epsilon_2, \quad b = b^* + \epsilon_3,
\]
the original system \eqref{eq:2.2} transforms into the following perturbed system:
\begin{equation}\label{4.12}
\begin{cases}
\displaystyle \frac{{\rm d}x}{{\rm d}t} = \dfrac{x^3}{1 + ax + (b^*+\epsilon_3)x^3}\Big(1 - (c^*+\epsilon_1)x - (c^*+\epsilon_1)y\Big) - mx,
\\[12pt]
\displaystyle \frac{{\rm d}y}{{\rm d}t} = x - (n^*+\epsilon_2)y.
\end{cases}
\end{equation}

To analyze the structural stability of this perturbed system near the cusp singularity, we establish its equivalence to a universal normal form through smooth coordinate transformations. The following theorem formalizes this reduction process and verifies the versality of the unfolding. For improved readability, the full proof appears in the
\ref{Proof-thm:cusp_versal}.

\begin{thm}\label{thm:cusp_versal}
For sufficiently small \(\epsilon = (\epsilon_1, \epsilon_2, \epsilon_3)\) and under the non-degeneracy condition
\[
\begin{aligned}
12(\vartheta+1)(2\vartheta+3)^4 &+ 3\vartheta D^4 - 3\vartheta D^3+ (2\vartheta+3)^2(32\vartheta+69)D^2 \\
&- (2\vartheta+3)^2(82\vartheta^2+240\vartheta+171)D \neq 0,
\end{aligned}
\]
system \eqref{4.12} is \(C^\infty\)-equivalent to the normal form
\begin{align}\label{4.13}
\begin{cases}
\dot{x} = y, \\[2pt]
\dot{y} = \mu_1(\epsilon) + \mu_2(\epsilon)y + \mu_3(\epsilon)xy + x^2 + x^3y + R(x,y,\epsilon),
\end{cases}
\end{align}
where the remainder term \(R(x,y,\epsilon)\) satisfies
\[
R(x,y,\epsilon) = y^2\mathcal{O}(|x,y|^2) + \mathcal{O}(|x,y|^5) + \mathcal{O}(\epsilon)\big(\mathcal{O}(y^2) + \mathcal{O}(|x,y|^3)\big) + \mathcal{O}(\epsilon^2)\mathcal{O}(|x,y|),
\]
and is jointly \(C^\infty\)-smooth in \((x,y,\epsilon)\). Furthermore, the parameters \(\mu = (\mu_1, \mu_2, \mu_3)\) satisfy the transversality condition
\begin{align}\label{4.14}
\det\left(\frac{\partial (\mu_1, \mu_2, \mu_3)}{\partial (\epsilon_1, \epsilon_2, \epsilon_3)}\right)\bigg|_{\epsilon=0} \neq 0.
\end{align}
This establishes system \eqref{4.12} as a versal unfolding of the cusp singularity of codimension 3.
\end{thm}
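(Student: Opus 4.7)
The plan is to adapt the normal-form reduction already established in Theorem \ref{thm:cusp-equivalence} for the unperturbed cusp to the full three-parameter perturbed system \eqref{4.12}, carrying the $\epsilon$-dependence through every change of variables. First, I would translate coordinates so that the bifurcating equilibrium sits at the origin for every sufficiently small $\epsilon$. Because the equilibrium moves smoothly with $\epsilon$ (by the implicit function theorem applied to \eqref{eq:2.6}), this preliminary shift contributes $C^\infty$ functions of $\epsilon$ to the constant and linear parts of the right-hand sides and yields a preliminary form $\dot x = y + \mathcal{O}(|(x,y)|^2)$, $\dot y = \alpha_0(\epsilon) + \alpha_1(\epsilon)x + \alpha_2(\epsilon)y + \text{(higher order)}$, with $\alpha_0(0)=\alpha_1(0)=\alpha_2(0)=0$ by construction of $(c^*,n^*,b^*)$.

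Second, I would apply a sequence of near-identity $C^\infty$ changes of coordinates, each smooth in $\epsilon$, to (i) absorb the linear-in-$x$ term into the quadratic by completion of squares, (ii) kill the $y^2$, $xy^2$, $y^3$ and $x^2 y$ monomials by the elementary transformations used in Lemma \ref{lem:4.6} and the proof of Theorem \ref{thm:cusp-equivalence}, and (iii) remove the pure-$x$ terms of orders $3$ and $4$ by a Tschirnhaus-type substitution. A final time rescaling combined with a quasi-homogeneous scaling $(x,y,t) \mapsto (\lambda_1 x, \lambda_2 y, \lambda_3 t)$ chosen to fix the coefficients of $x^2$ and $x^3 y$ at $1$ produces the stated normal form \eqref{4.13}; the structure of the remainder $R$ is inherited because each transformation is $C^\infty$ jointly in $(x,y,\epsilon)$ and reduces to the identity at $\epsilon=0$.

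The delicate step is the transversality condition \eqref{4.14}. The three parameters $\mu_1, \mu_2, \mu_3$ arise, respectively, as the images of the translated constant term, the coefficient of $y$, and the coefficient of $xy$ under the composition of all transformations above; only their linear-in-$\epsilon$ parts matter at $\epsilon=0$. I would compute $\partial \mu_i / \partial \epsilon_j|_{\epsilon=0}$ by first linearizing the equilibrium shift via implicit differentiation of \eqref{eq:2.6}, then tracking how $(\partial_\epsilon\alpha_0, \partial_\epsilon\alpha_1, \partial_\epsilon\alpha_2)$ transform under the subsequent $C^\infty$ changes of coordinates; because each transformation is the identity at $\epsilon=0$, it produces only triangular corrections at first order. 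Substituting the critical values $(c^*, n^*, b^*)$ and using the identity $(x^*)^2 = 2amx^* + 3m$ to eliminate powers of $x^*$, the $3\times 3$ determinant collapses to a rational expression whose numerator, after factoring out manifestly nonzero quantities, equals the polynomial displayed in the hypothesis. Hence \eqref{4.14} is precisely the stated non-degeneracy condition, and the classical versality criterion for nilpotent singularities of type $A_3$ \cite{Dumortier1987, Bruce1992} completes the proof.

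The main obstacle is not the qualitative structure of the reduction, which is essentially dictated by Lemma \ref{lem:4.6}, but the explicit algebraic computation of the transversality determinant: the formulas for $\mu_i$ are built from several composed nonlinear changes, so the bookkeeping required to express $\partial(\mu_1,\mu_2,\mu_3)/\partial(\epsilon_1,\epsilon_2,\epsilon_3)|_{\epsilon=0}$ in closed form and to recognize the stated polynomial factor in $\vartheta$ and $D$ constitutes the principal technical hurdle. Careful use of the cusp critical-value identities and the relation between $b^*$ and $(c^*,n^*,x^*,a,m)$ should be enough to streamline this calculation.
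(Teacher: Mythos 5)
Your overall architecture (successive near-identity changes of coordinates smooth in $\epsilon$, elimination of the cubic/quartic pure-$x$ terms and of the $x^2y$-type monomials as in Lemma \ref{lem:4.6}, a final quasi-homogeneous scaling to normalize the $x^2$ and $x^3y$ coefficients, and then an explicit computation of the $3\times 3$ Jacobian at $\epsilon=0$) matches the paper's three-step reduction. However, your very first step contains a genuine error. You propose to ``translate coordinates so that the bifurcating equilibrium sits at the origin for every sufficiently small $\epsilon$,'' justified by the implicit function theorem applied to \eqref{eq:2.6}. The implicit function theorem is not applicable here: at $(c^*,n^*,b^*)$ the equilibrium $(x^*,y^*)$ is precisely the degenerate (nilpotent) one, with $\det(J^*)=0$, so the equilibrium does not persist as a smooth function of $\epsilon$ --- indeed for $\mu_1>0$ the unfolded system has no nearby equilibria at all, which is the whole content of the fold part of the bifurcation. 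Worse, even granting such a translation, it would force the constant term to vanish identically, so $\mu_1(\epsilon)\equiv 0$ and the transversality determinant \eqref{4.14} would be zero; your own preliminary form already betrays the inconsistency, since placing the equilibrium at the origin gives $\alpha_0(\epsilon)\equiv 0$, not merely $\alpha_0(0)=0$.

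The correct move, and the one the paper makes, is to translate by the \emph{fixed} critical point $\bigl(am+\sqrt{a^2m^2+3m},\ \tfrac{1}{n}(am+\sqrt{a^2m^2+3m})\bigr)$, independent of $\epsilon$. The resulting constant and linear coefficients $\alpha_{00},\alpha_{10},\alpha_{01}$ are then $\mathcal{O}(\epsilon)$ but generically nonzero, and it is exactly these quantities that propagate through the subsequent transformations (including the later completion-of-the-square shift $X_3=X_2+\beta_{10}/(2\beta_{20})$, which is a moving shift but not to an equilibrium) to become $\mu_1,\mu_2,\mu_3$. With that correction the rest of your plan is sound, though the transversality computation is, as you anticipate, a heavy explicit calculation: the paper carries the full coefficient expressions through Steps 1--3 and evaluates the determinant in closed form, obtaining a prefactor involving $\varsigma^{4/5}$ and manifestly nonzero powers of $(2\vartheta+3)$, $(\vartheta+3)$, and $D$ times the stated polynomial, which is what turns the non-degeneracy hypothesis into \eqref{4.14}.
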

 \begin{rem}\label{rem:SN_manifold_codim}
    The geometry of the saddle-node bifurcation set exemplifies a fundamental principle of unfolding theory. In the codimension-2 unfolding~\eqref{eq:4.4}, it is the line $\mu_1 = 0$ in the $(\mu_1, \mu_2)$-plane. In the higher-dimensional parameter space of the codimension-3 unfolding~\eqref{4.13}, this set becomes the plane $\mu_1 = 0$ in the $(\mu_1, \mu_2, \mu_3)$-space. This illustrates how increasing the codimension elevates bifurcation sets to higher-dimensional strata: the parameter $\mu_1$ controls the splitting of the degenerate singularity, while $\mu_2$ and $\mu_3$ parametrize directions tangent to the bifurcation set itself. In the bifurcation diagram (Figure 8), this plane is represented by the yellow surfaces ($SN^{+}$ and $SN^{-}$).
\end{rem}

Following Chow's theoretical framework \cite{Chow1994}, the Hopf bifurcation surface for system \eqref{4.13} is characterized by:
\begin{align}
\mu_2 &= \mu_3(-\mu_1)^{1/2} + (-\mu_1)^{3/2} + \mathcal{O}\left((-\mu_1)^{7/4}\right).
\end{align}
The homoclinic bifurcation surface satisfies:
\begin{align}
\mu_2 &= \frac{5}{7}\mu_3(-\mu_1)^{1/2} + \frac{103}{77}(-\mu_1)^{3/2} + \mathcal{O}\left((-\mu_1)^{7/4}\right).
\end{align}
While the saddle-node bifurcation surface involves solutions to a Riccati equation, the saddle-node bifurcation of limit cycles exhibits the following geometric relationships:
\begin{itemize}
\item[(1)] Tangency to the homoclinic bifurcation surface along the curve
\begin{align}
\left\{-u^2,\ 4u^3,\ 3u\right\},
\end{align}

\item[(2)] Tangency to the Hopf bifurcation surface along the curve
\begin{align}
\left\{-u^2,\ -\frac{4}{11}u^3,\ -\frac{15}{11}u\right\}.
\end{align}
\end{itemize}

Using Hermite interpolation, we derive the saddle-node bifurcation surface equation:
\begin{align}
\left\{-u^2, \,
    \begin{aligned}[t]
        &\frac{1}{11} u \biggl( u^2 (74 v^3 - 111 v^2 + 33) \\
        &\quad + 3025 (v - 1) v^2 \biggr),
    \end{aligned}
    \quad
    \begin{aligned}[t]
        &\frac{1}{55} v^2 \biggl( 241 u^2 (2v - 3) \\
        &\quad + 21175 (v - 1) \biggr) + 2u^2
    \end{aligned}
\right\}.
\end{align}

Figure~\ref{fig:81} presents the complete bifurcation diagram generated via Mathematica, with notation definitions provided in Table~\ref{tab:bifur_notations}.

\begin{figure}[htbp]
\centering
\includegraphics[width=0.85\textwidth]{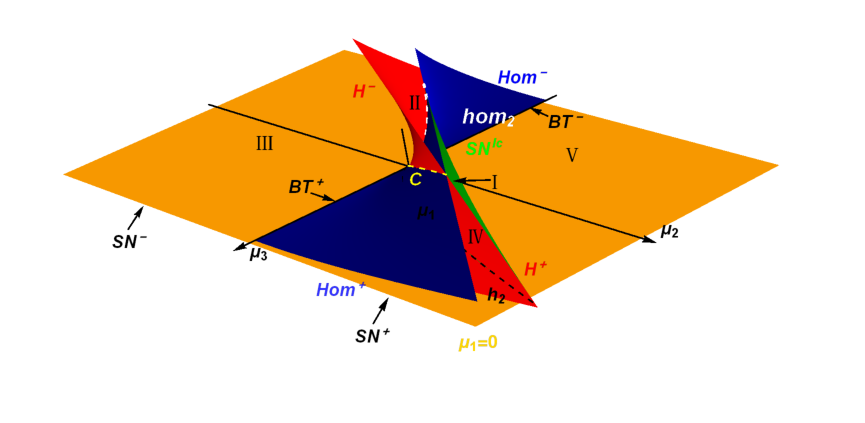}
\caption{Codimension-3 cusp singularity bifurcation diagram.}
\label{fig:81}
\end{figure}

\begin{table}[htbp]
  \centering
  \caption{Bifurcation Notations with Color Coding for Figure~\ref{fig:81}.}
  \label{tab:bifur_notations}
  \begin{tabular}{@{}lll@{}}
    \toprule
    \textbf{Symbol} & \textbf{Description} & \textbf{Color} \\
    \midrule
    $H^+$           & Repelling Hopf bifurcation       & Red \\
    $H^-$           & Attracting Hopf bifurcation      & Red \\
    $Hom^+$         & Repelling homoclinic bifurcation & Blue \\
    $Hom^-$         & Attracting homoclinic bifurcation & Blue \\
    $SN^{\ell c}$   & Saddle-node bifurcation of limit cycles & Yellow \\
    $h_2$           & Codimension-two Hopf bifurcation & Black \\
    $hom_2$         & Codimension-two homoclinic bifurcation & White \\
    $C$             & Intersection curve of $H$ and $Hom$ & Yellow \\
    $BT^+$          & Bogdanov-Takens (positive $xy$)  & Black \\
    $BT^-$          & Bogdanov-Takens (negative $xy$)  & Black \\
    $SN^+$          & Repelling saddle-node            & Yellow \\
    $SN^-$          & Attracting saddle-node           & Yellow \\
    \bottomrule
  \end{tabular}
\end{table}

\begin{rem}
The bifurcation diagram in Figure~\ref{fig:81} stratifies the parameter space of system \eqref{4.13} into six generic dynamical regimes of codimension-0, whose phase portraits are systematically cataloged in Table~\ref{tab:phase3}. The full stratification includes codimension-1 bifurcation surfaces and codimension-2 singularities that bound or intersect these regimes, described as follows:
\begin{itemize}
\item[(1)] \textbf{Region I}: Bounded by $\mathcal{H}=H^+\cup H^-$, $Hom=Hom^+\cup Hom^-$, and $SN^{\ell c}$. Contains two hyperbolic limit cycles (the inner one is stable and the outer one is unstable).

\item[(2)] \textbf{Region II}: Between supercritical Hopf ($H^+$) and repelling homoclinic ($Hom^+$) surfaces. Sustains a unique unstable limit cycle.

\item[(3)] \textbf{Region III}: Adjacent to $SN^+$. Exhibits no limit cycles; phase space contains saddle and anti-saddle equilibria.

\item[(4)] \textbf{Region IV}: Between subcritical Hopf ($H^-$) and attracting homoclinic ($Hom^-$) surfaces. Contains a unique stable limit cycle.

\item[(5)] \textbf{Region V}: Adjacent to $SN^-$. Similar to Region III but with different equilibrium stability.

\item[(6)] \textbf{Plane $\mu_1 = 0$ ($SN^+\cup SN^-$)}: Saddle-node bifurcation surface. For $\mu_1 >0$, system \eqref{4.13} lacks equilibria; all bifurcations occur in $\mu_1 <0$.

\item[(7)] \textbf{Surface $SN^{\ell c}$}: Contains semi-stable limit cycles formed through limit cycle coalescence.

\item[(8)] \textbf{Surface $\mathcal{H}=H^+\cup H^-$}: Generates codimension-1 Hopf bifurcations, excluding $h_2$.

\item[(9)] \textbf{Surface $Hom=Hom^+\cup Hom^-$}: Generates codimension-1 homoclinic bifurcations, excluding $hom_2$.

\item[(10)] \textbf{Curve $C$}: Intersection of $\mathcal{H}$ and $Hom$ surfaces where codimension-1 Hopf and homoclinic bifurcations coexist.

\item[(11)] \textbf{Curve $h_2$}: Codimension-2 Hopf bifurcation locus; $SN^{\ell c}$ tangency point to $\mathcal{H}$.

\item[(12)] \textbf{Curve $hom_2$}: Codimension-2 homoclinic bifurcation locus; $SN^{\ell c}$ tangency point to $Hom$.
\end{itemize}
\end{rem}

\begin{table}[htbp]
  \centering
  \caption{The codimension 0 phase portraits of equation \eqref{4.13}.}
  \label{tab:phase3}
  \begin{tabular}{|l|l|l|}
    \hline
    \includegraphics[width=4cm]{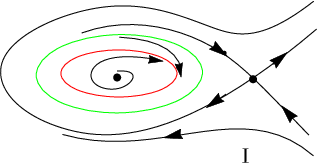} &
    \includegraphics[width=4cm]{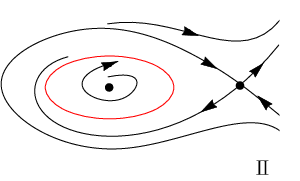} &
    \includegraphics[width=4cm]{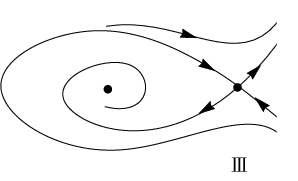} \\
    \hline
    \includegraphics[width=4cm]{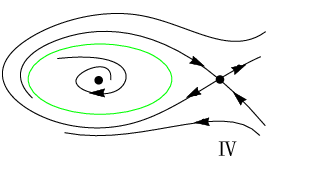} &
    \includegraphics[width=4cm]{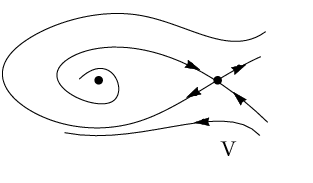} &
    \includegraphics[height=3cm]{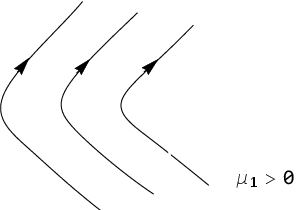} \\
    \hline
  \end{tabular}
\end{table}

\begin{rem}\label{rem:epsilon_diagram}
The bifurcation diagram in the original perturbation coordinates \((\epsilon_1, \epsilon_2, \epsilon_3)\) is topologically equivalent to the universal diagram in \((\mu_1, \mu_2, \mu_3)\)-space under the local diffeomorphism guaranteed by the transversality condition \(\det\left(\frac{\partial \mu}{\partial \epsilon}\right)\big|_{\epsilon=0} \neq 0\) (Theorem~\ref{thm:cusp_versal}). Geometrically, the \(\epsilon\)-diagram arises from a smooth reparameterization of the \(\mu\)-diagram via the Jacobian matrix \(\left(\frac{\partial \mu}{\partial \epsilon}\right)\big|_{\epsilon=0}\), preserving the topological structure of critical manifolds (e.g., cusp surfaces, hysteresis branches) while introducing affine distortions. The orientation of the diagram depends on \(\det\left(\frac{\partial \mu}{\partial \epsilon}\right)\big|_{\epsilon=0}\): if \(\det\left(\frac{\partial \mu}{\partial \epsilon}\right)\big|_{\epsilon=0}> 0\), the \(\epsilon\)-coordinates retain the orientation of stability domains and bifurcation branches in \(\mu\)-space; if \(\det\left(\frac{\partial \mu}{\partial \epsilon}\right)\big|_{\epsilon=0} < 0\), the diagram becomes a mirror image of the \(\mu\)-system, reversing orientation-dependent features such as stability switching directions. In both cases, the universal unfolding property ensures that all codimension-3 bifurcation modes remain transverse to the cusp singularity, and the \(\epsilon\)-parameterized diagram inherits the versality of \eqref{4.12} under \(C^\infty\)-equivalence.
\end{rem}

\subsection{$\mathcal{K}$-Equivalence and the Codimension Discrepancy}

The codimension-3 designation for the Bogdanov-Takens bifurcation in Subsection 4.2, derived from a dynamical systems perspective, contrasts with the result obtained by applying the formalisms of singularity theory. This divergence arises from the different notions of equivalence employed in the two fields. Here, we employ \textit{contact equivalence} ($\mathcal{K}$-equivalence) to rigorously compute the codimension of the singularity within its own framework and resolve this apparent discrepancy.

The foundational definitions for $\mathcal{K}$-equivalence, the extended tangent space $T_e\mathcal{K}\cdot f$, and the contact codimension $\operatorname{codim}(f, \mathcal{K})$ are provided in Definitions~\ref{def:contact_equiv}--\ref{def:contact_codim} of \ref{app:components}. Furthermore, Theorem~\ref{thm:k_determined} establishes that a finite $\mathcal{K}$-codimension implies finite determinacy, which justifies our analysis of the normal form.

Since the transformations \eqref{eq:2.39}, \eqref{eq:2.41}, and \eqref{eq:4.1} are diffeomorphisms, the maps involved in systems \eqref{eq:2.38}, \eqref{eq:2.40}, \eqref{eq:2.42}, and \eqref{eq:4.2} are $\mathcal{K}$-equivalent (Definition~\ref{def:contact_equiv}) and consequently share the same codimension (Proposition~\ref{prop:same_codim}). We therefore compute the codimension of the simplest representative, system (4.2), with map germ $\psi(x,y)=(y,x^2+xy)$.

The extended tangent space $T_e\mathcal{K}\cdot\psi$ (Definition~\ref{def:extended_tangent_space}) is calculated as follows. The tangent map $\mathrm{t}\psi$ (Definition~\ref{def:tangent_map}) yields generators, and the ideal $I_\psi = \langle y, x^2+xy \rangle$ contributes additional terms:
\begin{equation}
\begin{split}
T_e\mathcal{K}\cdot\psi &= {\rm t}\psi(\theta_2) + I_\psi\theta(\psi) \\
&= \left\langle \begin{pmatrix} y \\ 0 \end{pmatrix}, \begin{pmatrix} x^2 \\ 0 \end{pmatrix}, \begin{pmatrix} 0 \\ y \end{pmatrix}, \begin{pmatrix} 0 \\ x^2 \end{pmatrix}, \begin{pmatrix} 0 \\ 2x+y \end{pmatrix}, \begin{pmatrix} 1 \\ x \end{pmatrix} \right\rangle \\
&= \left\langle \begin{pmatrix} y \\ 0 \end{pmatrix}, \begin{pmatrix} 0 \\ y \end{pmatrix}, \begin{pmatrix} 0 \\ x \end{pmatrix}, \begin{pmatrix} 1 \\ 0 \end{pmatrix} \right\rangle.
\end{split}
\end{equation}
A basis for the quotient space $\theta(\psi)/T_e\mathcal{K}\cdot\psi$ is $\left\{ \begin{pmatrix} 0 \\ 1 \end{pmatrix}, \begin{pmatrix} x \\ 0 \end{pmatrix} \right\}$. By Definition~\ref{def:contact_codim}, the contact-codimension is:
\begin{equation}
\operatorname{codim}(\psi,\mathcal{K}) = \dim \left( \theta(\psi) \big/ T_e\mathcal{K}\cdot \psi \right) = 2.
\end{equation}
This confirms that within the framework of $\mathcal{K}$-equivalence, the singularity is of codimension 2. Since this codimension is finite, Theorem~\ref{thm:k_determined} implies the germ $\psi$ is finitely determined.

\begin{rem}
The discrepancy between the singularity-theoretic codimension 2 and the dynamical codimension 3 originates from fundamentally different treatments of nonlinear degeneracies:

\noindent\textbf{Singularity Theory:} Classifies the degeneracy of the equilibrium's zero set. Under $\mathcal{K}$-equivalence, the cubic term $x^3y$ is absorbable ($x^3y \in \mathfrak{m}^4\cdot\theta(\psi)$) and thus does not increase codimension.

\noindent\textbf{Dynamical Systems:} Requires versal unfolding of the full phase portrait. The term $\chi x^3y$ becomes essential as it governs:
\begin{itemize}
    \item[(1)] Separatrix splitting (via Melnikov integral $\mathcal{M} \propto \chi$);
    \item[(2)] Limit cycle multiplicity transitions (1 $\leftrightarrow$ 3);
    \item[(3)] Cusp formation in bifurcation curves.
\end{itemize}
The third parameter $\chi$ emerges naturally to preserve bifurcation diagram versality under nonlinear degeneracies, reflecting the need for global control of dynamical phenomena.
\end{rem}
\subsection{Comparison with Lu et al. (2019) on Bogdanov-Takens Bifurcations}
We provide a comparative analysis between our findings on Bogdanov-Takens (BT) bifurcations and those established by Lu et al. \cite{Lu2019}. The key advancements and inheritances are summarized in Table~\ref{tab:BT_comparison}.
\begin{table}[h]
\centering
\caption{Comparison with Lu et al. (2019) on Bogdanov-Takens Bifurcations.}
\label{tab:BT_comparison}
\renewcommand{\arraystretch}{1.3} 
\begin{tabular}{|p{0.3\textwidth}|p{0.25\textwidth}|p{0.35\textwidth}|}
\hline
\textbf{Aspect} & \textbf{Lu et al. (2019)} & \textbf{This work} \\
\hline
Incidence function & Quadratic saturation & Cubic saturation \\
\hline
Max codimension of BT & 2 & 3 \\
\hline
Singularity type & Cusp of order 2 & Cusp of order 3 \\
\hline
Unfolding theory & Not applied & Versal unfolding applied \\
\hline
Number of limit cycles & Up to 2 & Up to 3 \\
\hline
Normal form derivation & Standard BT form & Higher-order degenerate BT \\
\hline
Numerical verification & Limited to codim-2 & Extended to codim-3 \\
\hline
\end{tabular}
\end{table}

As shown in Table~\ref{tab:BT_comparison}, while Lu et al. established the foundation for BT bifurcations in SIRS models with quadratic incidence, this work extends the analysis to cubic incidence, achieving higher codimension and more complex dynamics. The use of singularity theory and versal unfolding provides a more systematic framework for analyzing degenerate bifurcations.
\section{Multiple periodic orbits}
While the previous section focused on bifurcations of equilibria (Bogdanov-Takens), we now turn to the analysis of periodic solutions. This section is dedicated to the study of Hopf bifurcations and their degeneracies in system (2.4). We establish conditions for the existence of weak foci of higher order and prove the emergence of multiple limit cycles through degenerate Hopf bifurcations, a phenomenon rigorously confirmed by both analytical methods and numerical simulations.
\subsection{Characterization of generic Hopf Bifurcation}

As established in Section 2, any oscillatory solution must encircle the biologically meaningful equilibrium $E_2(x_2, y_2)$. The stability transition of this equilibrium occurs when the trace of its Jacobian vanishes, indicating a potential Hopf bifurcation. Rewriting system \eqref{eq:2.4} as
\begin{equation}
    \begin{aligned}
        \dfrac{{\rm d}x}{{\rm d}t} & = p(x)(G(x) - y), \\
        \dfrac{{\rm d}y}{{\rm d}t} & = x - n y,
    \end{aligned}
    \label{eq:5.1}
\end{equation}
where
\begin{equation}
    \begin{aligned}
        G(x)&:= \dfrac{1}{c} - x - \dfrac{m(1 + ax + bx^3)}{cx^2}, \\
        p(x)&:= \dfrac{cx^3}{1 + ax + bx^3}.
    \end{aligned}
    \label{eq:5.2}
\end{equation}
The eigenvalues of $J(x_2,y_2)$ become purely imaginary when the trace condition is satisfied:
\begin{equation}
    \operatorname{Tr}(J) = p(x_2)G'(x_2) - n = 0.
    \label{eq:5.3}
\end{equation}
Building on the classification framework in Theorem \ref{th:3.6} for $\mathfrak{D}_{(p,q)} < 0$, we designate $b$ as the bifurcation parameter. The positive equilibrium $(x_2,y_2)$, which depends on $b$, satisfies
\begin{equation*}
    \big(c(n+1) + bmn\big)x_2^3 - nx_2^2 + amn x_2 + nm = 0.
\end{equation*}
Consequently, we obtain
\begin{equation*}
    \frac{\mathrm{d} x_2}{\mathrm{d} b} = \frac{-m x_2^4}{x_2^2 - 2 a m x_2 - 3 m}.
\end{equation*}

\begin{thm}\label{th:5.1}
    Under the discriminant condition $\mathfrak{D}_{(p,q)} < 0$, the system \eqref{eq:2.4} undergoes a generic Hopf bifurcation at $E_2(x_2,y_2)$ if and only if either:
    \begin{align*}
        G'(x_2) = \frac{n}{p(x_2)} \quad \text{and} \quad G''(x_2) \neq -\frac{n p'(x_2)}{p^2(x_2)},
    \end{align*}
    or:
    \begin{align*}
        2m - n + a(m - n)x_2 - (c + bm + bn)x_2^3 = 0 \quad \text{and} \quad n(2a x_2 + 3) - 2m(a x_2 + 3) \neq 0.
    \end{align*}
\end{thm}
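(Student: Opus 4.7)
The plan is to apply the classical (codimension-one) Hopf bifurcation criterion with $b$ as the distinguished bifurcation parameter. By Theorem~\ref{th:3.6}, whenever $\mathfrak{D}_{(p,q)}<0$ the determinant $\det J(x_2,y_2)$ is strictly positive, so a pair of purely imaginary eigenvalues of $J$ appears precisely when $\mathrm{Tr}\,J(x_2,y_2)=0$, and they cross the imaginary axis transversely exactly when $(d/db)\mathrm{Tr}\,J(x_2(b),y_2(b))|_{b=b_H}\neq 0$. The entire task is to unpack these two abstract conditions into the two equivalent explicit forms stated in the theorem.

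First, I would derive the trace equality in both forms. Using the Liénard-type reformulation \eqref{eq:5.1}--\eqref{eq:5.2}, direct computation of the linearisation at $E_2$ gives $\mathrm{Tr}\,J=p(x_2)G'(x_2)-n$, so $\mathrm{Tr}\,J=0$ iff $G'(x_2)=n/p(x_2)$. Alternatively, differentiating the right-hand side of \eqref{eq:2.4} directly and substituting the equilibrium identities $y_2=x_2/n$ and $x_2^2(1-cx_2-cy_2)=m(1+ax_2+bx_2^3)$ yields
\begin{equation*}
\mathrm{Tr}\,J(x_2,y_2)=\frac{2m-n+a(m-n)x_2-(c+bm+bn)x_2^3}{1+ax_2+bx_2^3}.
\end{equation*}
Since the denominator is strictly positive, the trace vanishes iff the numerator does, giving the second formulation; the algebraic equivalence of the two equalities is then evident.

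Second, I would verify the transversality inequality in both forms. Substituting the explicit derivatives of $p$ and $G$ computed from \eqref{eq:5.2} shows
\begin{equation*}
G''(x_2)+\frac{np'(x_2)}{p^2(x_2)}=\frac{n(2ax_2+3)-2m(ax_2+3)}{cx_2^4},
\end{equation*}
so the two inequality conditions coincide. To connect this quantity to $(d/db)\mathrm{Tr}\,J\neq 0$, I would differentiate either explicit trace expression using the implicit-function formula $dx_2/db=-mx_2^4/(x_2^2-2amx_2-3m)$ derived above the theorem. At the critical value $b_H$, the explicit $b$-contribution and the implicit contribution through $x_2(b)$ are reorganised using the trace equality, and $(d/db)\mathrm{Tr}\,J$ collapses to a nonzero scalar multiple of $dx_2/db$ times the factor $n(2ax_2+3)-2m(ax_2+3)$. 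Transversality is therefore equivalent to the stated inequality.

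The principal technical obstacle is bookkeeping the two sources of $b$-dependence in $\mathrm{Tr}\,J$—the explicit dependence through the cubic coefficient and the implicit dependence through $x_2(b)$—and arranging the cancellations so that the derivative factorises cleanly into the product displayed above. A secondary point is to confirm $dx_2/db\neq 0$ on the Hopf locus, i.e.\ $x_2^2-2amx_2-3m\neq 0$, which is precisely the statement that the Hopf equilibrium is disjoint from the saddle-node point $x^*=am+\sqrt{a^2m^2+3m}$ of Theorem~\ref{th:3.4}; this separation is automatic under $\mathfrak{D}_{(p,q)}<0$, so the Hopf criterion applies without further restriction.
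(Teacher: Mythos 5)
Your proposal follows the paper's route almost step for step: the Li\'enard reformulation gives $\operatorname{Tr}J=p(x_2)G'(x_2)-n$, the two formulations of the trace condition are related by clearing a positive denominator, and the nondegeneracy factor $n(2ax_2+3)-2m(ax_2+3)$ arises from $\tfrac{np'}{p}+pG''$ exactly as in the paper's proof. Your closing remark is also sound: at any positive equilibrium one has $\det J=n(x_2^2-2amx_2-3m)/(1+ax_2+bx_2^3)$, which is positive when $\mathfrak{D}_{(p,q)}<0$, so $dx_2/db$ is indeed finite and nonzero on the Hopf locus.

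The gap is in the final transversality step. You correctly flag that $\operatorname{Tr}J$ depends on $b$ both explicitly (through the $bx^3$ terms in $p$ and $G$) and implicitly through $x_2(b)$, and you then assert that after reorganising, the total derivative collapses to a nonzero multiple of $\tfrac{dx_2}{db}\,\bigl[n(2ax_2+3)-2m(ax_2+3)\bigr]$. That assertion is false. On the Hopf locus, where $p(x_2)G'(x_2)=n$, the explicit contribution is
\begin{equation*}
\frac{\partial}{\partial b}\bigl(p\,G'\bigr)\Big|_{x_2\ \mathrm{fixed}}
=\frac{\partial p}{\partial b}G'+p\,\frac{\partial G'}{\partial b}
=-\frac{n\,x_2^{3}}{1+ax_2+bx_2^{3}}-\frac{m\,x_2^{3}}{1+ax_2+bx_2^{3}}
=-\frac{(m+n)\,x_2^{3}}{1+ax_2+bx_2^{3}}\neq 0,
\end{equation*}
and this term does not vanish when $n(2ax_2+3)-2m(ax_2+3)=0$, so the sum of the explicit and implicit contributions cannot be proportional to that factor. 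Carrying out your plan as written would therefore produce a transversality condition different from the one stated in Theorem~\ref{th:5.1} (namely the inequality $-m\bigl[n(2ax_2+3)-2m(ax_2+3)\bigr]\neq(m+n)(x_2^2-2amx_2-3m)$ rather than $n(2ax_2+3)-2m(ax_2+3)\neq 0$). The paper arrives at the stated condition by computing $\tfrac{d}{db}\operatorname{Re}\lambda=\tfrac{\partial}{\partial x_2}\operatorname{Re}\lambda\cdot\tfrac{dx_2}{db}$, i.e.\ by differentiating only through the implicit dependence on $x_2$ and discarding the explicit $b$-dependence. To reproduce the theorem you must either adopt (and justify) that convention, or accept that the honest total derivative yields the modified inequality above; the clean factorisation you rely on does not exist.
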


\begin{proof}
    The condition $\operatorname{Tr}(J(x_2,y_2)) = 0$ is equivalent to $G'(x_2) = \frac{n}{p(x_2)}$. Under this condition, the Jacobian has purely imaginary eigenvalues $\lambda(b)$ and $\overline{\lambda}(b)$ satisfying:
    \begin{align*}
        \operatorname{Re}(\lambda(b)) = \operatorname{Re}(\overline{\lambda}(b)) = \frac{1}{2} \operatorname{Tr}(J(x_2,y_2)) = 0.
    \end{align*}
    Using the relation $p(x_2)G'(x_2) - n = 0$, we compute the transversality condition:
    \begin{align*}
        \frac{\mathrm{d}}{\mathrm{d}b} \operatorname{Re}(\lambda(b))
        &= \frac{\partial}{\partial x_2} \operatorname{Re}(\lambda(b)) \cdot \frac{\mathrm{d} x_2}{\mathrm{d} b} \\
        &= \frac{1}{2} \left( p'(x_2)G'(x_2) + p(x_2)G''(x_2) \right) \frac{-m x_2^4}{x_2^2 - 2 a m x_2 - 3 m} \\
        &= \frac{1}{2} \left( \frac{n p'(x_2)}{p(x_2)} + p(x_2)G''(x_2) \right) \frac{-m x_2^4}{x_2^2 - 2 a m x_2 - 3 m}.
    \end{align*}
    For transversality, the following nondegeneracy condition must hold:
    \begin{align*}
        \frac{n p'(x_2)}{p(x_2)} + p(x_2)G''(x_2) = \dfrac{n(2a x_2 + 3) - 2m(a x_2 + 3)}{x_2(1 + a x_2 + b x_2^3)} \neq 0.
    \end{align*}
    Direct calculation yields:
    \begin{align*}
        G'(x_2) - \frac{n}{p(x_2)} = \dfrac{2m - n + a(m - n)x_2 - (c + bm + bn)x_2^3}{1 + a x_2 + b x_2^3}.
    \end{align*}
    This completes the proof.
\end{proof}

\subsection{Higher-Order Weak Foci and Stability}

The theory of degenerate Hopf bifurcations originates in Hopf's seminal work \cite{e.hofp}, establishing fundamental conditions for periodic solution emergence from equilibria. Bautin \cite{NN} advanced this field through analysis of the vanishing first Lyapunov coefficient ($\sigma_1=0$), revealing multiple limit cycle bifurcations. Arnold \cite{Arnold1988} formalized the framework via singularity theory and universal unfoldings, while Guckenheimer \& Holmes \cite{Guckenheimer1983} systematized the dynamical systems approach.

Higher-codimension degenerate Hopf bifurcations (codimension >2) present significant analytical challenges. Classical techniques-notably Lyapunov coefficient computation \cite{Kuznetsov2023,2007, Chow1982} and singularity theory \cite{Golubitsky1981} provide foundations for codimension-two cases but encounter prohibitive algebraic complexity when extended to codimension-three scenarios. Existing methods struggle with intricate focus quantities where sign determination becomes intractable under higher degeneracies, necessitating structure-specific approaches.

Building on  Han's algebraic framework for Hopf cyclicity in planar systems \cite{Han2001}, we transform our system to the generalized Lienard form:
\begin{equation}\label{eq:5.4}
\begin{aligned}
    \dot{x} &= \psi(y) - F(x) \\
    \dot{y} &= -g(x).
\end{aligned}
\end{equation}
Central to our analysis is Han's classical lemma on weak focus characterization \cite{Han2001}. This result was later reformulated by \cite{Arsie2022} with equivalent content but a slightly modified presentation. We adopt the formulation from \cite{Arsie2022}:

\begin{lem}[\cite{Han2001,Arsie2022}] \label{lem:weak_focus}
Let $\psi$, $F$, and $g$ be $C^\infty$-functions defined in a neighborhood of the origin satisfying:
\begin{align*}
\psi(0)=g(0)=F(0)=0,\psi'(0)>0 \ \text{and} \ g'(0)>0.
\end{align*}
Define $H(x) = \int_{0}^{x} g(s)  {\rm d}s$. Suppose there exists a $C^\infty$-function $\theta(x) = -x + \mathcal{O}(x^2)$ such that:
\begin{align*}
H(\theta(x)) &\equiv H(x); \\
F(\theta(x)) - F(x) &= \sum_{i\geq 1} B_i x^i.
\end{align*}
Then the origin is a focus of order $k$ if:
\begin{equation*}
B_i = 0 \quad \text{for} \quad i = 1, 2, \dots, 2k \quad \text{and} \quad B_{2k+1} \neq 0
\end{equation*}
with stability determined by the sign of $B_{2k+1}$:
\begin{equation*}
\begin{cases}
\text{stable} & \text{if } B_{2k+1} < 0 \\
\text{unstable} & \text{if } B_{2k+1} > 0.
\end{cases}
\end{equation*}
\end{lem}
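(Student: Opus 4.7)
The plan is to analyze the Poincaré return map on a transversal through the origin and to show that its leading-order deviation from the identity is governed by $B_{2k+1}$. First, I would set up the conservative skeleton by momentarily dropping $F$, yielding the Hamiltonian system $\dot{x}=\psi(y)$, $\dot{y}=-g(x)$ with first integral $E(x,y)=H(x)+\Psi(y)$, where $\Psi(y)=\int_{0}^{y}\psi(s)\,ds$. Because $g'(0)>0$ and $\psi'(0)>0$, both $H$ and $\Psi$ attain strict local minima at $0$, so nearby level sets of $E$ are closed curves, and the involution $\theta(x)=-x+\mathcal{O}(x^{2})$ defined by $H(\theta(x))=H(x)$ is the unique smooth germ pairing the two $x$-intercepts of each level curve.

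Next, for the full system I would differentiate $E$ along trajectories to obtain
\begin{align*}
\frac{dE}{dt}=g(x)\bigl(\psi(y)-F(x)\bigr)-\psi(y)g(x)=-g(x)F(x),
\end{align*}
and use $g(x)\,dt=-dy$ to recast this as $dE=F(x)\,dy$. For small $x_{0}>0$, let $\bar{x}(x_{0})<0$ be the first return of the orbit through $(x_{0},0)$ to the $x$-axis and $P(x_{0})>0$ the subsequent return. Integrating $dE$ along both half-arcs and summing gives
\begin{align*}
H\bigl(P(x_{0})\bigr)-H(x_{0})=\oint F(x)\,dy,
\end{align*}
where the contour is the orbit arc from $(x_{0},0)$ to $(P(x_{0}),0)$. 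In the conservative limit this loop integral vanishes and $P(x_{0})=x_{0}$, $\bar{x}(x_{0})=\theta(x_{0})$, which establishes the proper reference against which to expand.

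The third step uses $H$ as a coordinate on each half-plane: on the level $\{E=h\}$ the two branches are $x=X_{+}(h)$ and $x=X_{-}(h)=\theta(X_{+}(h))+\mathcal{O}(F)$, and a change of variables converts the loop integral of $F\,dy$ into an integral over the positive part of the transversal whose integrand is, up to a positive weight derived from $\psi$ and $H$, the difference $F(x)-F(\theta(x))=-\sum_{i\geq 1}B_{i}\,x^{i}$. Because the weight is even under $x\mapsto\theta(x)$ at leading order, even-indexed $B_{i}$ contribute only at subleading order, so under the hypothesis $B_{1}=\cdots=B_{2k}=0$ the displacement has the expansion
\begin{align*}
H\bigl(P(x_{0})\bigr)-H(x_{0})=c_{k}\,B_{2k+1}\,x_{0}^{2k+2}+o\bigl(x_{0}^{2k+2}\bigr)
\end{align*}
for an explicit positive constant $c_{k}$. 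Since $H(x)=\tfrac{1}{2}g'(0)x^{2}+\mathcal{O}(x^{3})$ with $g'(0)>0$, the signs of $P(x_{0})-x_{0}$ and $B_{2k+1}$ agree, so orbits spiral inward when $B_{2k+1}<0$ (stable weak focus of order $k$) and outward when $B_{2k+1}>0$ (unstable).

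The hard part will be the third step: rigorously establishing the symmetric pairing $F(x)-F(\theta(x))$ at the claimed order and verifying that the change-of-variables weight is genuinely $\theta$-even, so that the even-indexed $B_{i}$ drop out of the leading coefficient and only the first nonvanishing odd coefficient $B_{2k+1}$ survives. This is essentially a Bautin-type asymptotic expansion adapted to the Liénard structure; the involution $\theta$ is the right tool because it simultaneously diagonalizes the conservative dynamics and exposes the nonconservative part $F$ through the differences $B_{i}$, reducing the computation of the focus order and its stability type to locating the first nonzero coefficient in the hypothesized expansion.
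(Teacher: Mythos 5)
This lemma is imported verbatim from the cited sources (Han 2001; Arsie et al.\ 2022) and the paper gives no proof of it, so there is no internal argument to compare against; your proposal has to be judged against the standard proof in those references, and it is essentially that proof: the succession function $H(P(x_0))-H(x_0)=\oint F\,dy$ obtained from $\frac{{\rm d}E}{{\rm d}t}=-g(x)F(x)$, the pairing of the two branches of each energy level by the involution $\theta$, and the identification of the leading displacement coefficient with $B_{2k+1}$ (your sign bookkeeping, $c_k>0$ and stability for $B_{2k+1}<0$, checks out against the model case $\psi(y)=y$, $g(x)=x$, $F(x)=ax^{2k+1}$).

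The one genuine weak point is the one you flag in step three, and it is worth naming precisely: $F$ is \emph{not} a small parameter here, so writing $X_-(h)=\theta(X_+(h))+\mathcal{O}(F)$ and arguing that the correction is "subleading" is not yet an estimate --- you must show that the deviation of the true orbit from the level curve of $E$ contributes only at order $x_0^{2k+3}$ or higher to the displacement, which requires an induction on the order of the first nonvanishing $B_i$ rather than a one-shot Melnikov-type evaluation. The clean way to close this (and the way the cited proofs effectively do it) is to bypass the orbit-correction estimates entirely: set $u=\operatorname{sgn}(x)\sqrt{2H(x)}$, which is a $C^\infty$ diffeomorphism near $0$ conjugating $\theta$ to $u\mapsto -u$, and rescale time by the positive factor $g(x)/u$. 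The system becomes the Li\'enard form $u'=\psi(y)-\widetilde F(u)$, $y'=-u$ with $\widetilde F(u)=F(x(u))$, and $\widetilde F(-u)-\widetilde F(u)=F(\theta(x))-F(x)=\sum_i B_i x(u)^i$ exhibits $-\tfrac12 B_{2k+1}(g'(0))^{-(2k+1)/2}$ as the first nonvanishing odd Taylor coefficient of $\widetilde F$; the classical focus-order and stability statement for Li\'enard systems in terms of the odd part of $\widetilde F$ then gives the lemma directly, with your "even weight drops out" heuristic replaced by the exact parity of $\widetilde F(-u)-\widetilde F(u)$. With that substitution for step three your outline becomes a complete proof.
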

Through strategic control of polynomial coefficients, we simultaneously satisfy three independent degeneracy conditions via Lemma \ref{lem:weak_focus}. This approach enables explicit sign determination for focus quantities and simultaneous resolution of multiple degeneracy constraints. Consequently, we rigorously confirm three coexisting limit cycles near the origin. The bifurcation structure emerges from synergistic asymptotic and algebro-geometric arguments applied to controlled degeneracy parameters.

One  translates the positive equilibrium $(x_2,y_2)$ to the origin by the transform  $ x\rightarrow x+x_2, y\rightarrow y+y_2$,  and substituting $y_2=\frac{1}{n}x_2$ to the above system yields
\begin{equation}
\label{eq:5.5}
\begin{aligned}
\dfrac{{\rm d} x}{{\rm d}t}&=p(x+x_2)(G(x+x_2)-y_2-y),\\
\dfrac{{\rm d} y}{{\rm d}t}&=x+x_2-n(y+y_2),\\
\end{aligned}
\end{equation}
where
\begin{equation}
\label{eq:5.6}
\begin{aligned}
G(x+x_2)&=\Big(\dfrac{1}{c}-(x+x_2)-\dfrac{m(1+a(x+x_2)+b(x+x_2)^3)}{c(x+x_2)^2}\Big), \\
p(x+x_2)&=\dfrac{c(x+x_2)^3}{1+a(x+x_2)+b(x+x_2)^3}.\\
\end{aligned}\end{equation}
It is clear that there exists a neighborhood $U_1$ with radius less then $x_2$ such that $p(x+x_2)\neq0$. Making the time scaling $t\rightarrow \dfrac{1}{p(x+x_2)}t$ for system in neighborhood $U_1$, thus the system \eqref{eq:5.5} are transformed into the  following system
\begin{equation}
\label{eq:5.7}
\begin{aligned}
\dfrac{{\rm d} x}{{\rm d}t}&=G(x+x_2)-y_2-y,\\
\dfrac{{\rm d} y}{{\rm d}t}&=\dfrac{1}{p(x+x_2)}(x+x_2)+\dfrac{-n}{p(x+x_2)}(y+y_2),\\
\end{aligned}
\end{equation}
the transformation  $x\rightarrow x,G(x+x_2)-y_2-y\rightarrow y$ will bring this into the standard Lienard form
\begin{equation}\label{eq:5.8}
\begin{aligned}
\dfrac{{\rm d} x}{{\rm d}t}&=y,\\
\dfrac{{\rm d} y}{{\rm d}t}&=-g(x)+f(x)y,\\
\end{aligned}
\end{equation}
with
\begin{align*}
g(x)&=\frac{1}{p(x+x_2)}\Big((x+x_2)-nG(x+x_2)\Big),\\
 f(x)&=G'(x+x_2)-\frac{n}{p(x+x_2)}.
\end{align*}
By the transform $x\rightarrow x, y\rightarrow y-F(x)$, where \begin{align*}
F(x)&=-\int_{0}^{x}f(s){\rm d}s=-\int_{0}^{x}(G'(s+x_2)-\frac{n}{p(s+x_2)}){\rm d}s\\
&=-G(x+x_2)+G(x_2)+\int_{0}^{x}\frac{n}{p(s+x_2)}{\rm d}s,
\end{align*}
we change system \eqref{eq:5.8} to the following system
\begin{equation}
\label{eq:5.9}
\begin{aligned}
\dfrac{{\rm d}x}{{\rm d}t}&=y-F(x),\\
\dfrac{{\rm d} y}{{\rm d}t}&= -g(x),\\
\end{aligned}
\end{equation}
which is a generalized Lienard system in terms of $\psi(y)=y$. It is easy to verify that
\begin{equation*}\begin{aligned}
\psi(0)&=0,\quad F(0)=0,\quad g(0)=\dfrac{n}{p(x_2)}\Big(\dfrac{1}{n}x_2-G(x_2)\Big)=0,\\
\psi'(0)&=1,\quad g'(0)=\dfrac{n((x_2)^2-2am(x_2)-3m)}{1+a(x_2)+b(x_2)^3}>0.
\end{aligned}
\end{equation*}
Let
\begin{align*}
H(x)=\int_0^xg(s){\rm d}s=\int_0^x\bigg(\frac{n}{p(s+x_2)}\bigg(\frac{1}{n}(s+x_2)-G(s+x_2)\bigg)\bigg){\rm d}s.
\end{align*}
Noticing $H(0)=H'(0)=0$, the Taylor's series of $H(x)$ at $x=0$ has the form
\begin{align*}
\sum_{n=2}^7\frac{H^{(n)}(0)}{n!}x^n+\mathcal{O}(x^8),
\end{align*}
where
\begin{align*}
H''(0)&=\frac{1-n G'(x_2)}{p(x_2)}:=h_2,\\
H^{(3)}(0)&=\frac{2 p'(x_2) (n G'(x_2)-1)}{p^2(x_2)}-\frac{n G''(x_2)}{p(x_2)}:=h_3,\\
H^{(4)}(0)&=-\frac{n G^{(3)}(x_2)}{p(x_2)}+\frac{3 n G''(x_2) p'(x_2)}{p^2(x_2)}+\Big(\frac{1}{n}-G'(x_2)\Big)\Big(\frac{6 n p'^2(x_2)}{p^3(x_2)}-\frac{3 n p''(x_2)}{p^2(x_2)}\Big):=h_4,\\
H^{(5)}(0)&=-\frac{n G^{(4)}(x_2)}{p(x_2)}+\frac{4 n G^{(3)}(x_2) p'(x_2)}{p^2(x_2)}+\frac{6 n G''(x_2) p''(x_2)}{p^2(x_2)}-\frac{12 n G''(x_2) p'^2(x_2)}{p^3(x_2)}\\
&\quad+\Big(\dfrac{1}{n}-G'(x_2)\Big)\Big(-\frac{4 n p^{(3)}(x_2)}{p^2(x_2)}-\frac{24 n p'^3(x_2)}{p(^4 x_2)}+\frac{24 n p'(x_2) p''(x_2)}{p^3(x_2)}\Big):=h_5,\\
H^{(6)}(0)&=-\frac{n G^{(5)}(x_2)}{p(x_2)}+\frac{5 n G^{(4)}(x_2) p'(x_2)}{p^2(x_2)}+G^{(3)}(x_2)\Big(\frac{10 n p''(x_2)}{p^2(x_2)}-\frac{20 n p'^2(x_2)}{p^3(x_2)}\Big)\\
&\quad +G''(x_2)(\frac{10 n p^{(3)}(x_2)}{p^2(x_2)}+\frac{60 n p'^3(x_2)}{p^4(x_2)}-\frac{60 n p'(x_2) p''(x_2)}{p^3(x_2)}) +\Big(\frac{1}{n}-G'(x_2)\Big)\\
&\quad\times\Big(-\frac{5 n p^{(4)}(x_2)}{p^2(x_2)}+\frac{30 n p''^2(x_2)}{p^3(x_2)}+\frac{120 n p'^4(x_2)}{p^5(x_2)}+\frac{40 n p^{(3)}(x_2) p'(x_2)}{p^3(x_2)}\\
&\quad-\frac{180 n p'^2(x_2) p''(x_2)}{p^4(x_2)}\Big):=h_6,\\
H^{(7)}(0)&=-\frac{n G^{(6)}(x_2)}{p(x_2)}+\frac{6 n G^{(5)}(x_2) p'(x_2)}{p^2(x_2)}+G^{(4)}(x_2)(\frac{15 n p''(x_2)}{p^2(x_2)}-\frac{30 n p'^2(x_2)}{p^3(x_2)})\\
&\quad+G^{(3)}(x_2)(\frac{20 n p^{(3)}(x_2)}{p^2(x_2)}+\frac{120 n p'^3(x_2)}{p^4(x_2)}-\frac{120 n p'(x_2) p''(x_2)}{p^3(x_2)})+G''(x_2)\\
&\quad\times\Big(\frac{15 n p^{(4)}(x_2)}{p^2(x_2)}-\frac{90 n p''^2(x_2)}{p^3(x_2)}-\frac{360 n p'^4(x_2)}{p^5(x_2)}-\frac{120 n p^{(3)}(x_2) p'(x_2)}{p^3(x_2)}\\
&\quad+\frac{540 n p'^2(x_2) p''(x_2)}{p^4(x_2)}\Big)+\Big(\frac{1}{n}-G'(x_2)\Big)(-\frac{6 n p^{(5)}(x_2)}{p^2(x_2)}-\frac{720 n p'^5(x_2)}{p^6(x_2)}\\
&\quad+\frac{60 n p^{(4)}(x_2) p'(x_2)}{p^3(x_2)}+\frac{120 n p^{(3)}(x_2) p''(x_2)}{p^3(x_2)}-\frac{360 n p^{(3)}(x_2) p'^2(x_2)}{p^4(x_2)}\\
&\quad+\frac{1440 n p'^3(x_2) p''(x_2)}{p^5(x_2)}-\frac{540 n p'(x_2) p''^2(x_2)}{p^4(x_2)}):=h_7,
\end{align*}
letting
\begin{align*}
\theta(x)=-x+\nu_2x^2+\nu_3x^3+\nu_4x^4+\nu_5x^5+\nu_6x^6+\mathcal{O}(x^7)
\end{align*}
and $H(\theta(x))\equiv H(x)$, one concludes
\begin{align*}
\nu_2&=-\frac{h_3}{3h_2},\\
\nu_3&=-\nu_2^2,\\
\nu_4&=\frac{h_5+10h_4\nu_2}{-60 h_2}+2\nu_2^3,\\
\nu_5&=\frac{h_4}{2h_2}\nu_2^2+\frac{h_5\nu_2}{20 h_2}-4\nu_2^4,\\
\nu_6&=-\frac{19 h_4 \nu _2^3}{12 h_2}-\frac{11 h_5 \nu _2^2}{60 h_2}+\frac{\left(70 h_4^2-21 h_2 h_6\right) \nu _2}{2520 h_2^2}+\frac{7 h_4 h_5-h_2 h_7}{2520 h_2^2}+9 \nu _2^5,
\end{align*}
where
\begin{align*}
h_2=\frac{n \left(a x_2+b x_2^3+1\right) \left(x_2^2-m \left(2 a x_2+3\right)\right)}{c^2 x_2^6}>0.
\end{align*}

\begin{thm}\label{th:5.3}
The codimension of the Hopf bifurcation is determined by the order $k$ ($k=0,1,2,3$) of the weak focus and summarized as follows:
$$
\begin{array}{c|c|l}
 \hline
 \text{Order } k & \text{Codimension} & \text{Conditions} \\ \hline
 0 & 0& \mathscr{G}'(x_2) \neq 0 \\
  & &  \\
1 & 1 & \mathscr{G}'(x_2) = 0, -3\nu_2\mathscr{G}''(x_2) \neq \mathscr{G}^{(3)}(x_2) \\
 & &  \\
2 & 2 & \mathscr{G}'(x_2) = 0, \ -3\nu_2\mathscr{G}''(x_2)= \mathscr{G}^{(3)}(x_2),\\
 &  &\mathscr{G}''(x_2)( h_5+10 h_4 \nu_2) \neq h_2(\mathscr{G}^{(5)}(x_2)+10 \mathscr{G}^{(4)}(x_2)\nu_2)
   \\
    & &  \\
3 & 3 & \mathscr{G}'(x_2) = 0,\ -3\nu_2\mathscr{G}''(x_2)= \mathscr{G}^{(3)}(x_2),\\
 & & \mathscr{G}''(x_2)( h_5+10 h_4 \nu_2) = h_2(\mathscr{G}^{(5)}(x_2)+10 \mathscr{G}^{(4)}(x_2)\nu_2), \\
 & & \dfrac{630 h_2 h_4 \nu_2^3+(70 h_4^2-21 h_2 h_6) \nu_2+7 h_4 h_5-h_2 h_7}{h_2^2} \mathcal{G}''(x_2) \\
 & & \neq -\mathscr{G}^{(7)}(x_2)-21 \nu_2 \mathscr{G}^{(6)}(x_2)+\dfrac{7\left(90 h_2 \nu_2^3+10 h_4 \nu_2+k_5\right)}{h_2}\mathscr{G}^{(4)}(x_2) \\
\hline
\end{array}
$$
with stability determined by the sign of $B_{2k+1}$:
\begin{equation*}
\begin{cases}
\text{stable} & \text{if } B_{2k+1} < 0; \\
\text{unstable} & \text{if } B_{2k+1} > 0,
\end{cases}
\end{equation*}
where $\mathscr{G}(x)=G(x)+\displaystyle\int_{0}^{x}\dfrac{n}{p(s)}{\rm d}s$.
\end{thm}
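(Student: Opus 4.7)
The plan is to apply Lemma~\ref{lem:weak_focus} directly to the generalized Liénard form \eqref{eq:5.9}. The whole classification reduces to locating the first non-vanishing odd coefficient in the expansion $F(\theta(x)) - F(x) = \sum_{i \geq 1} B_i x^i$, because Lemma~\ref{lem:weak_focus} asserts that the origin of \eqref{eq:5.9} is a focus of order $k$ exactly when $B_1 = \cdots = B_{2k} = 0$ and $B_{2k+1} \neq 0$, with stability governed by the sign of $B_{2k+1}$. The function $F$ is already known, and differentiating $F'(x) = n/p(x+x_2) - G'(x+x_2)$ shows that each Taylor coefficient $F^{(j)}(0)/j!$ agrees up to sign with $\mathscr{G}^{(j)}(x_2)/j!$, so every derivative of $F$ at $0$ rewrites as a derivative of $\mathscr{G}$ at $x_2$.

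The bulk of the work is substituting the already-determined series $\theta(x) = -x + \nu_2 x^2 + \nu_3 x^3 + \cdots + \nu_6 x^6 + \mathcal{O}(x^7)$ into $F(\theta(x)) - F(x) = \sum_{n\geq 1} \frac{F^{(n)}(0)}{n!}[\theta(x)^n - x^n]$ and collecting powers of $x$ up to $x^7$. The elementary expansions $\theta(x) - x = -2x + \nu_2 x^2 + \cdots$, $\theta(x)^2 - x^2 = -2\nu_2 x^3 + \cdots$, $\theta(x)^3 - x^3 = -2x^3 + 3\nu_2 x^4 + \cdots$, etc., yield $B_1 = -2F'(0)$, which gives the $k=0$ row since $B_1 \neq 0 \iff \mathscr{G}'(x_2) \neq 0$. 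After imposing $F'(0) = 0$, one obtains $B_3 = -\nu_2 F''(0) - \tfrac{1}{3}F^{(3)}(0)$, which is a nonzero multiple of $-3\nu_2 \mathscr{G}''(x_2) - \mathscr{G}^{(3)}(x_2)$, matching the $k=1$ row. A crucial structural observation is that the even coefficients $B_2, B_4, B_6$ vanish automatically once the preceding odd coefficients do; this is precisely the geometric role of the identity $H(\theta(x)) \equiv H(x)$ used to define the $\nu_j$. Continuing to orders $5$ and $7$ and substituting the closed-form expressions for $\nu_3,\ldots,\nu_6$ in terms of $h_2,\ldots,h_7$ converts $B_5$ and $B_7$ into the precise combinations of $\mathscr{G}^{(j)}(x_2)$ and $h_j$ displayed in the $k=2$ and $k=3$ rows.

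The main obstacle I anticipate is combinatorial rather than conceptual: assembling the $B_7$ formula requires expanding $\theta(x)^n - x^n$ for $n$ up to $7$ and organising a large number of cross terms of the form $\nu_2^i \nu_j \mathscr{G}^{(m)}(x_2)$ and $h_i h_j/h_2^\ell$. The tabulated $\nu_j$ given before the theorem keep this tractable, because after each substitution one can eliminate $\nu_j$ with $j \geq 3$, reducing the final expressions to combinations of $\nu_2$ together with $h_2,\ldots,h_7$ and $\mathscr{G}^{(j)}(x_2)$ for $j \leq 7$; these collapse exactly to the inequalities stated in the table. The stability assertion is then immediate from Lemma~\ref{lem:weak_focus}: the sign of the surviving $B_{2k+1}$ classifies the focus as stable when $B_{2k+1} < 0$ and unstable when $B_{2k+1} > 0$, completing the proof.
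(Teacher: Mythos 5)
Your proposal is correct and follows essentially the same route as the paper's proof: apply Lemma~\ref{lem:weak_focus} to the Liénard form \eqref{eq:5.9}, expand $F(\theta(x))-F(x)$ using the $\nu_j$ determined by $H(\theta(x))\equiv H(x)$, observe that the even coefficients vanish automatically (the paper makes this explicit via the recurrences $B_{2m}=-\tfrac{2m-1}{2}\nu_2 B_{2m-1}$), and read off the codimension and stability from the first nonvanishing odd coefficient $B_{2k+1}$. Your identifications of $B_1$ and $B_3$, and the reduction of $B_5$, $B_7$ to the stated combinations of $\mathscr{G}^{(j)}(x_2)$ and $h_j$, match the paper's computation.
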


\begin{figure}[h]
  \centering
  \begin{minipage}[c]{0.48\textwidth}
    \centering
    \includegraphics[width=\textwidth]{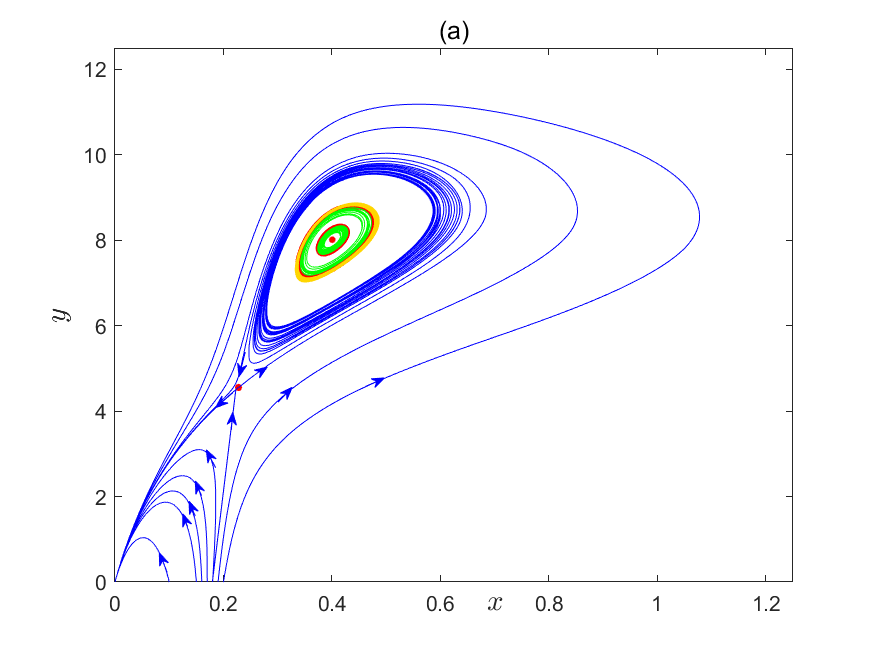}
  \end{minipage}
  \hfill
  \begin{minipage}[c]{0.48\textwidth}
    \centering
    \includegraphics[width=\textwidth]{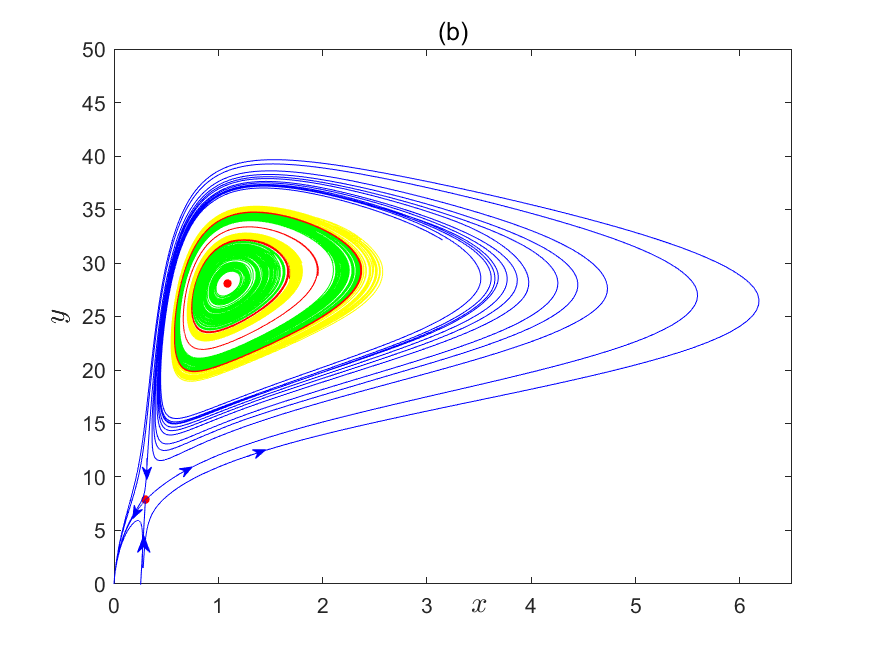}
  \end{minipage}
  \caption{(a) Two limit cycles. (b) Three limit cycles.}
  \label{fig:10}
\end{figure}
The whole proof of Theorem \ref{th:5.3} can be found in \ref{proof_th:5.3}.
Theorem~\ref{th:5.3} establishes that the Hopf bifurcation in system \eqref{eq:2.4} can attain codimension 3 under appropriate parameter conditions. We numerically validate this theoretical prediction by demonstrating both codimension 2 and 3 Hopf bifurcations through carefully selected examples. These bifurcations generate multiple limit cycles when parameters undergo controlled perturbations. Notably, the codimension 2 Hopf bifurcation also arises naturally from a codimension 3 Bogdanov-Takens bifurcation, as guaranteed by Theorem~\ref{thm:cusp_versal}.
\begin{ex}
Consider parameters $c =0.0988432$, $b = 1$, $m =0.0292698$, and define $\lambda_1 = (a,n)$ with $\lambda_{10} = (\widehat{a},\widehat{n}) = (-0.35, 0.05)$. At $\lambda_{10}$, system (2.4) possesses two positive equilibria: $E_1 = (0.22727,4.5454)$ and $E_2 = (0.400544,8.01088)$, with $B_1(\lambda_{10}) = B_3(\lambda_{10}) = 0$ and $B_5(\lambda_{10}) =15668.7> 0$ at $x_2=0.400544$.

Define the mapping
\begin{align*}
\phi_1 : (U \subset \mathbb{R}^2, \lambda_{10}) \rightarrow (V \subset \mathbb{R}^2, (B_1(\lambda_{10}), B_3(\lambda_{10})))
\end{align*}
where the Jacobian determinant satisfies
\[
\det\left(\left. \frac{\partial(B_1, B_3)}{\partial(a, n)}\right) \right|_{\lambda_1 = \lambda_{10}} = 13261 \neq 0.
\]
This establishes $\phi_1$ as a local diffeomorphism with $\lambda_{10}$ as a regular point, confirming system \eqref{eq:2.4} as a universal unfolding of a codimension 2 Hopf bifurcation near $\lambda_{10}$.\par
 The phase portrait (Figure~\ref{fig:10}(a)) reveals striking dynamics:
\begin{itemize}
    \item[(1)] The inner  green trajectory starting at $(0.415,8.1)$ spirals outward;
    \item[(2)] The outer trajectory from $(0.45,8.1)$ expands outward;
    \item[(3)] The outermost yellow trajectory initiating at $(0.476,8.1)$ contracts inward.
\end{itemize}
Between these flows, a key phenomenon emerges: two distinct limit cycles appear around $E_2$ (red curves in Figure~\ref{fig:10}(a)), consisting of a semi-stable inner cycle and a stable outer cycle. This confirms the predicted bifurcation structure under parameter perturbation.
Equilibrium $E_2$ remains unstable, while $E_1$ functions as a saddle-node.
\end{ex}

\begin{ex}\label{ex:codim3}
Set $b=0.02$, $m=0.0391069$ and define $\lambda_2 = (a,c, n)$ with $\lambda_{20} = (\widehat{a}, \widehat{c}, \widehat{n}) = (2.5, 0.0300281, 0.0387063)$. At $\lambda_{20}$, system \eqref{eq:2.4} satisfies $B_1(\lambda_{20}) = B_3(\lambda_{20}) = B_5(\lambda_{20}) = 0$ and $B_7(\lambda_{20}) = 152.691 > 0$, fulfilling the criteria for a codimension 3 bifurcation. The system possesses two positive equilibria: $E_1=(0.300757,7.77023)$ and $E_2=(1.08727,28.0903)$.

Define the mapping
\begin{align*}
\phi_2 : (U \subset \mathbb{R}^3, \lambda_{20}) \rightarrow (V \subset \mathbb{R}^3, (B_1(\lambda_{20}), B_3(\lambda_{20}), B_5(\lambda_{20})))
\end{align*}
with nonvanishing Jacobian determinant
\[
\det\left(\left. \frac{\partial(B_1, B_3, B_5)}{\partial(a, c, n)} \right)\right|_{\lambda_2 = \lambda_{20}}= 18491.108080\neq 0.
\]
 This confirms $\phi_2$ as a local diffeomorphism and $\lambda_{20}$ as a regular point, establishing system \eqref{eq:2.4} as a universal unfolding of the codimension 3 Hopf bifurcation near $\lambda_{20}$.

 The  dynamics (Figure~\ref{fig:10}(b)) exhibit a remarkable structure:
\begin{itemize}
    \item[(1)] Outward spiraling from $(1.2, 28)$ and $(2.1, 32)$ (green);
    \item[(2)] Inward contraction from $(2.37, 32.15)$ and $(1.8, 30)$ (yellow);
    \item[(3)] Outward expansion from $(3.15, 32.15)$ (blue).
\end{itemize}
The most significant outcome emerges in the region surrounding $E_2$: three distinct concentric limit cycles materialize (red curves), comprising an innermost stable cycle, an intermediate unstable cycle, and an outermost stable cycle. This triadic limit cycle configuration provides definitive numerical evidence of the codimension 3 bifurcation. While $E_2$ at $(1.08727,28.0903)$ is unstable and $E_1$ at $(0.300757,7.77023)$ is a saddle-node character, the emergence of triple limit cycles represents the hallmark feature of this system.
\end{ex}

\begin{rem}\label{rem:4.6}
This work provides the   evidence of three concentric limit cycles emerging in any epidemic model through pure codimension 3 Hopf bifurcation. As demonstrated in Example \ref{ex:codim3} (Figure~\ref{fig:10}(b)), this novel configuration originates solely from degeneracies at a single Hopf point without auxiliary bifurcations, a phenomenon rarely  reported for infectious disease systems.

The mechanism enabling this high codimension bifurcation stems from the interaction between monotonic risk aversion and nonmonotonic risk compensation in our SIRS incidence function. This represents a fundamental advance in understanding oscillatory complexity in behavioral epidemiology.

\end{rem}
\begin{rem}\label{th:global_local}
Under the conditions of Theorem \ref{th:5.3} (codimension-3 Hopf bifurcation) and $\mathfrak{D}_{(p,q)}<0$:
\begin{itemize}
    \item[(1)] The outermost limit cycle is always stable, acting as a \textit{separatrix};
    \item[(2)] Trajectories escaping its repulsion are captured by $W^s(E_1)$;
    \item[(3)] The unstable manifold $W^u(E_1)$ connects to $(0,0)$;
    \item[(4)] The disease-free equilibrium $(0,0)$ absorbs all trajectories outside the basin of $E_2$.
\end{itemize}
\end{rem}

\subsection{Nonexistence and existence of limit cycles}

\begin{thm}[Nonexistence of limit cycles]
System \eqref{eq:2.4} has no limit cycles in the positive quadrant if either of the following holds:
\begin{itemize}
    \item[\rm (a)] There are no positive equilibria ($\mathfrak{D}_{(p,q)} > 0$);
    \item[\rm (b)] All positive equilibria are hyperbolic nodes (i.e., for each equilibrium $E_i$, $(\operatorname{Tr} J_{E_i})^2 - 4\det J_{E_i} > 0$ and eigenvalues real with the same sign);
    \item[\rm (c)] $G'(x) > \dfrac{n}{p(x)}$ for all $x > 0$ where $p(x) > 0$.
\end{itemize}
\end{thm}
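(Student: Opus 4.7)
The plan is to establish each of the sufficient conditions (a)--(c) by a distinct classical tool of planar dynamics, organized around the common observation that any limit cycle must lie in the open positive quadrant (since $\{x=0\}$ is invariant by Lemma~\ref{lem:2.1}) and be contained in the positively invariant compact set $\widetilde\Sigma$. For part (a), I would apply Poincar\'e's index theorem: any closed orbit has index $+1$ and must enclose a set of equilibria whose indices sum to $+1$. When $\mathfrak{D}_{(p,q)}>0$, Theorem~\ref{th:2.11}(1) excludes every positive equilibrium, and the disease-free equilibrium $(0,0)$ lies on the invariant axis $\{x=0\}$ so cannot sit interior to any closed orbit contained in $\mathrm{int}\,\mathbb{R}^2_+$. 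Hence no closed orbit can exist.

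For part (b), I would again appeal to the index count, combined with the equilibrium classification already derived. By Theorem~\ref{th:3.6}, whenever two positive equilibria coexist, the lower one $(x_1,y_1)$ is a hyperbolic saddle (index $-1$), and by Theorem~\ref{th:3.4} any unique positive equilibrium ($\mathfrak{D}_{(p,q)}=0$) is a degenerate saddle-node, hence non-hyperbolic. In neither case is ``every positive equilibrium is a hyperbolic node'' realized, so (b) reduces to the no-positive-equilibrium regime already covered by (a). As a self-contained fallback I would record that a hyperbolic node's linearization admits no rotation, which, combined with the boundedness of $\widetilde\Sigma$ and the Poincar\'e--Bendixson trichotomy, rules out an isolated periodic orbit encircling a single hyperbolic node as the only interior equilibrium.

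For part (c), I would invoke the Bendixson--Dulac criterion with the Dulac multiplier $B(x,y)=1/p(x)$, which is $C^1$ and strictly positive on the simply connected region $\Omega=\{(x,y)\in\mathrm{int}\,\widetilde\Sigma:\ p(x)>0\}$. Writing the system in the form~\eqref{eq:5.1} and differentiating gives
$$\frac{\partial}{\partial x}\bigl(B(x,y)\,p(x)(G(x)-y)\bigr)+\frac{\partial}{\partial y}\bigl(B(x,y)(x-ny)\bigr)=G'(x)-\frac{n}{p(x)},$$
which is strictly positive throughout $\Omega$ by hypothesis~(c). The Bendixson--Dulac criterion then precludes any closed orbit in $\Omega$, hence any limit cycle of system~\eqref{eq:2.4}.

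The hard part will be part (b): the stated hypothesis is in tension with Theorem~\ref{th:3.6}, which forces $(x_1,y_1)$ to be a hyperbolic saddle (not a node) as soon as two positive equilibria coexist. The cleanest resolution is to read (b) strictly, observe that it is compatible with the model only in the no-positive-equilibrium regime (so nominally subsumed by (a)), and back this up with the local index/rotation argument for any degenerate edge cases. Careful presentation is needed to avoid the appearance of a circular or empty statement while still extracting the genuine content, namely that \emph{hyperbolic-node-only} configurations of positive equilibria cannot support isolated periodic orbits inside $\widetilde\Sigma$.
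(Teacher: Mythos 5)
Your proposal is correct, and parts (a) and (c) follow essentially the same route as the paper: (a) is the Poincar\'e--Bendixson/index observation that a closed orbit must enclose an equilibrium of total index $+1$, and your Dulac multiplier $B=1/p(x)$ in (c) is exactly the paper's time rescaling ${\rm d}\tau=p(x)\,{\rm d}t$ followed by a divergence computation --- both yield $G'(x)-n/p(x)>0$ and the same conclusion. (If anything, your choice of region is cleaner: the paper restricts to $\widetilde\Sigma\cap\{x\ge\delta_1\}$ and asserts that any cycle lies there, which does not obviously follow from the cycle merely enclosing an equilibrium; since hypothesis (c) gives positivity of the divergence on all of $x>0$, one can simply apply Dulac on the whole open positive quadrant.) The genuine divergence is in (b). The paper argues directly that a hyperbolic node ``prohibits oscillatory behavior'' and that ``limit cycles require spiral flow around some equilibrium''; this is not a valid general principle --- a node has index $+1$, so there is no topological obstruction to a periodic orbit enclosing a single node, and such examples exist for planar systems of degree $\ge 3$ (the ``cycles surround foci'' theorem is special to quadratic systems). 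Your resolution --- reading (b) strictly and noting that Theorems~\ref{th:3.4} and~\ref{th:3.6} force a saddle or a non-hyperbolic saddle-node whenever positive equilibria exist, so that hypothesis (b) can only hold vacuously and is subsumed by (a) --- is logically sound and in fact more rigorous than the paper's argument, at the cost of exposing (b) as having no nonvacuous content for this model. Your ``fallback'' rotation argument, however, inherits exactly the same weakness as the paper's; I would drop it rather than lean on it.
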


\begin{proof}
(a) By the Poincare-Bendixson theorem, limit cycles must enclose at least one equilibrium. Absence of equilibria precludes cycles.

(b) For each hyperbolic node $E_i$:
\begin{itemize}
    \item If stable ($\operatorname{Tr} J_{E_i} < 0$), nearby trajectories approach $E_i$ monotonically;
    \item If unstable ($\operatorname{Tr} J_{E_i} > 0$), trajectories diverge radially.
\end{itemize}
The nodal structure prohibits oscillatory behavior near each $E_i$. Since limit cycles require spiral flow around some equilibrium, this topological constraint prevents cycle formation.

(c) Case 1: No positive equilibria $\Rightarrow$ no limit cycles by (a).

Case 2: Positive equilibria exist. Let $\{E_i\}$ be the set of positive equilibria with $x$-coordinates $\{x_{E_i}\}$. Take:
\[
\delta_1 = \frac{1}{2} \min_{i}\{ x_{E_i}\} > 0.
\]
Define $\widetilde{D} = \widetilde{\Sigma} \cap \{(x,y) \mid x \geq \delta_1\}$. Then $\widetilde{D}$ is compact and simply connected, and all positive equilibria lie in $\widetilde{D}$. Applying the time-rescaling $d\tau = p(x)dt$ to system \eqref{eq:5.1}, we obtain the transformed vector field:
\[
\mathbf{F}(x,y) = \left( G(x) - y,  \frac{x - n y}{p(x)} \right).
\]
The divergence of $\mathbf{F}$ is calculated as:
\[
\nabla \cdot \mathbf{F} = \frac{\partial}{\partial x}\left( G(x) - y \right) + \frac{\partial}{\partial y}\left( \frac{x - n y}{p(x)} \right) = G'(x) - \frac{n}{p(x)} > 0 \quad \forall (x,y) \in \widetilde{D}.
\]

By Dulac's criterion, no periodic orbits exist in $\widetilde{D}$. Since any limit cycle must enclose an equilibrium and thus lie in $\widetilde{D}$, we conclude that system \eqref{eq:2.4} has no limit cycles in the positive quadrant.
\end{proof}

\begin{thm}[Existence of limit cycles]\label{thm:exist}
Assume system \eqref{eq:2.4} satisfies $\mathfrak{D}_{(p,q)} < 0$ (two distinct positive equilibria $E_1$ saddle and $E_2$ focus) and $E_2$ is unstable ($\operatorname{Tr} J(E_2)=p(x_2)G'(x_2) - n > 0$). Then there exists at least one stable limit cycle surrounding $E_2$ in the compact invariant set $\widetilde{\Sigma}$.
\end{thm}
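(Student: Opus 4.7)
The plan is to apply the Poincar\'e-Bendixson theorem on an annular trapping region surrounding $E_2$. First, from the hypothesis $\operatorname{Tr} J(E_2) > 0$ combined with $\det J(E_2) > 0$ (which follows from the proof of Theorem~\ref{th:3.6}, since $(x_2,y_2)$ corresponds to the root $z_2 \in (z_r, +\infty)$ with $\omega'(z_2) > 0$), the equilibrium $E_2$ is either an unstable node or an unstable focus. Standard linearization theory then yields a small closed disk $\overline{U_\epsilon}$ centered at $E_2$ whose boundary $\partial U_\epsilon$ is transverse to the flow and on which the vector field points strictly outward; this disk serves as the inner boundary of the trapping annulus.

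Next I would construct a simply connected, positively invariant region $\Omega \subset \widetilde{\Sigma}$ with $E_2 \in \operatorname{int}\Omega$ and $E_1,(0,0) \notin \overline{\Omega}$. The boundary $\partial \Omega$ is to be assembled from pieces that cannot be crossed by forward trajectories: arcs of the stable manifold $W^s(E_1)$ of the saddle (invariant by definition) together with portions of $\partial \widetilde{\Sigma}$. The latter are genuinely non-exiting because $\widetilde{\Sigma}$ is positively invariant (Lemma~\ref{lem:2.1}); in particular, along the slanted face $x+y = 1/c$ a direct computation gives
\[
\frac{{\rm d}}{{\rm d}t}(x+y) \;=\; \frac{x^3}{1+ax+bx^3}\bigl(1-c(x+y)\bigr) + (1-m)x - ny \;=\; -\tfrac{d}{\mu}x - ny \;<\; 0,
\]
so the flow points strictly inward on this face. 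Granted such an $\Omega$, the annular set $A := \overline{\Omega} \setminus U_\epsilon$ is compact, positively invariant, and free of equilibria. By the Poincar\'e-Bendixson theorem, the $\omega$-limit set of every trajectory entering $A$ is a periodic orbit encircling $E_2$. The \emph{outermost} such cycle is then stable from the exterior, because any trajectory in $\Omega$ lying just outside it is squeezed between it and $\partial \Omega$ and must accumulate on a periodic orbit, which by maximality coincides with the outermost one; combined with the outward-pushing behavior on $\partial U_\epsilon$, this yields the stable limit cycle claimed.

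The principal obstacle is the rigorous construction of $\Omega$. One must exclude the pathological scenario where both branches of $W^s(E_1)$ spiral back toward $E_2$ (preventing closure against $\partial \widetilde{\Sigma}$), and one must verify that $E_2$ lies on the ``interior'' side of $W^s(E_1)$ rather than on the side whose trajectories drain into the disease-free equilibrium $(0,0)$. I would handle this by combining three ingredients: an index count for the candidate region (the sum of indices of enclosed equilibria must equal $+1$ for a simple closed invariant curve, forcing the trapping-region boundary to separate $E_1$ from $E_2$), the strictly inward flow on $\partial \widetilde{\Sigma}$ computed above, and the invariance of the axis $x=0$ (which, together with the saddle structure of $E_1$, prevents trajectories launched from $\partial U_\epsilon$ from reaching $(0,0)$ without first crossing $W^s(E_1)$, an impossibility by invariance). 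Once the global topology of $W^s(E_1)$ inside $\widetilde{\Sigma}$ is controlled in this way, the Poincar\'e-Bendixson mechanism cleanly delivers the stable limit cycle.
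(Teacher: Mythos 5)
Your proposal uses essentially the same strategy as the paper: the Poincar\'e--Bendixson theorem applied inside the compact positively invariant set $\widetilde{\Sigma}$, with the instability of $E_2$ providing the inner repelling boundary. You simply execute it in far more detail than the paper does --- the inner disk from the linearization (using $\det J(E_2)>0$ from the proof of Theorem~\ref{th:3.6} together with $\operatorname{Tr} J(E_2)>0$), the inward flux computation on the face $x+y=1/c$ (which is correct, since $1-m=-d/\mu$), and the index count showing any cycle must encircle $E_2$ alone are all sound additions. The obstacle you honestly flag --- controlling the global disposition of $W^s(E_1)$ so that the outer boundary of the trapping annulus actually closes up and separates $E_2$ from the basin of the disease-free equilibrium --- is a genuine issue, but be aware that the paper's own proof does not resolve it either: it asserts that ``$\widetilde{\Sigma}$ contains no other attractors,'' which overlooks that $(0,0)$ is a stable node, and leans on Figure~10 for the claim that $W^u(E_1)$ connects to the origin. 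So your proposal is, if anything, a more careful version of the published argument, with the same unfinished step made explicit rather than hidden.
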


\begin{proof}
By Lemma \ref{lem:2.1} and the subsequent dimensional reduction, $\widetilde{\Sigma}$ is compact and positively invariant. All trajectories eventually enter and remain in $\widetilde{\Sigma}$ (see Section 2.1).

Theorem \ref{th:3.6} guarantees that $E_1$ is a saddle and $E_2$ is an unstable focus in $\widetilde{\Sigma}$. Since $E_0(0,0)$ is a stable node (Theorem \ref{th:3.2}), the unstable manifold of $E_1$ connects to $E_0$ (see Figure~\ref{fig:10}).

As $E_2$ is unstable and $\widetilde{\Sigma}$ contains no other attractors, the Poincare-Bendixson theorem implies the existence of a stable limit cycle surrounding $E_2$ in $\widetilde{\Sigma}$.
\end{proof}

\subsection{Hopf bifurcation diagram and differential topology of the bifurcation surface}

We select $(a,c,n)$ as the bifurcation parameters for system \eqref{eq:2.4}. At the critical parameter configuration $(\widehat{a},\widehat{c},\widehat{n})$, the focus quantities satisfy $B_1 = B_3 = B_5 = 0$ and $B_7\neq0$. Letting $(\widehat{a},\widehat{c},\widehat{n})=(\widehat{a}+\mu_1,\widehat{c}+\mu_2,\widehat{n}+\mu_3)$, this degeneracy condition implies that in a neighborhood $U$ of $(\widehat{a},\widehat{c},\widehat{n})$.
The original system is locally topologically equivalent to the normal form:
\begin{equation}\label{eq:5.10}
\begin{aligned}
    \frac{{\rm d} r}{{\rm d}t} &= \mu_1 r + \mu_2 r^3 + \mu_3 r^5 - r^7 + \mathcal{O}(r^9) \\
    \frac{{\rm d} \theta}{{\rm d}t} &= -1 + \mathcal{O}(\mu, r^2),
\end{aligned}
\end{equation}
near the origin, under a smooth parameter-dependent diffeomorphism. This equivalence is established through: (1) translation of equilibrium $E_2$ to the origin, (2) polar coordinate transformation, and (3) normal form reduction via near-identity transformations. Consequently, the bifurcation sets of both systems coincide locally in parameter space. This equivalence permits us to analyze the simplified system \eqref{eq:5.10} to characterize the complete bifurcation structure.

\begin{figure}[h]
  \centering
  \begin{minipage}{0.6\textwidth}
    \centering
    \includegraphics[width=\linewidth]{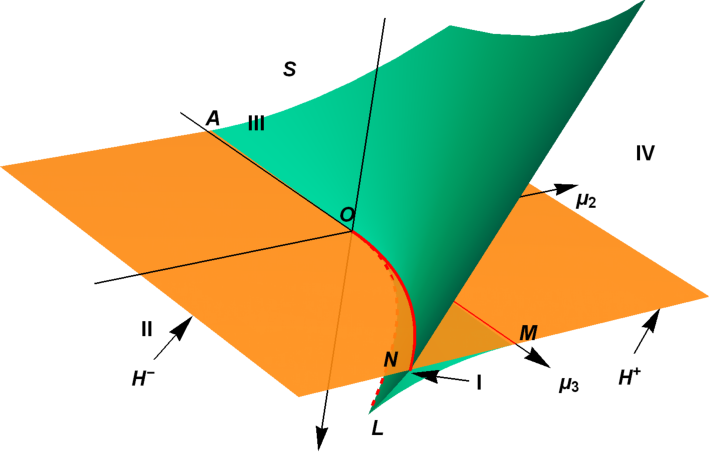}
    \caption{Codimension-3 Hopf bifurcation diagram.}
     \label{fig:111}
  \end{minipage}
\end{figure}

\begin{table}[htbp]
  \centering
  \caption{The codimension 3 phase portraits of equation \eqref{eq:5.10}.}
  \label{tab:phase4}
  \renewcommand{\arraystretch}{1.5}
  \setlength{\tabcolsep}{10pt}
  \begin{tabular}{|l|l|l|l|l|}
    \hline
    \begin{tabular}{@{}c@{}} \\[-3pt] \includegraphics[width=2.2cm, height=2.1cm]{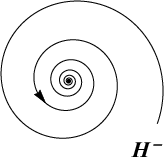} \\[-3pt] \end{tabular} &
    \begin{tabular}{@{}c@{}} \\[-3pt] \includegraphics[width=2.2cm, height=2.1cm]{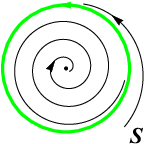} \\[-3pt] \end{tabular} &
    \begin{tabular}{@{}c@{}} \\[-3pt] \includegraphics[width=2.2cm, height=2.1cm]{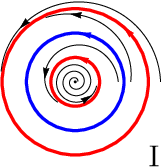} \\[-3pt] \end{tabular} &
    \begin{tabular}{@{}c@{}} \\[-3pt] \includegraphics[width=2.2cm, height=2.1cm]{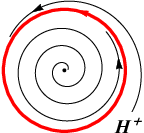} \\[-3pt] \end{tabular} &
    \begin{tabular}{@{}c@{}} \\[-3pt] \includegraphics[width=2.2cm, height=2.1cm]{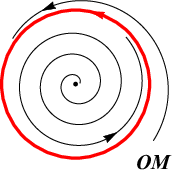} \\[-3pt] \end{tabular} \\
    \hline

    \begin{tabular}{@{}c@{}} \\[-3pt] \includegraphics[width=2.2cm, height=2.1cm]{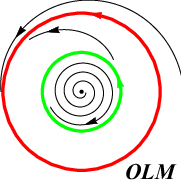} \\[-3pt] \end{tabular} &
    \begin{tabular}{@{}c@{}} \\[-3pt] \includegraphics[width=2.2cm, height=2.1cm]{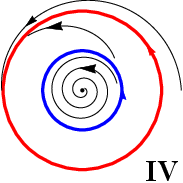} \\[-3pt] \end{tabular} &
    \begin{tabular}{@{}c@{}} \\[-3pt] \includegraphics[width=2.2cm, height=2.1cm]{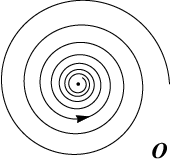} \\[-3pt] \end{tabular} &
    \begin{tabular}{@{}c@{}} \\[-3pt] \includegraphics[width=2.2cm, height=2.1cm]{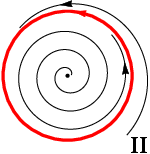} \\[-3pt] \end{tabular} &
    \begin{tabular}{@{}c@{}} \\[-3pt] \includegraphics[width=2.2cm, height=2.1cm]{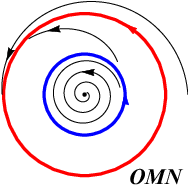} \\[-3pt] \end{tabular} \\
    \hline

    \begin{tabular}{@{}c@{}} \\[-3pt] \includegraphics[width=2.2cm, height=2.1cm]{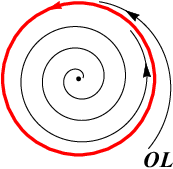} \\[-3pt] \end{tabular} &
    \begin{tabular}{@{}c@{}} \\[-3pt] \includegraphics[width=2.2cm, height=2.1cm]{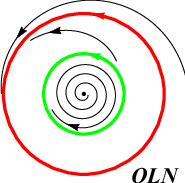} \\[-3pt] \end{tabular} &
    \begin{tabular}{@{}c@{}} \\[-3pt] \includegraphics[width=2.2cm, height=2.1cm]{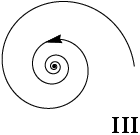} \\[-3pt] \end{tabular} &
    \begin{tabular}{@{}c@{}} \\[-3pt] \includegraphics[width=2.2cm, height=2.1cm]{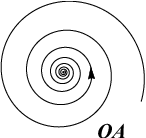} \\[-3pt] \end{tabular} &
    \begin{tabular}{@{}c@{}} \\[-3pt] \includegraphics[width=2.2cm, height=2.1cm]{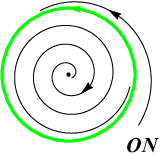} \\[-3pt] \end{tabular} \\
    \hline
  \end{tabular}
\end{table}

The $(\mu_1,\mu_2,\mu_3)$ parameter space exhibits the following bifurcation structure (see Figure~\ref{fig:111}):
\begin{enumerate}
    \item[(1)] The half-plane defined by $\mu_1 = 0$ and $\mu_2 < 0$ ($H^{-}$) corresponds to supercritical Hopf bifurcations.

    \item[(2)] Subcritical Hopf bifurcations emerge on the half-plane $H^{+}$: $\mu_1 = 0$ with $\mu_2 > 0$.

    \item[(3)] Along the $\mu_3$-axis (excluding $\mu_3 = 0$), codimension-2 Hopf bifurcations occur.

    \item[(4)] Saddle-node bifurcations of limit cycles appear on the surface $SN_{lc}$, composed of:
    \begin{itemize}
        \item[(4a)] Surface $S$ for $\mu_3 > 0$ (above $\mu_2\mu_3$-plane);
        \item[(4b)] Surfaces $OLM$ and $OLN$ for $\mu_3 < 0$ (below $\mu_2\mu_3$-plane).
    \end{itemize}
    A single semistable limit cycle exists throughout $SN_{lc}$, including boundaries $ON$ and $OL$.

    \item[(5)] Four generic regions (I-IV) are formed by the intersection of the $\mu_2\mu_3$-plane and $SN_{lc}$.
\end{enumerate}

Table \ref{tab:phase4} illustrates representative phase portraits of system \eqref{eq:5.10} across generic parameter regions and near bifurcation boundaries. Limit cycle stabilities are visually coded as: red circles (stable), blue circles (unstable), and green circles (semi-stable).\par
The classification of singular points for fronts relies on singularity theory, which provides powerful tools for analyzing degenerate behaviors in geometric structures. Building on this foundation, we now apply similar singularity-theoretic techniques to study degenerate Hopf bifurcations in dynamical systems. To investigate the degenerate Hopf bifurcation of the original system \eqref{eq:2.4} and characterize the local differential structure of its bifurcation set, we focus on its normal form (system \eqref{eq:5.10}). Applying singularity theory, we classify the singular points of codimension 3 in the Hopf bifurcation set of the normal form. We review the following criteria for singular points \cite{Kokubu2005, Saji2009}:
Consider an oriented 2-manifold $M^2$ and an oriented Riemannian 3-manifold $(N^3, g)$.
A smooth map $\widetilde{f} \colon M^2 \to N^3$ is classified as a {\it front} if there exists
a unit normal vector field $\bm\nu$ along $f$ such that the induced map
$L := (\widetilde{f}, \bm\nu) \colon M^2 \to T_1 N^3$ is a {\it Legendrian immersion}
(i.e., an isotropic immersion). This requires the pullback of the canonical contact form
on $T_1 N^3$ to vanish identically on $M^2$. Equivalently, $\widetilde{f}$ satisfies the
 orthogonality condition :
\[
g(\widetilde{f}_* X, \bm\nu) = 0 \quad \text{for all} \quad X \in TM^2,
\]
where $\widetilde{f}_*$ denotes the differential of $\widetilde{f}$. The vector field $\bm\nu$ is termed the
{\it unit normal} of the front. Specifically, the following lemma provides criteria for classifying singular points of fronts into cuspidal edges and swallowtails.
\begin{lem}[\cite{Saji2009}]\label{lem:5.10}
Let \( N^3 \) be a Riemannian 3-manifold, $\widetilde{\bm\gamma}(I)\subset U$ (where $I=(-\varepsilon,\varepsilon)$) be a singular set of a front \( \widetilde{f}: U \to N^3 \)  and   \( p = \widetilde{\bm\gamma}(0) \in U \) be a nondegenerate singular point of the front \( \widetilde{f}\).
\begin{itemize}
    \item[(1)] The germ of \( \widetilde{f} \) at \( p \) is a cuspidal edge if and only if the null direction \( \widetilde{\eta}(0) \) is not proportional to the singular direction \(\widetilde{\bm\gamma}'(0) \).

    \item[(2)] The germ of \( \widetilde{f} \) at \( p \) is a swallowtail if and only if:
    \begin{itemize}
        \item \( \widetilde{\eta}(0) \) is proportional to \( \widetilde{\bm\gamma}'(0) \), and
        \item The derivative satisfies:
        \begin{align*}
        \left.\frac{{\rm d}}{{\rm d}t}\right|_{t=0} \det\left(\widetilde{\bm\gamma}'(t), \widetilde{\eta}(t)\right) \neq 0,
        \end{align*}
    \end{itemize}
\end{itemize}
where \(\widetilde{\bm\gamma}'(t) \) and \( \widetilde{\eta}(t) \) are represented as column vectors in \( \mathbb{R}^2 \).
\end{lem}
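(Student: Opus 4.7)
The plan is to establish both criteria by reducing the front to a local normal form adapted to the singular direction $\widetilde{\bm\gamma}'$ and the null direction $\widetilde{\eta}$, then reading off whether the resulting germ is a cuspidal edge or a swallowtail. First I would choose coordinates $(u,v)$ on a neighborhood of $p$ in $U$ so that the singular set is straightened, that is, $\widetilde{\bm\gamma}(t) = (t,0)$ and hence $\widetilde{\bm\gamma}'(t) = \partial_u$. Since $p$ is nondegenerate, $\ker(d\widetilde{f})$ along $\widetilde{\bm\gamma}$ is a smooth line field spanned by a non-vanishing section $\widetilde{\eta}(t) = a(t)\partial_u + b(t)\partial_v$. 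The two cases of the lemma correspond exactly to $b(0) \neq 0$ (null transverse to the singular curve) and $b(0) = 0$ (null tangent to the singular curve); in the latter, $\det(\widetilde{\bm\gamma}'(t), \widetilde{\eta}(t)) = b(t)$, so the condition $\left.\frac{{\rm d}}{{\rm d}t}\right|_{t=0}\det(\widetilde{\bm\gamma}',\widetilde{\eta}) \neq 0$ reduces to $b'(0) \neq 0$.

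Next I would exploit the Legendrian structure to produce the desired normal forms. Since $L = (\widetilde{f}, \bm\nu)$ is an immersion annihilating the canonical contact form on $T_1 N^3$, the second-order behavior of $\widetilde{f}$ along the null direction is controlled by the pullback contact condition. In case (1), a further local diffeomorphism rectifies $\widetilde{\eta}$ to $\partial_v$ (possible because $b(0) \neq 0$), and the Legendrian condition forces $\widetilde{f}(u,v) - \widetilde{f}(u,0)$ to vanish to order $v^2$ with nondegenerate quadratic-cubic jet, whence the germ is $\mathcal{A}$-equivalent to $(u,v^2,v^3)$---the cuspidal edge. In case (2), $\widetilde{\eta}(0) \parallel \widetilde{\bm\gamma}'(0)$ together with $b'(0) \neq 0$ places $\widetilde{\eta}$ in Morse position relative to $\widetilde{\bm\gamma}'$; a parametrized reduction along the lines of Arnold's classification of Legendrian singularities then yields the swallowtail germ $(3u^4+u^2v,\, 4u^3+2uv,\, v)$. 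Conversely, one verifies directly that these two normal forms realize the stated conditions on $\widetilde{\eta}(0)$ and $\widetilde{\bm\gamma}'(0)$, and both conditions are manifestly invariants of the Legendrian equivalence class.

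The hard part will be the simultaneous normalization in the second step: one must arrange coordinates that (i) straighten $\widetilde{\bm\gamma}$, (ii) control $\widetilde{\eta}$ to the required order, and (iii) make the Legendrian lift $L$ compatible with a Darboux chart for the contact form on $T_1 N^3$. For the swallowtail case in particular, the first-order vanishing of $b(t)$ must be shown to be precisely the condition separating the swallowtail from higher-codimension degeneracies such as the butterfly, which would require a further vanishing of $b''(0)$ together with a nondegenerate third-order jet. This can be verified either by explicit contact-equivalence computations matching the candidate normal forms, or, more efficiently, by invoking the Saji--Umehara--Yamada formulation in which these criteria are already cast as coordinate-invariant, intrinsic tests on the Legendrian lift, thereby circumventing the need to exhibit full Darboux coordinates by hand.
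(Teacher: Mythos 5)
The paper does not prove this lemma: it is imported verbatim from Saji--Umehara--Yamada \cite{Saji2009} (and is only ever \emph{applied} later, in Theorem \ref{thm:BS-singularities}), so there is no in-paper argument to compare yours against. Judged on its own terms, your outline reproduces the architecture of the original proof --- straighten the singular curve to $\widetilde{\bm\gamma}(t)=(t,0)$, write the null field as $\widetilde{\eta}(t)=a(t)\partial_u+b(t)\partial_v$, observe that $\det(\widetilde{\bm\gamma}',\widetilde{\eta})=b$, and split into the cases $b(0)\neq 0$ and $b(0)=0$, $b'(0)\neq 0$ --- and that framing is correct.

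However, as a proof there is a genuine gap, and it sits exactly where the content of the lemma lives. In case (1) you assert that ``the Legendrian condition forces $\widetilde{f}(u,v)-\widetilde{f}(u,0)$ to vanish to order $v^2$ with nondegenerate quadratic-cubic jet.'' This is the theorem, not a consequence one can wave at: the vanishing of $\partial_v\widetilde{f}$ along the singular curve gives the $v^2$ term, but the nonvanishing of the $v^3$-coefficient (which is what separates the cuspidal edge from, say, $(u,v^2,0)$ or a cuspidal cross-cap) comes precisely from the requirement that the lift $L=(\widetilde{f},\bm\nu)$ be an \emph{immersion}, i.e.\ from the front (not merely frontal) hypothesis, and extracting it requires the division-theorem/Malgrange-preparation computation that constitutes the bulk of the published proof. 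The same issue recurs in case (2): ``a parametrized reduction along the lines of Arnold's classification then yields the swallowtail germ'' is a placeholder for the actual recognition argument showing that $b(0)=0$, $b'(0)\neq 0$ is exactly the swallowtail stratum and not a higher degeneracy. Finally, your proposed escape hatch --- invoking ``the Saji--Umehara--Yamada formulation in which these criteria are already cast as coordinate-invariant, intrinsic tests'' --- is circular, because that formulation \emph{is} the statement being proved. Since the lemma is a quoted literature result, the honest options are either to cite it (as the paper does) or to carry out the normal-form computation in full; the middle ground in your proposal does neither.
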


To analyze the bifurcation structure, we define the surface $\mathcal{BS}:  [0, \infty)  \times \mathbb{R} \rightarrow \mathbb{R}^3$ by
\[
\mathcal{BS}(r,\mu_3) = (\mu_3 r^4 - 2r^6, 3r^4 - 2\mu_3 r^2, \mu_3),
\]
which parametrizes the singularity set. Furthermore, we introduce a potential function $V:  [0, \infty)  \times \mathbb{R}^3 \rightarrow \mathbb{R}$
\[
V(r) = \frac{1}{2}\mu_1 r^2 + \frac{1}{4}\mu_2 r^4 + \frac{1}{6}\mu_3 r^6 - \frac{1}{8}r^8,
\]
whose critical points approximate the equilibrium conditions of \eqref{eq:5.10} when higher-order terms are neglected. For fixed parameters $\bm\mu = (\mu_1,\mu_2,\mu_3) \in \mathbb{R}^3$, we denote $\mathfrak{v}_\mu(r) = V(r)$.

\begin{prop} \label{pr:5.11}
The singular set of $\mathcal{BS}$ is given by
\[
\left\{ (r,\mu_3) \in  [0, \infty)  \times \mathbb{R} \mid r = 0 \ \text{or}\ \mu_3 = 3r^2 \right\}.
\]
\begin{proof}
The partial derivatives of $\mathcal{BS}$ are:
\begin{align*}
\frac{\partial \mathcal{BS}}{\partial r} &= (4\mu_3 r^3 - 12r^5, 12r^3 - 4\mu_3 r, 0) = 4r(3r^2 - \mu_3)(-r^2, 1, 0), \\
\frac{\partial \mathcal{BS}}{\partial \mu_3} &= (r^4, -2r^2, 1).
\end{align*}
The cross product is:
\begin{align*}
\frac{\partial \mathcal{BS}}{\partial r} \wedge \frac{\partial \mathcal{BS}}{\partial \mu_3}
&= \begin{vmatrix}
\bm{i} & \bm{j} & \bm{k} \\
4r(3r^2 - \mu_3)(-r^2) & 4r(3r^2 - \mu_3)(1) & 0 \\
r^4 & -2r^2 & 1
\end{vmatrix} \\
&= 4r(3r^2 - \mu_3) \left(1, r^2, r^4\right)
\end{align*}
The tangent vectors are linearly dependent iff this cross product vanishes, which occurs precisely when $r(3r^2 - \mu_3) = 0$.
\end{proof}
\end{prop}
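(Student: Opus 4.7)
The plan is to reduce the singular-set condition to a transparent algebraic one by computing the Jacobian of $\mathcal{BS}$ and determining when its two column vectors fail to be linearly independent in $\mathbb{R}^3$. Since $\mathcal{BS}$ maps a two-dimensional parameter domain into $\mathbb{R}^3$, a point $(r,\mu_3)$ is singular exactly when the partial derivatives $\partial_r\mathcal{BS}$ and $\partial_{\mu_3}\mathcal{BS}$ fail to span a two-plane, which can be checked either by looking at the $2\times 2$ minors of the Jacobian or, equivalently, by the vanishing of the cross product.

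First I would compute the two partial derivatives componentwise. A short calculation yields
\[
\partial_r \mathcal{BS} = (4\mu_3 r^3 - 12 r^5,\; 12 r^3 - 4\mu_3 r,\; 0) = 4r(3r^2 - \mu_3)(-r^2,\; 1,\; 0),
\]
and $\partial_{\mu_3}\mathcal{BS} = (r^4,\; -2r^2,\; 1)$. The factored form already exposes the candidate singular locus $\{r=0\}\cup\{\mu_3 = 3r^2\}$, and I would just need to confirm that no additional singular points arise elsewhere.

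The decisive observation is that the third entry of $\partial_{\mu_3}\mathcal{BS}$ equals $1$, whereas the third entry of $\partial_r\mathcal{BS}$ is identically zero. Consequently, any linear dependence $\alpha\,\partial_r\mathcal{BS}+\beta\,\partial_{\mu_3}\mathcal{BS}=0$ forces $\beta=0$, and hence linear dependence of the two partials is equivalent to the single condition $\partial_r\mathcal{BS}=0$. Packaging this via the cross product gives
\[
\partial_r \mathcal{BS} \wedge \partial_{\mu_3}\mathcal{BS} = 4r(3r^2 - \mu_3)(1,\; r^2,\; r^4),
\]
whose vector factor $(1,r^2,r^4)$ never vanishes, so the wedge vanishes precisely when $r(3r^2-\mu_3)=0$. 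Restricting to $[0,\infty)\times\mathbb{R}$ gives exactly the claimed description of the singular set.

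There is no genuine obstacle — the argument is essentially bookkeeping — but the small conceptual point worth emphasizing is the systematic use of the trivial third coordinate (which is literally $\mu_3$) to convert the a priori two-vector dependence condition into the vanishing of a single explicit vector, thereby sidestepping any case analysis.
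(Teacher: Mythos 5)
Your proposal is correct and follows essentially the same route as the paper: compute the two partial derivatives, factor $\partial_r\mathcal{BS}$ as $4r(3r^2-\mu_3)(-r^2,1,0)$, and observe that the cross product $4r(3r^2-\mu_3)(1,r^2,r^4)$ vanishes exactly when $r(3r^2-\mu_3)=0$. Your extra remark that the nonzero third coordinate of $\partial_{\mu_3}\mathcal{BS}$ reduces linear dependence to $\partial_r\mathcal{BS}=0$ is a nice shortcut but does not change the substance of the argument.
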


The following proposition establishes the degeneracy conditions for the potential function, its proof involves straightforward differentiation and is omitted for brevity.

\begin{prop}\label{pr:5.12}
The singularity conditions for $\mathfrak{v}_\mu$ characterize the degeneracy levels of the potential function:
\begin{enumerate}
    \item[\rm (1)] $\mathfrak{v}_\mu'(r) = 0$ iff $\mu_1 r + \mu_2 r^3 + \mu_3 r^5 - r^7 = 0$.
    \item[\rm (2)] $\mathfrak{v}_\mu'(r) = \mathfrak{v}_\mu''(r) = 0$ iff
        \[
        \bm{\mu} = (\mu_3 r^4 - 2 r^6, 3 r^4 - 2 \mu_3 r^2, \mu_3).
        \]
    \item[\rm (3)] $\mathfrak{v}_\mu'(r) = \mathfrak{v}_\mu''(r) = \mathfrak{v}_\mu'''(r) = 0$ iff $\bm{\mu} = (r^6, -3 r^4, 3 r^2)$.
    \item[\rm (4)] $\mathfrak{v}_\mu'(r) = \mathfrak{v}_\mu''(r) = \mathfrak{v}_\mu'''(r) = \mathfrak{v}_\mu^{(4)}(r) = 0$ iff $\bm{\mu} = (0,0,0)$.
\end{enumerate}
\end{prop}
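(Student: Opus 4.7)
The proof is essentially a direct computation once the first four derivatives of the potential are written out, so my plan is to organize the work so that each successive assertion is obtained by adding one more equation to the previous linear system in the parameters $\mu_1, \mu_2$ treated as unknowns.

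First I would compute
\begin{align*}
\mathfrak{v}_\mu'(r) &= \mu_1 r + \mu_2 r^3 + \mu_3 r^5 - r^7, \\
\mathfrak{v}_\mu''(r) &= \mu_1 + 3\mu_2 r^2 + 5\mu_3 r^4 - 7r^6, \\
\mathfrak{v}_\mu'''(r) &= 6\mu_2 r + 20\mu_3 r^3 - 42 r^5, \\
\mathfrak{v}_\mu^{(4)}(r) &= 6\mu_2 + 60\mu_3 r^2 - 210 r^4.
\end{align*}
Assertion (1) is then immediate from the formula for $\mathfrak{v}_\mu'$. For assertion (2), I would work at a fixed value of $r$ (treating $r \neq 0$ as the generic case, with $r = 0$ handled separately) and view the pair $\mathfrak{v}_\mu'(r) = \mathfrak{v}_\mu''(r) = 0$ as a linear system in $\mu_1, \mu_2$ with $\mu_3$ as a free parameter. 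Dividing the first equation by $r$, solving for $\mu_1$, and substituting into the second equation yields $\mu_2 = 3r^4 - 2\mu_3 r^2$ and then $\mu_1 = \mu_3 r^4 - 2 r^6$, exactly as stated.

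For assertion (3), I would append the equation $\mathfrak{v}_\mu'''(r) = 0$, divide by $2r$, and substitute the expression for $\mu_2$ already found; the terms in $\mu_3$ and $r^4$ combine to give $-12 r^4 + 4\mu_3 r^2 = 0$, hence $\mu_3 = 3 r^2$. Back-substituting then produces $\mu_2 = -3r^4$ and $\mu_1 = r^6$, matching the claimed parametrization of the cuspidal edge of the bifurcation set. For assertion (4), I would further impose $\mathfrak{v}_\mu^{(4)}(r) = 0$; substituting $\mu_2 = -3 r^4$ and $\mu_3 = 3 r^2$ into this identity reduces it to $-8 r^4 = 0$, forcing $r = 0$, whence the formulas from (3) give $\bm\mu = (0,0,0)$.

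No conceptual obstacle should arise; the only bookkeeping care needed is the treatment of the exceptional value $r = 0$. Because each inductive step requires a division by a positive power of $r$, I would remark at the outset that the singular set of $V$ lives in $r \geq 0$ and that at $r = 0$ one has $\mathfrak{v}_\mu'(0) = 0$ automatically, so the parametrizations in (2)--(3) should be interpreted as identifying the locus of $\mu$ for which a given $r > 0$ is a degenerate critical point, with the $r = 0$ limit yielding the origin in $\mu$-space consistent with assertion (4).
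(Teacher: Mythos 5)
Your computation is correct and is exactly the ``straightforward differentiation'' argument that the paper explicitly omits for this proposition, including the appropriate caveat that the ``iff'' parametrizations in (2)--(3) are meant for $r>0$ since the forward implications degenerate at $r=0$. The only cosmetic point is in (4): substituting into $\mathfrak{v}_\mu^{(4)}(r)=6\mu_2+60\mu_3 r^2-210r^4$ gives $-48r^4$ rather than $-8r^4$ (you evidently divided by $6$ first), which of course still forces $r=0$ and $\bm\mu=(0,0,0)$.
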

\begin{rem} \label{rem:5.13}
Proposition \ref{pr:5.12} reveals that the surface $\mathcal{BS}$ parametrizes points where the potential function $\mathfrak{v}_\mu$ has degenerate critical points (specifically, where the first and second derivatives vanish simultaneously). This corresponds to the bifurcation set of system \eqref{eq:5.10} where saddle-node bifurcations occur. Higher degeneracies (items 3 and 4) correspond to more singular bifurcation points, the image of the curve $\mathcal{C}:[0,\infty)\rightarrow \mathbb{R}^3; \mathcal{C}(r)=(r^6,-3 r^4,3 r^2)$ is the critical value set (i.e., the image of the singular set) of the surface $\mathcal{BS}$ (see the red dashed curve in Figure \ref{fig:111}).
\end{rem}

Setting $N^3 = \mathbb{R}^3$ with the Euclidean metric $g = \langle \cdot, \cdot \rangle$, we assert that $\mathcal{BS}$ is a front. Indeed, taking the unit normal vector field
\[
\bm\nu = \dfrac{1}{\sqrt{r^8 + r^4 + 1}}\left(1, r^2, r^4\right),
\]
we verify the orthogonality conditions:
\begin{align*}
\left\langle \frac{\partial \mathcal{BS}}{\partial r}, \bm{\nu} \right\rangle = 0, \quad
\left\langle \frac{\partial \mathcal{BS}}{\partial \mu_3}, \bm{\nu} \right\rangle = 0.
\end{align*}

\begin{thm}\label{thm:BS-singularities}
The singularities of $\mathcal{BS}$ are classified as follows:
\begin{enumerate}
    \item[\rm (1)] The germ of \(\mathcal{BS}\) at \((r, \mu_3)\) is a cuspidal edge if and only if \(\mu_3 = 3r^2\) and \(r \neq 0\).
    \item[\rm (2)] The germ of \(\mathcal{BS}\) at \((r, \mu_3)\) is a swallowtail if and only if \(r = \mu_3 = 0\).
\end{enumerate}
\end{thm}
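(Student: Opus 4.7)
The plan is to verify the hypotheses of Lemma \ref{lem:5.10} on the smooth branch $\{\mu_3 = 3r^2\}$ of the singular locus: compute the null direction $\widetilde{\eta}$ of $d\mathcal{BS}$, parametrize the singular curve to obtain the singular direction $\widetilde{\bm\gamma}'$, and then check (i) non-proportionality for the cuspidal-edge case and (ii) the determinant-derivative transversality for the swallowtail.

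From the partials already computed in Proposition \ref{pr:5.11},
\[
\partial_r\mathcal{BS} = 4r(3r^2 - \mu_3)(-r^2, 1, 0), \qquad \partial_{\mu_3}\mathcal{BS} = (r^4, -2r^2, 1),
\]
the factor $r(3r^2 - \mu_3)$ shows $\partial_r\mathcal{BS}$ vanishes identically on the singular set $\{r = 0\} \cup \{\mu_3 = 3r^2\}$, while $\partial_{\mu_3}\mathcal{BS}$ is nowhere zero. Consequently, at every singular point $\ker d\mathcal{BS}$ is spanned by the constant null vector field $\widetilde{\eta} = (1, 0)^T$. Parametrizing the branch $\{\mu_3 = 3r^2\}$ by $\widetilde{\bm\gamma}(t) = (r_0 + t, 3(r_0 + t)^2)$, the singular direction is $\widetilde{\bm\gamma}'(0) = (1, 6r_0)^T$.

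For the cuspidal-edge case with $r_0 \neq 0$, the vectors $\widetilde{\eta}(0) = (1, 0)^T$ and $\widetilde{\bm\gamma}'(0) = (1, 6r_0)^T$ are linearly independent, so Lemma \ref{lem:5.10}(1) applies and yields the cuspidal edge. At the origin ($r_0 = 0$), these two vectors coincide as $(1, 0)^T$; reparametrizing by $\widetilde{\bm\gamma}(t) = (t, 3t^2)$, I compute
\[
\left.\frac{d}{dt}\right|_{t=0}\det\bigl(\widetilde{\bm\gamma}'(t), \widetilde{\eta}(t)\bigr) = \left.\frac{d}{dt}\right|_{t=0}\det\begin{pmatrix} 1 & 1 \\ 6t & 0 \end{pmatrix} = -6 \neq 0,
\]
which activates Lemma \ref{lem:5.10}(2) and gives a swallowtail.

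The main subtle step is handling the origin, where the smooth branch $\{\mu_3 = 3r^2\}$ meets the boundary branch $\{r = 0\}$. The swallowtail emerges specifically as the limit of cuspidal edges along $\mu_3 = 3r^2$ as $r_0 \to 0$; the coalescence of the singular and null directions together with the second-order transversality of the determinant confirms the change of germ type. For the converse (``only if'') halves of both parts of the theorem, I would verify that points on the boundary branch $\{r = 0\}$ with $\mu_3 \neq 0$ are outside the scope of Lemma \ref{lem:5.10}: a direct computation gives $\partial_r\bm\nu|_{r=0} = 0$ in addition to $\partial_r\mathcal{BS}|_{r=0} = 0$, so the Legendrian lift $L = (\mathcal{BS}, \bm\nu)$ fails to be immersed and such singular points are degenerate, ruling them out as either cuspidal edges or swallowtails.
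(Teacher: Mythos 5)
Your proposal is correct and follows essentially the same route as the paper's proof: the same null field $\widetilde{\eta}=(1,0)^{T}$, the same singular direction $\widetilde{\bm\gamma}'(r)=(1,6r)$, the non-proportionality check for $r\neq 0$, and the determinant-derivative computation $\frac{{\rm d}}{{\rm d}r}(-6r)\big|_{r=0}=-6\neq 0$ for the swallowtail. The only difference is that you additionally address the boundary branch $\{r=0\}$ and the ``only if'' direction (via the failure of the Legendrian lift to be an immersion there), a point the paper's proof leaves implicit.
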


\begin{proof}
By Proposition \ref{pr:5.11}, the singular set of $\mathcal{BS}$ is $\{ (r,\mu_3) \mid r(3r^2 - \mu_3) = 0 \}$. Consider the singular curve parameterized by
\[
\widetilde{\bm\gamma}(r) = (r, 3r^2), \quad r \in I,
\]
with tangent vector $\widetilde{\bm\gamma}'(r) = (1, 6r)$.

To apply Lemma \ref{lem:5.10}, we construct a null vector field $\widetilde{\eta}$ along $\widetilde{\bm\gamma}$. At each point $(r, 3r^2)$, the kernel of $d\mathcal{BS}$ is spanned by $\begin{pmatrix} 1 \\ 0 \end{pmatrix}$, so we take $\widetilde{\eta}(r) = \begin{pmatrix} 1 \\ 0 \end{pmatrix}$.

For (1): When $r \neq 0$ and $\mu_3 = 3r^2$, we have $\widetilde{\bm\gamma}'(r) = (1, 6r)$ and $\widetilde{\eta}(r) = (1, 0)$. These vectors are linearly independent since $6r \neq 0$. By Lemma \ref{lem:5.10}(1), the singularity is a cuspidal edge.

For (2): At $(0,0)$, we have $\widetilde{\bm\gamma}'(0) = (1, 0)$ and $\widetilde{\eta}(0) = (1, 0)$, which are proportional. Consider the determinant function:
\[
\det\left( \widetilde{\bm\gamma}'(r), \widetilde{\eta}(r) \right) = \det \begin{pmatrix} 1 & 1 \\ 6r & 0 \end{pmatrix} = -6r.
\]
Its derivative at $r=0$ is:
\[
\left. \frac{\rm d}{{\rm d}r} \det\left( \widetilde{\bm\gamma}'(r), \widetilde{\eta}(r) \right) \right|_{r=0} = \left. \frac{{\rm d}}{{\rm d}r}(-6r) \right|_{r=0} = -6 \neq 0.
\]
Thus by Lemma \ref{lem:5.10} (2), $(0,0)$ is a swallowtail singularity.
\end{proof}

\begin{rem}\label{rem:5.15}
The potential function $\mathfrak{v}_\mu(r)$ becomes a quartic polynomial under the substitution $s = r^2$ ($s \geq 0$):
\[
\mathfrak{v}_\mu(s) = \frac{1}{2}\mu_1 s + \frac{1}{4}\mu_2 s^2 + \frac{1}{6}\mu_3 s^3 - \frac{1}{8}s^4.
\]
The bifurcation surface $\mathcal{BS}$ defined in Proposition \ref{pr:5.11} corresponds to the set where $\mathfrak{v}_\mu'(s)=\mathfrak{v}_\mu''(s) = 0$, which is precisely the bifurcation set of the normal form system \eqref{eq:5.10} where the radial equation and its first derivative simultaneously vanish:
\[
\frac{{\rm d}r}{{\rm d}t} = 0 \quad \text{and} \quad \frac{\partial}{\partial r}\left(\frac{{\rm d}r}{{\rm d}t}\right) = 0.
\]

In classical singularity theory, the whole swallowtail surface is generated by the universal unfolding of the $A_3$ singularity $V_0(s) = s^4/4$ without the restriction $s \geq 0$. When we enforce $s = r^2 \geq 0$, we obtain only half of the swallowtail surface, specifically the portion where $s \geq 0$.

For completeness, Figure \ref{fig:12} shows the full swallowtail surface in $\mathbb{R}^3$ with coordinates $(\mu_1, \mu_2, \mu_3)$, where the cyan portion corresponds to our bifurcation surface $\mathcal{BS}$ for $s \geq 0$. The cuspidal edge curve $\mu_3 = 3s$ (the red locus) and swallowtail point (the blue point) at the origin are visible.

\end{rem}
\begin{figure}[htbp]
\centering
\includegraphics[width=0.5\textwidth]{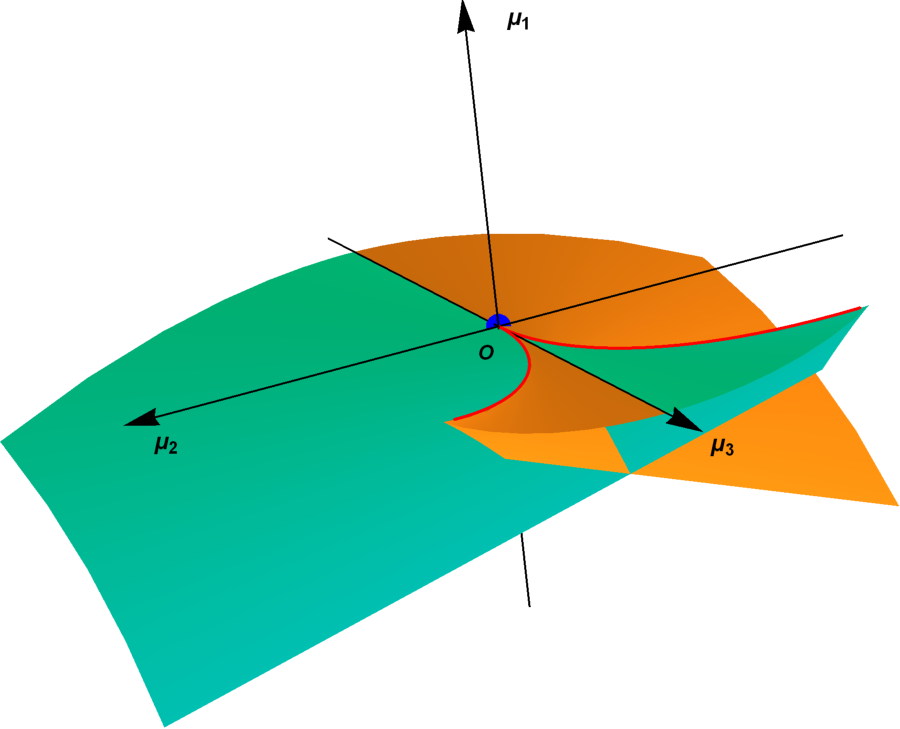}
\caption{The full swallowtail surface. The cyan portion ($s \geq 0$) corresponds to our bifurcation surface $\mathcal{BS}$, while the orange portion ($s < 0$) completes the swallowtail singularity.}
\label{fig:12}
\end{figure}
\subsection{Comparison with Lu et al. (2019) on Hopf Bifurcations}
We compare our results on degenerate Hopf bifurcations with those of Lu et al. \cite{Lu2019}. The following table highlights the key similarities and advancements.

\begin{table}
\centering
\caption{Comparative Hopf Bifurcation Analysis: Lu et al. (2019) vs. This Work.}
\label{tab:Hopf_comparison}
\renewcommand{\arraystretch}{1.3} 
\begin{tabular}{|p{0.3\textwidth}|p{0.25\textwidth}|p{0.35\textwidth}|}
\hline
\textbf{Aspect} & \textbf{Lu et al. (2019)} & \textbf{This work} \\
\hline
Incidence function & Quadratic & Cubic \\
\hline
Max codimension of Hopf & 2 & 3 \\
\hline
Limit cycles vs.\newline incidence monotonicity &
\makecell[l]{Nonmonotonic regime:\\ Up to 2 limit cycles} &
\makecell[l]{Nonmonotonic regime ($\beta < 0$):\\ Up to 2 limit cycles\\
          Monotonic regime ($\beta > 0$):\\ Up to 3 limit cycles} \\
\hline
Focus order & Up to 2nd order & Up to 3rd order \\
\hline
Method for stability \& degeneracy & Second Lyapunov coefficient ($\sigma_2$) & Focal values ($B_7$) and singularity theory \\
\hline
Analysis of weak focus & Up to 2nd order & Up to 3rd-order \\
\hline
Bifurcation diagram & 2D parameter space & 3D parameter space \\
\hline
Singularity analysis & Not applied & Swallowtail singularity \\
\hline
\end{tabular}
\end{table}

Table~\ref{tab:Hopf_comparison} illustrates that our work not only generalizes the incidence function but also achieves higher degeneracy in Hopf bifurcations, leading to the emergence of three limit cycles-a phenomenon not observed in Lu et al.'s model. The incorporation of singularity theory for bifurcation surfaces further enriches the topological understanding of the system.
\section{Concluding Remarks}
This work presents a systematic investigation of high-codimension bifurcations, particularly codimension-three Bogdanov-Takens and degenerate Hopf bifurcations, in SIRS epidemic models with cubic saturated incidence. By extending Lu et al.'s quadratic framework \cite{Lu2019}, we have demonstrated that cubic nonlinearities induce significantly richer and more complex dynamics than quadratic approximations. Key contributions include the rigorous identification and analysis of these high-codimension bifurcations, providing a detailed characterization of their unfolding and the associated multi-cycle dynamics.

Our innovative application of singularity and unfolding theory characterizes the local topology of bifurcation sets, revealing how higher-codimension singularities organize global dynamics. The comprehensive analysis shows distinct dynamical regimes: while nonmonotonic incidence ($\beta < 0$) can lead to double limit cycles, monotonic configurations ($\beta > 0$) can support triple limit cycle coexistence, a phenomenon that remains rare and poorly understood in epidemiological models. This work thus provides one of the few rigorously analyzed mathematical frameworks demonstrating how triple limit cycles can emerge in epidemic models through high-codimension bifurcations.

The integration of singularity theory with bifurcation analysis opens new pathways for exploring sophisticated dynamics in biological systems governed by higher-order nonlinearities. Future research should explore several promising directions: (1) extending the framework to age-structured or spatially heterogeneous models to enhance biological realism; (2) conducting stochastic bifurcation analysis under parameter uncertainty to better reflect real-world conditions; (3) investigating connections between cubic saturation and real-world intervention effects, potentially linking model parameters to measurable public health actions; (4) exploring nilpotent saddle-node bifurcations, which represent a natural extension of our singularity theory approach to higher-order degeneracies and could provide new insights into critical transitions between disease-free and endemic states; and (5) additional work on codimension-four singularities would further unravel the complex interplay between behavioral responses and epidemic thresholds.

This work establishes a mathematical foundation for understanding critical thresholds in disease transmission, with implications for predicting complex outbreak patterns and designing effective public health interventions. The mathematical insights gained from this study may also find applications beyond epidemiology in other biological systems exhibiting saturation effects and nonlinear feedback mechanisms.
\section*{Acknowledgements}
Research of S. Liu is supported by the National Natural Science Foundation of China (Grant Number: 12301627) and the Science and Technology Research Projects of the Education Office of Jilin Province, China (Grant Numbers: JJKH20250046KJ).

\section*{Competing Interests}
The authors have declared that no competing interests exist.

\bibliographystyle{elsarticle-num-names}
\bibliography{SIRS.bib}

\begin{thebibliography}{53}
\expandafter\ifx\csname natexlab\endcsname\relax\def\natexlab#1{#1}\fi
\providecommand{\url}[1]{\texttt{#1}}
\providecommand{\href}[2]{#2}
\providecommand{\path}[1]{#1}
\providecommand{\DOIprefix}{doi:}
\providecommand{\ArXivprefix}{arXiv:}
\providecommand{\URLprefix}{URL: }
\providecommand{\Pubmedprefix}{pmid:}
\providecommand{\doi}[1]{\href{http://dx.doi.org/#1}{\path{#1}}}
\providecommand{\Pubmed}[1]{\href{pmid:#1}{\path{#1}}}
\providecommand{\bibinfo}[2]{#2}
\ifx\xfnm\relax \def\xfnm[#1]{\unskip,\space#1}\fi
\bibitem[{W.~O.~Kermack(1927)}]{1927}
\bibinfo{author}{A.~G.~M. W.~O.~Kermack},
\newblock \bibinfo{title}{A contribution to the mathematical theory of epidemics},
\newblock \bibinfo{journal}{Proceedings of the Royal Society of London. Series A, Containing Papers of a Mathematical and Physical Character} \bibinfo{volume}{115} (\bibinfo{year}{1927}) \bibinfo{pages}{700--721}. \DOIprefix\doi{10.1098/rspa.1927.0118}.
\bibitem[{Li et~al.(1999)Li, Graef, Wang, and Karsai}]{Li1999}
\bibinfo{author}{M.~Y. Li}, \bibinfo{author}{J.~R. Graef}, \bibinfo{author}{L.~Wang}, \bibinfo{author}{J.~Karsai},
\newblock \bibinfo{title}{Global dynamics of a {SEIR} model with varying total population size},
\newblock \bibinfo{journal}{Mathematical Biosciences} \bibinfo{volume}{160} (\bibinfo{year}{1999}) \bibinfo{pages}{191--213}. \DOIprefix\doi{10.1016/s0025-5564(99)00030-9}.
\bibitem[{Liu and Li(2021)}]{Liu2021a}
\bibinfo{author}{S.~Liu}, \bibinfo{author}{M.~Y. Li},
\newblock \bibinfo{title}{Epidemic models with discrete state structures},
\newblock \bibinfo{journal}{Physica D: Nonlinear Phenomena} \bibinfo{volume}{422} (\bibinfo{year}{2021}) \bibinfo{pages}{132903}. \DOIprefix\doi{10.1016/j.physd.2021.132903}.
\bibitem[{Li et~al.(2025)Li, Chen, Liu, Zhang, and Liu}]{Li2025}
\bibinfo{author}{W.~Li}, \bibinfo{author}{X.~Chen}, \bibinfo{author}{S.~Liu}, \bibinfo{author}{C.~Zhang}, \bibinfo{author}{G.~Liu},
\newblock \bibinfo{title}{Using a multi-strain infectious disease model with physical information neural networks to study the time dependence of {SARS}-{C}o{V}-2 variants of concern},
\newblock \bibinfo{journal}{PLOS Computational Biology} \bibinfo{volume}{21} (\bibinfo{year}{2025}) \bibinfo{pages}{e1012778}. \DOIprefix\doi{10.1371/journal.pcbi.1012778}.
\bibitem[{V.~Capasso(1977)}]{V1977}
\bibinfo{author}{G.~S. V.~Capasso, E.~Crosso},
\newblock \bibinfo{title}{I modelli matematici nella indagine epidemiologica. i applicazione all¡¯epidemia di colera verificatasi in bari nel 1973},
\newblock \bibinfo{journal}{ANN. SCLAVO}  (\bibinfo{year}{1977}).
\bibitem[{Capasso and Serio(1978)}]{Capasso1978}
\bibinfo{author}{V.~Capasso}, \bibinfo{author}{G.~Serio},
\newblock \bibinfo{title}{A generalization of the {K}ermack-{McK}endrick deterministic epidemic model},
\newblock \bibinfo{journal}{Mathematical Biosciences} \bibinfo{volume}{42} (\bibinfo{year}{1978}) \bibinfo{pages}{43--61}. \DOIprefix\doi{10.1016/0025-5564(78)90006-8}.
\bibitem[{Liu et~al.(1986)Liu, Levin, and Iwasa}]{Liu1986}
\bibinfo{author}{W.~M. Liu}, \bibinfo{author}{S.~A. Levin}, \bibinfo{author}{Y.~Iwasa},
\newblock \bibinfo{title}{Influence of nonlinear incidence rates upon the behavior of {SIRS} epidemiological models},
\newblock \bibinfo{journal}{Journal of Mathematical Biology} \bibinfo{volume}{23} (\bibinfo{year}{1986}) \bibinfo{pages}{187--204}. \DOIprefix\doi{10.1007/bf00276956}.
\bibitem[{Liu et~al.(1987)Liu, Hethcote, and Levin}]{Liu1987}
\bibinfo{author}{W.~Liu}, \bibinfo{author}{H.~W. Hethcote}, \bibinfo{author}{S.~A. Levin},
\newblock \bibinfo{title}{Dynamical behavior of epidemiological models with nonlinear incidence rates},
\newblock \bibinfo{journal}{Journal of Mathematical Biology} \bibinfo{volume}{25} (\bibinfo{year}{1987}) \bibinfo{pages}{359--380}. \DOIprefix\doi{10.1007/bf00277162}.
\bibitem[{Hethcote and van~den Driessche(1991)}]{Hethcote1991}
\bibinfo{author}{H.~W. Hethcote}, \bibinfo{author}{P.~van~den Driessche},
\newblock \bibinfo{title}{Some epidemiological models with nonlinear incidence},
\newblock \bibinfo{journal}{Journal of Mathematical Biology} \bibinfo{volume}{29} (\bibinfo{year}{1991}) \bibinfo{pages}{271--287}. \DOIprefix\doi{10.1007/bf00160539}.
\bibitem[{Ruan and Wang(2003)}]{Ruan2003}
\bibinfo{author}{S.~Ruan}, \bibinfo{author}{W.~Wang},
\newblock \bibinfo{title}{Dynamical behavior of an epidemic model with a nonlinear incidence rate},
\newblock \bibinfo{journal}{Journal of Differential Equations} \bibinfo{volume}{188} (\bibinfo{year}{2003}) \bibinfo{pages}{135--163}. \DOIprefix\doi{10.1016/s0022-0396(02)00089-x}.
\bibitem[{Tang et~al.(2008)Tang, Huang, Ruan, and Zhang}]{Tang2008}
\bibinfo{author}{Y.~Tang}, \bibinfo{author}{D.~Huang}, \bibinfo{author}{S.~Ruan}, \bibinfo{author}{W.~Zhang},
\newblock \bibinfo{title}{Coexistence of limit cycles and homoclinic loops in a {SIRS} model with a nonlinear incidence rate},
\newblock \bibinfo{journal}{SIAM Journal on Applied Mathematics} \bibinfo{volume}{69} (\bibinfo{year}{2008}) \bibinfo{pages}{621--639}. \DOIprefix\doi{10.1137/070700966}.
\bibitem[{Saha and Ghosh(2022)}]{Saha2022}
\bibinfo{author}{P.~Saha}, \bibinfo{author}{U.~Ghosh},
\newblock \bibinfo{title}{Complex dynamics and control analysis of an epidemic model with non-monotone incidence and saturated treatment},
\newblock \bibinfo{journal}{International Journal of Dynamics and Control} \bibinfo{volume}{11} (\bibinfo{year}{2022}) \bibinfo{pages}{301--323}. \DOIprefix\doi{10.1007/s40435-022-00969-7}.
\bibitem[{Cui and Zhao(2024)}]{Cui2024}
\bibinfo{author}{W.~Cui}, \bibinfo{author}{Y.~Zhao},
\newblock \bibinfo{title}{Saddle-node bifurcation and {B}ogdanov-{T}akens bifurcation of a {SIRS} epidemic model with nonlinear incidence rate},
\newblock \bibinfo{journal}{Journal of Differential Equations} \bibinfo{volume}{384} (\bibinfo{year}{2024}) \bibinfo{pages}{252--278}. \DOIprefix\doi{10.1016/j.jde.2023.11.030}.
\bibitem[{Kuznetsov(2023)}]{Kuznetsov2023}
\bibinfo{author}{Y.~A. Kuznetsov}, \bibinfo{title}{Elements of Applied Bifurcation Theory}, \bibinfo{publisher}{Springer International Publishing}, \bibinfo{year}{2023}. \DOIprefix\doi{10.1007/978-3-031-22007-4}.
\bibitem[{Perko(1996)}]{Perko1996}
\bibinfo{author}{L.~Perko}, \bibinfo{title}{Differential Equations and Dynamical Systems}, \bibinfo{publisher}{Springer US}, \bibinfo{year}{1996}. \DOIprefix\doi{10.1007/978-1-4684-0249-0}.
\bibitem[{Xiao and Ruan(2007)}]{Xiao2007a}
\bibinfo{author}{D.~Xiao}, \bibinfo{author}{S.~Ruan},
\newblock \bibinfo{title}{Global analysis of an epidemic model with nonmonotone incidence rate},
\newblock \bibinfo{journal}{Mathematical Biosciences} \bibinfo{volume}{208} (\bibinfo{year}{2007}) \bibinfo{pages}{419--429}. \DOIprefix\doi{10.1016/j.mbs.2006.09.025}.
\bibitem[{Liu et~al.(2015)Liu, Pang, Ruan, and Zhang}]{Liu2015}
\bibinfo{author}{S.~Liu}, \bibinfo{author}{L.~Pang}, \bibinfo{author}{S.~Ruan}, \bibinfo{author}{X.~Zhang},
\newblock \bibinfo{title}{Global dynamics of avian influenza epidemic models with psychological effect},
\newblock \bibinfo{journal}{Computational and Mathematical Methods in Medicine} \bibinfo{volume}{2015} (\bibinfo{year}{2015}) \bibinfo{pages}{1--12}. \DOIprefix\doi{10.1155/2015/913726}.
\bibitem[{Zhou et~al.(2007)Zhou, Xiao, and Li}]{Zhou2007}
\bibinfo{author}{Y.~Zhou}, \bibinfo{author}{D.~Xiao}, \bibinfo{author}{Y.~Li},
\newblock \bibinfo{title}{Bifurcations of an epidemic model with non-monotonic incidence rate of saturated mass action},
\newblock \bibinfo{journal}{Chaos, Solitons \& Fractals} \bibinfo{volume}{32} (\bibinfo{year}{2007}) \bibinfo{pages}{1903--1915}. \DOIprefix\doi{10.1016/j.chaos.2006.01.002}.
\bibitem[{Lu et~al.(2020)Lu, Huang, Ruan, and Yu}]{Lu2020}
\bibinfo{author}{M.~Lu}, \bibinfo{author}{J.~Huang}, \bibinfo{author}{S.~Ruan}, \bibinfo{author}{P.~Yu},
\newblock \bibinfo{title}{Global dynamics of a susceptible-infectious-recovered epidemic model with a generalized nonmonotone incidence rate},
\newblock \bibinfo{journal}{Journal of Dynamics and Differential Equations} \bibinfo{volume}{33} (\bibinfo{year}{2020}) \bibinfo{pages}{1625--1661}. \DOIprefix\doi{10.1007/s10884-020-09862-3}.
\bibitem[{Lu et~al.(2019)Lu, Huang, Ruan, and Yu}]{Lu2019}
\bibinfo{author}{M.~Lu}, \bibinfo{author}{J.~Huang}, \bibinfo{author}{S.~Ruan}, \bibinfo{author}{P.~Yu},
\newblock \bibinfo{title}{Bifurcation analysis of an {SIRS} epidemic model with a generalized nonmonotone and saturated incidence rate},
\newblock \bibinfo{journal}{Journal of Differential Equations} \bibinfo{volume}{267} (\bibinfo{year}{2019}) \bibinfo{pages}{1859--1898}. \DOIprefix\doi{10.1016/j.jde.2019.03.005}.
\bibitem[{Li et~al.(2015)Li, Li, and Ma}]{Li2015}
\bibinfo{author}{C.~Li}, \bibinfo{author}{J.~Li}, \bibinfo{author}{Z.~Ma},
\newblock \bibinfo{title}{Codimension 3 {B-T} bifurcations in an epidemic model with a nonlinear incidence},
\newblock \bibinfo{journal}{Discrete and Continuous Dynamical Systems - Series B} \bibinfo{volume}{20} (\bibinfo{year}{2015}) \bibinfo{pages}{1107--1116}. \DOIprefix\doi{10.3934/dcdsb.2015.20.1107}.
\bibitem[{Harris(2023)}]{Harris2023}
\bibinfo{author}{E.~Harris},
\newblock \bibinfo{title}{Hybrid {COVID}-19 immunity common by fall of 2022},
\newblock \bibinfo{journal}{JAMA} \bibinfo{volume}{330} (\bibinfo{year}{2023}) \bibinfo{pages}{13}. \DOIprefix\doi{10.1001/jama.2023.10399}.
\bibitem[{Jang et~al.(2024)Jang, Kim, Lee, Son, Ko, and Lee}]{Jang2024}
\bibinfo{author}{G.~Jang}, \bibinfo{author}{J.~Kim}, \bibinfo{author}{Y.~Lee}, \bibinfo{author}{C.~Son}, \bibinfo{author}{K.~T. Ko}, \bibinfo{author}{H.~Lee},
\newblock \bibinfo{title}{Analysis of the impact of {COVID}-19 variants and vaccination on the time-varying reproduction number: statistical methods},
\newblock \bibinfo{journal}{Frontiers in Public Health} \bibinfo{volume}{12} (\bibinfo{year}{2024}). \DOIprefix\doi{10.3389/fpubh.2024.1353441}.
\bibitem[{Rosen(1977)}]{Rosen1977}
\bibinfo{author}{R.~Rosen},
\newblock \bibinfo{title}{Structural stability and morphogenesis},
\newblock \bibinfo{journal}{Bulletin of Mathematical Biology} \bibinfo{volume}{39} (\bibinfo{year}{1977}) \bibinfo{pages}{629--632}. \DOIprefix\doi{10.1007/BF02461210}.
\bibitem[{Arnold et~al.(2012)Arnold, Gusein-Zade, and Varchenko}]{Arnold2012}
\bibinfo{author}{V.~Arnold}, \bibinfo{author}{S.~Gusein-Zade}, \bibinfo{author}{A.~Varchenko}, \bibinfo{title}{Singularities of Differentiable Maps, Volume 1: Classification of Critical Points, Caustics and Wave Fronts}, \bibinfo{publisher}{Birkhäuser Boston}, \bibinfo{year}{2012}. \DOIprefix\doi{10.1007/978-0-8176-8340-5}.
\bibitem[{Wall(1981)}]{Wall1981}
\bibinfo{author}{C.~T.~C. Wall},
\newblock \bibinfo{title}{Finite determinacy of smooth map-germs},
\newblock \bibinfo{journal}{Bulletin of the London Mathematical Society} \bibinfo{volume}{13} (\bibinfo{year}{1981}) \bibinfo{pages}{481--539}. \DOIprefix\doi{10.1112/blms/13.6.481}.
\bibitem[{Golubitsky and Langford(1981)}]{Golubitsky1981}
\bibinfo{author}{M.~Golubitsky}, \bibinfo{author}{W.~F. Langford},
\newblock \bibinfo{title}{Classification and unfoldings of degenerate hopf bifurcations},
\newblock \bibinfo{journal}{Journal of Differential Equations} \bibinfo{volume}{41} (\bibinfo{year}{1981}) \bibinfo{pages}{375--415}. \DOIprefix\doi{10.1016/0022-0396(81)90045-0}.
\bibitem[{Bruce and Giblin(1992)}]{Bruce1992}
\bibinfo{author}{J.~W. Bruce}, \bibinfo{author}{P.~J. Giblin}, \bibinfo{title}{Curves and Singularities: A Geometrical Introduction to Singularity Theory}, \bibinfo{publisher}{Cambridge University Press}, \bibinfo{year}{1992}. \DOIprefix\doi{10.1017/cbo9781139172615}.
\bibitem[{Izumiya et~al.(2014)Izumiya, Romero~Fuster, Ruas, and Tari}]{Izumiya2014}
\bibinfo{author}{S.~Izumiya}, \bibinfo{author}{M.~d.~C. Romero~Fuster}, \bibinfo{author}{M.~A.~S. Ruas}, \bibinfo{author}{F.~Tari}, \bibinfo{title}{Differential Geometry from a Singularity Theory Viewpoint}, \bibinfo{publisher}{World Scientific}, \bibinfo{year}{2014}. \DOIprefix\doi{10.1142/9108}.
\bibitem[{Tang and Zhang(2016)}]{Tang2016}
\bibinfo{author}{Y.~Tang}, \bibinfo{author}{W.~Zhang},
\newblock \bibinfo{title}{Versal unfolding of planar hamiltonian systems at fully degenerate equilibrium},
\newblock \bibinfo{journal}{Journal of Differential Equations} \bibinfo{volume}{261} (\bibinfo{year}{2016}) \bibinfo{pages}{236--272}. \DOIprefix\doi{10.1016/j.jde.2016.03.008}.
\bibitem[{Tang and Zhang(2019)}]{Tang2019}
\bibinfo{author}{Y.~Tang}, \bibinfo{author}{W.~Zhang},
\newblock \bibinfo{title}{Versal unfolding of a nilpotent liénard equilibrium within the odd liénard family},
\newblock \bibinfo{journal}{Journal of Differential Equations} \bibinfo{volume}{267} (\bibinfo{year}{2019}) \bibinfo{pages}{2671--2685}. \DOIprefix\doi{10.1016/j.jde.2019.03.025}.
\bibitem[{Liu et~al.(2021)Liu, Tang, and Zhang}]{Liu2021}
\bibinfo{author}{L.~Liu}, \bibinfo{author}{Y.~Tang}, \bibinfo{author}{W.~Zhang},
\newblock \bibinfo{title}{Versal unfolding of homogeneous cubic degenerate centers in strong monodromic family},
\newblock \bibinfo{journal}{Journal of Differential Equations} \bibinfo{volume}{283} (\bibinfo{year}{2021}) \bibinfo{pages}{136--162}. \DOIprefix\doi{10.1016/j.jde.2021.02.038}.
\bibitem[{Hu et~al.(2011)Hu, Bi, Ma, and Ruan}]{Hu2011}
\bibinfo{author}{Z.~Hu}, \bibinfo{author}{P.~Bi}, \bibinfo{author}{W.~Ma}, \bibinfo{author}{S.~Ruan},
\newblock \bibinfo{title}{Bifurcations of an {SIRS} epidemic model with nonlinear incidence rate},
\newblock \bibinfo{journal}{Discrete and Continuous Dynamical Systems - B} \bibinfo{volume}{15} (\bibinfo{year}{2011}) \bibinfo{pages}{93--112}. \DOIprefix\doi{10.3934/dcdsb.2011.15.93}.
\bibitem[{Alexander and Moghadas(2005)}]{Alexander2005}
\bibinfo{author}{M.~E. Alexander}, \bibinfo{author}{S.~M. Moghadas},
\newblock \bibinfo{title}{Bifurcation analysis of an {SIRS} epidemic model with generalized incidence},
\newblock \bibinfo{journal}{SIAM Journal on Applied Mathematics} \bibinfo{volume}{65} (\bibinfo{year}{2005}) \bibinfo{pages}{1794--1816}. \DOIprefix\doi{10.1137/040604947}.
\bibitem[{She and Yi(2024)}]{She2024}
\bibinfo{author}{G.~She}, \bibinfo{author}{F.~Yi},
\newblock \bibinfo{title}{Stability and bifurcation analysis of a reaction–diffusion {SIRS} epidemic model with the general saturated incidence rate},
\newblock \bibinfo{journal}{Journal of Nonlinear Science} \bibinfo{volume}{34} (\bibinfo{year}{2024}). \DOIprefix\doi{10.1007/s00332-024-10081-z}.
\bibitem[{Xiao and Fang~Zhang(2007)}]{Xiao2007}
\bibinfo{author}{D.~Xiao}, \bibinfo{author}{K.~Fang~Zhang},
\newblock \bibinfo{title}{Multiple bifurcations of a predator-prey system},
\newblock \bibinfo{journal}{Discrete and Continuous Dynamical Systems - B} \bibinfo{volume}{8} (\bibinfo{year}{2007}) \bibinfo{pages}{417--433}. \DOIprefix\doi{10.3934/dcdsb.2007.8.417}.
\bibitem[{Saunders(1980)}]{Saunders1980}
\bibinfo{author}{P.~T. Saunders}, \bibinfo{title}{An Introduction to Catastrophe Theory}, \bibinfo{publisher}{Cambridge University Press}, \bibinfo{year}{1980}. \DOIprefix\doi{10.1017/cbo9781139171533}.
\bibitem[{Chow et~al.(1994)Chow, Li, and Wang}]{Chow1994}
\bibinfo{author}{S.-N. Chow}, \bibinfo{author}{C.~Li}, \bibinfo{author}{D.~Wang}, \bibinfo{title}{Normal Forms and Bifurcation of Planar Vector Fields}, \bibinfo{publisher}{Cambridge University Press}, \bibinfo{year}{1994}. \DOIprefix\doi{10.1017/cbo9780511665639}.
\bibitem[{Dumortier et~al.(1987)Dumortier, Roussarie, and Sotomayor}]{Dumortier1987}
\bibinfo{author}{F.~Dumortier}, \bibinfo{author}{R.~Roussarie}, \bibinfo{author}{J.~Sotomayor},
\newblock \bibinfo{title}{Generic 3-parameter families of vector fields on the plane, unfolding a singularity with nilpotent linear part. the cusp case of codimension 3},
\newblock \bibinfo{journal}{Ergodic Theory and Dynamical Systems} \bibinfo{volume}{7} (\bibinfo{year}{1987}) \bibinfo{pages}{375--413}. \DOIprefix\doi{10.1017/s0143385700004119}.
\bibitem[{Hopf(1942)}]{e.hofp}
\bibinfo{author}{E.~Hopf},
\newblock \bibinfo{title}{Abzweigung einer periodischen lösung von einer stationären lösung eines differentialsystems},
\newblock \bibinfo{journal}{Berichte der Mathematisch-Physikalischen Klasse der Sächsischen Akademie der Wissenschaften zu Leipzig}  (\bibinfo{year}{1942}).
\bibitem[{Bautin(1952)}]{NN}
\bibinfo{author}{N.~N. Bautin},
\newblock \bibinfo{title}{On the number of limit cycles appearing with variation of coefficients from an equilibrium position of focus or center type},
\newblock \bibinfo{journal}{Mat. Sb. (N.S.)}  (\bibinfo{year}{1952}).
\bibitem[{Arnold(1988)}]{Arnold1988}
\bibinfo{author}{V.~I. Arnold}, \bibinfo{title}{Geometrical Methods in the Theory of Ordinary Differential Equations}, \bibinfo{publisher}{Springer New York}, \bibinfo{year}{1988}. \DOIprefix\doi{10.1007/978-1-4612-1037-5}.
\bibitem[{Guckenheimer and Holmes(1983)}]{Guckenheimer1983}
\bibinfo{author}{J.~Guckenheimer}, \bibinfo{author}{P.~Holmes}, \bibinfo{title}{Nonlinear Oscillations, Dynamical Systems, and Bifurcations of Vector Fields}, \bibinfo{publisher}{Springer New York}, \bibinfo{year}{1983}. \DOIprefix\doi{10.1007/978-1-4612-1140-2}.
\bibitem[{B.~D.~Hassard(2007)}]{2007}
\bibinfo{author}{N.~D.~K. B.~D.~Hassard}, \bibinfo{title}{Interpolation Theory and Applications}, \bibinfo{publisher}{American Mathematical Society}, \bibinfo{year}{2007}. \DOIprefix\doi{10.1090/conm/445}.
\bibitem[{Chow and Hale(1982)}]{Chow1982}
\bibinfo{author}{S.-N. Chow}, \bibinfo{author}{J.~K. Hale}, \bibinfo{title}{Methods of Bifurcation Theory}, \bibinfo{publisher}{Springer New York}, \bibinfo{year}{1982}. \DOIprefix\doi{10.1007/978-1-4613-8159-4}.
\bibitem[{Han(2001)}]{Han2001}
\bibinfo{author}{M.~Han},
\newblock \bibinfo{title}{The hopf cyclicity of lienard systems},
\newblock \bibinfo{journal}{Applied Mathematics Letters} \bibinfo{volume}{14} (\bibinfo{year}{2001}) \bibinfo{pages}{183--188}. \DOIprefix\doi{10.1016/s0893-9659(00)00133-6}.
\bibitem[{Arsie et~al.(2022)Arsie, Kottegoda, and Shan}]{Arsie2022}
\bibinfo{author}{A.~Arsie}, \bibinfo{author}{C.~Kottegoda}, \bibinfo{author}{C.~Shan},
\newblock \bibinfo{title}{A predator-prey system with generalized holling type {IV} functional response and {A}llee effects in prey},
\newblock \bibinfo{journal}{Journal of Differential Equations} \bibinfo{volume}{309} (\bibinfo{year}{2022}) \bibinfo{pages}{704--740}. \DOIprefix\doi{10.1016/j.jde.2021.11.041}.
\bibitem[{Kokubu et~al.(2005)Kokubu, Rossman, Saji, Umehara, and Yamada}]{Kokubu2005}
\bibinfo{author}{M.~Kokubu}, \bibinfo{author}{W.~Rossman}, \bibinfo{author}{K.~Saji}, \bibinfo{author}{M.~Umehara}, \bibinfo{author}{K.~Yamada},
\newblock \bibinfo{title}{Singularities of flat fronts in hyperbolic space},
\newblock \bibinfo{journal}{Pacific Journal of Mathematics} \bibinfo{volume}{221} (\bibinfo{year}{2005}) \bibinfo{pages}{303--351}. \DOIprefix\doi{10.2140/pjm.2005.221.303}.
\bibitem[{Saji et~al.(2009)Saji, Umehara, and Yamada}]{Saji2009}
\bibinfo{author}{K.~Saji}, \bibinfo{author}{M.~Umehara}, \bibinfo{author}{K.~Yamada},
\newblock \bibinfo{title}{The geometry of fronts},
\newblock \bibinfo{journal}{Annals of Mathematics} \bibinfo{volume}{169} (\bibinfo{year}{2009}) \bibinfo{pages}{491--529}. \DOIprefix\doi{10.4007/annals.2009.169.491}.
\bibitem[{Bogdanov(1975)}]{Bogdanov1975}
\bibinfo{author}{R.~I. Bogdanov},
\newblock \bibinfo{title}{Versal deformations of a singular point of a vector field on the plane in the case of zero eigenvalues},
\newblock \bibinfo{journal}{Functional Analysis and Its Applications} \bibinfo{volume}{9} (\bibinfo{year}{1975}) \bibinfo{pages}{144--145}. \DOIprefix\doi{10.1007/bf01075453}.
\bibitem[{Bogdanov(1981)}]{Bogdanov}
\bibinfo{author}{R.~I. Bogdanov},
\newblock \bibinfo{title}{Bifurcation of a limit cycle for a family of vector fields on the plane},
\newblock \bibinfo{journal}{Selecta Mathematica Sovietica}  (\bibinfo{year}{1981}).
\bibitem[{Takens(1973)}]{Takens1973}
\bibinfo{author}{F.~Takens},
\newblock \bibinfo{title}{Unfoldings of certain singularities of vectorfields: Generalized hopf bifurcations},
\newblock \bibinfo{journal}{Journal of Differential Equations} \bibinfo{volume}{14} (\bibinfo{year}{1973}) \bibinfo{pages}{476--493}. \DOIprefix\doi{10.1016/0022-0396(73)90062-4}.
\bibitem[{Montaldi(2021)}]{Montaldi2021}
\bibinfo{author}{J.~Montaldi}, \bibinfo{title}{Singularities, Bifurcations and Catastrophes}, \bibinfo{publisher}{Cambridge University Press}, \bibinfo{year}{2021}. \DOIprefix\doi{10.1017/9781316585085}.

\end{thebibliography}

\newpage
\appendix
\vspace{1em}
\section{Proofs of Theorems}

\subsection{Proof of Theorem \ref{th:3.4}}
\label{proof_th:3.4}

\begin{proof}
By Theorem~\ref{th:2.11}(2), the conditions $\mathfrak{D}_{(p,q)} = 0$ ensure the existence of a unique double positive equilibrium $(x^*,y^*)$ satisfying $y^* = \frac{1}{n}x^*$. Moreover, it follows from Lemma~\ref{lem:2.5} and Theorem~\ref{th:2.9} that $\mathfrak{D}_{(p,q)} = 0$  is equivalent to the algebraic condition
\[
(c(n+1) + bmn)(x^*)^3 - n(x^*)^2 + amn x^* + mn = 0
\]
together with $\det(J^*) = 0$, where the Jacobian matrix evaluated at $(x^*, y^*)$ is
\[
J^* =
\begin{pmatrix}
\displaystyle \frac{-2amx^* - 4(bm+c)(x^*)^3 - 3c(x^*)^2y^* - m + 3(x^*)^2}{D} & \displaystyle \frac{-c(x^*)^3}{D} \\[2ex]
1 & -n
\end{pmatrix},
\]
and $D = 1 + a x^* + b (x^*)^3 > 0$ by the assumption $a > -3\sqrt[3]{\frac{1}{4}b}$. Furthermore, the determinant simplifies as follows:
\begin{align*}
\det(J^*) &= \frac{4(c(n+1)+bmn)(x^*)^3 - 3n(x^*)^2 + 2amn x^* + mn}{D} \\
&= \frac{n\left((x^*)^2 - 2am x^* - 3m\right)}{D}.
\end{align*}
Setting $\det(J^*) = 0$ yields the expression:
\[
x^* = am + \sqrt{a^2 m^2 + 3m}.
\]
For the trace calculation:
\begin{align*}
\mathrm{Tr}(J^*) &= \frac{-2amx^* - 4(bm+c)(x^*)^3 - 3c(x^*)^2(\frac{x^*}{n}) - m + 3(x^*)^2}{D} - n \\
&= \frac{-4(bm+c)n(x^*)^3 - 3c(x^*)^3 - 2amnx^* - mn + 3n(x^*)^2}{nD} - n.
\end{align*}
Substituting the equilibrium relation $4(c(n+1)+bmn)(x^*)^3 = 3n(x^*)^2 - 2amnx^* - mn$ yields:
\begin{align*}
\mathrm{Tr}(J^*) &= \frac{c(x^*)^3 - n^2(1 + ax^* + b(x^*)^3)}{nD} = \frac{\mathcal{N}_T}{nD},
\end{align*}
where \(\mathcal{N}_T := c(x^*)^3 - n^2D\).
The coupled system governing variables $c$ and $n$ consists of:
\begin{equation}\label{eq:2.34}
\begin{cases}
\mathcal{N}_T = c(x^*)^3 - n^2D = 0, \\
(c(n+1) + bmn)(x^*)^{3} - n(x^*)^{2} + amnx^* + mn = 0.
\end{cases}
\end{equation}
From the first equation, we explicitly solve for $c$ in terms of $n$:
\begin{equation}\label{eq:2.31.1}
c = \frac{n^2D}{(x^*)^3}.
\end{equation}
Substituting equation \eqref{eq:2.31.1} into the second equation, we have
\begin{equation}
Dn^2 + Dn + mD - (x^*)^{2} = 0.
\end{equation}
Under condition \eqref{eq:3.2}, this quadratic admits precisely one positive solution for $n$ (the critical threshold):
\begin{equation}\label{eq:n-critical}
n^* = \frac{-D + \sqrt{D^2 - 4D(mD - (x^*)^2)}}{2D}.
\end{equation}
The mathematical structure of the solution reveals an inverse cubic relationship between parameters $c$ and $n$ governed by $c = (n^*)^2D/(x^*)^3$, where $n^*$ determines the bifurcation point. This coupled parameter relationship defines the critical threshold at which the system's stability properties undergo qualitative transformation, marking the boundary between distinct dynamical regimes.

\begin{figure}[htbp]
\centering
\includegraphics[width=0.5\textwidth]{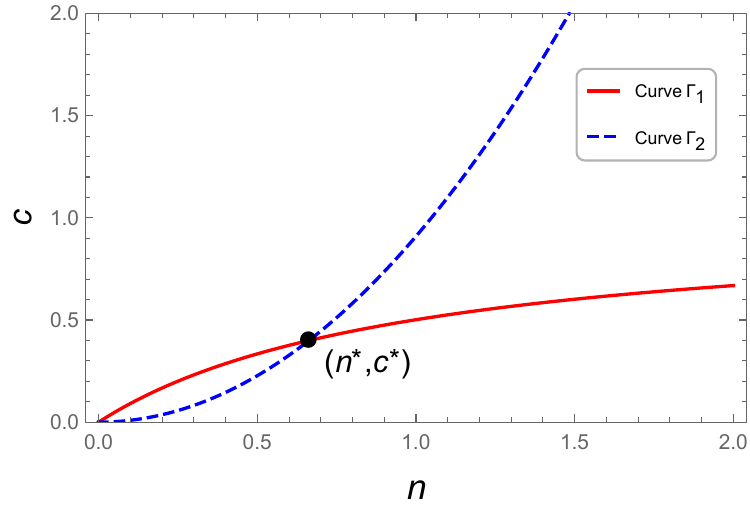}
\caption{Intersection of $\Gamma_1$ (red) and $\Gamma_2$ (blue) at $(n,c) =(n^*,c^*)$.}
\label{fig:6}
\end{figure}

Stability transitions are analyzed through geometric relationships in the $(n,c)$-parameter space (shown as Figure~\ref{fig:6}), and
$\Gamma_1$: $c = \dfrac{n((x^*)^{2} -mD)}{(n+1)(x^*)^{3}}$ (equilibrium condition);
$\Gamma_2$: $c = \dfrac{n^2D}{(x^*)^3}$ (from $\mathcal{N}_T = 0$).
The $c$-difference function between these curves is:
\begin{equation}
d_c(n) := c_{\Gamma_1} - c_{\Gamma_2} = \dfrac{n((x^*)^{2} -mD)}{(n+1)(x^*)^{3}} - \frac{n^2D}{(x^*)^3}.
\end{equation}
Normalizing by multiplying through by $\dfrac{1}{n}$ gives:
\begin{align*}
\frac{1}{n}d_c(n) &= \frac{-Dn^2 - Dn + (x^*)^2 - mD}{(n+1)(x^*)^{3}}.
\end{align*}
Sign analysis of $d_c(n)$ reveals:
\begin{enumerate}[label=\textnormal{(\roman*)}]
\item Case $n = n^*$: At the intersection $(n,c)=(n^*,c^*)$ where $c^* := \dfrac{(n^*)^2D}{(x^*)^3}$, both trace and determinant vanish by definition, hence $d_c(n^*) = 0$.
\item Case $n < n^*$: Here $\dfrac{1}{n}d_c(n) > 0$ implies $d_c(n) > 0$. For fixed $x^*$, decreasing $n$ below $n^*$ elevates $c$ values above $\Gamma_2$ along $\Gamma_1$ (see Figure~6), producing:
\[
\mathcal{N}_T = c(x^*)^3 - n^2D > 0 \Rightarrow \mathrm{Tr}(J^*) = \dfrac{\mathcal{N}_T}{nD} > 0,
\]
characterizing a repelling saddle-node.
\item Case $n > n^*$: Now $\dfrac{1}{n}d_c(n) < 0$ implies $d_c(n) < 0$. Increasing $n$ beyond $n^*$ lowers $c$ values below $\Gamma_2$ along $\Gamma_1$, resulting in:
\[
\mathcal{N}_T = c(x^*)^3 - n^2D < 0 \Rightarrow \mathrm{Tr}(J^*) = \dfrac{\mathcal{N}_T}{nD} < 0,
\]
corresponding to an attracting saddle-node.
\end{enumerate}

We now establish assertion (2) by demonstrating the emergence of a codimension-two Bogdanov-Takens singularity at $(c,n) = (c^*,n^*)$. At this critical parameter pair, both the Jacobian trace $\mathrm{Tr}(J^*)$ and determinant $\det(J^*)$ vanish simultaneously. Through higher-order jet calculations, we verify the non-degeneracy conditions $\zeta\eta \neq 0$ required for this bifurcation type.
\par
Translating the endemic equilibrium $(x^*,y^*)$ to the origin via $x \rightarrow x + x^*,\ y \rightarrow y + y^*$, where
\begin{align}\label{2.37}
x^*=am+\sqrt{a^2 m^2+3 m}, \quad y^*=\frac{1}{n}x^*,
\end{align}
we expand the system in Taylor series about $(0,0)$:
\begin{equation}\label{eq:2.38}
       \begin{aligned}
       \dfrac{{\rm d} x}{{\rm d}t}&=\xi_1x+\xi_2y+\xi_3x^2+\xi_4xy+\mathcal{O}(|(x,y)|^2),\\
         \dfrac{{\rm d} y}{{\rm d}t}&= x- ny,
\end{aligned}\end{equation}
with coefficients
\begin{equation}
\begin{aligned}\label{paraxi}
\xi_1&=n,\quad \xi_2=-n^2,\\
\xi_3&=-\frac{x^*(-2 a^2 m n-6 n+(9 b m n^*+12 c n+6c)x^* +4acn(x^*)^2+3 b n(x^*)^3)}{2n(1+a x^*+b (x^*)^3)^2},\\
\xi_4&=-\frac{c(x^*)^2(2 a x^*+3)}{(1+a x^*+b(x^*)^3)^2}.
\end{aligned}
\end{equation}
Applying the near-identity transformation
\begin{equation}\label{eq:2.39}
x\rightarrow x, \quad \xi_1 x+\xi_2 y \rightarrow y,
\end{equation}
we obtain the simplified system:
\begin{equation}\label{eq:2.40}
       \begin{aligned}
       \dfrac{{\rm d} x}{{\rm d}t}&=y+\Big(\xi_3-\dfrac{\xi_1\xi_4}{\xi_2}\Big)x^2+\dfrac{\xi_4}{\xi_2}xy+\mathcal{O}(|(x,y)|^2),\\
         \dfrac{{\rm d}y}{{\rm d}t}&= \xi_1\Big(\xi_3-\dfrac{\xi_1\xi_4}{\xi_2}\Big)x^2+\dfrac{\xi_1\xi_4}{\xi_2}xy+\mathcal{O}(|(x,y)|^2),
\end{aligned}\end{equation}
The linearization matrix now assumes the Jordan block form:
\begin{align*}A=\begin{pmatrix}
      0 & 1 \\
       0 & 0
\end{pmatrix},
\end{align*}
characteristic of Bogdanov-Takens singularities. A subsequent nonlinear coordinate shift
 \begin{align}\label{eq:2.41}
x\rightarrow x+\frac{\xi_4}{2\xi_2}x^2, \quad y\rightarrow y-\Big(\xi_3-\dfrac{\xi_1\xi_4}{\xi_2}\Big)x^2
\end{align}
yields the second-order normal form:
\begin{equation}\label{eq:2.42}
       \begin{aligned}
       \dfrac{{\rm d} x}{{\rm d}t}&=y,\\
         \dfrac{{\rm d} y}{{\rm d}t}&= \zeta\xi_1 x^2+\eta xy+\mathcal{O}(|(x,y)|^3),
\end{aligned}\end{equation}
where $\zeta=\xi_3-\dfrac{\xi_1\xi_4}{\xi_2}, \eta=2\xi_3-\dfrac{\xi_1\xi_4}{\xi_2}$.\par
\end{proof}

\vspace{1em}
\subsection{Proof of Theorem \ref{th:4.3}}\label{proof_th_4.3}
\begin{proof} One needs to show that there exist a sequence of $C^\infty$ diffeomorphisms and time rescaling(preserving orientations of orbits) so that  system \eqref{eq:4.3} is $\mathcal{K}_{un}$-equivalent  to the normal form \eqref{eq:4.4}, and condition \eqref{eq:4.5} is fulfilled.\par
Making the transform  $ x\rightarrow x+x^*, y\rightarrow y+y^*$ to translate the positive equilibrium $(x^*,y^*)$ to origin and  taking Taylor series about $(x,y) = (0, 0)$ up to order 2, one shows
\begin{equation}\label{eq:3.4}
       \begin{aligned}
       \dfrac{{\rm d} x}{{\rm d}t}&=d_1+d_2x+d_3y+d_4x^2+d_5xy,\\
         \dfrac{{\rm d}y}{{\rm d}t}&= d_6+ x+d_7y,
\end{aligned}\end{equation}
\begin{equation}\label{eq:3.5}
\begin{split}
d_1&=-\frac{n^*(n^*+1)\epsilon_1x^*}{c^*},\\
d_2&=n^*-\frac{\epsilon_1(x^*)^3}{n^*(1+a x^*+b(x^*)^3)^2}\Big(b^* n^* (x^*)^3 +3 a n^*x^*+2ax^*+4n^*+3\Big),\\
d_3&=-\Big(1+\dfrac{\epsilon_1}{c^*}\Big)(n^*)^2,\\
d_4&=-\frac{ \left(x^*\right)^2\epsilon_1 }{n^* \left(1+a x^*+b \left(x^*\right)^3\right)^3}\Big(-a b \left(n^*+3\right) \left(x^*\right)^4-3 b \left(n^*+2\right) \left(x^*\right)^3 +a^2\\
&\quad\times\left(3 n^*+1\right) \left(x^*\right)^2+a \left(8 n^*+3\right) x^*+6 n^*+3\Big)-\frac{x^*}{2n^*(1+a x^*+b (x^*)^3)^2}\\
&\quad\times\Big(-2 a^2 m n^*-6 n^*+(9 b m n^*+12 c^* n^*+6c^*)x^* +4ac^*n^*(x^*)^2\\
&\quad+3 b n^*(x^*)^3\Big),\\
 d_5&=-\frac{\left(x^*\right)^2 \left(2 a x^*+3\right)(c^*+\epsilon_1)}{\left(1+a x^*+b \left(x^*\right)^3\right)^2},\quad d_6=-\frac{\epsilon_2}{n^*}x^*,\quad d_7=-(n^*+\epsilon_2).
\end{split}
\end{equation}
Letting
\begin{align}\label{eq:3.6}
x\rightarrow x, \quad d_1+d_2x+d_3y+d_4x^2+d_5xy\rightarrow y,
\end{align}
this transformation is invertible in some neighborhood of $(x,y)=(0,0)$ for small $|\epsilon|$ and
depends smoothly on the parameters. It is clear that if $\epsilon=0$, the origin $(x,y)=(0,0)$ is a fixed point of this map.
The transformation brings \eqref{eq:3.4} into
\begin{equation}\label{eq:3.7}
       \begin{aligned}
       \dfrac{{\rm d}x}{{\rm d}t}&=y,\\
         \dfrac{{\rm d}y}{{\rm d}t}&= e_1+ e_2x+e_3y+e_4x^2+e_5xy+e_6y^2+P(x,y,\epsilon),
\end{aligned}
\end{equation}
where
\begin{equation}\label{eq:3.8}
\begin{aligned}
e_1&=-d_1d_7+d_3d_6, \quad  e_2=d_3-d_2d_7+d_5d_6, \quad e_3=d_2+d_7-\dfrac{d_1d_5}{d_3},\\
e_4&=d_5-d_4d_7, \quad e_5=2d_4-\dfrac{d_2d_5}{d_3}+\dfrac{d_1d_5^2}{(d_3)^2}, \quad e_6=\dfrac{d_5}{d_3},\\
\end{aligned}
\end{equation}
where $b_i$ and $P(x,y,\epsilon)=\mathcal{O}(|(x,y)|^3)$ are smooth functions of their arguments.

Introduce the new time $\tau$ via the equation
\begin{align}\label{eq:3.9}
{\rm d}t=(1-e_6x){\rm d}\tau,
\end{align}
the direction of time is preserved near the origin for small $|\epsilon|$. One obtains (still denote $\tau$ by $t$)
\begin{equation}\label{eq:3.10}
       \begin{aligned}
       \dfrac{{\rm d}x}{{\rm d}t}&=(1-e_6x)y,\\
         \dfrac{{\rm d} y}{{\rm d}t}&= (1-e_6x)(e_1+ e_2x+e_3y+e_4x^2+e_5xy+e_6y^2+P(x,y,\epsilon)).
\end{aligned}\end{equation}
Taking a coordinate transformation $x\rightarrow x, y-e_6xy\rightarrow y$, which maps the origin into itself for arbitrary  $\epsilon$,  thus one can eliminate the $y^2$-term.  Consequently, one obtains
\begin{equation}\label{eq:3.11}
       \begin{aligned}
       \dfrac{{\rm d} x}{{\rm d}t}&=y,\\
         \dfrac{{\rm d}y}{{\rm d}t}&=f_1+ f_2x+f_3y+f_4x^2+f_5xy+P(x,y,\epsilon),
\end{aligned}\end{equation}
where
\begin{align}\label{eq:3.12}
f_1=e_1,\ f_2=e_2-2e_1e_6,\ f_3=e_3,\ f_4=e_4-2e_2e_6+e_1e_6^2,\ f_5=e_5-e_3e_6.
\end{align}
A transformation  of coordinates
in the $x$-direction
\begin{align}\label{eq:3.13}
  x+\frac{f_2}{2f_4}\rightarrow x, \ y\rightarrow y,
\end{align}
 which maps the origin $(x,y)=(0,0)$  to itself when $\epsilon=0$ and leads to the system
 the $y^2$-term.  Consequently, one obtains
\begin{equation}\label{eq:3.14}
       \begin{aligned}
       \dfrac{{\rm d}x}{{\rm d}t}&=y,\\
         \dfrac{{\rm d}y}{{\rm d}t}&=g_1+g_2y+g_3x^2+g_4xy+P(x,y,\epsilon),
\end{aligned}\end{equation}
where
\begin{align}\label{eq:3.15}
  g_1=f_1-\frac{f_2^2}{4f_4}, \ g_2=f_3-\frac{f_2f_5}{2f_4},\ g_3=f_4,\ g_4=f_5,
\end{align}
when $\epsilon=0$, $g_3=\zeta\neq0$ and $g_4=\eta\neq0$. One performs a scaling by introducing the transformation
\begin{align}\label{eq:3.16}
  \frac{g_4^2}{g_3}x\rightarrow x, \ \frac{g_4^3}{g_3^2}y\rightarrow y,\ \frac{g_3}{g_4}t\rightarrow t,
\end{align}
the variables and time scalings  above will be well defined under the assumption $\zeta\eta\neq0$, the transformation brings \eqref{eq:3.14} into
\begin{equation}\label{eq:3.17}
       \begin{aligned}
       \dfrac{{\rm d}x}{{\rm d}t}&=y,\\
         \dfrac{{\rm d}y}{{\rm d}t}&= \mu_1(\epsilon)+\mu_2(\epsilon)y+ x^2+xy,
\end{aligned}\end{equation}
where
\begin{align}\label{eq:3.18}
  \mu_1=\dfrac{g_1g_4^4}{g_3^3}, \ \mu_2=\dfrac{g_2g_4}{g_3}.
\end{align}
Compositing a sequence of $C^\infty$ diffeomorphisms above, $\mu_1,\mu_2$ can be expressed as the functions of $\epsilon_1$ and $\epsilon_2$ as follows
\begin{equation}\label{eq:3.19}
\begin{split}
\mu_1(\epsilon_1,\epsilon_2)&=r_1\epsilon_1+r_2\epsilon_2+r_3\epsilon_1^2+r_4\epsilon_1\epsilon_2+r_5\epsilon_2^2+\mathcal{O}(|(\epsilon_1,\epsilon_2|^3),\\
\mu_2(\epsilon_1,\epsilon_2)&=s_1\epsilon_1+s_2\epsilon_2+s_3\epsilon_1^2+s_4\epsilon_1\epsilon_2+s_5\epsilon_2^2+\mathcal{O}(|(\epsilon_1,\epsilon_2|^3),
\end{split}
\end{equation}
where
\begin{equation}\label{eq:3.20}
\begin{split}
r_1&=-\frac{(n^*+1)x^*\eta^4}{c^*n^* \zeta^3},\quad r_2=\frac{x^*\eta^4}{ (n^*)^2\zeta^3},\\
r_3&=\dfrac{\eta ^3}{4 (c^*)^2 \zeta ^4(n^*)^2(a x^*+b(x^*)^3+1)^2}\bigg(((n^*)^2 (\eta (43 a^2(x^*)^2-2 a x^*(b (x^*)^3-65)\\
&\quad-b^2 (x^*)^6-2 b (x^*)^3+98)+4 c^* \Upsilon  x^* (8 \zeta -3 \eta )(a x^*+b(x^*)^3+1)^2)+(n^*)^4\\
&\quad\times(-(32 \zeta (2 a x^*+3) (a x^*+b(x^*)^3+1)-\eta(95 a^2 (x^*)^2+10 a x^*(5 b (x^*)^3+26)\\
&\quad-b^2 (x^*)^6+76 b (x^*)^3+176)))+2 c^* n^* x^* (2 \Upsilon  (8 \zeta -3 \eta )(a x^*+b (x^*)^3+1)^2\\
&\quad+\eta (x^*)^2(13 a x^*-b(x^*)^3+20))-2 (n^*)^3 (2 a x^*+3)(16 \zeta(a x^*+b(x^*)^3+1)\\
&\quad-\eta(35 a x^*+13 b(x^*)^3+46))+14 c^* \eta(x^*)^3 (2 a x^*+3))\bigg),
\end{split}
\end{equation}
\begin{equation*}
\begin{split}
r_4&=\dfrac{\eta ^3 }{2 c^* \zeta ^4(n^*)^3(1+a x^*+b(x^*)^3)^3}\Big((-c^* \eta  x^*(1+a x^*+b (x^*)^3) ((x^*)^2(2 n^*(a^2 \Upsilon +19)\\
&\quad+63)+4 a b \Upsilon  n^* (x^*)^4+(x^*)^3 (a(25 n^*+42)+4 b \Upsilon  n^*)+4 a \Upsilon  n^* x^*+2 b^2 \Upsilon  n^*(x^*)^6\\
&\quad-b n^*(x^*)^5+2 \Upsilon  n^*)+c^*(2 a x^*+3)(4(n^*)^2 ((x^*)^2 (2 a^2 \Upsilon +3)+4 a b \Upsilon (x^*)^4+2(x^*)^3 \\
&\quad\times(a+2 b \Upsilon )+4 a \Upsilon  x^*+2 b^2 \Upsilon (x^*)^6+2 \Upsilon)+n^*(x^*)^2(a x^*(8-27 \eta  x^*)-5 b \eta  (x^*)^4\\
&\quad-38 \eta  x^*+12)-11 \eta (x^*)^3(2 a x^*+3))+\eta  n^*(a x^*+b(x^*)^3+1)^3(2 \eta  n^* x^*+n^*+3 \eta  x^*)\\
&\quad-8 c^*(n^*)^2(x^*)^2 (2 a x^*+3)^2)\Big),\\
r_5&=-\dfrac{\eta ^3}{4 \zeta ^4x^*(n^*)^4(a x^*+b(x^*)^3+1)^2}\Big(\eta  n^*x^*(a x^*+b (x^*)^3+1)^2 (n^*+6 \eta  x^*)\\
&\quad-28 c^* \eta(x^*)^4 (2 a x^*+3)+(n^*)^2 (2 a x^*+3)^2 (8 n^*-11 \eta  x^*)\Big),\\
s_1&=\dfrac{\eta (\eta  x^* (-2-a x^*+b (x^*)^3)+(2 a x^*+3) ((n^*)^2-\eta  n^* x^*))}{2 c^* \zeta ^2 x^* (1+a x^*+b(x^*)^3)},\\
s_2&=\frac{\eta  \left(\eta  x^* (\eta -2 \zeta )-n^* (2 \zeta +\eta )\right)}{2 \zeta ^2 \left(n^*\right)^2},\\
s_3&=\dfrac{(2 a x^*+3)}{2c^* \zeta ^3 n^* (1+a x^*+b(x^*)^3)^4}\Big( \Big(\eta \left(n^* x^*(3 a x^*-b(x^*)^3+5\right)+2 x^*\big(a x^*-b (x^*)^3+2)\big)\\
&\quad-(n^*)^2 (2 a x^*+3)\Big)\Big(-c^* \Upsilon  x^*(1+a x^*+b(x^*)^3)+\eta(n^* x^*+x^*)(3 a x^*+b(x^*)^3+4)\\
&\quad+(n^*)^2(2 a x^*+3)\Big)\Big),
\end{split}
\end{equation*}
\begin{equation}
\begin{split}
s_4&=-\dfrac{1}{4 c^* \zeta ^3 (n^*)^4 (1+a x^*+b(x^*)^3)^4}\bigg(-c^* (n^*)^2 x^* (2 a x^*+3)^2 (c^* (2 \Upsilon  n^* (a x^*+b (x^*)^3\\
&\quad+1)+\eta  x^* (4 a \Upsilon x^*+4 b \Upsilon (x^*)^3+4 \Upsilon +7 (x^*)^2))-2 \eta ^2 x^* (n^* (9 a x^*+b (x^*)^3+13)\\
&\quad+8 a x^*+12))+(c^*)^2 \eta (x^*)^2 (2 a x^*+3) (6 \Upsilon (n^*)^2 (a x^*+b(x^*)^3+1)^2+n^* (x^*)^2 (2 a x^* \\
&\quad\times(4-3 \eta  x^*)-8 b \eta  (x^*)^4-5 \eta  x^*+12)+2 \eta (x^*)^3 (2 a x^*+3))-c^* \eta ^2 n^*(x^*)^2 (2 a x^*+3)\\
&\quad\times(a x^*+b(x^*)^3+1)^2 (n^*(\eta  x^*+14)+2 \eta  x^*)+2 \eta ^3 (n^*)^3 (1+a x^*+b(x^*)^3)^4\\
&\quad+c^* (n^*)^4 (2 a x^*+3)^3(3 n^*+2 \eta  x^*)\bigg),\\
s_5&=\dfrac{1}{4 \zeta ^3 (n^*)^4 (1+a x^*+b (x^*)^3)^4}\bigg(-c^* \eta ^2(x^*)^2(2 a x^*+3)(a x^*+b(x^*)^3+1)^2\\
&\quad\times(13 n^*+2 \eta  x^*)+c^* \eta  n^* x^* (2 a x^*+3)^2(a x^*+b(x^*)^3+1)(7 n^*+2 \eta  x^*)\\
&\quad+3 \eta ^3 (n^*)^2(1+a x^*+b (x^*)^3)^4-c^* n^*(2 a x^*+3)^3((n^*)^2-4 \eta ^2 (x^*)^2)\bigg),
\end{split}
\end{equation}
here
\begin{align*}
\Upsilon:&=\frac{\left(x^*\right)^2 }{n^* \left(a x^*+b \left(x^*\right)^3+1\right)^3}\Big(\left(x^*\right)^2 \left(3 a^2 n^*+a^2\right)+\left(x^*\right)^4 \left(-a b n^*-3 a b\right)+x^* \left(8 a n^*+3 a\right)\\
&\quad+\left(x^*\right)^3 \left(-3 b n^*-6 b\right)+6 n^*+3\Big).
\end{align*}
Hence
\begin{equation}
\begin{split}
\det\left(\frac{\partial(\mu_1,\mu_2)}{\partial(\epsilon_1,\epsilon_2)}\right)\bigg|_{\epsilon=0}=&\dfrac{(2 n^*+1) x^*\eta^5}{c^* (n^*)^2\zeta^4}\neq0.
\end{split}
\end{equation}
The above transversality condition ensures $(\mu_1,\mu_2)$ span the parameter space near $(c^*,n^*)$.
The results in Bogdanov \cite{Bogdanov1975,Bogdanov} and Takens \cite{Takens1973} now imply that system \eqref{eq:3.17} (i.e., \eqref{eq:4.4} or
\eqref{eq:4.3}) undergoes Bogdanov-Takens bifurcation when $(\mu_1,\mu_2)$ changes in a small neighborhood
of $(0,0)$.
\end{proof}

\vspace{1em}
\subsection{Proof of Theorem \ref{thm:cusp-equivalence}}
\label{proof-thm:cusp-equivalence}
\begin{proof}
We begin by applying a change of variables to translate the positive equilibrium \((x^*, y^*)\) to the origin. The transformation is defined as:
\[
x \rightarrow x + x^*, \quad y \rightarrow y + y^*,
\]
and we take the Taylor expansion of system (2.4) around the origin. This yields the following system:
\begin{equation}
\begin{aligned}
\dfrac{{\rm d} x}{{\rm d}t} &= \xi_1 x + \xi_2 y + \xi_3 x^2 + \xi_4 x y + \xi_5 x^3 + \xi_6 x^2 y + \xi_7 x^4 + \xi_8 x^3 y + \mathcal{O}(|(x,y)|^2), \\
\dfrac{{\rm d} y}{{\rm d}t} &= x - \xi_1 y.
\end{aligned}
\end{equation}
The coefficients \(\xi_5, \xi_6, \xi_7, \xi_8\) are given by:
\[\begin{aligned}
\xi_5&=-\frac{1}{n (a x^*+b(x^*)^3+1)^3}\Big((x^*)^3 (a^2 c n+13 b n)-a^2 m n+(x^*)^5 (-4 a b^2 m n\\
&\quad-7 a b c n-4 a b c)+(x^*)^4 (4 a b n-19 b^2 m n-19 b c n-13 b c)+(x^*)^2 (-4 a b m n\\
&\quad+2 a c n-a c)+x^* (a n+4 b m n+4 c n+c)-4 b^2 n (x^*)^6+(x^*)^7 (4 b^3 m n+4 b^2 c n\\
&\quad+4 b^2 c)-n\Big),\\
\xi_6&=-\frac{c x^*(a^2(x^*)^2-3 a b(x^*)^4+3 a x^*-6 b(x^*)^3+3)}{(a x^*+b (x^*)^3+1)^3},\\
\xi_7&=-\frac{1}{n(a x^*+b(x^*)^3+1)^4}\Big(a^3 m n+(x^*)^5(-a^2 b^2 m n-5 a^2 b c n-a^2 b c-35 b^2 n)\\
&\quad+(x^*)^4 (a^2 b n+5 a b^2 m n-14 a b c n)+(x^*)^2 (5 a^2 b m n+a^2 c n+a^2 c+14 b n)\\
&\quad+x^*(a^2 (-n)-5 a b m n-a c n-a c)+(x^*)^7 (10 a b^3 m n+14 a b^2 c n+10 a b^2 c)\\
&\quad+(x^*)^6(-10 a b^2 n+45 b^3 m n+45 b^2 c n+35 b^2 c)+a n+5 b^3 n (x^*)^8+(x^*)^3\\
 &\quad\times(-30 b^2 m n-30 b c n-14 b c)+(x^*)^9(-5 b^4 m n-5 b^3 c n-5 b^3 c)+b m n+c n\Big),\\
\xi_8&=\frac{c (2 b(x^*)^3 (2 a^2(x^*)^2+7 a x^*+8)-2 b^2 (x^*)^6 (2 a x^*+5)-1)}{(a x^*+b(x^*)^3+1)^4}.
\end{aligned}\]
Next, we apply a near-identity transformation to simplify the system. Following standard techniques for cusp bifurcations, we define:
\[x \rightarrow x, \quad \xi_1 x + \xi_2 y \rightarrow y,
\]
resulting in:
\begin{equation}
\begin{aligned}
\dfrac{{\rm d} x}{{\rm d}t} &= y + a_1 x^2 + a_2 x y + a_3 x^3 + a_4 x^2 y + a_5 x^4 + a_6 x^3 y + \mathcal{O}(|(x,y)|^4), \\
\dfrac{{\rm d} y}{{\rm d}t} &= n(a_1 x^2 + a_2 x y + a_3 x^3 + a_4 x^2 y + a_5 x^4 + a_6 x^3 y) + \mathcal{O}(|(x,y)|^4),
\end{aligned}
\end{equation}
where the coefficients \(a_1, \ldots, a_6\) satisfy:
\[
\begin{aligned}
a_1 &= \xi_3 - \dfrac{\xi_1 \xi_4}{\xi_2}, \quad a_2 = \dfrac{\xi_4}{\xi_2}, \quad a_3 = \xi_5 - \dfrac{\xi_1 \xi_6}{\xi_2}, \\
a_4 &= \dfrac{\xi_6}{\xi_2}, \quad a_5 = \xi_7 - \dfrac{\xi_1 \xi_8}{\xi_2}, \quad a_6 = \dfrac{\xi_8}{\xi_2}.
\end{aligned}
\]
The condition \(\xi_4 \neq -n \xi_3\) ensures \(a_1 \neq 0\). Applying the nonlinear transformation:
\[
x \rightarrow \frac{1}{n a_1} \left( x + \frac{a_2}{2 a_1 n} x^2 + \frac{2 a_4 - a_2^2}{6 a_1^2 n^2} x^3 + \frac{-a_2^3 - 4 a_4 a_2 + 6 a_6}{24 a_1^3 n^3} x^4 \right),
\]
\[
y \rightarrow \frac{1}{n a_1} \left( y - \frac{1}{n} x^2 - \frac{a_3}{a_1^2 n^2} x^3 + \frac{7 a_1 a_2^2 - 6 a_3 a_2 + 4 a_1 a_4 - 12 a_5}{12 a_1^3 n^3} x^4 \right),
\]
we reduce the system to:
\begin{equation}
\begin{aligned}\label{eq:3.31}
\dfrac{{\rm d}x}{{\rm d}t} &= y, \\
\dfrac{{\rm d} y}{{\rm d}t} &= x^2 + c_1 x y + c_2 x^3 + c_3 x^2 y + c_4 x^4 + c_5 x^3 y + \mathcal{O}(|(x,y)|^4),
\end{aligned}
\end{equation}
where the coefficients \(c_1, c_2, c_3, c_4, c_5\) are given by:
\begin{align*}
c_1 &= \dfrac{2 a_1 + n a_2}{n a_1}, \quad c_2 = \dfrac{a_3}{n a_1^2}, \quad c_3 = \dfrac{a_2^2 n + 2 a_4 n + 4 a_1 a_2 + 6 a_3}{2 a_1^2 n^2}, \\
c_4 &= \dfrac{6(a_2 a_3 + 2 a_5) - a_1 (7 a_2^2 + 4 a_4)}{12 a_1^3 n^2},\\
 c_5& = \dfrac{a_2^3 (-n) + a_2(8 a_4 n + 30 a_3) + 6(a_6 n + 4 a_5) + a_1(4 a_4 - 8 a_2^2)}{6 a_1^3 n^3}.
\end{align*}
Under the condition \(\xi_4 = -2n\xi_3\), we have \(c_1 = 0\). By Lemma \ref{lem:4.6}, this system is \(C^\infty\)-equivalent to:
\begin{align}\label{3.32}
\begin{aligned}
\dfrac{{\rm d}x}{{\rm d}t} &= y, \\
\dfrac{{\rm d} y}{{\rm d}t} &= x^2 + \chi x^3 y + \mathcal{O}(|(x,y)|^4),
\end{aligned}
\end{align}
where \(\chi = c_5 - c_2 c_3\). Substituting the expressions for \(a_i\) in terms of \(\xi_j\), we obtain:
\begin{align}\label{3.33}
\chi=-\frac{1}{n^5 \xi _3^4}\Big(3 n^2 \xi _5^2+\xi _3^2(8 n \xi _5+6 \xi _6)+n \xi _3(4 n \xi _7+3 \xi _8)+5 n \xi _5 \xi _6+4 \xi _3^4+2 \xi _6^2\Big),
\end{align}
which completes the proof.
\end{proof}

\vspace{1em}
\subsection{Proof of Theorem \ref{thm:cusp_versal}}
\label{Proof-thm:cusp_versal}

\begin{proof}
We demonstrate the existence of orientation-preserving $C^\infty$ diffeomorphisms and time rescalings that transform system \eqref{4.12} into the normal form \eqref{4.13} while satisfying the transversality condition \eqref{4.14}. The proof proceeds through three coordinate transformations and parameter rescalings.

First, translate the equilibrium $E^*(x^*,y^*)$ to the origin via the shift
\[
(X_1, Y_1) = \left(x - \left(am + \sqrt{a^2m^2 + 3m}\right),\ y - \frac{1}{n}\left(am + \sqrt{a^2m^2 + 3m}\right)\right),
\]
and expand the translated system \eqref{4.12} in Taylor series about $(0,0)$ up to fourth order. Applying a $C^\infty$ near-identity transformation to eliminate non-resonant terms yields the reduced system
\[
\begin{cases}
\dot{X}_1 = Y_1, \\
\dot{Y}_1 = \sum\limits_{i=0}^4 \alpha_{i0}X_1^i + y_1 \sum\limits_{i=0}^3 \alpha_{i1}X_1^i + R(X_1, Y_1, \epsilon),
\end{cases}
\]
where $R(X_1,Y_1,\epsilon)$ shares the analytic properties of $R(x,y,\epsilon)$ in \eqref{4.13}, subsequent $R$-terms will inherit these properties through each transformation stage. Moreover, coefficients $\alpha_{i0}$ and $\alpha_{i1}$ in the reduced system are as follows:
\begin{align*}
\alpha _{00}=& \epsilon _2 \left(n x^*\right)-\frac{\epsilon _1 \left((2\vartheta +3)^2 m^2 (n+1)\right)}{D},\\
\alpha _{10}=&\frac{\epsilon _2 (n (2 \vartheta +D+3))}{D}-\frac{\epsilon _1 \left((2 \vartheta +3)^2 m^2 (n+1) (2 \vartheta +D+3)\right)}{D x^*},\\
\alpha _{20}=&\frac{\epsilon _1 \left((2 \vartheta +3) m (n+1) \left((\vartheta +3) D-(2 \vartheta +3)^2\right)\right)}{D^3}+\frac{(\vartheta +3) (2 \vartheta +3) m^2 n \epsilon _3}{D^2}\\
&-\frac{\epsilon _2 \left((\vartheta +3) D (m+n)-(2 \vartheta +3)^2 n\right)}{D^2 x^*}+\frac{(-(\vartheta +3)) m n}{D x^*},\\
\alpha _{30}=&\frac{m n x^* \epsilon _3 (2 (\vartheta +3) (2 \vartheta +3)-(\vartheta +4) D)}{D^3}+\frac{n ((\vartheta +4) D-(\vartheta +3) (2 \vartheta +3))}{(2 \vartheta +3) D^2}\\
&-\frac{(n+1) x^* \epsilon _1 \left((\vartheta +4) D^2-(2 \vartheta +3) (4 \vartheta +9) D+2 \vartheta  (2 \vartheta +3) (4 m+9)+27\right)}{D^4}\\
&+\frac{\epsilon _2}{D^2 m^2 (n+1)(m+n^2+n)}\Big(m^3 (-12 \vartheta +6 D+n-17)-m^2 n (n+1) (2 \vartheta\\
& -6 D+(\vartheta +3) n+5)-3 m n^2 (n+1)^2+(2 \vartheta +3) n^3 (n+1)^3\Big),\\
\alpha _{40}=&-\frac{n}{(2 \vartheta +3) D^3 x^*} \left((\vartheta +3) (2 \vartheta +3)^2+(\vartheta +5) D^2-\left(5 \vartheta ^2+26 \vartheta +30\right) D\right)\\
&+\frac{m n \epsilon _3 }{D^4}\left(3 (\vartheta +3) (2 \vartheta +3)^2+(\vartheta +5) D^2-2 \left(5 \vartheta ^2+26 \vartheta +30\right) D\right)\\
&+\frac{(n+1) \epsilon _1}{D^5}\times\Big(-(2 \vartheta +3)^4+(\vartheta +5) D^3-(13 \vartheta ^2+58 \vartheta +60) D^2+(7 \vartheta +15) \\
&\times(2 \vartheta +3)^2 D\Big)-\frac{n \epsilon _2}{2 (\vartheta +3) D^2 m^2 (n+1) x^*}\Big(m (39 \vartheta ^2+118 \vartheta +10 D^2+D (-40 \vartheta \\
&+(\vartheta +5) n-61)-(3 \vartheta ^2+17 \vartheta +21) n+87)+n (n+1) (10 D^2-(25 \vartheta +46) D\\
&-(2 \vartheta +3) ((\vartheta +3) n-3 (\vartheta +2)))\Big),\\
\alpha _{01}=& -\frac{n^3 \epsilon _3}{c}-\frac{n^2 \epsilon _1}{c}-\epsilon _2,\\
\alpha _{11}=&\frac{-\epsilon _1(2 \vartheta +3)^2 m)}{D^2}-\frac{2 (2 \vartheta +3) m^2 \epsilon _3 (6 (\vartheta +1) (2 \vartheta +3)-(11 \vartheta +15) D)}{D^3},\\
\alpha _{21}=&\frac{1}{(2 \vartheta +3) D^3 (n+1)}\Big((2 \vartheta +3)^3+3 D^2 (2 (\vartheta +3)+(\vartheta +4) n)-(2 \vartheta +3) D (7 \vartheta \\
&+2 (\vartheta +3) n+15)\Big)-\frac{m x^* \epsilon _3}{D^4 (n+1)}\Big(3 (2 \vartheta +3)^3+3 D^2 (3 \vartheta +(\vartheta +4) n+8)\\
&-(2 \vartheta +3) D (16 \vartheta +4 (\vartheta +3) n+33)\Big)+\frac{x^* \epsilon _1 \left(3 (\vartheta +2) D-(2 \vartheta +3)^2\right)}{D^3},\\
\alpha _{31}=& \frac{1}{(2 \vartheta +3) D^4 (n+1) x^*}\Big((2 \vartheta +3)^4-2 D^3 (4 \vartheta +2 (\vartheta +5) n+15)+D^2 (32 \vartheta ^2+137 \vartheta \\
&+3 (4 \vartheta ^2+21 \vartheta +24) n+138)-(2 \vartheta +3)^2 D (10 \vartheta +2 (\vartheta +3) n+21)\Big)+\frac{2 m \epsilon _3}{D^5 (n+1)} \\
&\times\Big(-2 (2 \vartheta +3)^4+2 D^3 (3 \vartheta +(\vartheta +5) n+10)-D^2(2(19 \vartheta ^2+79 \vartheta +78)+3 (4 \vartheta ^2\\
&+21 \vartheta +24) n)+(2 \vartheta +3)^2 D (16 \vartheta +3 (\vartheta +3) n+33)\Big)+\frac{\epsilon _1 }{D^5 n}\Big(2 D^3 (10 (\vartheta +3)\\
&+(8 \vartheta +25) n)-2 D^2 (2 (5 \vartheta +8) (5 \vartheta +12)+(44 \vartheta ^2+179 \vartheta +174) n)+(2 \vartheta +3)^2 D\\
 &\times(36 (\vartheta +2)+(34 \vartheta +69) n)-4 (2 \vartheta +3)^4 (n+1)\Big).
\end{align*}

\vspace{1em}
\noindent\textbf{Step 1: Reduction of cubic and quartic terms.}
Consider the formal power series solution to the auxiliary equation
\begin{equation}\label{eq:3.37}
\sum_{i=2}^4 \alpha_{i0}X_1^i {\rm d}X_1 = \alpha_{20}X_2^2 {\rm d}X_2,
\end{equation}
assuming the expansion
\begin{equation}\label{eq:3.38}
X_1 = k_1X_2 + k_2X_2^2 + k_3X_2^3 + k_4X_2^4 + \mathcal{O}(X_2^5).
\end{equation}
Differentiation yields
\begin{equation}\label{eq:3.39}
{\rm d}X_1 = \left(k_1 + 2k_2X_2 + 3k_3X_2^2 + 4k_4X_2^3\right){\rm d}X_2 + \mathcal{O}(X_2^4).
\end{equation}
Substituting \eqref{eq:3.38}-\eqref{eq:3.39} into \eqref{eq:3.37} and matching coefficients (using $\alpha_{20}(0) \neq 0$) produces
\[
k_1 = 1,\quad k_2 = -\frac{\alpha_{30}}{4\alpha_{20}},\quad k_3 = \frac{3\alpha_{30}^2}{16\alpha_{20}^2} - \frac{\alpha_{40}}{5\alpha_{20}},\quad k_4 = \frac{7\alpha_{30}\alpha_{40}}{20\alpha_{20}^2} - \frac{35\alpha_{30}^3}{192\alpha_{20}^3}.
\]
Implementing the transformation
\[
\begin{aligned}
X_1 &= X_2 - \frac{\alpha_{30}}{4\alpha_{20}}X_2^2 + \left(\frac{3\alpha_{30}^2}{16\alpha_{20}^2}-\frac{\alpha_{40}}{5\alpha_{20}}\right)X_2^3 + \left(\frac{7\alpha_{30}\alpha_{40}}{20\alpha_{20}^2}-\frac{35\alpha_{30}^3}{192\alpha_{20}^3}\right)X_2^4 + \mathcal{O}(X_2^5), \\
Y_1 &= Y_2, \\
{\rm d}t &= {\rm d}\tau\left(1 - \frac{\alpha_{30}}{2\alpha_{20}}X_2 + 3\left(\frac{3\alpha_{30}^2}{16\alpha_{20}^2}-\frac{\alpha_{40}}{5\alpha_{20}}\right)X_2^2 + 4\left(\frac{7\alpha_{30}\alpha_{40}}{20\alpha_{20}^2}-\frac{35\alpha_{30}^3}{192\alpha_{20}^3}\right)X_2^3\right),
\end{aligned}
\]
we obtain the simplified system
\[
\begin{cases}
\dot{X}_2 = Y_2, \\
\dot{Y}_2 = \sum\limits_{i=0}^2 \beta_{i0}X_2^i + y_2 \sum\limits_{i=0}^3 \beta_{i1}X_2^i + R(X_2,Y_2,\epsilon),
\end{cases}
\]
where $\beta_{30},\beta_{40} = \mathcal{O}(\epsilon)$ justify absorption into $R(X_2,Y_2,\epsilon)$. Coefficients $\beta_{ij}$ are listed as follows.
\begin{align*}
\beta_{00}=&\alpha _{00},\beta_{10}=\frac{2 \alpha _{10} \alpha _{20}-\alpha _{30} \alpha _{00}}{2 \alpha _{20}},\beta_{20}=\frac{80 \alpha _{20}^3-60 \alpha _{10} \alpha _{30} \alpha _{20}-48 \alpha _{40} \alpha _{20} \alpha _{00}+45 \alpha _{30}^2 \alpha _{00}}{80 \alpha _{20}^2},\\
\beta_{30}=&\frac{210 \alpha _{10} \alpha _{20} \alpha _{30}^2-192 \alpha _{10} \alpha _{20}^2 \alpha _{40}-175 \alpha _{30}^3 \alpha _{00}+336 \alpha _{20} \alpha _{40} \alpha _{30} \alpha _{00}}{240 \alpha _{20}^3},\\
\beta_{40}=&\frac{\alpha _{10}(96 \alpha _{20} \alpha _{30} \alpha _{40}-55 \alpha _{30}^3)}{48 \alpha _{20}^3},\beta_{01}=\alpha _{01},\beta_{11}=\frac{2 \alpha _{11} \alpha _{20}-\alpha _{30} \alpha _{01}}{2 \alpha _{20}},\\
\beta_{21}=&\frac{80 \alpha _{21} \alpha _{20}^2-60 \alpha _{11} \alpha _{30} \alpha _{20}-48 \alpha _{40} \alpha _{20} \alpha _{01}+45 \alpha _{30}^2 \alpha _{01}}{80 \alpha _{20}^2},\\
\beta_{31}=&\frac{1}{240 \alpha _{20}^3}\Big(240 \alpha _{31} \alpha _{20}^3-240 \alpha _{21} \alpha _{30} \alpha _{20}^2-192 \alpha _{11} \alpha _{40} \alpha _{20}^2+210 \alpha _{11} \alpha _{30}^2 \alpha _{20}+336 \alpha _{30} \alpha _{40}\\
&\times \alpha _{20} \alpha _{01}-175 \alpha _{30}^3 \alpha _{01}\Big).
\end{align*}

\vspace{1em}
\noindent\textbf{Step 2: Elimination of linear and quadratic terms.}
Using $\beta_{20} = -\dfrac{(\delta+3)mn}{D^3x^*} + \mathcal{O}(\epsilon) < 0$, remove the $X_2$-term via
\[
X_3 = X_2 + \frac{\beta_{10}}{2\beta_{20}},\quad Y_3 = \frac{Y_2}{-\sqrt{-\beta_{20}}},\quad \tau = -\sqrt{-\beta_{20}}t,
\]
yielding
\[
\begin{cases}
\dot{X}_3 = Y_3, \\
\dot{Y}_3 = \gamma_{00} + X_3^2 + Y_3\left(\gamma_{01} + \gamma_{11}X_3 + \gamma_{21}X_3^2 + \gamma_{31}X_3^3\right) + R(X_3,Y_3,\epsilon).
\end{cases}
\]
Subsequent elimination of $X_3^2Y_3$ through
\[
X_3 = X_4,\quad Y_3 = Y_4 + \frac{\gamma_{21}}{3}Y_4^2,\quad {\rm d}\tau = \left(1 + \frac{\gamma_{21}}{3}Y_4\right){\rm d}t,
\]
produces
\[
\begin{cases}
\dot{X}_4 = Y_4, \\
\dot{Y}_4 = \nu_{00} + X_4^2 + Y_4\left(\nu_{01} + \nu_{11}X_4 + \nu_{31}X_4^3\right) + R(X_4,Y_4,\epsilon),
\end{cases}
\]
with $\nu_{ij}$ coefficients expressed as follows:
\[\begin{aligned}
\nu_{00} = &\gamma_{00},\quad \nu_{01} =\gamma_{01} - \gamma_{00}\gamma_{21},\quad \nu_{11} = \gamma_{11}, \quad\nu_{31} = \gamma_{31},\\
\nu_{02} =& -\dfrac{\gamma_{21}(\gamma_{00}\gamma_{21} + 6\gamma_{01})}{9},\quad \nu_{03} = -\dfrac{2\gamma_{21}^2(\gamma_{00}\gamma_{21} + 3\gamma_{01})}{27}, \quad \nu_{12} = -\dfrac{2\gamma_{21}\gamma_{11}}{3},
\end{aligned}
\]
and
\begin{align*}
\gamma_{00}=&\frac{4 \beta _{20} \beta _{00}-\beta _{10}^2}{4 \beta _{20}^2},\,\gamma_{01}=-\frac{\sqrt{-\beta _{20}} \left(-\beta _{31} \beta _{10}^3+2 \beta _{20} \beta _{21} \beta _{10}^2-4 \beta _{11} \beta _{20}^2 \beta _{10}+8 \beta _{20}^3 \beta _{01}\right)}{8 \beta _{20}^4},\\
\gamma_{11}=&-\frac{\sqrt{-\beta _{20}} \left(3 \beta _{31} \beta _{10}^2-4 \beta _{20} \beta _{21} \beta _{10}+4 \beta _{11} \beta _{20}^2\right)}{4 \beta _{20}^3},\,\gamma_{21}=-\frac{\sqrt{-\beta _{20}} \left(2 \beta _{20} \beta _{21}-3 \beta _{10} \beta _{31}\right)}{2 \beta _{20}^2},\\
\gamma_{31}=&-\frac{\sqrt{-\beta _{20}} \beta _{31}}{\beta _{20}}.
\end{align*}

\vspace{1em}
\noindent\textbf{Step 3: Final parameter scaling.}
Using $\nu_{31}(0) = \varsigma < 0$ where
\[
\varsigma = -\frac{2\sqrt{2}\vartheta(5\vartheta+12)}{(\vartheta+3)(2\vartheta+3)^2Dx^*\sqrt{\frac{(2\vartheta+3)\left((2\vartheta+3)^2-(4\vartheta+9)D\right)^2}{(\vartheta+3)^2D^3x^*}}},
\]
rescale variables via
\[
X_4 = \nu_{31}^{-2/5}X_5,\quad Y_4 = \nu_{31}^{-3/5}Y_5,\quad t = \sqrt[5]{\nu_{31}}\tau,
\]
to obtain system \eqref{4.13} with parameters
\begin{align*}
\mu_1 &= \nu_{31}^{4/5}\nu_{00} = \frac{(2\vartheta+3)^2m(n+1)x^*}{(\vartheta+3)n}\epsilon_1 - \frac{D(x^*)^2}{m(\vartheta+3)}\epsilon_2 + \mathcal{O}(\epsilon^2), \\
\mu_2 &=\nu_{31}^{\frac{1}{5}} \nu _{01}\\
 &=-\frac{1}{c D^4 x^* (\frac{(\vartheta +3) m n}{D x^*})^{3/2}}\bigg(m (c (2 \vartheta +3) m x^* ((2 \vartheta +3)^3+3 D^2 (2 (\vartheta +3)+(\vartheta +4) n)\\
&\quad-(2 \vartheta +3) D (7 \vartheta +2 (\vartheta +3) n+15))+(\vartheta +3) D^3 n^3)\bigg)\epsilon_1-n^3\Big/\Big(c \sqrt{\frac{(\vartheta +3) m n}{D x^*}}\Big)\epsilon_3\\
&\quad+\frac{1}{(2 \vartheta +3) D^3 (n+1) x^* (\frac{(\vartheta +3) m n}{D x^*})^{3/2}}\bigg(n ((x^*)^2 ((2 \vartheta +3)^3+3 D^2 (2 (\vartheta +3)+(\vartheta +4) n)\\
&\quad-(2 \vartheta +3) D (7 \vartheta +2 (\vartheta +3) n+15))-(2 \vartheta ^2+9 \vartheta +9) D^2 m (n+1))\bigg)\epsilon_2 + \mathcal{O}(\epsilon^2), \\
\mu_3 &= \nu_{31}^{-\frac{1}{5}}\nu_{11} = -\frac{m n\epsilon_1}{2 c (2 \vartheta +3) D^6 (x^*)^2(\frac{(\vartheta +3) m n}{D x^*})^{5/2}}\bigg(((\vartheta +3) D^2 (2 c (\vartheta +3) (2 \vartheta +3)^3 m^2 n\\
&\quad+D n^3 x^* ((\vartheta +4) D-(\vartheta +3) (2 \vartheta +3)))+c (2 \vartheta +3) m ((2 \vartheta +3)^3+3 D^2 (2 (\vartheta +3)\\
&\quad+(\vartheta +4) n)-(2 \vartheta +3) D (7 \vartheta +2 (\vartheta +3) n+15))(2 (\vartheta +3) (2 \vartheta +3) D m (2 \vartheta +D+3)\\
&\quad+(x^*)^2 ((\vartheta +4) D-(\vartheta +3) (2 \vartheta +3))))\bigg)+\frac{\epsilon_1}{(2 \vartheta +3) D^3 (n+1) x^* (\frac{(\vartheta +3) m n}{D x^*})^{3/2}}\\
&\quad\times\bigg(n ((x^*)^2 ((2 \vartheta +3)^3+3 D^2 (2 (\vartheta +3)+(\vartheta +4) n)-(2 \vartheta +3) D (7 \vartheta +2 (\vartheta +3) n\\
&\quad+15))-(2 \vartheta ^2+9 \vartheta +9) D^2 m (n+1))\bigg) -\frac{n\epsilon_3}{2 c (2 \vartheta +3) D^4 x^* (\frac{(\vartheta +3) m n}{D x^*})^{3/2}}\\
&\quad\times\bigg( (4 c (\vartheta +3) (2 \vartheta +3)^2 m^3 (6 (2 \vartheta ^2+5 \vartheta +3)-(11 \vartheta +15) D)+D^2 n^3 x^* (-2 \vartheta ^2\\
&\quad-9 \vartheta +(\vartheta +4) D-9))\bigg)+ \mathcal{O}(\epsilon^2).
\end{align*}

The Jacobian determinant evaluates to
\[
\begin{aligned}
\det\left(\frac{\partial(\mu_1,\mu_2,\mu_3)}{\partial(\epsilon_1,\epsilon_2,\epsilon_3)}\right)\bigg|_{\epsilon=0} &= \frac{\varsigma^{4/5}(2\vartheta+3)^6\left((2\vartheta+3)^2-(4\vartheta+9)D\right)}{16(\vartheta+3)^7D^4}\Big(12(\vartheta+1) \\
&\quad\times (2\vartheta+3)^4 + 3\vartheta D^4 - 3\vartheta D^3+ (2\vartheta+3)^2(32\vartheta+69)D^2 \\
&\quad - (2\vartheta+3)^2(82\vartheta^2+240\vartheta+171)D\Big) \neq 0,
\end{aligned}
\]
verifying the transversality condition \eqref{4.14}. Thus, system \eqref{4.12} constitutes a versal unfolding of the codimension-3 cusp singularity.
\end{proof}

\section{Knowledges on $\mathcal{K}$-equivalence and $\mathcal{K}$-codimension}
\label{app:components}

All of these essential definitions, propositions, and theorems can be found in \citet{Montaldi2021}.

\begin{defn}\label{def:contact_equiv} Two map germs $f,g:\mathbb{R}^n\rightarrow \mathbb{R}^p$ are {\it contact equivalent} or {\it $\mathcal{K}$-equivalent}, if there exist,\\
(i) a diffeomorphism $\phi$ of the source $(\mathbb{R}^n, 0)$, and\\
(ii) a matrix $M\in GL_p(\varepsilon_n)$ such that\\
\begin{align}
f\circ\phi(x)=M(x)g(x),
\end{align}
where $f(x)$ and $g(x)$ are written as column vectors, and $M(x)g(x)$ is the usual product of matrix times vector. In particular,  $M(x)$ is the identity, one says $f(x)$ and $g(x)$ are $\mathcal{C}$-equivalent, in this case $f(x)=M(x)g(x)$.
\end{defn}
\begin{prop}\label{prop:same_codim}
If $f,g$ are contact equivalent, then they have the same codimension:
\begin{align}
{\rm codim}(f,\mathcal{K})={\rm codim}(g,\mathcal{K})
\end{align}
\end{prop}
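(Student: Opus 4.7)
The plan is to construct an explicit $\mathbb{R}$-linear isomorphism $\bar\Phi\colon\theta(f)/T_e\mathcal{K}\cdot f\to\theta(g)/T_e\mathcal{K}\cdot g$ induced by the contact-equivalence data. Given $f\circ\phi=M\,g$ as in Definition~\ref{def:contact_equiv}, I would set
\[
\Phi\colon\theta(f)\to\theta(g),\qquad \Phi(\xi)=M^{-1}\cdot(\xi\circ\phi),
\]
which is manifestly bijective with inverse $\eta\mapsto M\,(\eta\circ\phi^{-1})$. The map $\Phi$ is $\mathbb{R}$-linear and, once the $\varepsilon_n$-module structure on $\theta(g)$ is identified with that on $\theta(f)$ via pullback by $\phi$, is compatible with multiplication by functions. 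The entire theorem will then follow once I verify that $\Phi(T_e\mathcal{K}\cdot f)\subseteq T_e\mathcal{K}\cdot g$; the reverse inclusion is obtained by symmetry, applying the same argument to $g=M^{-1}(f\circ\phi)$.

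I would split the verification into the two generating pieces of $T_e\mathcal{K}\cdot f=\mathrm{t}f(\theta_n)+I_f\theta(f)$. For the ideal contribution: if $h=\sum a_i f_i\in I_f$ and $\zeta\in\theta(f)$, then $h\circ\phi=\sum(a_i\circ\phi)(M g)_i$ lies in $I_g$ because each $(Mg)_i\in I_g$, and hence
\[
\Phi(h\,\zeta)=(h\circ\phi)\,\Phi(\zeta)\in I_g\,\theta(g).
\]

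For the tangent contribution, if $\xi=\mathrm{t}f(X)=df\cdot X$ with $X\in\theta_n$, differentiating $f\circ\phi=M g$ and solving for $df\circ\phi$ gives $(df)\circ\phi=\bigl((dM)\,g+M\,(dg)\bigr)(d\phi)^{-1}$. Setting $Y=(d\phi)^{-1}(X\circ\phi)\in\theta_n$, I obtain the decomposition
\[
\Phi(\mathrm{t}f(X))=M^{-1}(dM\cdot Y)\,g+\mathrm{t}g(Y),
\]
where the second summand lies in $\mathrm{t}g(\theta_n)$ and the first summand has each component expressed as an $\varepsilon_n$-linear combination of $g_1,\dots,g_p$, hence lies in $I_g\,\theta(g)$. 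Together with the ideal case this proves the required inclusion.

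The main technical obstacle is keeping the module-theoretic bookkeeping precise, since $\theta(f)$ and $\theta(g)$ carry $\varepsilon_n$-module structures that only agree after pullback by $\phi$, and since the tangent map's image under $\Phi$ must be split cleanly into a $\mathrm{t}g(\theta_n)$ part and an $I_g\theta(g)$ part via the Leibniz identity above. Once this is done, $\Phi$ descends to a well-defined $\mathbb{R}$-linear bijection $\bar\Phi$ on the quotients, yielding $\dim_{\mathbb{R}}\bigl(\theta(f)/T_e\mathcal{K}\cdot f\bigr)=\dim_{\mathbb{R}}\bigl(\theta(g)/T_e\mathcal{K}\cdot g\bigr)$, which is precisely ${\rm codim}(f,\mathcal{K})={\rm codim}(g,\mathcal{K})$.
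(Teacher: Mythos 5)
Your proof is correct. Note first that the paper itself does not prove this proposition: Appendix~B presents it as an imported fact, with all definitions and statements attributed to Montaldi, so there is no in-paper argument to compare against. Your argument is the standard one and it goes through: the bijection $\Phi(\xi)=M^{-1}(\xi\circ\phi)$ is $\mathbb{R}$-linear (and $\varepsilon_n$-semilinear over the ring automorphism $a\mapsto a\circ\phi$); it carries $I_f\,\theta(f)$ into $I_g\,\theta(g)$ because $f_i\circ\phi=(Mg)_i\in I_g$; and the Leibniz identity $(df)\circ\phi=\bigl((dM)\,g+M\,(dg)\bigr)(d\phi)^{-1}$ splits $\Phi(\mathrm{t}f(X))$ as $\mathrm{t}g(Y)+M^{-1}(dM\cdot Y)\,g$ with $Y=(d\phi)^{-1}(X\circ\phi)\in\theta_n$, the second summand having every component in $I_g$ and hence lying in $I_g\,\theta(g)$. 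Combined with the symmetric inclusion, $\Phi$ maps $T_e\mathcal{K}\cdot f$ onto $T_e\mathcal{K}\cdot g$ and therefore induces an $\mathbb{R}$-linear isomorphism of the quotients, which is exactly the equality of codimensions. Two minor points of hygiene, neither of which affects validity: the inverse of $\Phi$ is $\eta\mapsto(M\eta)\circ\phi^{-1}=(M\circ\phi^{-1})\,(\eta\circ\phi^{-1})$, i.e.\ $M$ must be evaluated at $\phi^{-1}$ of the new variable, not at the new variable itself; and for the symmetry step one should record explicitly that $f\circ\phi=Mg$ rewrites as $g\circ\phi^{-1}=(M^{-1}\circ\phi^{-1})\,f$, which exhibits the pair $(g,f)$ as contact equivalent in the precise sense of Definition~\ref{def:contact_equiv} before the same inclusion argument is applied in reverse.
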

\begin{defn}\label{def:tangent_map} Let $f:\mathbb{R}^n\rightarrow \mathbb{R}^p$ be a smooth map and $\bm v$ be a smooth vector field on the domain of $f$. Then
 ${\rm t}f(\bm v)$ is the vector field along $f$ given by
\begin{align}
{\rm t}f(\bm v)={\rm d}f_x(\bm v(x))=\sum_j\frac{\partial f}{\partial x_j}(x) v_j(x),
\end{align}
The map ${\rm t}f$ is the tangent map of $f$.
\end{defn}
\begin{defn}\label{def:tangent_space_contact} The {\it tangent space} for contact equivalence of a smooth map germ
$f:(\mathbb{R}^n,0)\rightarrow (\mathbb{R}^p,0)$ is defined to be an $\varepsilon_n$-module, a submodule of $\theta(f)\simeq \varepsilon_n^p$
\begin{align}T\mathcal{K}\cdot f={\rm t}f(\mathfrak{m}_n\theta_n)+I_f\theta(f)
\end{align}
\end{defn}
\begin{defn}\label{def:extended_tangent_space}
 The {\it extended tangent space} for contact equivalence of a smooth map germ
$f:(\mathbb{R}^n,0)\rightarrow (\mathbb{R}^p,0)$ is defined to be
\begin{align}
T_e\mathcal{K}\cdot f={\rm t}f(\theta_n)+I_f\theta(f)
\end{align}
\end{defn}

\begin{defn}\label{def:contact_codim}
Let $f:\mathbb{R}^n\rightarrow \mathbb{R}^p$ be a smooth map germ,  the contact-codimension of the map germ $f$ is defined as
\begin{align}
{\rm codim}(f,\mathcal{K})={\rm dim}\Big(\theta(f)\big/T_e\mathcal{K}\cdot f\Big).
\end{align}
A map germ $f$ is therefore said to be of {\it finite $\mathcal{K}$-codimension} if ${\rm codim}(f,\mathcal{K})<\infty$.

\end{defn}

\begin{thm}\label{thm:k_determined}Let $f:(\mathbb{R}^n,0)\rightarrow (\mathbb{R}^p,0)$ be a smooth map germ. If
\begin{align}
\mathfrak{m}_n^{k+1}\theta(f)\subset\mathfrak{m}_n^{k}T\mathcal{K}\cdot f,
\end{align}
then $f$ is $k$-determined with respect to $\mathcal{K}$-equivalence.
\end{thm}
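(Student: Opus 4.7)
The plan is to use the Mather--Thom--Levine homotopy method, which converts the infinitesimal hypothesis $\mathfrak{m}_n^{k+1}\theta(f) \subset \mathfrak{m}_n^{k} T\mathcal{K}\cdot f$ into an actual contact equivalence between $f$ and any germ $g$ sharing its $k$-jet. Suppose $g:(\mathbb{R}^n,0)\to(\mathbb{R}^p,0)$ satisfies $j^k g(0) = j^k f(0)$, equivalently $g - f \in \mathfrak{m}_n^{k+1}\theta(f)$. Construct the affine homotopy $F(x,t) = f(x) + t(g(x)-f(x))$ and denote $f_t := F(\cdot,t)$, so that $f_0 = f$ and $f_1 = g$. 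The goal is to build a smooth one-parameter family of contact equivalences transporting $f_0$ to $f_1$.

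The key reduction is to a homological equation. By the Thom--Levine criterion for $\mathcal{K}$-equivalence, it suffices to produce a time-dependent source vector field $\xi_t \in \mathfrak{m}_n\theta_n$ and a time-dependent matrix $A_t \in \mathfrak{gl}_p(\varepsilon_n)$ solving
\begin{equation*}
\frac{\partial F}{\partial t}(x,t) \;=\; \mathrm{t}F_t(\xi_t)(x) + A_t(x)\cdot F(x,t)
\end{equation*}
in a neighborhood of the origin, for all $t \in [0,1]$. Once such $\xi_t$ is found, integrating it from $t=0$ yields a diffeomorphism $\phi$ of $(\mathbb{R}^n, 0)$; integrating $A_t$ starting at the identity yields $M \in GL_p(\varepsilon_n)$; and the pair $(\phi, M)$ then realizes $f\circ\phi = M\cdot g$ at $t=1$, which is precisely Definition \ref{def:contact_equiv} of $\mathcal{K}$-equivalence.

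To solve the homological equation, observe that $\partial_t F = g - f \in \mathfrak{m}_n^{k+1}\theta(f)$ by the jet assumption. The hypothesis gives $\mathfrak{m}_n^{k+1}\theta(f) \subset \mathfrak{m}_n^{k} T\mathcal{K}\cdot f$, which handles the equation at $t=0$. To extend to all $t$, one compares $T\mathcal{K}\cdot f_t = \mathrm{t}f_t(\mathfrak{m}_n\theta_n) + I_{f_t}\theta(f_t)$ with $T\mathcal{K}\cdot f$: both the tangent map $\mathrm{t}f_t$ and the ideal $I_{f_t}$ differ from their $t=0$ counterparts by elements coming from $g-f \in \mathfrak{m}_n^{k+1}$, so the inclusion should persist as an inclusion of finitely generated $\varepsilon_{n+1}$-modules upon a parameterized application of Nakayama's lemma.

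The principal obstacle is precisely this parameterized Nakayama step: the infinitesimal inclusion at $t=0$ must be promoted to an inclusion of $\varepsilon_{n+1}$-modules valid uniformly in $t$, which requires checking that the unipotent perturbations induced by the homotopy preserve the relation $M = \mathfrak{m}_{n+1} M$ for the quotient module $M = \mathfrak{m}_n^{k+1}\theta(F)/(T\mathcal{K}\cdot F \cap \mathfrak{m}_n^{k+1}\theta(F))$. The factor $\mathfrak{m}_n^k$ appearing in the hypothesis (rather than only $\mathfrak{m}_n$) is exactly what provides the margin needed to absorb these perturbations. With this in place, extracting smooth sections $\xi_t$ and $A_t$ follows from Malgrange's preparation theorem applied in the parameter $t$; the condition $\xi_t \in \mathfrak{m}_n\theta_n$ is inherited from the $\mathfrak{m}_n$-factor in the source tangent space, ensuring the integrated flow fixes $0$, and $A_t$ automatically integrates into $GL_p(\varepsilon_n)$ starting from the identity. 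Evaluating at $t=1$ completes the proof that $f$ is $k$-determined.
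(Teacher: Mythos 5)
The paper does not prove this theorem: it appears in Appendix~B as background material with the blanket citation to Montaldi (2021), so there is no in-paper proof to compare against; your proposal reproduces the standard Mather--Thom--Levine homotopy argument, which is precisely how the result is established in that reference. The outline is correct and the key points are rightly identified --- the reduction to the homological equation $\partial_t F = \mathrm{t}F_t(\xi_t) + A_t F_t$ with $\xi_t \in \mathfrak{m}_n\theta_n$, the observation that derivatives and components of $g-f \in \mathfrak{m}_n^{k+1}\theta(f)$ perturb $T\mathcal{K}\cdot f$ only by terms lying in $\mathfrak{m}_n\cdot\mathfrak{m}_n^{k+1}\theta(f)$ so that the $\mathfrak{m}_n^k$ margin in the hypothesis lets Nakayama absorb them, and the Malgrange-preparation extraction of smooth $t$-dependent sections followed by integration --- with only minor points glossed (finite generation of the relevant quotient over $\varepsilon_{n+1}$, and the need to obtain the equivalence locally in $t$ near each $t_0$ and then compose over a finite cover of $[0,1]$).
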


\section{Proof of Theorem \ref{th:5.3}}
\label{proof_th:5.3}
\begin{proof}
According to Lemma \ref{lem:weak_focus}, the origin is a weak focus of order $k$ if $B_i = 0$ for $i=1,2,\dots,2k$ and $B_{2k+1} \neq 0$, which corresponds to codimension $k$. The recurrence relations established in the proof:
\begin{align*}
B_2 &= -\tfrac{1}{2}\nu_2 B_1, \quad
B_4 = -\tfrac{3}{2}\nu_2 B_3, \quad
B_6 = -\tfrac{5}{2}\nu_2 B_5, \quad
B_8 = -\tfrac{7}{2}\nu_2 B_7,
\end{align*}
show that when $B_{2m-1} = 0$ for any $m \geq 1$, then $B_{2m} = 0$ (provided $\nu_2 \neq 0$). Thus, the independent conditions reduce to $B_{2j-1} = 0$ for $j=1,2,\dots,k$, implying codimension $k$. Stability is determined by the sign of the first nonvanishing coefficient $B_{2k+1}$ as specified in Lemma \ref{lem:weak_focus}.

The Taylor series expansion of $F(\theta(x)) - F(x)$ about $x=0$ is
\begin{align*}
F(\theta(x)) - F(x) &= -G(\theta(x) + x_2) + G(x + x_2) + \int_x^{\theta(x)} \frac{n}{p(s + x_2)}  {\rm d}s \\
&= \sum_{i=1}^7 B_i x^i + \mathcal{O}(x^8),
\end{align*}
with coefficients calculated as follows.\\
The first-order coefficient is
\begin{align*}
B_1=F'(\theta(0))-F'(0)=2\bigg( G'(x_2)-\frac{ n}{p(x_2 )}\bigg)= 2\mathscr{G}'(x_2).
\end{align*}
The origin is a weak focus (order $\geq 0$) if $B_1 = 0$, i.e., $\mathscr{G}'(x_2) = 0$. Under this condition, $B_2 = -\frac{1}{2}\nu_2 B_1 = 0$ automatically.

Under the condition $B_1 = 0$ (and hence $B_2 = 0$), we compute
\begin{align*}
B_3 &=\frac{1}{6}\big(F^{(3)}(\theta(0))-F^{(3)}(0)\big)\\
&=\frac{1}{3}\big( G^{(3)}(x_2)+3 G''(x_2)\nu_2+\frac{ n p''(x_2)}{p^2(x_2)}+\frac{3 n p'(x_2)\nu_2}{p^2(x_2)}-\frac{2 n p'^2(x_2)}{p^3(x_2)}\Big)\\
&= \mathscr{G}'''(x_2) + 3\mathscr{G}''(x_2)\nu_2.
\end{align*}
The origin is a weak focus of order $\geq 1$ if additionally $B_3 = 0$, i.e., $\mathscr{G}'''(x_2) = -3\nu_2\mathscr{G}''(x_2)$. Under this condition, $B_4 = -\frac{3}{2}\nu_2 B_3 = 0$ automatically.
\\
Under the conditions $B_1 = B_3 = 0$ (and hence $B_2 = B_4 = 0$), we compute
\begin{align*}
B_5 &=\frac{1}{120}\big(F^{(5)}(\theta(0))-F^{(5)}(0)\big)\\
&=\dfrac{1}{60}\bigg(G^{(5)}(x_2)+10 G^{(4)}(x_2)\nu_2+\frac{(3 h_2 h_5-10 h_3 h_4) G''(x_2)\nu_2}{h_2 h_3}-\frac{24 n p'^4(x_2)}{p^5(x_2)}\\
&\quad+\frac{(3 h_2 h_5-10 h_3 h_4) n p'(x_2)\nu_2}{h_2 h_3 p^2(x_2)}+\frac{ n p^{(4)}(x_2)}{p^2(x_2)}-\frac{6 n p''^2(x_2)}{p^3(x_2)}+\frac{60 n p'^3(x_2)\nu_2}{p^4(x_2)}\\
&\quad+p^{(3)}(x_2)\Big(\frac{10 n \nu_2}{p^2(x_2)}-\frac{8 n p'(x_2)}{p^3(x_2)}\Big)+p''(x_2)\Big(\frac{36 n p'^2(x_2)}{p^4(x_2)}-\frac{60 n p'(x_2)\nu_2}{p^3(x_2)}\Big)\bigg)\\
&= \frac{1}{60}\left( \mathscr{G}^{(5)}(x_2) + 10 \mathscr{G}^{(4)}(x_2)\nu_2 - \frac{h_5 + 10 h_4 \nu_2}{h_2} \mathscr{G}''(x_2) \right).
\end{align*}
The origin is a weak focus of order $\geq 2$ if additionally $B_5 = 0$, i.e.,
\[
\mathscr{G}''(x_2)(h_5 + 10 h_4 \nu_2) = h_2(\mathscr{G}^{(5)}(x_2) + 10 \mathscr{G}^{(4)}(x_2)\nu_2).
\]
 Under this condition, $B_6 = -\frac{5}{2}\nu_2 B_5 = 0$ automatically.
\par
Under the conditions $B_1 = B_3 = B_5 = 0$ (and hence $B_2 = B_4 = B_6 = 0$), we compute $B_7$ from the expression below. The origin is a weak focus of order $\geq 3$ if $B_7 = 0$, and of order exactly $3$ if $B_7 \neq 0$.

The explicit expression for $B_7$ is:
\begin{align*}
B_7 &= \frac{1}{5040}\bigg(\frac{(2 (7 h_4 h_5-h_2 h_7)) G''(x_2)}{h_2^2}-\frac{84 h_5 n p'^3(x_2)}{h_2 p^4(x_2)}+\frac{14 h_4 h_5 n p'(x_2)}{h_2^2 p^2(x_2 )}-\frac{2 h_7 n p'(x_2)}{h_2 p^2(x_2)}\\
&\quad -\frac{1440 n p'^6(x_2)}{p^7(x_2)}+\frac{84 h_5 n p'(x_2) p''(x_2)}{h_2 p^3(x_2)}+\frac{3600 n p'^4(x_2) p''(x_2 )}{p^6(x_2)}-\frac{2160 n p'(x_2 )^2 p''^2(x_2)}{p^5(x_2)}\\
&\quad -\frac{14 h_5 n p^{(3)}(x_2)}{h_2 p^2(x_2 )}-\frac{40 n p^{(3)}(x_2)^2}{p^3(x_2)}-\frac{960 n p^{(3)}(x_2) p'^3(x_2)}{p(x_2)^5}+\frac{720 n p^{(3)}(x_2) p'(x_2) p''(x_2)}{p^4(x_2)}\\
&\quad +\frac{180 n p''^3(x_2)}{p(x_2)^4}-\frac{14 h_5 G^{(4)}(x_2)}{h_2}-\frac{24 n p^{(5)}(x_2) p'(x_2)}{p^3(x_2)}-\frac{60 n p^{(4)}(x_2) p''(x_2)}{p^3(x_2)}\\
&\quad +\frac{180 n p^{(4)}(x_2) p'^2(x_2)}{p^4(x_2)}+\nu_2^3 \Big(-1260 G^{(4)}(x_2)+\frac{1260 h_4 G''(x_2)}{h_2}+\frac{1260 h_4 n p'(x_2)}{h_2 p^2(x_2)}\\
&\quad -\frac{1260 n p^{(3)}(x_2)}{p^2(x_2)}-\frac{7560 n p'^3(x_2)}{p^4(x_2)}+\frac{7560 n p'(x_2) p''(x_2)}{p^3(x_2)}\Big)+\nu_2\Big(\frac{140 h_4^2 n p'(x_2 )}{h_2^2 p^2(x_2)}\\
&\quad -\frac{42 h_6 n p'(x_2)}{h_2 p^2(x_2)}
+\frac{140 h_4^2 G''(x_2)}{h_2^2}-\frac{42 h_6 G''(x_2)}{h_2}-\frac{840 h_4 n p'^3(x_2)}{h_2 p^4(x_2)}+\frac{5040 n p'^5(x_2)}{p^6(x_2)}\\
&\quad +42 G^{(6)}(x_2)-\frac{140 h_4 G^{(4)}(x_2)}{h_2}-\frac{140 h_4 n p^{(3)}(x_2)}{h_2 p^2(x_2)}+\frac{840 h_4 n p'(x_2) p''(x_2)}{h_2 p^3(x_2)}+\frac{42 n p^{(5)}(x_2)}{p^2(x_2)}\\
&\quad -\frac{10080 n p'^3(x_2) p''(x_2)}{p^5(x_2)}+\frac{1260 n p'(x_2) \big(3 p''^2(x_2)+2 p^{(3)}(x_2) p'(x_2)\big)}{p^4(x_2)}\\
&\quad -\frac{420 n \big(p^{(4)}(x_2) p'(x_2)+2 p^{(3)}(x_2) p''(x_2)\big)}{p^3(x_2)}
\Big)+2 G^{(7)}(x_2)+\frac{2 n p^{(6)}(x_2)}{p^2(x_2)}
\bigg)\\
&=\frac{1}{2520}\Big(\mathscr{G}^{(7)}(x_2)+21 \nu_2 \mathscr{G}^{(6)}(x_2)-\frac{ 7\left(90 h_2 \nu_2^3+10 h_4 \nu_2+h_5\right)}{h_2} \mathscr{G}^{(4)}(x_2)\\
&\quad+\frac{630 h_2 h_4 \nu_2^3+\left(70 h_4^2-21 h_2 h_6\right) \nu_2+7 h_4 h_5-h_2 h_7}{h_2^2}\mathscr{G}''(x_2)\Big).
\end{align*}
Under the conditions $B_1 = B_3 = B_5 = 0$, we have $B_6 = B_8 = 0$ automatically from the recurrence relations. The eighth-order coefficient is given by:
\begin{align*}
B_8 &= -\frac{7}{2}\nu_2 B_7.
\end{align*}

The derivatives of $p(x)$ and $G(x)$ used in these expressions are:
\begin{align*}
p(x_2) &= \frac{cx_2^3 }{1+ax_2+bx_2^3},\quad p'(x_2) = \frac{cx_2^2(2ax_2+3)}{(1+ax_2+bx_2^3)^2},\\
p''(x_2) &= \frac{2cx_2(a^2x_2^2+3 ax_2-3 a bx_2^4-6 bx_2^3+3)}{(1+a x_2 +bx_2^3)^3},\\
p^{(3)}(x_2) &= \frac{6 c(-2 bx_2^3(2 a^2x_2^2+7 a x_2 +8)+2 x_2^6 b^2 (2 ax_2+5)+1)}{(1+a x_2+bx_2^3)^4},
\end{align*}
\begin{align*}
p^{(4)}(x_2) &= -\frac{24 c}{\big(1+ax_2+bx_2^3\big)^5}\Big( a^3 bx_2^5+5 a^2 bx_2^4 \big(1-2 bx_2^3\big)+a \big(5 b^3x_2^9-40 b^2x_2^6\\
&\quad +10 bx_2^3+1\big)+3 bx_2^2 \big(5 b^2x_2^6-17 bx_2^3+5\big)\Big),\\
p^{(5)}(x_2) &= \frac{120 c}{(1+ax_2+bx_2^3)^6}\Big(6 a^3 b^2x_2^7+a^2(1-20 b^3x_2^9+33 b^2x_2^6)+6abx_2^2(b^3x_2^9-15 b^2x_2^6\\
&\quad +12bx_2^3+1)+3 bx_2(7 b^3x_2^9-42 b^2x_2^6+30 bx_2^3-2)\Big),
\end{align*}
and
\begin{align*}
G'(x_2)&= \frac{-c x_2^3 + m(a x_2 - b x_2^3 + 2)}{c x_2^3}, \quad & G''(x_2) &= -\frac{2m(a x_2 + 3)}{c x_2^4}, \\[6pt]
G^{(3)}(x_2)&= \frac{6m(a x_2 + 4)}{c x_2^5},\quad & G^{(4)}(x_2)&= -\frac{24m(a x_2 + 5)}{c x_2^6}, \\[6pt]
G^{(5)}(x_2)&= \frac{120m(a x_2 + 6)}{c x_2^7},\quad & G^{(6)}(x_2)&= -\frac{720m(a x_2 + 7)}{c x_2^8}, \\[6pt]
G^{(7)}(x_2)&= \frac{5040m(a x_2 + 8)}{c x_2^9}.
\end{align*}
The explicit expressions for $B_1$ to $B_8$ confirm that the conditions $\mathscr{G}'(x_2) = 0$, $\mathscr{G}'''(x_2) = -3\nu_2\mathscr{G}''(x_2)$, and subsequent higher-order conditions correspond precisely to $B_1 = B_2 = \cdots = B_{2k} = 0$ for $k = 0,1,2,3$. By Lemma \ref{lem:weak_focus}, the origin is therefore a weak focus of order $k$ with codimension $k$, and its stability is determined by the sign of the first nonvanishing odd-index coefficient $B_{2k+1}$, as summarized in the theorem statement.
\end{proof}

\end{document}